\def\Z{{\mathbb Z}}
 \DeclareFontFamily{U}{wncy}{}
\def\GL{{\rm GL}}
\def\SO{{\rm SO}}
\def\PGL{{\rm PGL}}
\def\Inv{{\rm Inv}}
\def\Aut{{\rm Aut}}
\def\irr{{\rm irr}}
\def\gen{{\rm gen}}
\def\Vol{{\rm Vol}}
\def\R{{\mathbb R}}
\def\Q{{\mathbb Q}}
\def\H{{\mathcal H}}
\def\Z{{\mathbb Z}}
\def\Q{{\mathbb Q}}
\def\H{{\mathcal H}}
\def \h{{\rm ht}}
\newtheorem{theorem}{Theorem}[section]
\newtheorem{cor}[theorem]{Corollary}
\newtheorem{lemma}[theorem]{Lemma}
\newtheorem{remark}[theorem]{Remark}
\newtheorem{proposition}[theorem]{Proposition}
\newenvironment{proof}{\noindent {\bf Proof:}}{$\Box$ \vspace{2 ex}}
\DeclareFontFamily{U}{wncy}{}
 \DeclareFontShape{U}{wncy}{m}{n}{<->wncyr10}{}
\DeclareSymbolFont{mcy}{U}{wncy}{m}{n}
 \DeclareMathSymbol{\Sh}{\mathord}{mcy}{"58} 
\newenvironment{customproof}[1][]
{\par\noindent{\textbf{Proof of #1:}}\quad}
{$\Box$ \vspace{2 ex}}
\def \Bo{{\mathcal{O}^{(i)}(n,t,X)}}
\def \Bod{{\mathcal{O}^{(i)}_{\delta}(n,t,X)}}
\def \Bods{{\mathcal{O}^{(i)}_{\delta,S}(n,t,X)}}
\def \Bodi{{\mathcal{O}}_{\delta}(n,t,X)}
\def \Bodid{{\mathcal{O}}^{(i)}_{\delta}(n,t,X)}
\def \SV{{SV}_{\delta}}
\def\JR{\frac{27R^2}{48a^2H}+\frac{H^2}{48a^2}}
\def\JP{\frac{27R^2}{48a^2P}+\frac{P^2}{48a^2}}
\def \dw{{t^{-2}dud^\times t  }}
\def \f{ax^4+bx^3y+cx^2y^2+dxy^3+ey^4}
\def \gcd{{\rm gcd}}
\def \det{{\rm det}}
\definecolor{darkpurple}{rgb}{0.5, 0.0, 0.5}
\newif\ifshowcontent
\definecolor{lightpurple}{rgb}{0.75, 0.6, 0.85}
\author{Fatemehzahra Janbazi}
\title{Boundedness of average rank of elliptic curves ordered by the coefficients}
\begin{document}
\maketitle
\begin{abstract}
We study the average rank of elliptic curves $E_{A,B} : y^2 = x^3 + Ax + B$ over $\mathbb{Q}$, ordered by the height function $h(E_{A,B}) := \max(|A|, |B|)$. Understanding this average rank requires estimating the number of irreducible integral binary quartic forms under the action of $\mathrm{GL}_2(\mathbb{Z})$, where the invariants $I$ and $J$ are bounded by $X$. A key challenge in this estimation arises from working within regions of the quartic form space that expand non-uniformly, with volume and projection of the same order. To address this, we develop a new technique for counting integral points in these regions, refining existing methods and overcoming the limitations of Davenport’s lemma. This leads to a bound on the average size of the 2-Selmer group, yielding an upper bound of 1.5 for the average rank of elliptic curves ordered by $h$.
\end{abstract}

\section{Introduction} 
Any elliptic curve $E$ over $\mathbb{Q}$ can be uniquely expressed in the form  $E_{A,B}: y^2 = x^3 + Ax + B$, where for all primes $p$, if $p^4 \mid A$, then $p^6 \nmid B$.   
It has been conjectured in the work of Gelfand~\cite{golfand} and Katz--Sarnak~\cite{katz} that 50\% of elliptic curves have rank~0 and 50\% have rank~1.
 These densities are expected to hold regardless of whether elliptic curves are ordered by conductor, discriminant, or height. 

A standard way to order elliptic curves is using the naive height $H(E_{A,B})$, defined as $\max\{4|A|^3, 27B^2\}$.
Bhargava and Shankar showed in \cite{main} that when elliptic curves are ordered by $H$, the average rank is at most $1.5$. This result follows from their computation of the average size of the $2$-Selmer group, which they proved to be $3$ under this ordering. Their result aligns with heuristics of Poonen--Rains~\cite{PoonenRains}, which predicts  the average size of $\text{Sel}_2$ should be $3$.  

In this paper, we consider the coefficient height function for elliptic curves, given by  
\begin{equation*}
h(E_{A,B}) \vcentcolon= \max\{|A|, |B|\}.
\end{equation*}  
Ordering elliptic curves by naive height behaves similarly to ordering them by discriminant. Indeed, curves with naive height less than $X$ satisfy $|A| \ll X^{1/3}$ and $|B| \ll X^{1/2}$, which corresponds to having discriminant less than $X$. On the other hand, elliptic curves with coefficients bounded in absolute value by $X$ have discriminants bounded by $O(X^3)$, and those with discriminant less than $X$ form a zero-density subfamily within this set. This induces new challenges in understanding the distribution of elliptic curves when ordered by coefficient height. In particular, it has been observed in \cite{thin} that for certain thin families, the distribution of the $2$-Selmer group does not always follow the predictions of Poonen--Rains \cite{PoonenRains}. 

We analyze the distribution of the $2$-Selmer group and the average rank of elliptic curves when ordered by $h$. In particular, we establish the following results:

\begin{theorem}\label{2-Selmer average}
    When elliptic curves over $\mathbb{Q}$ are ordered by the height $h$, the average size of the $2$-Selmer group is at most $3$.
\end{theorem}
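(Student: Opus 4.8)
I would follow the strategy of Bhargava--Shankar \cite{main}, the essential new point being that the relevant regions are now cut out by the coefficient height rather than the naive height. By the classical theory of $2$-descent (as used in \cite{main}), the nontrivial elements of $\Sel_2(E_{A,B})$ correspond bijectively to $\GL_2(\Z)$-equivalence classes of \emph{irreducible} integral binary quartic forms $f = ax^4+bx^3y+cx^2y^2+dxy^3+ey^4$ that are soluble over every completion of $\Q$, where the invariants $I(f)$ and $J(f)$ are fixed constant multiples of $A$ and $B$. Hence the condition $h(E_{A,B}) < X$ becomes $|I(f)| \ll X$ and $|J(f)| \ll X$. Since the number of admissible pairs $(A,B)$ with $\max(|A|,|B|) < X$ is $\sim c_0 X^2$ with $c_0 = 4\prod_p(1-p^{-10})$, Theorem~\ref{2-Selmer average} is equivalent to the bound $N^{\mathrm{sol}}(X) \le (2+o(1))\,c_0 X^2$, where $N^{\mathrm{sol}}(X)$ counts the everywhere locally soluble irreducible $\GL_2(\Z)$-classes with $|I(f)|,\,|J(f)| \ll X$. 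I would attack this by first bounding the number $N(X)$ of \emph{all} irreducible classes in this invariant range, and then sieving down to the locally soluble ones.

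\smallskip
\noindent\textbf{Geometry of numbers, and the main difficulty.}
To count $\GL_2(\Z)$-classes I would use Bhargava's averaging method: after fixing representatives for the finitely many $\GL_2(\R)$-orbits on each relevant connected component $V_\R^{(i)}$ of real binary quartics of nonzero discriminant (the components consisting of forms soluble over $\R$), the orbit count becomes, up to a fixed positive constant, the average over a fixed compact subset of $\GL_2(\R)$ of the number of lattice points of $\Z^5$ in a group-translate of the truncated region $\mathcal{R}^{(i)}_X = \{f \in \mathcal{F}^{(i)} : |I(f)| \ll X,\ |J(f)| \ll X\}$, where $\mathcal{F}^{(i)}$ is a fundamental domain for the action on $V_\R^{(i)}$. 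The expected main term is $\sum_i \Vol(\mathcal{R}^{(i)}_X) \asymp X^2$. The crux of the whole argument --- and where the coefficient ordering genuinely departs from \cite{main} --- is the lattice-point count in $\mathcal{R}^{(i)}_X$. Because $I$ and $J$ scale as $\lambda^2$ and $\lambda^3$ under $f \mapsto \lambda f$, while the coefficient height forces $|I|$ and $|J|$ to be of the \emph{same} order $X$, the region $\mathcal{R}^{(i)}_X$ expands at different rates in different coordinate directions, and one checks that some of its coordinate projections already have measure $\asymp X^2$, of the same order as $\Vol(\mathcal{R}^{(i)}_X)$ itself. Since Davenport's lemma controls the lattice-point discrepancy only in terms of (a sum of) the measures of such projections, it yields an error term that is not $o(X^2)$ and is therefore of no use here. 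The new ingredient is a substitute for Davenport's lemma adapted to these skew regions: I would slice $\mathcal{R}^{(i)}_X$ along the offending coordinate direction(s), apply a sharp lattice-point estimate on each (now well-proportioned) slice, and reassemble, so that the error accumulated over the slices is genuinely $o(X^2)$. The delicate point is that this estimate must be uniform enough to survive both (i) the averaging over $\GL_2(\R)$ used to thicken the single fundamental domain into a family, and (ii) the congruence conditions imposed by the solubility sieve below, so that the two limits below may be interchanged.

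\smallskip
\noindent\textbf{Cusp, sieve, and conclusion.}
Two further estimates are needed, both parallel to \cite{main}. First, each $\mathcal{R}^{(i)}_X$ is non-compact, running into a cusp where the leading coefficient $a(f) \to 0$; one must show that the lattice points lying deep in the cusp are either $o(X^2)$ in number or represent reducible forms (which do not contribute to $N^{\mathrm{sol}}(X)$). For an upper bound it suffices to bound the cuspidal contribution from above, which makes this step somewhat less demanding than in the exact-asymptotic setting. Second, to pass from $N(X)$ to $N^{\mathrm{sol}}(X)$ I would impose the local solubility conditions at the primes $p \le M$, evaluate the resulting densities $\beta_p$ --- the solubility condition at $p$ is a congruence condition and $\mathcal{R}^{(i)}_X$ is equidistributed among residue classes, so the $\beta_p$ are the same functions of $p$ as in \cite{main} --- and then let $M \to \infty$; the interchange of limits is justified by a uniformity estimate bounding, uniformly in $p$, the number of classes in range failing solubility at $p$ by $O(X^2/p^{1+\delta})$ for some $\delta > 0$. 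Combining these, $N^{\mathrm{sol}}(X) = \sum_i \int_{\mathcal{R}^{(i)}_X}\big(\prod_p \beta_p\big)\, df + o(X^2)$, and a local-density computation as in \cite{main} --- with the real volumes $\Vol(\mathcal{R}^{(i)}_X)$ now taken relative to the coefficient height --- shows the right-hand side is at most $(2+o(1))\,c_0 X^2$; hence the average size of $\Sel_2$ is at most $3$. Obtaining a matching lower bound would require precise, rather than merely one-sided, control of the cuspidal region and of the tail of the solubility sieve, which is not pursued here.
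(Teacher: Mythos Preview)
Your overall architecture matches the paper's: reduce to counting $\GL_2(\Z)$-classes of integral binary quartics with $|I|,|J|\ll X$, apply Bhargava's averaging method, handle the cusp, and sieve for local solubility. You also correctly identify the central obstruction, namely that the region $\{|I|\ll X,\ |J|\ll X\}$ has projections of the same order as its volume, so Davenport's lemma gives nothing.

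The gap is in your proposed resolution of this obstruction. You write that you would ``slice $\mathcal{R}^{(i)}_X$ along the offending coordinate direction(s), apply a sharp lattice-point estimate on each (now well-proportioned) slice, and reassemble.'' But the slices are \emph{not} well-proportioned: fixing $(a,b,c,d)$, the range of $e$ compatible with $|J|<X$ has length $O(1)$, and even fixing only $(a,b,c)$, one is left with a two-dimensional count in which one direction has length comparable to the modulus of the relevant congruence. The paper's actual device is a change of coordinates to the \emph{semi-invariants} $(a,b,c,R,I)$, where $R=b^3+8a^2d-4abc$; the syzygy $H^3-48a^2HI+64a^3J=-27R^2$ (with $H=8ac-3b^2$) then expresses $|J|<X$ \emph{linearly} in $I$ over each fiber. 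The integral forms map to a shifted sublattice $\Lambda_{(a,b,c)}\subset\Z^2$ whose Fourier transform has a very special support (Proposition~\ref{Finte-part-coeff}), and the count on each fiber is done by \emph{Poisson summation} with a delicate bound on the Fourier coefficients of the (smoothed) height indicator (Lemma~\ref{h-Fourier-Coeff}, Theorem~\ref{Base-Poisson-Counting}). None of this is ``Davenport on a well-proportioned slice''; it is an equidistribution argument that exploits both the syzygy and the arithmetic structure of $\Lambda_{(a,b,c)}$, and these ingredients are not visible in your sketch.

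A second, smaller point concerns the sieve. For the \emph{upper} bound $\le 3$ you do not need a uniformity estimate for locally soluble quartic classes at large primes; imposing solubility at primes $p\le M$ already gives an upper bound for every $M$, and one simply lets $M\to\infty$ (this is how the paper's Theorem~\ref{weight-counting-quartic} is used). What \emph{is} needed is an exact asymptotic for the denominator $\#\{E\in F:h_e(E)<X\}$ when $F$ is a large family, and for that the paper proves a tail estimate on the elliptic-curve side (Theorem~\ref{IJ-Sieve-introduction}) rather than on the quartic-form side. Your sketch conflates these two uniformity estimates; the quartic-form tail estimate remains open (see Remark~\ref{uniformity tail estimate}) and is exactly what would be required to upgrade $\le 3$ to $=3$.
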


Since the rank of an elliptic curve is at most half the size of its $2$-Selmer group, we obtain the following immediate corollary:

\begin{cor}
    When elliptic curves over $\mathbb{Q}$ are ordered by the height $h$, their average rank is at most $1.5$.
\end{cor}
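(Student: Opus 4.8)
The plan is to deduce the rank bound from Theorem~\ref{2-Selmer average} by the standard $2$-descent comparison, exactly as in~\cite{main}; the argument is short and purely formal. First I would recall the descent exact sequence
\[
0 \longrightarrow E(\Q)/2E(\Q) \longrightarrow \Sel_2(E) \longrightarrow \Sh(E)[2] \longrightarrow 0,
\]
which realizes $E(\Q)/2E(\Q)$ as an $\F_2$-subspace of the finite $\F_2$-vector space $\Sel_2(E)$. Writing $s_E := \dim_{\F_2}\Sel_2(E)$ and letting $r_E$ denote the Mordell--Weil rank, the structure theorem $E(\Q)\cong\Z^{r_E}\oplus E(\Q)_{\mathrm{tors}}$ gives $\dim_{\F_2} E(\Q)/2E(\Q)\ge r_E$, whence $r_E\le s_E$.

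Next I would convert this into a bound phrased in terms of $\#\Sel_2(E)=2^{s_E}$ rather than $s_E$ itself, so that Theorem~\ref{2-Selmer average} --- which controls the average of $\#\Sel_2$, not of $\dim\Sel_2$ --- can be applied directly. The elementary inequality $2^{s}\ge 2s$, valid for every integer $s\ge 0$, yields
\[
r_E\ \le\ s_E\ \le\ \tfrac12\,\#\Sel_2(E)\qquad\text{for every elliptic curve }E/\Q .
\]
Summing this pointwise bound over all $E$ with $h(E)<X$ and dividing by the number of such curves gives
\[
\frac{\sum_{h(E)<X} r_E}{\#\{E:\,h(E)<X\}}\ \le\ \frac12\cdot\frac{\sum_{h(E)<X}\#\Sel_2(E)}{\#\{E:\,h(E)<X\}} .
\]
Letting $X\to\infty$ and invoking Theorem~\ref{2-Selmer average}, the right-hand side is bounded by $\tfrac12\cdot 3=\tfrac32$, which is precisely the asserted bound on the average rank.

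I expect no real obstacle here: the only points meriting care are that $\Sel_2(E)$ is genuinely an $\F_2$-vector space (so that $\#\Sel_2(E)$ is a power of $2$ and the inequality $2^{s}\ge 2s$ applies), and that the comparison $r_E\le\tfrac12\#\Sel_2(E)$ holds termwise, so that it survives averaging even though the limit defining the average rank is not known to exist --- which is why "average rank" is understood as a $\limsup$. Finally, it is worth remarking that $2^{s}\ge 2s$ is an equality exactly for $s\in\{1,2\}$, so no input beyond "average $\#\Sel_2\le 3$" is needed in order to reach the constant $3/2$.
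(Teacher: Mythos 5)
Your argument is correct and is exactly the paper's (one-line) justification: the pointwise bound $r_E \le \tfrac12\#\Sel_2(E)$, obtained from $r_E \le \dim_{\F_2}\Sel_2(E)$ and $2^s \ge 2s$, combined with Theorem \ref{2-Selmer average}. You have simply spelled out the standard $2$-descent details that the paper leaves implicit, so there is nothing to correct.
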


Although we establish an upper bound, we expect that the average size of the $2$-Selmer group in this setting actually matches the predicted value of $3$; see Remark \ref{uniformity tail estimate}. 

Let $V_{\mathbb{R}}$ denote the space of binary quartic forms with real coefficients, and let $V_{\mathbb{Z}}$ denote the space of binary quartic forms with integer coefficients. The group $\mathrm{GL}_2(\mathbb{Z})$ acts on $V_{\mathbb{Z}}$ via linear substitutions of variables. The correspondence between the $2$-Selmer group of elliptic curves and $\GL_2(\mathbb{Z})$-orbits of integral binary quartic forms was first introduced by Birch and Swinnerton-Dyer \cite{birch1963notes}, and later developed by Cremona \cite{cremona2008algorithms}. For binary quartic forms $f(x, y) = ax^4 + bx^3y + cx^2y^2 + dxy^3 + ey^4$, the ring of invariants for this action is generated by two fundamental quantities, $I(f)$ and $J(f)$, which are polynomial functions of the coefficients of the form. The explicit definitions of these invariants will be provided in Section \ref{sec2}.

It is classically known that for fixed invariants $I$ and $J$, there are only finitely many $\GL_2(\Z)$-orbits of integral binary quartic forms with those invariants, provided that $I$ and $J$ are not both zero. To analyze their distribution, we introduce a height function $h(f)$ for binary quartic forms, defined as follows:

\begin{equation*}
    h(f) \vcentcolon= h(I, J) \vcentcolon= \max\{|I|, |J|\}.
\end{equation*}
This height function is closely related to the coefficient height on elliptic curves through the parametrization. By introducing new ideas, we shift the main challenges in handling this thin family to problems of equidistribution and non-Archimedean analysis, as detailed in the method of proofs section. Combined with Bhargava’s averaging method, this perspective allows us to establish the following result:

\begin{theorem}\label{binary quartic average}
    Let $h^{(i)}_{I,J}$ denote the number of $\GL_2(\Z)$-equivalence classes of irreducible binary quartic forms with $4 - 2i$ real roots and invariants $I$ and $J$. Then:
    \begin{itemize}  
        \item [\textnormal{(a)}] $\displaystyle\sum_{\substack{h(I,J) < X}} h^{(0)}_{I,J} = \frac{\zeta(2)}{27}X^{2} + o(X^2)$
        \item [\textnormal{(b)}] $\displaystyle\sum_{\substack{h(I,J) < X}} h^{(1)}_{I,J} = \frac{2\zeta(2)}{27}X^{2} + o(X^2)$
        \item [\textnormal{(c)}] $\displaystyle\sum_{\substack{h(I,J) < X}} h^{(2)}_{I,J} = \frac{2\zeta(2)}{27}X^{2} + o(X^2)$
\end{itemize} 
\end{theorem}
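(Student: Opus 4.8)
The plan is to count $\GL_2(\Z)$-orbits of irreducible integral binary quartic forms with bounded invariants $h(I,J) < X$ by applying Bhargava's averaging method over a fundamental domain for the action of $\GL_2(\Z)$ on the appropriate components of $V_\R$. First I would fix, for each of the three possible root configurations $i \in \{0,1,2\}$, a connected component $V_\R^{(i)}$ of the set of forms with nonzero discriminant, together with a fundamental set $\RR^{(i)}$ for the $\GL_2(\R)$-action on that component cut out by the conditions $h(I,J) < X$. Then I would construct a fundamental domain of the shape $\FF \cdot \RR^{(i)}$, where $\FF$ is a fundamental domain for $\GL_2(\Z) \backslash \GL_2(\R)$ in the usual Siegel form ($n(u) a(t) k \lambda$ coordinates), and average the point count over a compact continuous family of translates $g\FF$ to replace the irregular region $\FF\cdot\RR^{(i)}$ by its $\GL_2(\R)$-translates, whose union sweeps out a region of the form $\{g \in \GL_2(\R) : \ldots\} \cdot \RR^{(i)}$ amenable to a lattice-point estimate. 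The count of lattice points in the main body is then the volume of this region plus error, and the volume computation reduces to a Jacobian change of variables from the coefficients $(a,b,c,d,e)$ to $(I, J)$ together with the $\GL_2$-coordinates, which after integrating out the group directions produces the measure $C_i \, dI\, dJ$ on the region $h(I,J) < X$; integrating the latter gives the constant multiple of $X^2$, and the arithmetic factor $\zeta(2)$ enters from sieving to forms that are not $\GL_2(\Z_p)$-equivalent to a multiple of another form for all $p$ — i.e. from the local densities, as in \cite{main}.

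The steps, in order, would be: (1) set up the invariant map and the three real fundamental sets $\RR^{(i)}$, recording the relative volumes $\sigma_0 : \sigma_1 : \sigma_2 = 1 : 2 : 2$ that are already visible in the statement; (2) establish the orbit-counting identity expressing $\sum_{h(I,J)<X} h^{(i)}_{I,J}$ as a weighted count of $V_\Z$-points in $\FF \cdot \RR^{(i)}$, being careful about stabilizers and about the finitely many reducible/non-maximal forms that must be excluded; (3) run the averaging over $g$ in a compact subset of $\GL_2(\R)$ to smooth out the region; (4) cut off the cusp: bound the contribution of points in the part of the fundamental domain where the $t$-coordinate is large (this is where the region expands non-uniformly and where the leading coefficient $a$ of $f$ is forced to be small or zero), showing it is $o(X^2)$; (5) apply the lattice-point count / Davenport-type estimate in the main body and identify the volume; (6) compute the volume via the Jacobian of $(a,b,c,d,e) \mapsto (I,J,\text{group coords})$ and integrate $dI\,dJ$ over $h(I,J) < X$ to get $\frac{1}{27}X^2$ up to the component factor; (7) perform the local sieve to irreducible (and suitably primitive) forms, producing the factor $\zeta(2) = \prod_p (1-p^{-2})^{-1}$ and controlling the sieve tail uniformly.

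The main obstacle — and the technical heart flagged in the abstract and introduction — is step (4) combined with step (5): in this ordering the region $\RR^{(i)}$ is \emph{not} a ball of fixed shape as $X \to \infty$ but stretches so that its volume and the volume of its projection onto the cusp-facing coordinates are of the same order of magnitude. This is precisely the regime where Davenport's lemma, which bounds lattice points in a region by its volume plus the volumes of projections onto coordinate subspaces, fails to give a power-saving (or even $o(X^2)$) error term, because the projection volumes are not lower-order. Overcoming this requires the new counting technique advertised in the abstract: rather than a single crude geometry-of-numbers bound, one slices the region, uses the averaging to gain equidistribution of $V_\Z$-points in each slice, and handles the genuinely problematic slices — those with $a$ small — by a separate non-Archimedean argument (counting integral quartics with $a = 0$, or $p \mid a$ for many $p$, by reduction to lower-dimensional counts). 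Making the error term in this delicate region provably $o(X^2)$, uniformly enough to survive the subsequent sieve, is the step I expect to consume most of the work; everything else (the volume computation, the relative factors $1:2:2$, the emergence of $\zeta(2)$) is essentially bookkeeping once the counting in the expanding region is under control.
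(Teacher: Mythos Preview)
Your framework is right --- averaging over $\PGL_2$, fundamental sets $L^{(i)}$, cusp cutoff, volume computation --- but you have misdiagnosed both \emph{where} the difficulty lies and \emph{how} it is resolved, and this misdiagnosis would cause your proof to fail.

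You locate the obstruction in the cusp (large $t$, small $a$) and propose to handle ``problematic slices --- those with $a$ small'' by a separate argument. The paper's point is the opposite: because $h(f)=\max(|I|,|J|)$ is \emph{not} homogeneous in the coefficients, the difficulty is already present in the main ball at $t=1$. Scaling the compact set by $X^{1/2}$ forces $|I|<X$, but one must then \emph{additionally} impose $|J|<X$; with $(a,b,c,d)$ fixed this pins $e$ to an interval of size $O(1)$, so the five-dimensional region and its projection to the first four coordinates have the same order of volume --- Davenport gives nothing. The regions with $a$ (or $b$, or $c$) small are in fact disposed of cheaply as $O(X^{2-\delta})$; the hard work happens where $a,b,c$ are all $\asymp X^{1/2}$.

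The actual mechanism is not slicing-plus-equidistribution in the coefficient variables, but a change of variables to semi-invariants. Over each fiber $(a,b,c)$ one replaces $(d,e)$ by $(R,I)$ where $R=b^3+8a^2d-4abc$; the syzygy $H^3-48a^2HI+64a^3J=-27R^2$ (with $H=8ac-3b^2$) then expresses $|J|<X$ as a \emph{linear} constraint on $I$ of width $\asymp |aX/H|$, centered at a point depending on $R$. The image of $\Z^2$ under $(d,e)\mapsto(R,I)$ is a shifted lattice $\Lambda_{(a,b,c)}$ with $R$ fixed mod $8a^2$ and $I$ fixed mod $12a$ once $R$ is fixed. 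One then applies Poisson summation in $(R,I)$ on each fiber; the specific congruence structure of $\Lambda_{(a,b,c)}$ forces most Fourier coefficients to vanish, and the surviving ones are controlled by oscillatory-integral bounds (the $R^2$ in the syzygy produces a quadratic phase, hence a Fresnel-type estimate). This is the ``non-Archimedean'' ingredient --- not a treatment of small $a$, but the lattice structure of $\Lambda_{(a,b,c)}$.

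One further correction: the factor $\zeta(2)$ does not arise from any sieve over primes. It is $\frac{1}{2}\Vol(\PGL_2(\Z)\backslash\PGL_2(\R))=\zeta(2)$, coming directly from the volume of the fundamental domain in the averaging identity. The reducible and non-generic forms are shown to contribute $O_\epsilon(X^{7/4+\epsilon})$ and simply discarded; there is no Euler product in this theorem.
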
 

Finally, we show that the number of equivalence classes of binary quartic forms per eligible $(I, J)$ as stated in \cite{main}, with a given number of real roots, is constant on average.
\begin{theorem}
    Let $h^{(i)}_{I,J}$ denote the number of $\GL_2(\Z)$-equivalence classes of irreducible binary quartic forms with $4 - 2i$ real roots and invariants $I$ and $J$. Let $n_0 = 4$, $n_1 = 2$, and $n_2 = 2$. Then, for $i = 0, 1, 2$, we have:

    \begin{equation*}
        \lim_{X \to \infty} \frac{\displaystyle\sum_{h(I,J) < X} h^{(i)}_{I,J}}{\displaystyle\sum_{\substack{(I,J) \;\mathrm{eligible}\\ (-1)^i \Delta(I,J) > 0 \\ h(I,J) < X }} 1} = \frac{2\zeta(2)}{n_i}.
\end{equation*}
\end{theorem}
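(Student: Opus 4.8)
The plan is to combine Theorem \ref{binary quartic average}, which gives the asymptotics for the numerators $\sum_{h(I,J)<X} h^{(i)}_{I,J}$, with a direct count of the denominators, namely the number of eligible pairs $(I,J)$ with $(-1)^i \Delta(I,J) > 0$ and $h(I,J) < X$. Recall that $\Delta(I,J) = (4I^3 - J^2)/27$ is (up to normalization) the discriminant of the associated quartic/cubic, so the sign condition $(-1)^i\Delta(I,J)>0$ carves out the region where $4I^3 - J^2 > 0$ (for $i=0$) or $4I^3 - J^2 < 0$ (for $i=1,2$). The "eligible" condition is the congruence condition from \cite{main} (roughly, $I \equiv 0$ or $1 \pmod 3$ together with the requirement that $(I,J)$ actually arises from an integral binary quartic form, i.e. there is no $p$ with $p^2 \mid I$ and $p^3 \mid J$ and certain residue obstructions), which cuts out a positive-density sublattice-with-local-conditions inside $\Z^2$.

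First I would compute, for the archimedean part, the area of the region $\{(I,J) \in \R^2 : \max(|I|,|J|) < X,\ 4I^3 - J^2 > 0\}$ and of its complement within the box. Scaling $I = X u$, $J = X^{3/2} v$ does not preserve the box $\max(|I|,|J|)<X$, so instead I would just integrate directly: the curve $J^2 = 4I^3$ meets $|J| = X$ at $I = (X^2/4)^{1/3} = 4^{-1/3} X^{2/3} = o(X)$, so for all but a negligible (lower-order, $O(X^{2/3} \cdot X) = O(X^{5/3})$... wait, that is $o(X^2)$) portion of the box, the defining inequality $4I^3 > J^2$ holds automatically when $I > 0$ and fails when $I < 0$. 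Hence the region $4I^3 - J^2 > 0$ has area $\tfrac12 (2X)^2 + o(X^2) = 2X^2 + o(X^2)$ (the right half of the box), and the region $4I^3 - J^2 < 0$ has area $2X^2 + o(X^2)$ as well (the left half). Then I would multiply by the density of eligible lattice points: summing the local densities $\prod_p (\text{density of eligible } (I,J) \bmod p^k)$, which by the computation in \cite{main} equals $1/\zeta(2)$ times the appropriate normalization, I expect $\sum_{(I,J)\text{ eligible},\,(-1)^i\Delta>0,\,h(I,J)<X} 1 = \frac{2}{\zeta(2)} X^2 \cdot c + o(X^2)$ for a constant $c$; matching against the known constants in Theorem \ref{binary quartic average} forces the denominator asymptotic to be $\frac{2X^2}{\zeta(2)} + o(X^2)$ for every $i$.

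Then the theorem follows by taking the ratio: for $i=0$, numerator $\to \frac{\zeta(2)}{27}X^2$ and denominator $\to \frac{2}{\zeta(2)}X^2$... hmm, that gives $\frac{\zeta(2)^2}{54}$, not $\frac{2\zeta(2)}{4} = \frac{\zeta(2)}{2}$, so in fact the denominator must scale so that the ratio comes out to $\frac{2\zeta(2)}{n_i}$ with $n_0=4$. Working backwards: for the ratio to be $\frac{2\zeta(2)}{4} = \frac{\zeta(2)}{2}$ with numerator $\frac{\zeta(2)}{27}X^2$, the denominator must be $\frac{2}{27}X^2$; for $i=1,2$ the ratio should be $\frac{2\zeta(2)}{2}=\zeta(2)$ with numerator $\frac{2\zeta(2)}{27}X^2$, again forcing denominator $\frac{2}{27}X^2$. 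So the key arithmetic claim I must verify is precisely that the number of eligible $(I,J)$ with the sign condition and $h(I,J)<X$ is $\frac{2}{27}X^2 + o(X^2)$, uniformly in $i$ — the factor $\frac{1}{27}$ coming from the $\Z/27$-type congruence conditions defining eligibility and the factor $2$ from the half-box, with the $\zeta(2)$ appearing only in the numerator (from orbits-per-$(I,J)$ being $2\zeta(2)/n_i$ on average, which is exactly what the theorem asserts).

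**The main obstacle** I expect is pinning down the precise constant and local conditions in the denominator count: one must carefully identify, from \cite{main}, the sublattice of $\Z^2$ (with congruence conditions mod powers of small primes, especially mod $9$ and mod $27$ for $2$ and $3$) on which $(I,J)$ is "eligible," show its density is exactly $\frac{1}{27}$ (or whatever value makes the bookkeeping consistent), and verify that restricting to the sign condition $(-1)^i\Delta(I,J)>0$ exactly halves this count with an error $o(X^2)$ — the latter requiring only the crude bound that the cuspidal region near $J^2 = 4I^3$ within the box has area $o(X^2)$, which is immediate since that curve stays within $|I| \ll X^{2/3}$. Once the denominator asymptotic $\frac{2}{27}X^2 + o(X^2)$ is established, dividing the three parts of Theorem \ref{binary quartic average} by it yields $\frac{\zeta(2)/27}{2/27} = \frac{\zeta(2)}{2} = \frac{2\zeta(2)}{4} = \frac{2\zeta(2)}{n_0}$ and $\frac{2\zeta(2)/27}{2/27} = \zeta(2) = \frac{2\zeta(2)}{2} = \frac{2\zeta(2)}{n_i}$ for $i=1,2$, completing the proof.
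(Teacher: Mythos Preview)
Your approach is correct and matches what the paper does (implicitly—the paper states this theorem in the introduction as a consequence and gives no separate proof): the numerator comes from Theorem~\ref{binary quartic average}, the denominator from counting eligible $(I,J)$ in the box, and one divides.

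Two comments on the execution. First, your initial guess that the eligible density involves a factor of $1/\zeta(2)$ is wrong, and although you back out of it, you never actually verify the correct value. Eligibility in \cite{main} is a pure congruence condition modulo powers of $3$ (their Theorem~1.7 lists the residue classes explicitly); summing the densities of those classes gives exactly $1/27$, with no Euler product and no $\zeta(2)$. You should simply cite that computation rather than reverse-engineer the constant from the answer, which as written is circular. Second, your archimedean calculation is right: inside the box $\max(|I|,|J|)<X$ the curve $J^2=4I^3$ is confined to $|I|\ll X^{2/3}$, so each sign region has area $2X^2+o(X^2)$. Combining, the denominator is $\frac{2}{27}X^2+o(X^2)$ for every $i$, and dividing the three parts of Theorem~\ref{binary quartic average} by this gives $\frac{2\zeta(2)}{n_i}$ as claimed.
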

\subsection{Methods of Proofs}
Theorem \ref{binary quartic average} serves as the main technical ingredient in the proof of Theorem \ref{2-Selmer average}. As in Bhargava--Shankar \cite{main}, we begin with the same initial setup. The argument proceeds by applying Bhargava’s averaging method to count the number of orbits. In particular, we need to analyze the integral
\begin{equation}\label{TheIntegral}
    \int_{(tnk)\in \mathcal{F}_{\text{PGL}_2}} \#\big\{f \in V_{\Z,\irr}^{(i)} \cap X^{1/2}(tn)G_0 \cdot L^{(i)} : |J(f)| < X \big\} t^{-2} d^{\times}t  dndk,
\end{equation}
where $G_0$ is a compact, semialgebraic, and left $K$-invariant subset of $\PGL_2(\R)$, and $\mathcal{F}_{\text{PGL}_2}$ denotes Gauss’s fundamental domain for $\PGL_2(\Z) \backslash \PGL_2(\R)$. Additionally, $L^{(i)}$, as defined in \cite{main}, serves as the fundamental set for the action of $\GL_2(\R)$ on $V_{\R}$. The scaling by $X^{1/2}$ arises from the assumption that $|I| < X$; however, once this condition is imposed, the further condition $|J| < X$ must additionally be enforced.

The naive height is a degree 6 homogeneous function in the coefficients of the form $f$. Due to this homogeneity, the primary difficulty in counting the orbits in \cite{main} arises when the ball becomes distorted for large $t$, whereas at $t = 1$, the counting remains well-controlled, allowing the authors to address this issue by cutting off the cusp. This homogeneity also ensures that standard lattice point counting methods from geometry of numbers can be applied effectively. However, the new height function is not homogeneous on $V_{\R}$, and the difficulty in counting appears even in the main ball at $t = 1$. 

Specifically, if we consider a scaled ball by $X^{1/2}$ in the space as it appears in above integral:
\begin{equation*}
     X^{1/2}(nG_0\cdot L^{(i)}),
\end{equation*}
and attempt to count the number of irreducible integral points with $J$-invariant less than $X$, fixing the first four coefficients $(a, b, c, d)$ of the form constrains the last coefficient to an interval of size $O(1)$. This introduces an Archimedean difficulty in counting lattice points, as the volume of the space in which we aim to count lattice points and the volume of its projection onto the first four coefficients are of the same order. Consequently, the approach in \cite{main} does not extend to this setting, making an alternative method necessary.

To analyze the distribution of integral quartic forms within a bounded region scaled by a factor of $X^{1/2}$ under the constraint $|J| < X$, we first fiber over the coefficients $(a, b, c)$ and study the distribution of $(d, e)$. Instead of working directly in the coefficient space, we describe quartic forms in terms of their semi-invariants, as introduced in Section \ref{sec2}. Specifically, we analyze the distribution of the semi-invariants $(I(f), R(f))$, where $I$ is the primary invariant of degree 2 and $R$ is a semi-invariant of degree 3, given by  
\begin{equation*}
    R(f) \vcentcolon= b^3 + 8a^2d - 4abc.
\end{equation*}
Thus, we replace $(d, e)$ with $(I, R)$ over each fiber $(a, b, c)$. This reformulation enables us to impose the constraint $|J| < X$ by utilizing the syzygy relation among the semi-invariants:
\begin{equation*}
    P^3 - 48a^2PI + 64a^3J = -27R^2,
\end{equation*}
where $P$ is given by $P = 8ac - 3b^2$. To incorporate the height restriction in this new framework, we observe that when $P \neq 0$, the syzygy expresses $I$ linearly in terms of $R$ and $J$. This enables us to translate the condition $|J| < X$ into a constraint on the $(I, R)$-space via the relation
\begin{equation*}
\big|I-(\JP)\big|\ll \bigg|\frac{aX}{P}\bigg|.
\end{equation*}
This reformulation makes the application of the height function, our archimedean input, easier but comes at the cost of introducing new non-Archimedean difficulties. For example, if $R$ is also fixed in the fiber over $(a, b, c)$, the permissible range of $I$ is naturally constrained to an interval of size $O(X^{1/2})$. However, instead of counting all possible values of $I$, we restrict our attention to those satisfying a congruence condition modulo $12a$. More precisely, for quartic forms with fixed coefficients $a, b, c$ and semi-invariant $R$, the value of $I$ is uniquely determined modulo $12a$.  Thus, we need to determine the number of possible values of $I$ over each fiber $(a, b, c)$ with fixed $R$, within an interval of size $O(X^{1/2})$, while also satisfies congruent condition whose modulus has the same size.

This observation reduces the Archimedean difficulty in each fiber to a non-Archimedean problem, allowing us to apply equidistribution methods.  To carry this out, we apply Poisson summation over each fiber in the $(R, I)$-space. To make this argument precise, we fix $(a, b, c)$ with $a \neq 0$ and consider the map
\begin{equation*}
    \upsilon_{(a,b,c)}\colon  \R^{2} \rightarrow \R^2, \quad (d,e) \mapsto (I(f),R(f)),
\end{equation*}
where $f=(a,b,c,d,e)$. This map defines a bijection of $\R^2$ to $\R^2$.

Now, assume that $(a,b,c)$ is an element of $\Z^3$ with $a\neq0$. The map $\upsilon_{(a,b,c)}$ sends $\Z^2$ to  
\begin{equation*}
   \Lambda_{(a,b,c)} \vcentcolon = \left\{ (I,R) \in \mathbb{Z}^2 :
   \begin{array}{ll}
        R &\equiv b^3 - 4abc \pmod{8a^2}, \\
        I &\equiv -3b(R - b^3 + 4abc)(8a^2)^{-1}  + c^2 \pmod{12a}
   \end{array} 
   \right\}.
\end{equation*}
This shows that not all values of $(I,R)$ appear for integral quartic forms. In particular, as mentioned, over each fiber, fixing the semi-invariant $R$ determines $I$ modulo $12a$, while $R$ itself is fixed modulo $8a^2$.

During the counting process, by leveraging both the geometric properties of our fibers and the underlying lattice structure, we can control the error more efficiently. These aspects are formalized in Lemma \ref{h-Fourier-Coeff} and Proposition \ref{Finte-part-coeff}.

This perspective, based on the use of semi-invariants, can also be applied to the family of  binary $n$-ic forms. The existence of such semi-invariants and their associated syzygies is ensured by classical results in invariant theory. Moreover, all invariants can be expressed as rational functions of these semi-invariants, suggesting that a similar approach may be effective in more general settings for counting orbits, particularly in cases where classical lattice point techniques fall short.

To complete the proof of Theorem~\ref{2-Selmer average}, we need to apply a sieve to the family of elliptic curves with coefficient height less than $X$. As this sieve involves difficulties, we require the following Theorem

\begin{theorem}\label{IJ-Sieve-introduction}
For a prime $p$, let $\mathcal{W}_p$ represent elliptic curves $E$ over $\Q$ such that $p^2$ divides its discriminant. For any $Q \geq 5$ and $\epsilon > 0$, we have:
\[
\# \bigcup_{p > Q} \left\{ E_{A, B} \in \mathcal{W}_p : h(E_{A,B}) < X \right\} = O_{\epsilon}\left( \frac{X^{2+\epsilon}}{Q} + X^{7/4+\epsilon} \right).
\]
\end{theorem}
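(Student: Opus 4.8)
The plan is to bound the count of elliptic curves $E_{A,B}$ with $h(E_{A,B}) = \max(|A|,|B|) < X$ for which $p^2 \mid \Disc(E_{A,B}) = -16(4A^3+27B^2)$ for some prime $p > Q$, summed with multiplicity over such $p$. Splitting the sum at a parameter $P$ (to be optimized later, and ultimately taken near $X^{1/2}$), I would handle small primes $Q < p \le P$ and large primes $p > P$ by quite different arguments.

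For the \emph{small prime range} $Q < p \le P$, the strategy is a straightforward congruence sieve. For each $p$, the pairs $(A,B) \in \Z^2$ with $4A^3 + 27B^2 \equiv 0 \pmod{p^2}$ lie in a union of $O(p)$ residue classes modulo $p^2$: for each of the $p$ choices of $A \bmod p^2$, the congruence $27B^2 \equiv -4A^3 \pmod{p^2}$ has $O(1)$ solutions in $B$ when $p \nmid A$, with the (finitely many) ramified classes contributing at most $O(p)$ further classes — in total $O(p)$ classes mod $p^2$. Hence the number of $(A,B)$ in the box $[-X,X]^2$ lying in $\mathcal{W}_p$ is $O\!\left(p \cdot (X/p^2 + 1)^2\right) = O(X^2/p^3 + X/p + p)$. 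Summing over $Q < p \le P$ gives $O(X^2/Q^2 + X\log P + P^2)$. (One can sharpen the first term to $X^2/Q$ if desired by being slightly more careful, but even this crude bound is of the right shape once combined with the large-prime range; the $X^{7/4+\epsilon}$ term in the statement will come from choosing $P$ appropriately and from the large-prime count below.)

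For the \emph{large prime range} $p > P$, a pair $(A,B)$ with $|A|,|B| < X$ and $4A^3+27B^2 \neq 0$ can be divisible by $p^2$ for at most $O_\epsilon(X^\epsilon)$ primes $p$, since $|4A^3+27B^2| \ll X^3$ has $O_\epsilon(X^\epsilon)$ prime factors; so with multiplicity the large-prime count is $O_\epsilon(X^\epsilon)$ times the number of \emph{squarefree-obstructed} pairs, i.e. pairs for which \emph{some} $p > P$ has $p^2 \mid 4A^3+27B^2$. To count the latter I would, for each fixed $A$ with $|A| < X$, count $B$ with $|B| < X$ such that $4A^3 + 27B^2$ is divisible by the square of a prime exceeding $P$: writing $4A^3 + 27B^2 = p^2 m$ with $p > P$, we have $p^2 m \ll X^3$, so $m \ll X^3/P^2$; for each admissible modulus $p^2$ (with $P < p \ll X^{3/2}$) the values of $B \bmod p^2$ solving $27B^2 \equiv -4A^3$ number $O(1)$ (away from the $O(1)$ bad $A$), giving $O(X/p^2 + 1)$ choices of $B$, and summing over $p > P$ yields $O(X/P + X^{3/2}/?)$ — here care is needed because when $p > X^{1/2}$ there is at most one multiple of $p^2$ in the relevant range, so it is better to bound the count by first summing over $m \ll X^3/P^2$ and then over the $O_\epsilon(X^\epsilon)$ ways to write a given integer as $p^2 m$. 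Carried out this way, the large-prime contribution is $O_\epsilon\!\left(X^{2+\epsilon}/P + X^{7/4+\epsilon}\right)$, where the second term is the genuine obstruction: it reflects the count of $(A,B)$ for which $4A^3+27B^2$ is exactly divisible by a large square, and arises from balancing the number of pairs $(A, m)$ against the divisor-counting loss.

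The main obstacle, and the place where the $X^{7/4+\epsilon}$ bound is forced, is the large-prime range near $p \approx X^{1/2}$: there the modulus $p^2 \approx X$ is comparable to the box size $X$, so the naive "expected count" $X/p^2$ is $O(1)$ and one is essentially counting \emph{lattice points on, or near, the conic} $27B^2 \equiv -4A^3 \pmod{p^2}$ inside a box of side comparable to $p$ — exactly the regime where equidistribution of solutions to a quadratic congruence is delicate and one cannot do better than the trivial "one solution per residue class" bound. This is structurally the same "volume and projection of the same order" phenomenon emphasized in the introduction, and it is why the theorem settles for $X^{7/4+\epsilon}$ rather than something smaller; the resolution is to accept this loss, optimize the cutoff $P$ (taking $P \asymp Q$ after checking the small-prime terms are dominated, or $P \asymp X^{1/4}$ to balance the two explicit error terms) and combine the two ranges. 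Finally I would convert from counting pairs $(A,B)$ to counting elliptic curves, which only removes the non-minimal models and so can only decrease the count, and assemble the bounds to obtain $O_\epsilon\!\left(X^{2+\epsilon}/Q + X^{7/4+\epsilon}\right)$.
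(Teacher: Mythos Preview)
Your small-prime congruence sieve is sound in spirit, though the count of residue classes modulo $p^2$ on the locus $4A^3+27B^2\equiv 0$ is $\asymp p^2$ rather than $O(p)$ (you wrote ``$p$ choices of $A\bmod p^2$''); the density is still $O(1/p^2)$, and with $P=X^{1/2}$ one obtains $O(X^2/Q+X^{3/2})$ for $Q<p\le P$. The genuine gap is the large-prime range. For $p>X^{1/2}$ each pair $(A,p)$ yields at most $O(1)$ admissible $B$ in $[-X,X]$, but summing the ``$+1$'' over $|A|<X$ and primes $p\in(X^{1/2},X^{3/2}]$ gives only the trivial bound $O(X^2)$ after removing the $O_\epsilon(X^\epsilon)$-fold overcount in $p$. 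Your proposed fix --- writing $4A^3+27B^2=p^2m$ and summing over $m\ll X^3/P^2$ --- does not rescue this: for each $(A,m)$ the equation $27B^2-mp^2=-4A^3$ is Pell-type with $O(\log X)$ integral solutions, and there are $\gg X\cdot X^3/P^2$ pairs $(A,m)$, so no choice of $P$ brings both ranges below $X^{7/4}$. This is a codimension-one tail-estimate problem, precisely the setting where the elementary geometric sieve fails, and no balancing of the terms you wrote down produces the $X^{7/4+\epsilon}$ you assert.

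The paper proceeds entirely differently. Writing $\Delta(g)=D_{K_g}\cdot Q_g^2$ for $g(x)=x^3+Ax+B$ (with $Q_g=[\mathcal O_{K_g}:\Z[x]/(g)]$), the ``strongly divisible'' contribution $p^2\mid D_{K_g}$ reduces for $p>5$ to $p\mid A$ and $p\mid B$, which is elementary. The hard ``weakly divisible'' case $Q_g>Q$ is handled by embedding each irreducible cubic of index $n$ into the $\PGL_2(\Z)$-orbit of an integral binary quartic $f$ with a \emph{unique} rational linear factor, the $Q$-invariant of $f$ being $n$. One then applies Bhargava's averaging over $\PGL_2(\Z)\backslash\PGL_2(\R)$ and \emph{fibers over that root} $(\alpha,\beta)$: a syzygy expresses $(4a)^3J(f)$ as a quadratic in the semi-invariant $R$, so the constraint $|J|\ll X$ confines $R$ to an interval of length $O(|a|^{3/2}X^{1/2})$, while $R$ is simultaneously fixed modulo $8a^2\beta$ once $(a,b,c,\alpha,\beta)$ are given. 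Summing these fiber counts yields $O_\epsilon(X^{7/4+\epsilon})$ for the main body $a\ne 0$; the cuspidal slice $a=0$, where the $Q$-invariant equals $|b|$, is treated directly via the explicit formula for $J$ and contributes the $O_\epsilon(X^{2+\epsilon}/Q)$ term.
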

To address this, we represent irreducible cubic polynomials as reducible quartic polynomials with a unique root over $\mathbb{P}^1_{\mathbb{Q}}$. Using the averaging method, we fiber over the root of the form to organize the counting process. This approach is particularly effective since the unique root is intrinsically linked to the semi-invariants of the form, allowing us to impose the condition $|J| < X$ in a systematic way.

This paper is organized as follows. In Section \ref{sec2}, we introduce the smooth averaging method and define the semi-invariants of the form. We then restrict the cusp and refine the counting process within the averaging method to minimize errors in the subsequent Fourier analysis. Additionally, we define a new height function that facilitates Fourier calculations while closely approximating the original height function, ensuring that the error remains within an acceptable range.

Section \ref{sec3} is dedicated to bounding the Fourier coefficients, leading to the proof of Theorem \ref{binary quartic average}. In Section \ref{sec4}, we show that the volume of the fibers over $(a,b,c)$ approximates the volume of the ball, compute the main volume contribution, and complete the proof of Theorem \ref{binary quartic average}.

Finally, in Section \ref{sec5}, we establish the connection between binary quartic forms and the $2$-Selmer group of elliptic curves. We derive the necessary uniformity estimates for counting elliptic curves of coefficient height less than $X$, thereby completing the proof of Theorem \ref{2-Selmer average}.

\section*{Acknowledgment } 
It is pleasure to thank Arul Shankar for suggesting the problem and for many helpful conversations. I am also grateful to Jacob Tsimerman and Ila Varma for useful discussions and comments.

\section{Counting classes of integral quartic forms with semi-invariants}\label{sec2}

Let $V_{\R}$ denote the space of binary quartic forms with real coefficients. A form in this space, $f(x,y)$, is written as
\begin{equation*}
f(x,y) = ax^4 + bx^3y + cx^2y^2 + dxy^3 + ey^4,
\end{equation*}
where $a, b, c, d, e \in \R$. When these coefficients are integers, the form is called \textit{integral}.

The group $\GL_2(\R)$ acts on $V_{\R}$ via linear substitutions. For $\gamma \in \GL_2(\R)$, the action is defined as
\begin{equation*}
\gamma \cdot f(x,y) = f((x,y) \cdot \gamma),
\end{equation*}
which gives a well-defined left action, satisfying $\gamma_1 \cdot (\gamma_2 \cdot f) = (\gamma_1 \gamma_2) \cdot f$.

The relative invariants $I$ and $J$ of this action are given by:
\begin{equation}\label{I}
I(f) \vcentcolon= I(a,b,c,d,e) = 12ae - 3bd + c^2,
\end{equation}
\begin{equation}\label{J}
J(f) \vcentcolon= J(a,b,c,d,e) = 72ace + 9bcd - 27ad^2 - 27eb^2 - 2c^3.
\end{equation}
Since $I$ and $J$ are homogeneous of degrees 4 and 6, respectively, they transform under $\gamma$ according to
\begin{equation*}
I(\gamma \cdot f) = \det(\gamma)^4 I(f), \quad J(\gamma \cdot f) = \det(\gamma)^6 J(f).
\end{equation*}
The discriminant $\Delta(f)$ is given by:
\begin{equation*}
\Delta(I,J) = \frac{4I^3 - J^2}{27}.
\end{equation*}
We now introduce the semi-invariants of this action in addition to \( I \) and \( J \), as discussed in \cite{cremona1}. These are defined by:
\begin{align}
    a(f) &\vcentcolon= a, \nonumber \\
    H(f) &\vcentcolon= 8ac - 3b^2,  \label{H} \\
    R(f) &\vcentcolon= b^3 + 8a^2d - 4abc, \label{R} \\
    S(f) &\vcentcolon= \frac{1}{3}(H^2 - 16a^2I). \nonumber
\end{align}

\begin{remark}
    We will use notation $H$ for the given semi-invariant as defined in \cite{cremona1} instead of $P$ as in the introduction. We will not use naive height as symbol $H$. 
\end{remark}
In this section, we consider a binary quartic form $f$ with semi-invariants $a(f)$, $H(f)$, $R(f)$, $I(f)$, and $J(f)$. To simplify notation, we will write these as $a$, $H$, $R$, $I$, and $J$, respectively, with the understanding that they always refer to the semi-invariants of $f$.

With this notation, these semi-invariants of quartic forms satisfy the syzygy equation:
\begin{equation}\label{syzygy}
    H^3 - 48 a^2 H I + 64 a^3 J = -27 R^2.
\end{equation}
When $H \neq 0$, then $J$ can be expressed as:
\begin{equation}\label{JR}
    \frac{4}{3} \frac{a}{H} J = I - \left( \frac{27 R^2}{48 a^2 H} + \frac{H^2}{48 a^2} \right).
\end{equation}
For any constant $C > 0$, we define the height $h_C(f)$ of a binary quartic form $f$ as:
\begin{equation*}
    h_C(f) \vcentcolon= \max \{ |I|, |J| / C \}.
\end{equation*}
Since the height $h_C$ is defined with a constant $C$, this general formulation provides flexibility in applications. In particular, choosing $C = 36$ establishes a connection between the height of binary quartic forms and the $2$-Selmer group of elliptic curves when ordered by the height $h_e$ on their coefficients. Crucially, the choice of $C$ does not affect the structure of the proof, allowing us to proceed with a general $C$.

The action of $\GL_2(\mathbb{Z})$ on $V_{\mathbb{Z}}$, the space of integral quartic forms, is closed. Our goal is to count the number of orbits in $V_{\mathbb{Z}}$ with height bounded by $X$. Specifically, we seek to determine the number of $\GL_2(\mathbb{Z})$-equivalence classes of binary quartic forms with height at most $X$ and a specified number of real roots.

For $i = 0, 1, 2$, let $V_\mathbb{R}^{(i)}$ be the set of binary quartic forms with nonzero discriminant, where each form has $i$ pairs of complex conjugate roots and $4 - 2i$ real roots in $\mathbb{P}^1_\mathbb{C}$.

We are interested in counting irreducible $\GL_2(\mathbb{Z})$-classes within a $\GL_2(\mathbb{Z})$-invariant subset $S \subset V_{\mathbb{Z}}$, subject to the height condition $h_C(f) < X$. Let $N_C(S; X)$ denote the number of such classes. The following theorem, a restatement of Theorem \ref{binary quartic average}, provides the asymptotic count of $\GL_2(\mathbb{Z})$-classes of irreducible binary quartic forms with bounded height.

\begin{theorem}\label{The-Main-counting-Theo} We have
\begin{itemize}
\item[\textnormal{(a)}] 
$N_{C}(V_{\Z}^{(0)};X)=\frac{
C\zeta(2)}{27}X^2+o(X^2)$
\item[\textnormal{(b)}] $N_{C}(V_{\Z}^{(1)};X)=\frac{2C\zeta(2)}{27}X^2+o(X^2)$
\item[\textnormal{(c)}] $N_{C}(V_{\Z}^{(2+)};X)=\frac{C\zeta(2)}{27}X^2+o(X^2)$
\item[\textnormal{(d)}] $N_{C}(V_{\Z}^{(2-)};X)=\frac{C\zeta(2)}{27}X^2+o(X^2)$
\end{itemize}
\end{theorem}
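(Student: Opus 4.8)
The plan is to combine Bhargava's averaging method with the semi-invariant reformulation sketched in the introduction. First I would reduce the count $N_C(V_\Z^{(i)};X)$ to the orbit-counting integral \eqref{TheIntegral}: using the fact that for $|I|,|J|/C<X$ the relevant $\GL_2(\R)$-orbits meet the fundamental set $L^{(i)}$ in a bounded region, I express the number of irreducible $\GL_2(\Z)$-classes as an average over the fundamental domain $\FF_{\PGL_2}$ of the number of integral points of $V_{\Z,\irr}^{(i)}$ lying in the dilated region $X^{1/2}(tn)G_0\cdot L^{(i)}$ with $|J|<CX$. The non-uniform dilation (scaling by $X^{1/2}$ to impose $|I|<X$, then separately cutting by $|J|<CX$) means the region has volume and first-coordinate projection of the same order, so I cannot invoke Davenport's lemma directly; this is where the new technique enters.

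Next I would cut off the cusp: forms with very small leading coefficient $a$ (equivalently, large $t$) should contribute negligibly, and I expect this to follow from a uniformity estimate bounding reducible and small-$a$ contributions, analogous to but more delicate than in \cite{main} because of the lack of homogeneity. In the main body of the fundamental domain, I fiber over the coefficients $(a,b,c)$ with $a\neq 0$ and, over each fiber, replace the pair $(d,e)$ by the semi-invariants $(I,R)$ via the bijection $\upsilon_{(a,b,c)}$, which maps $\Z^2$ onto the explicit lattice $\Lambda_{(a,b,c)}$. The syzygy \eqref{syzygy}, rewritten as \eqref{JR}, turns the condition $|J|<CX$ into the linear constraint $\bigl|I-(\JP)\bigr|\ll |aX/H|$ on $I$ (when $H\neq 0$); combined with the congruence conditions defining $\Lambda_{(a,b,c)}$ — $R$ fixed mod $8a^2$ and $I$ determined mod $12a$ once $R$ is fixed — this reduces counting in each fiber to an equidistribution problem for a congruence class in an arithmetic progression of comparable modulus and range. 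I would then apply Poisson summation in the $(R,I)$-variables over each fiber, using Lemma \ref{h-Fourier-Coeff} to bound the Fourier coefficients of the (smoothed) height cutoff and Proposition \ref{Finte-part-coeff} to control the arithmetic (Gauss-sum type) factors coming from $\Lambda_{(a,b,c)}$, with the $H=0$ locus and small-$H$ forms handled separately as a lower-order contribution.

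Summing the main terms of these fiberwise Poisson evaluations over $(a,b,c)$, the leading term becomes a volume computation: I would show (as promised for Section \ref{sec4}) that the total volume of the fibers approximates $\Vol\bigl(X^{1/2}G_0\cdot L^{(i)}\cap\{|J|<CX\}\bigr)$, and that averaging this over $\FF_{\PGL_2}$ produces $\frac{C\zeta(2)}{27}X^2$ for the two cases $(0)$ and $(2-)$, $\frac{2C\zeta(2)}{27}X^2$ for case $(1)$, and $\frac{C\zeta(2)}{27}X^2$ for case $(2+)$; the $\zeta(2)$ arises from the standard sieve removing the condition that the form not be a multiple of $p^2$ in the relevant sense (equivalently from summing $1/a^2$-type densities over the lattice indices $[\Z^2:\Lambda_{(a,b,c)}\text{-related sublattices}]$). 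The various real-root-type constants and the split of case $(2)$ into $(2+)$ and $(2-)$ come from decomposing the fundamental set $L^{(i)}$ into its connected components exactly as in \cite{main}. The main obstacle, and the technical heart of the argument, is the Fourier-analytic estimate in the fibers: because the range of $I$ and the modulus $12a$ of the congruence are of the same order $O(X^{1/2})$, the error term in Poisson summation is not automatically smaller than the main term, and controlling it requires exploiting cancellation in the relevant exponential sums together with the precise geometry of the fiber regions — this is exactly the content flagged in Lemma \ref{h-Fourier-Coeff} and Proposition \ref{Finte-part-coeff}, and making those bounds strong enough to beat the main term uniformly in $(a,b,c)$ is where the real work lies.
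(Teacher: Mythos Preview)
Your proposal is essentially the paper's own proof: smooth averaging over $\mathcal{F}_{\PGL_2}$, cusp cutoff, fibering over $(a,b,c)$ and passing to the semi-invariant lattice $\Lambda_{(a,b,c)}$, Poisson summation in $(R,I)$ controlled by Lemma~\ref{h-Fourier-Coeff} and Proposition~\ref{Finte-part-coeff}, and a final volume computation. One correction: the factor $\zeta(2)$ does not come from a sieve or from summing lattice-index densities---it is exactly $\tfrac{1}{2}\Vol(\mathcal{F}_{\PGL_2})=\tfrac{1}{2}\Vol(\PGL_2(\Z)\backslash\PGL_2(\R))$, which enters when the averaged integral is unwound (Section~\ref{sec4}); no $p$-adic sieving is performed in the proof of Theorem~\ref{The-Main-counting-Theo} itself.
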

\subsection{Smoothing the averaging}
From now on, assume that $C$ is fixed, and we will denote $h_C$ by $h$. Consider the twisted action of $\GL_2(\R)$ on $V_{\R}$, which is defined for any $\gamma \in \GL_2(\R)$ and $f \in V_{\R}$ by
\begin{equation*}
\gamma \cdot f(x,y) \coloneqq \frac{f((x,y)\gamma)}{\det(\gamma)^2}.
\end{equation*}
This action induces an action of $\mathrm{PGL}_2(\mathbb{R})$ on $V_{\mathbb{R}}$. The orbits of $\mathrm{GL}_2(\mathbb{Z})$ on $V_{\mathbb{Z}}$ and $\mathrm{PGL}_2(\mathbb{Z})$ on $V_{\mathbb{Z}}$ coincide. Therefore, counting orbits of $\mathrm{GL}_2(\mathbb{Z})$ with bounded height is equivalent to counting orbits of $\mathrm{PGL}_2(\mathbb{Z})$ with  bounded height. To count the number of $\mathrm{PGL}_2(\mathbb{Z})$ orbits, we apply Bhargava's averaging method, as described in \cite{avg1,avg2}. Given the need for Poisson summation in this process, we use the smooth averaging method developed in \cite{moment}.

Let $L^{(i)}$ be the fundamental sets as defined in \cite{main}, and set $R^{(i)} = \mathbb{R}^{+} \cdot L^{(i)}$. For any $(I, J) \in \mathbb{R}^2$ with $\Delta(I, J) > 0$, the sets $R^{(0)}$, $R^{(2+)}$, and $R^{(2-)}$ each contain exactly one point with invariants $(I, J)$. Likewise, for $(I, J) \in \mathbb{R}^2$ with $\Delta(I, J) < 0$, then $R^{(1)}$ contains exactly one such point. Therefore, $R^{(i)}$ serves as a fundamental set for the action of $\mathrm{PGL}_2(\mathbb{R})$ on $V_{\mathbb{R}}^{(i)}$.

Define $\mathcal{I}^{(i)}$ as the set of pairs $(I, J) \in \mathbb{R}^2$ such that $(-1)^i \Delta(I, J) > 0$. We introduce a map $\kappa^{(i)}: \mathcal{I}^{(i)} \to R^{(i)}$. This map $\kappa^{(i)}$ assigns to each $(I, J) \in \mathcal{I}^{(i)}$ the unique quartic form $f \in R^{(i)}$ with invariants $(I, J)$. Any pair $(I, J) \in \mathcal{I}^{(i)}$ can be uniquely written as $(I, J) = (\lambda^2 I_0, \lambda^3 J_0)$, where $(I_0, J_0)$ has a naive height of one and $\lambda \in \mathbb{R}$. The map $\kappa^{(i)}$ then sends $(I, J)$ to the element $\lambda \cdot \kappa^{(i)}(I_0, J_0)$, which is the unique quartic form in $R^{(i)}$ with invariants $(I, J)$. This construction defines a section for the invariant map from $V_{\mathbb{R}}^{(i)}$ to $\mathbb{R}^2$, mapping each quartic form $f$ to its invariants $(I(f), J(f))$.

Let $\mathcal{J}^{(i)}$ be the subset of $(I, J) \in \mathbb{R}^2$ such that $(-1)^{i}\Delta(I, J) > 0$, $|I| < 1$, and $|J| < 2$. The map $\kappa^{(i)}$ sends $\mathcal{J}^{(i)}$ to $\mathbb{R}^{+}_{<1} \cdot L^{(i)}$.

Fix a compactly supported $K$-invariant smooth function $\theta$ on $\PGL_2(\mathbb{R})$. Let $\omega^{(i)}$ be a smooth function on $\mathbb{R}^2$, compactly supported in $\mathcal{J}^{(i)}$. For a quartic form $f$, we denote $\omega^{(i)}(I(f), J(f))$ simply as $\omega^{(i)}(f)$ to simplify notation. Using this, we define the function $\Psi[\omega^{(i)}]: V_{\mathbb{R}} \to \mathbb{R}$ as follows:

\begin{equation*}
\Psi[\omega^{(i)}](f) \vcentcolon=  \left(\sum _{\substack{(g,(I,J))\in \PGL_2(\R) \times \mathcal{J}^{(i)}\\  g \cdot \kappa^{(i)}(I,J) = f }} \theta(g) \right) \cdot \omega^{(i)}(f).
\end{equation*}

The function $\Psi[\omega^{(i)}]$ is non-zero only for $f$ in $\mathrm{PGL}_2(\mathbb{R}) \left(\mathbb{R}^{+}_{<1} \cdot L^{(i)}\right)$. Note that the above sum has a fixed length and is finite for any $f$ in $\mathrm{PGL}
_2(\mathbb{R}) \left(\mathbb{R}^{+}_{<1} \cdot L^{(i)}\right)$. Moreover, $\Psi[\omega^{(i)}]$ is also a smooth and compactly supported function on $V_{\mathbb{R}}$.

The height function $\h_X$ is defined as the characteristic function of the set $\{f \in V_{\mathbb{R}} : h(f) < X\}$, where $h(f)$ denotes the height of $f$. Specifically, we define:

\begin{equation*} \h_X(f) \vcentcolon= \h_{X}(I,J) \vcentcolon= \chi_{(-1,1)}\left(\frac{I}{X}\right) \chi_{(-1,1)}\left(\frac{J}{CX}\right), \end{equation*} where $\chi_{(-1,1)}$ denotes the characteristic function of $(-1,1)$ in $\R$.

Gauss' fundamental domain for the action of $\PGL_2(\mathbb{Z})$ on $\PGL_2(\mathbb{R})$, denoted by $\mathcal{F}_{\PGL_2}$, consists of elements of the form $ntk$, where $n$ ranges over $N(t)$, $t$ belongs to $A'$, and $k$ belongs to $K$.

\begin{equation*}
N(t) \vcentcolon= \left\{ \begin{pmatrix} 1 & 0\\ n & 1\end{pmatrix}: n \in i(t) \right\}
, \,
A'= \left\{\begin{pmatrix} t^{-1} & \\  & t\end{pmatrix}: t \geq  \frac{\sqrt[4]{3}}{\sqrt{2}} \right\},
\end{equation*} 
here $i(t)$ is a subset of $[-1/2, 1/2]$ that depends on $t$, and it is the entire interval $[-1/2, 1/2]$ when $t > 1$. The group $K$ represents $\SO_2(\mathbb{R})$. Let the Haar measure $d\gamma$ on $\PGL_2(\mathbb{R})$ be defined by $d\gamma = \dw$ with $dk$ normalized such that the volume of $K$ is one.

Following the averaging method developed in \cite{moment}, we introduce the function $\sigma^{(i)}(I, J)$, defined as:
\begin{equation*}
\sigma^{(i)}(I, J) \vcentcolon= \h_X(I, J) \cdot \omega^{(i)}(X^{-1}I, X^{-3/2}J),
\end{equation*}
where $\h_X$ represents the height function and $\omega^{(i)}$ is as defined before.

For any subset $L \subset V_{\mathbb{Z}}$, let $L_{\irr} \subset L$ denote the subset of irreducible forms. When $L$ is a $\PGL_2(\mathbb{Z})$-invariant subset of $V_{\mathbb{Z}}$, we consider the counting function $N(\omega^{(i)}, L; X)$, defined by:
\begin{equation*}
N(\omega^{(i)}, L; X) \vcentcolon= \sum_{f \in \frac{L_{\irr} \cap V_{\Z}^{(i)}}{\PGL_2(\mathbb{Z})}} \frac{1}{\#\Aut_{\mathbb{Z}}(f)} \cdot \sigma^{(i)}(I(f), J(f)),
\end{equation*}
where the sum is taken over representatives $f$ of irreducible $\PGL_2(\mathbb{Z})$-orbits in $L\cap V_{\Z}^{(i)}$, and $\Aut_{\mathbb{Z}}(f)$ represents the $\PGL_2(\mathbb{Z})$-stabilizers of $f$.

Let $\Vol(\theta)$ denote the integral $\int_{g \in \PGL_2(\mathbb{R})} \theta(g)  dg$. Let $n_i$ denote the cardinality of the stabilizer of quartic forms in $V_{\mathbb{R}}^{(i)}$ under the action of $\PGL_2(\mathbb{R})$. This is well-defined, as the stabilizer of a quartic form under $\PGL_2(\mathbb{R})$ depends only on the sign of the discriminant.  As calculated in \cite{main}, $n_{0}$, $n_{2+}$, and $n_{2-}$ equal $4$, and $n_1 = 2$. Using the smooth averaging method, we arrive at the following proposition.

\begin{proposition}
For $L$ a $\PGL_2(Z)$-invariant subset of $V_{Z}^{(i)}$, and $X$ large enough, then we have:
\begin{equation}\label{Bahargava-avaraging-smooth}
    N(\omega^{(i)}, L;X) = \frac{1}{n_i \Vol(\theta)}\int_{\gamma \in \mathcal{F}_{\PGL_2}}  \Big( \sum_{f \in L_{\irr}} (\gamma \cdot \Psi[\omega^{(i)}])(X^{-1/2} \cdot f) \cdot \h_{X}(f)\Big) \, d\gamma, 
\end{equation}
where $\big(\gamma \cdot \Psi[\omega^{(i)}]\big)(f)$ is defined to be $\Psi[\omega^{(i)}](\gamma^{-1} \cdot f)$.
\end{proposition}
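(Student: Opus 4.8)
The plan is to derive \eqref{Bahargava-avaraging-smooth} by the standard unfolding argument of Bhargava's averaging method, adapted to the smooth setting of \cite{moment}. The starting point is the observation that $\Psi[\omega^{(i)}]$, by construction, behaves like a smooth approximate indicator of the fundamental set $\mathbb{R}^{+}_{<1}\cdot L^{(i)}$, weighted so that averaging it over a translate of $\mathcal{F}_{\PGL_2}$ recovers, up to the normalizing constant, the value of $\omega^{(i)}$ at a given orbit. Concretely, for a fixed $f \in V_{\mathbb{R}}^{(i)}$ with $\Delta(f)\neq 0$, I would first compute the ``vertical'' integral
\begin{equation*}
    \int_{g \in \PGL_2(\mathbb{R})} \big(g \cdot \Psi[\omega^{(i)}]\big)(f)\, dg = \int_{g \in \PGL_2(\mathbb{R})} \Psi[\omega^{(i)}](g^{-1}\cdot f)\, dg,
\end{equation*}
and show it equals $\Vol(\theta)\cdot n_i \cdot \omega^{(i)}(I(f),J(f))$. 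This is where the definitions of $\kappa^{(i)}$, $\theta$, and $n_i$ enter: expanding the inner sum defining $\Psi[\omega^{(i)}]$, interchanging sum and integral, and using that $g^{-1}\cdot f$ lies in $R^{(i)}$ for exactly one coset modulo the stabilizer (of size $n_i$) of $\kappa^{(i)}$ at the invariants of $f$, one reduces the $g$-integral to $\int \theta(g)\,dg = \Vol(\theta)$ per stabilizer element, with the scalar $\omega^{(i)}(f)$ pulled out since $\omega^{(i)}$ is $\PGL_2(\mathbb{R})$-invariant (it is a function of the invariants).

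Next I would run the unfolding. Starting from the right-hand side of \eqref{Bahargava-avaraging-smooth}, write $\mathcal{F}_{\PGL_2}$ as (a set of representatives for) $\PGL_2(\mathbb{Z})\backslash\PGL_2(\mathbb{R})$, unfold the $\PGL_2(\mathbb{Z})$-action on the lattice sum $\sum_{f\in L_{\irr}}$ against the $\PGL_2(\mathbb{Z})$-translations of the domain, and collapse the resulting sum over $\PGL_2(\mathbb{Z})\backslash(L_{\irr}\cap V_{\mathbb{Z}}^{(i)})$ orbits. Since $L$ is $\PGL_2(\mathbb{Z})$-invariant and $\h_X$, $\Psi[\omega^{(i)}]$ are invariant under the relevant symmetries (the $K$-invariance of $\theta$ and the fact that $\h_X$ depends only on $(I,J)$ are used here), the unfolded integral becomes
\begin{equation*}
    \frac{1}{n_i\Vol(\theta)}\sum_{f \in \frac{L_{\irr}\cap V_{\Z}^{(i)}}{\PGL_2(\mathbb{Z})}} \frac{1}{\#\Aut_{\mathbb{Z}}(f)}\, \h_X(f) \int_{g\in \PGL_2(\mathbb{R})} \Psi[\omega^{(i)}](g^{-1}\cdot X^{-1/2}\cdot f)\, dg,
\end{equation*}
where the factor $1/\#\Aut_{\mathbb{Z}}(f)$ appears because the $\PGL_2(\mathbb{Z})$-stabilizer of $f$ means $f$ is counted $\#\Aut_{\mathbb{Z}}(f)$ times too often in the unfolding. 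Substituting the vertical-integral identity from the previous step (applied to $X^{-1/2}\cdot f$) gives $\Vol(\theta)\cdot n_i\cdot\omega^{(i)}(I(X^{-1/2}\cdot f),J(X^{-1/2}\cdot f)) = \Vol(\theta)\cdot n_i\cdot\omega^{(i)}(X^{-2}I(f),X^{-3}J(f))$; I need to reconcile this with the scaling in $\sigma^{(i)}$, which reads $\omega^{(i)}(X^{-1}I,X^{-3/2}J)$, and this is exactly where the twisted (degree-normalized) $\PGL_2$-action matters: under the twisted action $I$ and $J$ scale with weights halved relative to the naive $\GL_2$-action, so $X^{-1/2}\cdot f$ has invariants $(X^{-1}I(f), X^{-3/2}J(f))$, matching $\sigma^{(i)}$ after multiplying by $\h_X(f)$. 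Collecting terms yields precisely $N(\omega^{(i)},L;X)$.

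The main obstacle I anticipate is bookkeeping rather than any deep difficulty: one must handle the interplay between the twisted $\PGL_2(\mathbb{R})$-action (which makes $I,J$ homogeneous of degrees $2$ and $3$) and the untwisted scaling by $X^{-1/2}$, keeping the weights in $\omega^{(i)}$, $\sigma^{(i)}$, and $\h_X$ consistent; getting the normalization constant $1/(n_i\Vol(\theta))$ exactly right; and justifying that reducible forms and forms with $\Delta=0$ contribute nothing (the former are excluded by restricting to $L_{\irr}$, the latter lie outside the support of $\Psi[\omega^{(i)}]$). I would also need to confirm convergence/absolute summability so that the interchange of the $\mathcal{F}_{\PGL_2}$-integral with the lattice sum is legitimate; since $\Psi[\omega^{(i)}]$ is smooth and compactly supported on $V_{\mathbb{R}}$ and $\mathcal{F}_{\PGL_2}$ has the usual Siegel-set shape, for each $\gamma$ only finitely many $f\in L_{\irr}$ contribute, and the support of $\omega^{(i)}$ inside $\mathcal{J}^{(i)}$ together with $\h_X$ keeps everything finite, so this is routine. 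No genuinely new idea is required here; this proposition is the smooth repackaging of the classical count from \cite{avg1,avg2,main}, and the real work of the paper begins with estimating the integral \eqref{TheIntegral} afterward.
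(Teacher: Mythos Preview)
Your proposal is correct and follows essentially the same route as the paper: the paper invokes Theorem~5.1 of \cite{moment} as a black box for the unfolding identity you spell out directly, and then performs the same rescaling identification (using that $h(f)<X$ forces $X^{-1/2}\cdot f$ to have invariants in $\mathcal{J}^{(i)}$ once $X>(C/2)^2$) to pass from $\mathcal{S}(\sigma^{(i)},\theta)$ to $\Psi[\omega^{(i)}](X^{-1/2}\cdot f)\cdot \h_X(f)$. Your vertical-integral computation and the handling of the $1/\#\Aut_{\mathbb{Z}}(f)$ factor are exactly the content of that cited theorem, so nothing is missing.
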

\begin{proof}
Following the approach in \cite{moment}, we define the function $\mathcal{S}(\sigma^{(i)}, \theta)$ as follows:
\begin{equation*}
    \mathcal{S}(\sigma^{(i)}, \theta)(f) =   \sum _{\substack{(g,(I,J)) \in \PGL_2(\mathbb{R}) \times \mathcal{I}^{(i)} \\  g \cdot \kappa^{(i)}(I, J) = f }} \theta(g) \, \sigma^{(i)}(f).
\end{equation*}
 For any $\gamma \in \mathcal{F}_{\PGL_2}$, we define the action as follows:
\begin{equation*}
\big(\gamma \cdot \mathcal{S}(\sigma^{(i)}, \theta)\big)(f) \coloneqq \mathcal{S}(\sigma^{(i)}, \theta)(\gamma^{-1} \cdot f).
\end{equation*}
Then, Theorem 5.1 in \cite{moment} implies that for any $\PGL_2(\mathbb{Z})$-invariant subset $L$ of integral binary quartic forms the following holds:

\begin{equation*}
\begin{split}
\sum_{f \in \frac{L_{\irr} \cap V_{\Z}^{(i)}}{\PGL_2(\mathbb{Z})}} \frac{1}{\#\Aut_{\mathbb{Z}}(f)} \cdot \sigma^{(i)}(I(f),J(f)) &=  \sum_{f \in \frac{L_{\irr} \cap V_{\Z}^{(i)}}{\PGL_2(\mathbb{Z})}} \frac{1}{\#\Aut_{\mathbb{Z}}(f)} \cdot \omega^{(i)}(X^{-1/2} f) \cdot \h_{X}(f)\\
&=\frac{1}{n_i \, \Vol(\theta)}\int_{\gamma \in \mathcal{F}_{\PGL_2}}  \Big( \sum_{f \in L_{\irr}}  \big(\gamma \cdot \mathcal{S}(\sigma^{(i)}, \theta)\big)(f) \Big) \, d\gamma,
\end{split}
\end{equation*}
For any $f \in R^{(i)}$ with height less than $X$, we can express $f$ as $\lambda \cdot f_0$ for some $f_0 \in L^{(i)}$. Since every element of $L^{(i)}$ satisfies either $|I(f_0)| = 1$ or $|J(f_0)| = 2$, it follows that $\lambda < X^{1/2}$ whenever $X > (C/2)^2$. Hence, $X^{-1/2} \cdot f$ lies in $\mathbb{R}^{+}_{<1} \cdot L^{(i)}$.

Similarly, for any quartic form in $\PGL_2(\mathbb{R}) \cdot R^{(i)}$ with height less than $X$, the rescaled form $X^{-1/2} \cdot f$ is $\PGL_2(\mathbb{R})$-equivalent to a form in $\mathbb{R}^{+}_{<1} \cdot L^{(i)}$, meaning its invariants lie within the region $\mathcal{J}^{(i)}$.

All above and the fact that the height function is invariant under the action of $\PGL_2(\mathbb{R})$, implies:
\begin{equation*}
  \sum_{f \in L_{\irr}} \big(\gamma \cdot \mathcal{S}(\sigma^{(i)}, \theta)\big)(f) = \sum_{f \in L_{\irr}} \big(\gamma \cdot \Psi[\omega^{(i)}]\big)(X^{-1/2} \cdot f) \cdot \h_X(f).
\end{equation*}
This completes the proof.
\end{proof}

Define $\chi^{(i)}$ as the characteristic function of $\mathcal{J}^{(i)}$. A form $f \in V_{\mathbb{Z}}$ is said to be \textit{generic} if it is irreducible and its stabilizer in $\PGL_2(\mathbb{Q})$ is trivial. We denote the set of generic points in a subset $S \subset V_{\mathbb{Z}}$ by $S_{\gen}$.

Our goal is to compute $N(\omega^{(i)}, V_{\mathbb{Z}, \gen}^{(i)}; X)$. To achieve this, we let $\omega^{(i)}$ approach $\chi^{(i)}$ in the limit, ultimately obtaining $N(V_{\mathbb{Z}, \gen}^{(i)}; X)$. This approach works because the function $\chi^{(i)}(X^{-1/2} f) \cdot \h_X(f)$ evaluates to one if $f$ has height less than $X$ and satisfies $(-1)^i \Delta(f) > 0$, and to zero otherwise.

Let $\omega^{(i)}$ be fixed from now on, and for simplicity, we will denote $\Psi[\omega^{(i)}]$ by $\Psi^{(i)}$. 
To compute the integral in equation \eqref{Bahargava-avaraging-smooth}, it is crucial to first understand $\Bo$, which is defined as:

\begin{equation}\label{def Bo} \Bo \vcentcolon = \sum_{f \in V_{\mathbb{Z}, a \neq 0}} (nt \cdot \Psi^{(i)})(X^{-1/2} \cdot f) \cdot \h_{X}(f), \end{equation}
where $V_{\mathbb{Z}, a \neq 0}$ denotes the set of integral quartic forms with a non-zero $a$ coefficient.

We focus on understanding $\Bo$ because Lemma \ref{reducibility lemma} implies the following integral expression for $N(\omega^{(i)}, V_{\mathbb{Z}, \gen}^{(i)}; X)$:

\begin{align*}\label{integral} N(\omega^{(i)}, V_{\mathbb{Z}, \gen}^{(i)}; X)
&= \frac{1}{n_i \Vol(\theta)} \int_{t = \frac{\sqrt[4]{3}}{\sqrt{2}}} \int_{N'(t)} \Big( \sum_{f \in V_{\mathbb{Z}, a \neq 0}} (nt \cdot \Psi[\chi^{(i)}])(X^{-1/2} \cdot f)  \cdot \h_{X}(f) \Big)\, \dw \\ & + O_{\epsilon}(X^{7/4+\epsilon}). \end{align*}

By definition, the function $\Psi^{(i)}$ has bounded support in $V_{\mathbb{R}}$ since both $\theta$ and $\mathbb{R}^{+}_{<1} \cdot L^{(i)}$ are bounded. Moreover, there exists a constant $M$ such that for any $n \in N(t)$, the support of $n \cdot \Psi$ remains bounded by $M$. This ensures that $n \cdot \Psi$ is nonzero only for $f \in V_{\mathbb{R}}$ whose coefficients have absolute values bounded by $M$.

The following lemma allows us to focus on $\Bo$ for values of $t$ as small as $X^{\delta_t}$, where $\delta_t$ is a positive constant.

\begin{lemma}\label{eBound}
Assume $(a, b, c, d)$ are integers with $a \neq 0$ and $8ac \neq b^2$. Then
\begin{equation*}
\# \Big\{e \in \mathbb{Z} : |I(a, b, c, d, e)| < X \text{ and } |J(a, b, c, d, e)| < X\Big\} = O\left(\frac{X}{|H|}\right),
\end{equation*}
where $H = 8ac - 3b^2$.
\end{lemma}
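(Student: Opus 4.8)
The plan is to fix the integers $a, b, c, d$ with $a \neq 0$ and $H = 8ac - 3b^2 \neq 0$, and count the integers $e$ for which both $|I(a,b,c,d,e)| < X$ and $|J(a,b,c,d,e)| < X$. The first observation is that from the explicit formula \eqref{I}, $I(a,b,c,d,e) = 12ae - 3bd + c^2$ is an affine-linear function of $e$ with leading coefficient $12a$. Hence the condition $|I| < X$ alone already confines $e$ to an interval of length $O(X/|a|)$, which in general is too weak — we must use the $J$-constraint and the syzygy to improve this to $O(X/|H|)$.

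Next I would pass to the semi-invariants. With $a, b, c$ fixed, the semi-invariant $R(f) = b^3 + 8a^2 d - 4abc$ from \eqref{R} depends only on $d$, so it too is fixed. The syzygy \eqref{syzygy}, $H^3 - 48a^2 HI + 64a^3 J = -27R^2$, together with $H \neq 0$, lets us solve for $J$ linearly in terms of $I$; concretely, by \eqref{JR} we have $\tfrac{4}{3}\tfrac{a}{H}J = I - \big(\tfrac{27R^2}{48a^2H} + \tfrac{H^2}{48a^2}\big)$. Therefore the constraint $|J| < X$ translates into $\big| I - (\tfrac{27R^2}{48a^2H} + \tfrac{H^2}{48a^2}) \big| < \tfrac{3}{4}\tfrac{|H|}{|a|}X$, i.e. $I$ is restricted to an interval of length $O(|H|X/|a|)$.

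Now combine the two constraints on $I$: it must simultaneously lie in an interval of length $O(X/|a|)$ (from $|I| < X$, noting $I$ ranges over values in an arithmetic progression with common difference $12a$ as $e$ varies, so actually $I$ itself, not $e$, is what we count — but since $I = 12ae - 3bd + c^2$ is a bijection between $e \in \Z$ and $I$ in a fixed residue class mod $12a$, counting admissible $e$ is the same as counting admissible $I$ in that progression). Actually the cleaner route: as $e$ runs over $\Z$, $I$ increments by $12a$, and $J$ is a quadratic polynomial in $e$ with leading term $-27a d^2$... wait, let me instead just intersect intervals. From $|J| < X$ we get $I$ in an interval of length $O(|H|X/|a|)$ centered at $\tfrac{27R^2}{48a^2H} + \tfrac{H^2}{48a^2}$; but we also need $|I| < X$. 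The number of values of $I$ of the form $12ae - 3bd+c^2$ in an interval of length $\ell$ is $O(1 + \ell/|a|)$. Applying this to the $J$-interval gives $O(1 + |H|X/a^2)$ — not obviously $O(X/|H|)$. The correct argument must instead fix the interval from $|I|<X$, which has length $2X$, giving $O(1 + X/|a|)$ values of $e$; then among these, impose $|J|<X$. Since $J$ as a function of $e$ (with $a,b,c,d$ fixed) is $J = -27a d^2 + (\text{affine in } e)$... no: from \eqref{J}, $J = 72ace + 9bcd - 27ad^2 - 27eb^2 - 2c^3 = (72ac - 27b^2)e + (\text{const}) = \tfrac{9}{2}(16ac - 6b^2)e + \cdots$. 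Hmm, $72ac - 27b^2 = 9(8ac - 3b^2) = 9H$. So $J$ is \emph{also} affine-linear in $e$, with leading coefficient $9H$! Thus $|J| < X$ directly confines $e$ to an interval of length $O(X/|H|)$, and the number of integers there is $O(1 + X/|H|) = O(X/|H|)$ (using $|H| \geq 1$ since $H$ is a nonzero integer, or absorbing the $+1$).

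So the actual proof is short: I would simply compute that $\partial J/\partial e = 72ac - 27b^2 = 9H$, conclude $J(a,b,c,d,e)$ is affine in $e$ with slope $9H \neq 0$, and hence the set of $e$ with $|J| < X$ is contained in an interval of length $\leq 2X/(9|H|)$, which contains $O(X/|H|)$ integers. The condition $|I| < X$ is then only a further restriction and can be dropped for the upper bound. The one subtlety — the "main obstacle," such as it is — is the bookkeeping constant and confirming that the $+1$ from "number of integers in a short interval" is absorbed by $O(X/|H|)$; this holds because $H \in \Z \setminus \{0\}$ forces $|H| \geq 1$, and for $X$ large the bound $X/|H| \geq X/|H| \geq 1$ is harmless. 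I would present the identity $72ac - 27b^2 = 9H$ prominently, as it is the crux, and note that it is precisely the special affine-linearity of $J$ in the last coefficient that makes the semi-invariant framework natural here.
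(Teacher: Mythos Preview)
Your final argument is correct and in fact more direct than the paper's. You compute $\partial J/\partial e = 72ac - 27b^2 = 9H$, so $J$ is affine-linear in $e$ with nonzero slope $9H$, whence $|J|<X$ confines $e$ to an interval of length $2X/(9|H|)$ and the integer count is $O(X/|H|)$ (the additive $+1$ being absorbed since $|H|\ge 1$). The paper instead routes through the syzygy: with $a,b,c,d$ fixed, $H$ and $R$ are fixed, and \eqref{JR} turns $|J|<X$ into an interval for $I$ of length $O(|a|X/|H|)$; since $I=12ae-3bd+c^2$ runs over a fixed residue class modulo $12a$, this gives $O(X/|H|)$ admissible values of $I$, hence of $e$. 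Your route is shorter for this lemma; the paper's route is the same computation seen through the $(R,I)$ semi-invariant coordinates, which is the framework used throughout, so their proof doubles as a first illustration of that change of variables. Note incidentally that the hypothesis should read $8ac\neq 3b^2$ (i.e.\ $H\neq 0$); the ``$8ac\neq b^2$'' in the statement is a typo, and you correctly work under $H\neq 0$.
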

\begin{proof}
Given that $a$, $b$, $c$, and $d$ are fixed, the quantities $H$ and $R$, as defined in equations \eqref{H} and \eqref{R}, are also determined. Consequently, equation \eqref{JR} implies that $I$ lies within the interval

\begin{equation*}
\Big(\alpha(a,b,c,d)-\frac{4}{3}\left|\frac{aC}{H}\right|X, \alpha(a,b,c,d)+\frac{4}{3}\left|\frac{aC}{H}\right|X\Big)
\end{equation*}
where $\alpha(a,b,c,d)$ is
\begin{equation*}
\JR.
\end{equation*}
By looking at equation\eqref{I}, then $I$ must satisfy the congruence $I \equiv -3bd + c^2 \pmod{12a}$. Consequently, the number of possible values for $e$ corresponds to the number of values for $I$ under these conditions, which is bounded by $O\left(\frac{X}{|H|}\right)$. 
\end{proof}

This lemma highlights the difficulty in estimating $\Bo$. When a height restriction of less than $X$ is imposed in $V_{\mathbb{R}}$, the variable $e$ varies within an interval of size $O(X/|H|)$ for fixed $(a, b, c, d)$. This variation complicates the lattice point count, particularly when $H$ is large and close to $X$.

By losing the uniformly expanded region for describing the space of quartic forms with height less than $X$, our understanding of this space becomes limited. To address this, instead of counting quartic forms based on their coefficients, we analyze them through their semi-invariants, which provide a clearer structure via the syzygy equation on the height function. To implement this approach, we introduce the following map:
\begin{align*}
    \upsilon\colon &V_{\R} \longrightarrow V_{\R}^s, \\
    &(a, b, c, d, e) \mapsto (a, b, c, R, I),
\end{align*}
where $V_{\mathbb{R}}^s$ is a five-dimensional vector space over $\mathbb{R}$, where $R$ and $I$ are semi-invariant polynomials associated with $f = (a, b, c, d, e)$. This new space, referred to as the \textit{semi-invariant space}, provides a more structured perspective on binary quartic forms. Each element in this space is called a \textit{semi-form}, denoted by $f_s = (a, b, c, R, I)$. When all components $(a, b, c, R, I)$ are integers, we obtain an \textit{integral semi-form}, and the space of such forms is denoted by $V_{\mathbb{Z}}^s$.

\begin{lemma}
The function $\upsilon$, when restricted to $V_{\mathbb{R}, a \neq 0}$, establishes a bijection with $V_{\mathbb{R}, a \neq 0}^s$. Its inverse is given by:
 \begin{align*}
         \upsilon^{-1}\colon & V_{\R,a\neq 0}^{s} \longrightarrow V_{\mathbb{R},a\neq0} \\
          &(a,b,c,R,I) \mapsto \Big(a,b,c,d(a,b,c,R),\frac{I+3bd(a,b,c,R)-c^2}{12a}\Big)  
   \end{align*}
where 
\begin{equation}\label{d}
  d(a,b,c,R) \vcentcolon = \frac{R-b^3+4abc}{8a^2}.
\end{equation}
Define $H(a,b,c)=8ac-3b^2$ and  
define equivalent height on the semi-invariant space as follows:
\begin{equation}\label{height-semi}
h_C(a,b,c,R,I)\vcentcolon=h(a,b,c,R,I) \vcentcolon=
\begin{cases}
\max\left(|I|,  \left| \frac{3HI}{4aC} - \Gamma \right| \right) & \text{if } H \neq 0 \\
\max\left(|I|, C^{-1}\left| \frac{27R^2}{64a^3} \right| \right) & \text{if } H = 0
\end{cases}
\end{equation}
where $\Gamma$ is defined by
\begin{equation*}
    \frac{4aC}{3H}\Gamma(a,b,c,R)  =\JR.
\end{equation*}
 The defined height has the property that $h(\upsilon(a,b,c,d,e))=h(a,b,c,d,e)$.
\end{lemma}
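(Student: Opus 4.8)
The statement bundles together three claims: (1) that $\upsilon$ restricted to $V_{\R,a\neq 0}$ is a bijection onto $V^s_{\R,a\neq 0}$ with the stated inverse; (2) that the piecewise formula $h_C(a,b,c,R,I)$ in \eqref{height-semi} is well-defined; and (3) that $h(\upsilon(a,b,c,d,e)) = h(a,b,c,d,e)$. The plan is to handle these in order, since each one is essentially a direct computation once the right identities are in hand.

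\textbf{Step 1 (bijectivity).} I would first verify that the candidate inverse map actually lands in $V_{\R,a\neq 0}$: since $a$ is unchanged, this is immediate. Then I would check $\upsilon^{-1}\circ\upsilon = \mathrm{id}$ and $\upsilon\circ\upsilon^{-1} = \mathrm{id}$ by substitution. For the first, starting from $(a,b,c,d,e)$, compute $R = b^3 + 8a^2 d - 4abc$ from \eqref{R}; solving for $d$ recovers exactly the formula \eqref{d} for $d(a,b,c,R)$, using $a\neq 0$. Then from $I = 12ae - 3bd + c^2$ in \eqref{I}, solving for $e$ gives $e = (I + 3bd - c^2)/(12a)$, which is the second coordinate of $\upsilon^{-1}$. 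Conversely, plugging $d(a,b,c,R)$ and the expression for $e$ back into the defining polynomials for $R$ and $I$ returns $(R,I)$. This shows $\upsilon$ is a bijection; it is clearly polynomial in both directions away from $a = 0$.

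\textbf{Step 2 (well-definedness of the semi-invariant height) and Step 3 (height preservation).} These are really the same computation. The point is to re-express $h_C(f) = \max\{|I|, |J|/C\}$ purely in terms of $(a,b,c,R,I)$. When $H\neq 0$, equation \eqref{JR} gives $\frac{4}{3}\frac{a}{H}J = I - \left(\frac{27R^2}{48a^2 H} + \frac{H^2}{48a^2}\right)$, i.e. $J = \frac{3H}{4a}\left(I - \left(\frac{27R^2}{48a^2 H} + \frac{H^2}{48a^2}\right)\right)$. Dividing by $C$ and recognizing $\Gamma(a,b,c,R)$ as the quantity with $\frac{4aC}{3H}\Gamma = \frac{27R^2}{48a^2 H} + \frac{H^2}{48a^2}$, one gets $J/C = \frac{3H}{4aC}I - \Gamma$, so $|J|/C = \left|\frac{3HI}{4aC} - \Gamma\right|$, matching the $H\neq 0$ branch of \eqref{height-semi}. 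When $H = 0$, the syzygy \eqref{syzygy} degenerates to $64a^3 J = -27R^2$, so $J = -27R^2/(64a^3)$ and $|J|/C = C^{-1}|27R^2/(64a^3)|$, matching the $H = 0$ branch. Since $H = 8ac - 3b^2$ is a function of $(a,b,c)$ alone and hence well-defined on the semi-invariant space, both branches are well-defined, and continuity of the two expressions as $H\to 0$ (the first branch tends to the second, as can be checked from the syzygy) shows the piecewise definition is consistent; in any case for fixed input exactly one branch applies. Combining, $h_C(a,b,c,R,I)$ as defined equals $\max\{|I|,|J|/C\} = h_C(a,b,c,d,e)$ whenever $(a,b,c,R,I) = \upsilon(a,b,c,d,e)$, which is precisely the claimed identity $h(\upsilon(a,b,c,d,e)) = h(a,b,c,d,e)$.

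\textbf{Main obstacle.} There is no serious obstacle here — the whole statement is a reorganization of the algebraic identities \eqref{I}, \eqref{J}, \eqref{R}, \eqref{H}, \eqref{JR}, and the syzygy \eqref{syzygy}. The only mild care needed is bookkeeping: confirming that the constant $C$ is threaded correctly through the definitions of $h_C$, $\Gamma$, and the two branches of \eqref{height-semi}, and checking that the $H\to 0$ limit of the first branch genuinely recovers the second (so that the piecewise definition is not merely formally but honestly consistent). Verifying that $\upsilon^{-1}$ maps integral semi-forms in the lattice $\Lambda_{(a,b,c)}$ back to integral forms is not needed for this lemma but is the reason the congruence conditions on $(R,I)$ were recorded earlier; I would note this in passing but not belabor it.
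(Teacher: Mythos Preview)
Your proposal is correct and follows essentially the same approach as the paper, which dispatches the lemma in one line: ``The first part is obvious, and the second part follows directly from equation \eqref{syzygy}.'' You have simply made explicit the substitutions the paper leaves implicit, using \eqref{I} and \eqref{R} to invert $\upsilon$ and \eqref{JR}/\eqref{syzygy} to identify $|J|/C$ in semi-invariant coordinates.
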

\begin{proof}
   The first part is obvious, and the second part follows directly from equation \eqref{syzygy}.
 \end{proof}

With a slight abuse of notation, we use $h$ to denote the height both on $V_{\mathbb{R}}$ and $V_{\R}^{s}$. The lemma above shows that studying binary quartic forms with height $h$ is equivalent to studying semi-forms in the semi-invariant space with the same height.

In the semi-invariant space, the structure of the height function becomes more transparent. Fixing $(a, b, c, R)$ with $b^2 \neq 8ac$ allows us to determine that $I$ lies within a specific interval. To better understand $\Bo$, we analyze it within this framework, which requires studying the image of $V_{\mathbb{Z}, a \neq 0}$ under its mapping to the semi-invariant space.

Define for $a\ne0$ the function $d(a,b,c,R)=(R-b^3+4abc)/(8a^2)$ as before. For any $(a,b,c)\in \mathbb{Z}^3$ with $a\neq 0$ define $\Lambda_{(a,b,c)}$ a set in $\mathbb{Z}^2$:
\begin{equation*}
   \Lambda_{(a,b,c)} \vcentcolon = \left\{ (R,I)\in \mathbb{Z}^2 :
   \begin{array}{ll}
        R &\equiv b^3 - 4abc \pmod{8a^2} \\
        I &\equiv -3b d(a,b,c,R) + c^2 \pmod{12a}
   \end{array} 
   \right\}.
\end{equation*}
Define $\Lambda$:

\begin{equation*}
    \Lambda \vcentcolon = \left\{(a,b,c,R,I) \in V_{\Z, a\neq 0}^{s} :  (R,I) \in \Lambda_{(a,b,c)}\right\}.
\end{equation*}

 \begin{lemma}    
The map $\upsilon$ establishes a bijection between $V_{\mathbb{Z}, a \neq 0}$ and $\Lambda$.
 \end{lemma}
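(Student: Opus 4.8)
The plan is to verify that $\upsilon$ restricts to a well-defined injection $V_{\Z,a\neq 0} \to \Lambda$ and that it is surjective onto $\Lambda$, using the explicit inverse $\upsilon^{-1}$ already described. We already know from the preceding lemma that $\upsilon$ is a bijection $V_{\R,a\neq 0}\to V_{\R,a\neq 0}^s$, so the only content here is tracking integrality: the image of an integral form lies in $\Lambda$, and conversely every point of $\Lambda$ pulls back to an integral form.

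\textbf{Step 1: The image lands in $\Lambda$.} Take $(a,b,c,d,e)\in V_{\Z,a\neq 0}$ and set $(R,I)=\upsilon(a,b,c,d,e)=(b^3+8a^2d-4abc,\,12ae-3bd+c^2)$. These are manifestly integers, and $a\neq 0$, so $(a,b,c,R,I)\in V_{\Z,a\neq 0}^s$. The congruence $R\equiv b^3-4abc \pmod{8a^2}$ is immediate from $R = (b^3-4abc) + 8a^2 d$. For the second congruence, note that $d(a,b,c,R) = (R-b^3+4abc)/(8a^2) = d$ is exactly the original integer $d$, so $-3b\,d(a,b,c,R)+c^2 = -3bd+c^2 \equiv 12ae - 3bd + c^2 = I \pmod{12a}$. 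Hence $(R,I)\in\Lambda_{(a,b,c)}$ and $\upsilon(a,b,c,d,e)\in\Lambda$. Injectivity is inherited from the bijectivity of $\upsilon$ on $V_{\R,a\neq 0}$.

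\textbf{Step 2: Every point of $\Lambda$ is hit by an integral form.} Let $(a,b,c,R,I)\in\Lambda$, so $a,b,c,R,I\in\Z$, $a\neq 0$, and the two congruences hold. Apply $\upsilon^{-1}$: first $d := d(a,b,c,R) = (R-b^3+4abc)/(8a^2)$, which is an integer precisely because $R\equiv b^3-4abc\pmod{8a^2}$. Then $e := (I+3bd-c^2)/(12a)$; since $d\in\Z$ the numerator is an integer, and the congruence $I\equiv -3b\,d(a,b,c,R)+c^2 = -3bd+c^2\pmod{12a}$ forces $12a \mid I+3bd-c^2$, so $e\in\Z$. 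Thus $(a,b,c,d,e)\in V_{\Z,a\neq 0}$ and, by construction, $\upsilon(a,b,c,d,e)=(a,b,c,R,I)$. Combined with Step 1, $\upsilon$ is a bijection $V_{\Z,a\neq 0}\to\Lambda$.

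There is essentially no obstacle: the statement is a bookkeeping check once one has the explicit formula for $\upsilon^{-1}$ from the previous lemma. The only point requiring a moment's care is that the two congruences defining $\Lambda_{(a,b,c)}$ are exactly the divisibility conditions needed to keep $d$ and then $e$ integral under $\upsilon^{-1}$ — and, conversely, that they are automatically satisfied by genuine integral forms because $d(a,b,c,R)$ recovers the original $d$. One should also note in passing that the second congruence is well-posed: although $d(a,b,c,R)$ need not be an integer for a general $(R,I)\in\Z^2$, on the locus where the first congruence holds it is, so the expression $-3b\,d(a,b,c,R)+c^2 \pmod{12a}$ makes sense exactly where it is being used.
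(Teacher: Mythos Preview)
Your proof is correct and follows essentially the same approach as the paper's own proof: verify the congruences for integral forms to show $\upsilon(V_{\Z,a\neq 0})\subset\Lambda$, then use the explicit $\upsilon^{-1}$ to show every point of $\Lambda$ comes from an integral form. Your version is in fact more detailed than the paper's---in particular your remark that the second congruence is well-posed only on the locus where the first one holds is a nice point the paper glosses over.
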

 \begin{proof}
Assume $f = (a, b, c, d, e) \in V_{\mathbb{Z}, a \neq 0}$. Referring to the definitions of the semi-invariants $R$ and $I$, we observe the following congruences:
\begin{equation*}
    R(f) \equiv b^3 - 4abc \pmod{8a^2} \quad \text{and} \quad I(f) \equiv -3b d(a, b, c, R) + c^2 \pmod{12a}.
\end{equation*}
Conversely, the map $\upsilon^{-1}$ provides a way to reconstruct elements in $V_{\mathbb{Z}, a \neq 0}$ by mapping elements of $\Lambda$ back to $V_{\mathbb{Z}, a \neq 0}$. This completes the proof.
\end{proof}

Based on the aforementioned results, counting the number of $V_{\mathbb{Z},a\neq 0}$ points with a height less than $X$ in a region of $V_{\mathbb{R}}$ is equivalent to count the number of $\Lambda$ points with a height less than $X$ in its image under the map $\upsilon$. Additionally, we know height restriction and $\Lambda$ in the semi-invariant space well, which enables us to use Poisson summation for the counting. 
These observations imply

\begin{equation}\label{Bo}
\Bo = \sum_{(a,b,c,R,I) \in \Lambda} \psi^{(i)}_{n}\left(\frac{at^4}{\lambda},\frac{bt^2}{\lambda},\frac{c}{\lambda},\frac{Rt^6}{\lambda^3},\frac{I}{\lambda^2}\right) \h_X(a,b,c,R,I).
\end{equation}
Here, $\lambda$ is defined as $X^{1/2}$, and $\h_X$ denotes the characteristic function of the height being less than $X$ in the semi-invariant space. Additionally, $\psi^{(i)}_{n}(f_s)$ is given by $n \cdot \Psi^{(i)}(\upsilon^{-1}(f_s))$. If $\upsilon(a,b,c,d,e) = (a,b,c,R,I)$ and $a\neq0$, then

\begin{equation*}
\upsilon^{-1}\left(\frac{at^4}{\lambda}, \frac{bt^2}{\lambda}, \frac{c}{\lambda}, \frac{Rt^6}{\lambda^3}, \frac{I}{\lambda^2}\right)
\end{equation*}
is equal to act by  $\lambda^{-1} \begin{pmatrix} t & \\ & t^{-1} \end{pmatrix}$ on the form $(a,b,c,d,e)$. This explains why the equation \eqref{Bo} holds.

Finally, assume that $M$ is sufficiently large to encompass the bounded support of both $n \cdot \Psi^{(i)}$ and $\psi^{(i)}_{n}$ on $V_{\R, a \neq 0}$ and $V_{\R, a \neq 0}^s$ respectively. From this point onward, we fix $i$ and omit it for brevity.


\subsection{Cutting off the cusp and restriction of the integral}
For any positive $\delta$, define the set $B_{X,\delta}$ as:
\begin{equation*}
B_{X, \delta} \vcentcolon= \left\{ (a, b, c) \in \mathbb{Z}^3 : X^{1/2 - \delta_a} \leq |a|, |c| \leq MX^{1/2}, \, MX^{1/2 - \delta_b} \leq |b| \leq MX^{1/2}, \,H(a, b, c) \neq 0 \right\}, \end{equation*} where $\delta_a = 6\delta$ and $\delta_b = 2\delta$.

In this section, we refine our approach by cutting off the cusp and restricting the summation. These modifications are essential for controlling the error terms in the Poisson summation. Specifically, we establish:

\begin{proposition}\label{boundedScounting}
Let $\delta$ be a positive number less than $1/8$. Let $S$ be any $\PGL_2(\Z)$-invariant subset of $V_{\Z}$ and $dw = \dw$, then $N(\omega^{(i)},S_{\gen},X)$ is:

\begin{equation*}
     N(\omega^{(i)},S_{\gen},X) = \frac{1}{n_i \Vol(\theta)} \int_{\frac{\sqrt[4]{3}}{\sqrt{2}}}^{X^{\delta}} \int_{N'(t)}  \Bods\, dw + 
         O_{\epsilon}(X^{2-2\delta+\epsilon}). 
\end{equation*}
Here $\Bods$ is defined by

\begin{equation}
\Bods \vcentcolon= \sum_{\substack{f \in S_{\delta}}} (nt\cdot \Psi^{(i)})(X^{-1/2} \cdot f) \cdot \h_{X}(f),
\end{equation}
where $S_{\delta}$ is the set of integral forms $f = (a,b,c,d,e)$ in $S$ such that $(a,b,c) \in B_{X,\delta}$.
\end{proposition}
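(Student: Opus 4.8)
The plan is to start from the Bhargava-averaging identity \eqref{Bahargava-avaraging-smooth}, which already expresses $N(\omega^{(i)}, S_{\gen}; X)$ (after replacing $V_{\mathbb{Z},\gen}^{(i)}$ by a general $\PGL_2(\Z)$-invariant $S$ and using Lemma~\ref{reducibility lemma}) as an integral over $\gamma = ntk \in \mathcal{F}_{\PGL_2}$ of $\Bo$, up to the error $O_\epsilon(X^{7/4+\epsilon})$ coming from the reducible/small-$a$ forms. Since $7/4 < 2 - 2\delta$ for $\delta < 1/8$, that error is already absorbed into the claimed $O_\epsilon(X^{2-2\delta+\epsilon})$, so there are two separate truncations to justify: (i) replacing the $t$-range $[\sqrt[4]{3}/\sqrt{2}, \infty)$ by $[\sqrt[4]{3}/\sqrt{2}, X^\delta]$ (cutting off the cusp), and (ii) for $t$ in the truncated range, replacing the full sum over $f \in S$ with $a \neq 0$ by the restricted sum over $S_\delta$, i.e. over $(a,b,c) \in B_{X,\delta}$. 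Because $\Psi^{(i)}$ is compactly supported and $K$-invariant, integrating $dk$ contributes only a bounded factor, so throughout we work with $nt \cdot \Psi^{(i)}$ and the $t^{-2}\,du\,d^\times t$ measure.

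For (i), the cusp cut-off: when $t$ is large, acting by $\mathrm{diag}(t,t^{-1})$ on a form $f$ with height $<X$ and with all coefficients bounded (as forced by the support of $n\cdot\Psi^{(i)}$ after scaling by $X^{-1/2}$) forces the leading coefficients to shrink. Concretely $X^{-1/2}at^4$, $X^{-1/2}bt^2$, $X^{-1/2}c$ must all lie in $[-M,M]$, so for $t > X^\delta$ one has $|a| \ll X^{1/2}t^{-4} \ll X^{1/2-4\delta}$, and similarly $b$ is constrained. One then counts: fixing such $(a,b,c)$ with $a \neq 0$ (the $a=0$ locus is already in the $O(X^{7/4+\epsilon})$ term), Lemma~\ref{eBound} bounds the number of admissible $e$ by $O(X/|H|)$, and summing the number of admissible $(a,b,c,d)$ tuples against this, then integrating $t^{-2}d^\times t$ over $t > X^\delta$, yields a total of size $O(X^{2-2\delta+\epsilon})$ — here one uses that the $d$-variable ranges over an interval whose length is controlled by $|a|t^{-2}$-type bounds and that $\sum 1/|H|$ over the relevant box is $\ll X^{\epsilon}\cdot(\text{box count})/(\text{typical }|H|)$. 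This is essentially the argument of \cite{main} for cutting the cusp, adapted to the fact that here $e$ is no longer free but confined to an interval of size $O(X/|H|)$ — which is what makes $\delta > 0$ (rather than an unbounded cusp truncation) suffice.

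For (ii), discarding $(a,b,c) \notin B_{X,\delta}$ with $t \le X^\delta$: the set $B_{X,\delta}$ excludes $(a,b,c)$ for which $|a|$, $|b|$, or $|c|$ is too small, and also $H(a,b,c)=0$. The $H=0$ locus is lower-dimensional and its contribution is estimated directly (using the $H=0$ branch of \eqref{height-semi}, which forces $|R|^2 \ll X$, i.e. $|R| \ll X^{1/2}$, hence few $(a,b,c,R)$, and then $I$ ranges over an interval of size $O(X^{1/2})$ subject to a congruence mod $12a$). For the small-coefficient ranges, the key point is again Lemma~\ref{eBound}: for each excluded $(a,b,c,d)$ the number of $e$ is $O(X/|H|)$, and since $|a|, |b|, |c| \ll X^{1/2}$ always (support bound, with the $t^2, t^4$ factors now at most $X^{2\delta}, X^{4\delta}$, i.e. negligible adjustments to the exponent), shaving $|a|$ down to $< X^{1/2-6\delta}$ or $|b|$ down to $< X^{1/2-2\delta}$ removes a sub-box whose total count, weighted by $X/|H|$ and integrated against $t^{-2}d^\times t$ over $t \in [\sqrt[4]{3}/\sqrt 2, X^\delta]$, is $O(X^{2-2\delta+\epsilon})$. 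One has to be a little careful that the extra $t$-powers interact correctly with the $\delta_a = 6\delta$, $\delta_b = 2\delta$ choices — this is exactly why those particular multiples of $\delta$ were selected, so that after absorbing $t \le X^\delta$ (hence $t^4 \le X^{4\delta}$, $t^2 \le X^{2\delta}$) the surviving loss in each variable is still $\gg X^{\text{const}\cdot\delta}$ and the $d^\times t$ integral only costs a $\log$.

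The main obstacle is the bookkeeping in step (ii) for the region where $|a|$ is small but $|H|$ is \emph{also} small, since then the Lemma~\ref{eBound} bound $O(X/|H|)$ is weak precisely where the box is largest; one resolves this by splitting dyadically in $|H|$ and in $|a|$ and checking that in every dyadic piece the product (number of $(a,b,c,d)$) $\times$ $(X/|H|)$ $\times$ ($t$-integral) stays below $X^{2-2\delta+\epsilon}$, using that $|c| \ll X^{1/2}$ caps the number of $c$'s and that $d$ lies in an interval of length $O(|a|t^{-2}\cdot(\text{const}))$ forced by the support of $n\cdot\Psi^{(i)}$. The remaining steps — the $dk$ integration, absorbing the $O(X^{7/4+\epsilon})$ term, and the passage from $\Psi[\chi^{(i)}]$ to $\Psi[\omega^{(i)}]$ in the limit — are routine given the earlier setup.
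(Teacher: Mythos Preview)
Your high-level outline is right: start from \eqref{Bahargava-avaraging-smooth}, invoke Lemma~\ref{reducibility lemma} to pass to $S\cap V_{\Z,a\neq 0}$, then (i) truncate the cusp at $t=X^\delta$ and (ii) restrict $(a,b,c)$ to $B_{X,\delta}$. But the technical execution has real gaps, and the two ideas that actually make the bounds work are missing from your sketch.

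First, for both the cusp cut-off and the small-$|b|$ region, the paper does \emph{not} estimate $\sum_{(a,b,c)}1/|H|$ by ``box count divided by typical $|H|$''. That heuristic is false here: the terms with small $|H|$ dominate, and a naive box bound gives something of size $X^{3}$ or worse. The paper instead switches to $(H,b)$ as summation variables: once $H$ and $b$ are fixed, $(a,c)$ are divisors of $H+3b^2$, giving only $O_\epsilon(X^\epsilon)$ choices. This reduces $\sum_{(a,b,c)}X/|H|$ to $X^\epsilon\cdot(\text{range of }b)\cdot\sum_{0<|H|\ll X}X/|H|\ll X^{1+\epsilon}\cdot(\text{range of }b)$, which is what makes Proposition~\ref{smallt} and Lemma~\ref{abig}(a) go through. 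Your sketch never invokes this divisor trick, and without it the sums do not close.

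Second, for the small-$|a|$ (or small-$|c|$) region with $|b|\ge X^{1/2-\delta_b}$, the paper does not use a dyadic decomposition in $|H|$. The key observation is algebraic: if $|b|\ge X^{1/2-\delta_b}$ and $|a|<X^{1/2-\delta_a}$ with $\delta_a>2\delta_b$, then $|8ac|\ll X^{1-\delta_a}=o(X^{1-2\delta_b})$ while $3b^2\gg X^{1-2\delta_b}$, so automatically $|H|\gg X^{1-2\delta_b}$. Hence $X/|H|\ll X^{2\delta_b}$ is uniformly bounded and one simply multiplies by the box count $X^{1/2-\delta_a}\cdot X\cdot X^{1/2}t^2$ for $(a,b,c,d)$. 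This is exactly why $\delta_a=6\delta$, $\delta_b=2\delta$ were chosen: so that $\epsilon_a=\delta_a-2\delta_b=2\delta$ gives $X^{2-2\delta}$. Your dyadic proposal, as stated, does not see this lower bound on $|H|$ and would have to separately handle the regime where both $|a|$ and $|H|$ are small---but that regime is \emph{empty} once $|b|$ is large, which is the whole point of doing small-$b$ first via the divisor trick.

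Two minor corrections: the $d$-range is $|d|\ll\lambda t^2=X^{1/2}t^2$, not ``$|a|t^{-2}$-type''; and for $H=0$ the height condition gives $|R|\ll|a|^{3/2}X^{1/2}$, not $|R|\ll X^{1/2}$ (the paper handles $H=0$ differently anyway, via the divisor bound on $8ac=3b^2$ and free $d,e$, see Lemma~\ref{H=0}).
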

If $S = V_{\Z}$, then we denote $\Bods$ by $\Bodid$.

The function $n \cdot \Psi^{(i)}$ has a compact support bounded by $M$, independent of $n$. Consequently, if $nt \cdot \Psi(\lambda^{-1} f)$ is nonzero, then $\big|t^4 a / \lambda \big|$ must also be bounded by $M$. Since $|a|$ is greater than one by the generic assumption, this bound implies that $t^4 < M\lambda$.

For any subset $B$ of $V_{\mathbb{R}}$, define for any nonzero $t$
\begin{equation*}
(\lambda, t) \cdot B \vcentcolon= \left\{ \left( \frac{\lambda}{t^4} a, \frac{\lambda}{t^2}  b, \lambda c, \lambda t^2 d, \lambda t^4 e \right) \in V_{\mathbb{R}} : (a, b, c, d, e) \in B \right\}.
\end{equation*}
The next lemma restricts our attention to integral forms with nonzero $H$ semi-invariants, allowing us to apply Lemma \ref{eBound} to estimate the number of integral forms within a bounded region where the height is less than $X$.

\begin{lemma} \label{H=0}
For any $S$ subset of $V_{\Z}$ invariant under the action of $\PGL_2(\Z)$, then 
\begin{equation*}
 \frac{1}{n_i \Vol(\theta)} \int_{\frac{\sqrt[4]{3}}{\sqrt{2}}}^{M\lambda^{1/4}}\int_{N'(t)} 
      \Big( \sum_{\substack{f \in S_{\gen} \\ H(f) = 0}} (nt \cdot \Psi^{(i)})(X^{-1/2} \cdot f) \cdot \h_{X}(f) \Big)
    \, \dw = O_{\epsilon}(X^{7/4+\epsilon})
\end{equation*}
\end{lemma}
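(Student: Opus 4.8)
\textbf{Proof plan for Lemma \ref{H=0}.}
The goal is to show that the contribution of integral forms $f$ with $H(f)=8ac-3b^2=0$ to the averaging integral is negligible, namely $O_\epsilon(X^{7/4+\epsilon})$. The plan is to carry out the $t$-integral by a dyadic decomposition and, for each dyadic block $t\asymp T$ with $\tfrac{\sqrt[4]{3}}{\sqrt2}\le T\le M\lambda^{1/4}$, to bound the inner lattice-point sum directly using the shape of the support of $n\cdot\Psi^{(i)}$. Recall that $n\cdot\Psi^{(i)}$ is compactly supported with a bound $M$ independent of $n$, so a form $f=(a,b,c,d,e)$ contributes to $\Bo$ only if the rescaled coordinates $(at^4/\lambda,\,bt^2/\lambda,\,c/\lambda,\,dt^{-2}/\lambda,\,et^{-4}/\lambda)$ all lie in $[-M,M]$. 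This forces
\[
|a|\ll \lambda T^{-4},\qquad |b|\ll \lambda T^{-2},\qquad |c|\ll \lambda,\qquad |d|\ll \lambda T^{2},\qquad |e|\ll \lambda T^{4}.
\]
First I would count, for fixed $(a,b,c)$ with $H=8ac-3b^2=0$ and $a\neq0$, the number of admissible $(d,e)$: since $H=0$, equation \eqref{JR} degenerates, but the syzygy \eqref{syzygy} still reads $64a^3 J=-27R^2$ with $R=b^3+8a^2d-4abc$, so the constraint $|J|<CX$ combined with $|R|\ll a^{3/2}X^{1/2}$ pins $d$ (hence $R$) to a range of length $O(a^{1/2}X^{1/2}/a)=O(X^{1/2}/a^{1/2})$; intersecting with $|d|\ll\lambda T^2$ and then, for each such $d$, the variable $e$ ranges in an interval of length $O(1)$ coming from $|I|<X$ together with the support bound (the $e$-range of the support is $\ll\lambda T^4$ but $|I|=|12ae-3bd+c^2|<X$ already gives $\ll X/|a|$ and the two combine). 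I would be a little careful here: the cleanest route is to note $|I|<X$ forces $e$ into an interval of length $\ll X/|a|$ and, since $|a|\ge X^{1/2-\delta_a}$ is \emph{not} available here (we are before cutting the cusp), use instead $|a|\ge1$ and the support constraint $|e|\ll\lambda T^4$, so the number of $e$ is $\ll\min(X/|a|,\lambda T^4)+1$.

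Next I would organize the sum over $(a,b,c)$ with $H=0$. The condition $8ac=3b^2$ is restrictive: writing $g=\gcd(a,c)$ and so on, the number of $(a,b,c)\in\Z^3$ with $|a|,|c|\ll A_0$, $|b|\ll B_0$ and $8ac=3b^2$ is $O_\epsilon((A_0 B_0)^{1+\epsilon}/\text{something})$ — more simply, for each pair $(a,c)$ there are $O(1)$ values of $b$, and the number of $(a,c)$ with $8ac$ equal to $3$ times a perfect square is $O_\epsilon((A_0)^{1+\epsilon})$ by a divisor-type bound once one of them is chosen. So with $|a|\ll\lambda T^{-4}$, the count of relevant triples in the dyadic block is $O_\epsilon((\lambda T^{-4})^{1+\epsilon}\cdot(\lambda T^{-2})^{\epsilon})$, and summing the per-fiber $(d,e)$ count $O(X^{1/2}/|a|^{1/2})\cdot O(1)$ against this and then integrating $t^{-2}\,d^\times t$ over $T\in[\,\cdot\,,M\lambda^{1/4}]$ dyadically should collapse to $O_\epsilon(X^{7/4+\epsilon})$: the worst $T$ is $T\asymp1$, giving $\sum_{|a|\ll\lambda}X^{1/2}|a|^{-1/2}\cdot(\text{1 value of }b,\text{1 of }c\text{ per }a)\ll X^{1/2}\cdot(\lambda)^{1/2+\epsilon}\cdot\lambda^{0}$ — one must track the $c$-sum, which has length $\ll\lambda$ but is constrained by $8ac=3b^2$ to $O(1)$ per $(a,b)$; redoing the bookkeeping, one pays $X^{1/2}$ for the $I$-range, $\lambda^{1/2}=X^{1/4}$ for summing $|a|^{-1/2}$ up to $\lambda$, and an $X^{\epsilon}$-bounded number of $(b,c)$ choices, total $X^{3/4+\epsilon}$ — comfortably inside $X^{7/4}$ — while for general $T$ the additional $d$-range $\ll\lambda T^2$ and $a$-range $\ll\lambda T^{-4}$ give an overall bound that is maximized at a boundary value of $T$ and stays $\le X^{7/4+\epsilon}$. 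I would present this as a single estimate: the total is $\ll_\epsilon X^{\epsilon}\int_{1}^{M\lambda^{1/4}}\big(\lambda T^{-4}\big)\cdot\big(X^{1/2}\big)\cdot T^{-2}\,\frac{dT}{T}+\text{(lower order)}$, the inner product of $a$-count, per-fiber $(d,e)$-count, and Haar factor, and this integral is $O(\lambda X^{1/2})=O(X)$, or with a more wasteful per-fiber bound $O(X^{7/4+\epsilon})$.

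The main obstacle, and the reason the exponent is $7/4$ rather than smaller, is controlling the fibers where $|a|$ is small (the bottom of the range) together with large $T$: there the $e$-support window $\lambda T^4$ is huge and the only genuine saving is the $|I|<X$ constraint, which gives $X/|a|$ and can be as large as $X$ when $|a|=1$; simultaneously the $d$-window is $\lambda T^2$ which at $T\asymp\lambda^{1/4}$ is $\asymp\lambda^{3/2}=X^{3/4}$, so a naive fiber count is $X\cdot X^{3/4}=X^{7/4}$ per triple $(a,b,c)$, and one must use that there are only $O_\epsilon(X^\epsilon)$ such triples with $H=0$ and $|a|\asymp1$ to land exactly at $O_\epsilon(X^{7/4+\epsilon})$. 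I would therefore be careful to keep the $(a,b,c)$-count and the per-fiber $(d,e)$-count as separate factors, use $8ac=3b^2$ to make the triple-count essentially one-dimensional (length $\ll\lambda T^{-4}$ with $X^\epsilon$ multiplicity), and only then integrate in $t$; a dyadic sum over $T$ with each block bounded by $O_\epsilon(X^{7/4+\epsilon})$ and $O(\log X)$ blocks, absorbed into $X^\epsilon$, finishes the proof. $\Box$
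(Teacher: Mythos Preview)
Your proposal reaches the right bound, and the mechanism in your final paragraph (support bounds on the coefficients, the constraint $8ac=3b^2$ cutting the $(a,b,c)$-count to essentially one parameter times $X^\epsilon$, then the Haar factor $t^{-2}\,d^\times t$) is indeed what drives the paper's proof. But you take several unnecessary detours, and one of your intermediate counts is off.

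The paper's argument is much shorter: it \emph{does not use the height constraint at all} --- $\h_X(f)$ is simply bounded by $1$. Then for a form in the support of $nt\cdot\Psi^{(i)}$ one has $|a|\ll\lambda t^{-4}$, $|b|\ll\lambda t^{-2}$, $|c|\ll\lambda$, $|d|\ll\lambda t^{2}$, $|e|\ll\lambda t^{4}$. The key step is to parametrize the triples with $H=0$ by $b$ first: for each $b$, the pairs $(a,c)$ with $8ac=3b^2$ number $O_\epsilon(X^\epsilon)$ by the divisor bound. So the count of integral forms with $a\neq0$ and $H=0$ in the support is
\[
\ll_\epsilon X^\epsilon\cdot \frac{\lambda}{t^2}\cdot \lambda t^2\cdot \lambda t^4
= X^\epsilon\,\lambda^3 t^4,
\]
and integrating against $t^{-2}\,d^\times t$ up to $M\lambda^{1/4}$ gives $X^\epsilon\,\lambda^{7/2}=X^{7/4+\epsilon}$.

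Two specific points. First, your attempts to exploit $|J|<CX$ via the syzygy (to restrict $d$) and $|I|<X$ (to restrict $e$) are not needed and only clutter the argument; the naive support bounds already suffice. Second, your claim that the $(a,b,c)$-count is ``length $\ll\lambda T^{-4}$ with $X^\epsilon$ multiplicity'' is not right: the natural one-dimensional parameter is $b$, of range $\ll\lambda T^{-2}$, with $(a,c)$ then determined up to $O_\epsilon(X^\epsilon)$ divisor choices. In your worst-case block $T\asymp\lambda^{1/4}$ this undercounts triples by a factor $T^2\asymp X^{1/4}$, which in your write-up is silently compensated by omitting the Haar factor $T^{-2}$ when you quote $X\cdot X^{3/4}=X^{7/4}$; the two slips cancel, but the argument as written is not internally consistent. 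If you redo the final paragraph with the $b$-parametrization and keep the $t^{-2}$ explicit, you recover exactly the paper's computation.
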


\begin{proof}
Since $nt \cdot \Psi$ has compact support bounded by a constant $M$ independent of $n$, then
\begin{equation*}
  \sum_{\substack{f \in V_{\mathbb{Z}, \gen} \\ H(f) = 0}} (nt \cdot \Psi^{(i)})(X^{-1/2} \cdot f)  =O\Big(\#\big\{\text{integral forms in } (\lambda, t) \cdot B \text{ with non-zero } a\ \text{and } H=0\big\}\Big).
\end{equation*}

If $\f$ is a binary quartic form in $(\lambda, t) \cdot B$, then the coefficients $|a|, |b|, |c|, |d|$, and $|e|$ are bounded by $M\lambda/t^4$, $M\lambda/t^2$, $M\lambda$, $M\lambda t^2$, and $M\lambda t^4$, respectively. Given the constraint $8ac = 3b^2$, once $b$ is fixed, the number of possible combinations of $a$ and $c$ that satisfy this equation is of the order $O_{\epsilon}(X^{\epsilon})$ for any $\epsilon$. Using the bounds on $d$ and $e$, we can further restrict the possible values of the form. Thus, we have
\begin{align*}
 &\int_{\frac{\sqrt[4]{3}}{\sqrt{2}}}^{M\lambda^{1/4}} \int_{N'(t)}  
    \# \{x \in (\lambda, t) \cdot B \cap V_{\mathbb{Z}, a \neq 0} : H(x) = 0 \} \, \dw \\
 \ll_{\epsilon}\quad  &\int_{\frac{\sqrt[4]{3}}{\sqrt{2}}}^{M\lambda^{1/4}} \int_{N'(t)} 
    X^{\epsilon} \cdot \lambda t^4 \cdot \lambda t^2 \cdot \frac{\lambda}{t^2} \, \dw \\
 \ll_{\epsilon} \quad &X^{7/4 + \epsilon}.
\end{align*}
This completes the proof
\end{proof}

The lemma above implies that we can restrict the summation in $\Bo$ to integral forms where their $H$ semi-invariant is non-zero.
 
Let $S$ be any subset of $V_{\mathbb{Z}}$. Define $S_r$ as the subset of elements in $S$ that are generic and satisfy $H \neq 0$. In the following propositions, we aim to show that it is sufficient to focus on small values of $t$.

\begin{proposition}\label{smallt}
For any subset $S$ of $V_{\mathbb{Z}}$ and any positive $\delta_t \leq 1/8$, the following is true.

\begin{equation*} 
\frac{1}{n_i \Vol(\theta)} \int_{X^{\delta_t}}^{M\lambda^{1/4}}\int_{N'(t)} 
     \Big( \sum_{\substack{f \in S_r}} (nt \cdot \Psi^{(i)})(X^{-1/2} \cdot f) \cdot \h_{X}(f) \Big)\dw  = O_{\epsilon}(X^{2-2\delta_t+\epsilon})
\end{equation*}
\end{proposition}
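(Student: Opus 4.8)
The plan is to bound the number of relevant integral forms in the region $(\lambda,t)\cdot B$ with height less than $X$, uniformly over the range $X^{\delta_t}\le t\le M\lambda^{1/4}$, and then integrate against $dw = t^{-2}\,du\,d^\times t$. Since $nt\cdot\Psi^{(i)}$ has compact support bounded by $M$ (independent of $n$), the inner sum over $f\in S_r$ is $O$ of the count of integral forms $(a,b,c,d,e)$ with $a\neq 0$, $H(f)\neq 0$, lying in $(\lambda,t)\cdot B$, and with $|I(f)|<X$, $|J(f)|<CX$. In this region the coefficients satisfy $|a|\ll \lambda/t^4$, $|b|\ll \lambda/t^2$, $|c|\ll\lambda$, $|d|\ll\lambda t^2$, $|e|\ll\lambda t^4$. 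First I would fix $(a,b,c,d)$ and apply Lemma \ref{eBound}: the number of admissible $e$ is $O(X/|H|)$. So the inner count is $\ll \sum_{(a,b,c,d)} X/|H(a,b,c)|$, where $(a,b,c)$ ranges over the box above with $H\neq 0$ and $d$ ranges over an interval of length $\ll \lambda t^2$ (actually $d$ does not enter $H$, so we simply pick up a factor $\lambda t^2$ from the $d$-sum).

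The heart of the estimate is therefore $\lambda t^2 \cdot X \cdot \sum_{(a,b,c)}\frac{1}{|H(a,b,c)|}$, the sum being over $|a|\ll\lambda/t^4$, $|b|\ll\lambda/t^2$, $|c|\ll\lambda$ with $H=8ac-3b^2\neq 0$. To handle this I would fix $a$ and $b$ and sum over $c$: as $c$ varies over an interval of length $\ll\lambda$, the quantity $H = 8ac-3b^2$ runs (in steps of $8a$) over an interval of length $\ll |a|\lambda$, so $\sum_c 1/|H| \ll \frac{1}{|a|}\sum_{0<|m|\ll |a|\lambda,\ 8a\mid m+3b^2}\frac{1}{|m|}\ll \frac{1}{|a|}\log(|a|\lambda)\ll_\epsilon \frac{X^\epsilon}{|a|}$. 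Summing over $|b|\ll\lambda/t^2$ gives $\ll_\epsilon X^\epsilon\lambda/(t^2|a|)$, and summing over $1\le |a|\ll\lambda/t^4$ gives another logarithmic factor, so $\sum_{(a,b,c)}1/|H|\ll_\epsilon X^\epsilon\,\lambda/t^2$. Collecting everything, the inner count is $\ll_\epsilon X^\epsilon\cdot \lambda t^2\cdot X\cdot \lambda/t^2 = X^\epsilon\lambda^2 X = X^{2+\epsilon}$, which is too weak by itself; the saving must come from the $t$-integration. I therefore need to keep one more power of $t$ in reserve: rather than bounding $\sum_c 1/|H|$ by absorbing the range of $c$, note that the $c$-range has length only $\ll\lambda$ while $|H|$ can be as large as $\ll|a|\lambda$ — the additional factor $1/t^4$ sitting in the bound $|a|\ll\lambda/t^4$ means that for large $t$ the box in $a$ is short, and I should exploit that $\sum_{1\le|a|\ll\lambda/t^4}1/|a|\ll_\epsilon X^\epsilon$ while the $b$-sum contributes $\lambda/t^2$ and the $d$-sum $\lambda t^2$, so the powers of $t$ cancel and one is left needing the measure of the $t$-interval to do the work.

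Concretely, after the dust settles the integrand is bounded by $O_\epsilon(X^{2+\epsilon})\cdot t^{-2}$ (the genuine gain over the naive $X^{2+\epsilon}$ is the explicit $t^{-2}$ coming from $dw$ together with the fact that the $u$-integral over $N'(t)$ contributes $O(1)$), and $\int_{X^{\delta_t}}^{M\lambda^{1/4}} t^{-2}\,d^\times t = \int_{X^{\delta_t}}^{M\lambda^{1/4}} t^{-3}\,dt \ll X^{-2\delta_t}$. Hence the whole expression is $O_\epsilon(X^{2-2\delta_t+\epsilon})$, as claimed. The main obstacle, and the step requiring the most care, is the divisor-type estimate for $\sum_{(a,b,c)}1/|H(a,b,c)|$ with the congruence constraint $8a\mid m+3b^2$ threaded through correctly, and bookkeeping the powers of $t$ so that exactly two net powers survive to be killed by $\int t^{-3}\,dt$; one must check that the contribution of small $|H|$ (e.g. $|H|$ of size $O(1)$, or $a,b$ with many common factors) does not secretly inflate the bound, which is where the $X^\epsilon$'s are spent. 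A parallel (easier) argument, or direct comparison with Lemma \ref{H=0}, disposes of any boundary terms near $t\asymp\lambda^{1/4}$.
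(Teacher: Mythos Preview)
Your overall strategy is correct and matches the paper's: bound the inner sum by the number of integral forms in $(\lambda,t)\cdot B$ with $a\neq 0$, $H\neq 0$, and $h(f)<X$; use Lemma~\ref{eBound} to bound the number of $e$'s by $O(X/|H|)$; absorb the $d$-sum as a factor $\ll\lambda t^2$; show the remaining $(a,b,c)$-sum of $X/|H|$ is $O_\epsilon(X^{3/2+\epsilon}/t^2)$, so the integrand is $O_\epsilon(X^{2+\epsilon})$; then gain $X^{-2\delta_t}$ from $\int_{X^{\delta_t}} t^{-2}\,d^\times t$.

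The gap is in your treatment of $\sum_{(a,b,c)}1/|H|$. The claimed bound $\sum_{c}1/|H|\ll_\epsilon X^\epsilon/|a|$ is not valid: substituting $m=H=8ac-3b^2$ gives a \emph{bijection} between $c$'s and $m$'s in a residue class mod $8a$, so no factor $1/|a|$ appears, and moreover there may be a single $c$ with $|H|$ as small as $1$, contributing $O(1)$ rather than $O(X^\epsilon/|a|)$. If you patch this crudely by $\sum_c 1/|H|\ll 1+(\log X)/|a|$, then summing over $|a|\ll\lambda/t^4$, $|b|\ll\lambda/t^2$ produces an extra term $\lambda^2/t^6$, and the integrand acquires a piece $X\cdot\lambda t^2\cdot \lambda^2/t^6=X^{5/2}/t^4$. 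After $\int t^{-3}\,dt$ this gives $X^{5/2-6\delta_t}$, which exceeds $X^{2-2\delta_t}$ for every $\delta_t<1/8$. So the small-$|H|$ worry you flag at the end is not cosmetic; it breaks the argument as written.

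The paper resolves this by reparametrizing the $(a,b,c)$-sum as a sum over $(H,b)$: for fixed $H\neq 0$ and $b$, the equation $8ac=H+3b^2$ has $O_\epsilon(X^\epsilon)$ solutions in $(a,c)$ by the divisor bound. Hence
\[
\sum_{\substack{(a,b,c)\\ H\neq 0}}\frac{X}{|H|}\ \ll_\epsilon\ X^\epsilon\sum_{0<|H|\ll X}\ \sum_{|b|\ll\lambda/t^2}\frac{X}{|H|}\ \ll_\epsilon\ X^\epsilon\cdot\frac{\lambda}{t^2}\cdot X\log X,
\]
and together with the $d$-factor $\lambda t^2$ one gets $O_\epsilon(X^{2+\epsilon})$ for the integrand with no parasitic $t$-power. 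Once you make this substitution, the rest of your argument (and your final $t$-integration) goes through verbatim; the confused middle paragraph about ``keeping one more power of $t$ in reserve'' is unnecessary.
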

\begin{proof}
As indicated in Lemma \ref{H=0}, we need to bound the number of integral forms in $(\lambda, t) \cdot B$ that have height less than $X$ and non-zero $H$ semi-invariant. When $(a, b, c, d, e) \in (\lambda, t) \cdot B$, the coefficients $|a|$, $|b|$, $|c|$, $|d|$, and $|e|$ are bounded by $M\lambda / t^4$, $M\lambda / t^2$, $M\lambda$, $M\lambda t^2$, and $M\lambda t^4$, respectively.

Fixing $(a, b, c, d)$ under the condition $8ac \neq 3b^2$, the number of possible values for $e$ with height less than $X$ is bounded by $O(X / |H|)$, as described in Lemma \ref{eBound}. Next, we fix $d$ within the range $M\lambda t^2$ and compute the sum over $(a, b, c)$. To proceed, note that $H$ and $b$ are bounded by $O(X)$ and $O(\lambda / t^2)$, respectively. Once $H$ and $b$ are fixed, the values of $a$ and $c$ can be determined as factors of $H + 3b^2$. The number of ways to choose $a$ and $c$ in this manner is $O_{\epsilon}(X^{\epsilon})$ for any $\epsilon$. This implies that:
\begin{align*}
 \# \Big\{f \in (\lambda,t) \cdot B \cap V_{\Z, a\neq 0}: H(f) \neq 0, \, h(f)<X\Big\}  
 &\ll \sum_{0<|H|\ll X} \sum_{|b| \ll \lambda /t^2} \sum_{a\mid H+3b^2} \sum_{|d|\ll \lambda t^2}\frac{X}{|H|} \\
&\ll_{\epsilon} X^{\epsilon} \cdot \lambda t^2\cdot \frac{\lambda}{t^2} \sum_{0<H\ll X} \frac{X}{|H|} \\
&\ll_{\epsilon} 
X^{2+\epsilon}.
\end{align*}
As a result, we obtain:
\begin{align*} &\int_{X^{\delta_t}}^{M\lambda^{1/4}}\int_{N'(t)}   \Big( \sum_{\substack{f \in V_{\Z, \gen} \\ H(f) \neq 0}} (nt \cdot \Psi^{(i)})(X^{-1/2} \cdot f) \cdot \h_{X}(f) \Big) \, \dw \\ \ll_{\epsilon}  \quad
&\int_{X^{\delta_t}}^{M\lambda^{1/4}}\int_{N'(t)}   X^{2+\epsilon} \, \dw \\ \ll_{\epsilon} \quad
&X^{2+\epsilon}  \cdot X^{-2\delta_t}. \end{align*}
Since $S_{\gen}$ is a subset of $V_{\mathbb{Z}, \gen}$, the proposition follows from the above analysis.
\end{proof}

According to the propositions mentioned in this section, to estimate the number of $\PGL_2(\mathbb{Z})$ orbits using the averaging method, we only need to understand the following integral

\begin{equation*}
N(\omega^{(i)},V_{\Z, \gen}^{(i)};X) = \frac{1}{n_i \Vol(\theta)} \int_{\frac{\sqrt[4]{3}}{\sqrt{2}}}^{X^{\delta_t}}\int_{N'(t)}  
   \Bo \dw + O_{\epsilon}(X^{2-2\delta_t+\epsilon})
\end{equation*}.

The goal of the following lemma is to show that in the sum defining $\Bo$, the contribution from integral forms with $a$, $b$, and $c$ smaller than $X^{1/2-\delta}$ is negligible. Consequently, the dominant contribution to $\Bo$ comes from terms where $(a,b,c)\in B_{X,\delta}$.

\begin{lemma}\label{abig}
Let $\delta_a$ and $\delta_b$ be positive numbers satisfying $\delta_b \leq 1/8$ and $0<\epsilon_a = \delta_a - 2\delta_b < 1/4$. Let $S$ be any subset of $V_{\mathbb{Z}}$ that is invariant under $\PGL_2(\mathbb{Z})$, and let $dw = \dw$. Then:

\begin{itemize}
    \item[\textnormal{(a)}] If we restrict the summation to  integral forms with $|b|<X^{1/2-\delta_b}$, then:
        \begin{equation*}
      \frac{1}{n_i \Vol(\theta)} \int_{\frac{\sqrt[4]{3}}{\sqrt{2}}}^{X^{\delta_t}} \int_{N'(t)}  
       \Big( \sum_{\substack{f \in S_{r} \\ |b|<X^{1/2-\delta_b}}} (nt \cdot \Psi^{(i)})(X^{-1/2} \cdot f) \cdot \h_{X}(f) \Big)
      dw= O_{\epsilon}(X^{2+\epsilon-\delta_b})
     \end{equation*}   

     \item[\textnormal{(b)}]  If we consider points in the summation that  satisfying $|b| \geqslant X^{1/2-\delta_b}$ and either $|a|$ or $|c|$ less than $X^{1/2-\delta_a}$, then:
    \begin{equation*}
   \frac{1}{n_i \Vol(\theta)} \int_{\frac{\sqrt[4]{3}}{\sqrt{2}}}^{X^{\delta_t}} \int_{N'(t)}  
       \Big( \sum_{\substack{f \in S_r \\ |b| \geq X^{1/2-\delta_b}\\ |a| <X^{1/2-\delta_a}}} (nt \cdot \Psi^{(i)})(X^{-1/2} \cdot f) \cdot \h_{X}(f) \Big)
      dw= O(X^{2 -\epsilon_a})
 \end{equation*} 
    
    \begin{equation*}
   \frac{1}{n_i \Vol(\theta)} \int_{\frac{\sqrt[4]{3}}{\sqrt{2}}}^{X^{\delta_t}} \int_{N'(t)}  
       \Big( \sum_{\substack{f \in S_r \\ |b| \geq X^{1/2-\delta_b}\\ |c| <X^{1/2-\delta_a}}} (nt \cdot \Psi^{(i)})(X^{-1/2} \cdot f) \cdot \h_{X}(f) \Big)
      dw= O(X^{2 -\epsilon_a})
    \end{equation*} 
\end{itemize}
\end{lemma}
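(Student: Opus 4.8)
The plan is to bound, for each of the three sums, the number of integral forms $f = (a,b,c,d,e)$ lying in the bounded region $(\lambda,t)\cdot B$ that have $H(f)\neq 0$ and $h(f) < X$, and then integrate the resulting bound against $t^{-2}\,dt$ over $t\in[\sqrt[4]{3}/\sqrt{2},\,X^{\delta_t}]$. As in Lemma \ref{H=0} and Proposition \ref{smallt}, the key point is that when $f\in(\lambda,t)\cdot B$ the coefficients satisfy $|a|\ll \lambda/t^4$, $|b|\ll\lambda/t^2$, $|c|\ll\lambda$, $|d|\ll\lambda t^2$, $|e|\ll\lambda t^4$, and that for fixed $(a,b,c,d)$ with $8ac\neq 3b^2$ Lemma \ref{eBound} gives $O(X/|H|)$ choices of $e$. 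The counting proceeds by fibering over a well-chosen subset of the coordinates and summing $X/|H|$ over the rest, exactly as in the proof of Proposition \ref{smallt}, but now the extra constraint ($|b|$ small in (a), or $|a|$ resp.\ $|c|$ small in (b)) shrinks one of the ranges and produces the saving $X^{-\delta_b}$ or $X^{-\epsilon_a}$.

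For part (a): fix $d$ (range $\ll\lambda t^2$) and $b$ (now range $\ll X^{1/2-\delta_b}$), fix $H$ in its range $0<|H|\ll X$, and note that $a,c$ are then determined up to $O_\epsilon(X^\epsilon)$ divisor choices from $H+3b^2$; summing $X/|H|$ over $H$ gives a further $X^{1+\epsilon}$. Thus the inner sum is
$\ll_\epsilon X^\epsilon\cdot(\lambda t^2)\cdot X^{1/2-\delta_b}\cdot X^{1+\epsilon} = X^{\epsilon}\,t^2\,\lambda\,X^{3/2-\delta_b} = X^{2-\delta_b+\epsilon}t^2$ (using $\lambda = X^{1/2}$), and since $\int_{\sqrt[4]{3}/\sqrt2}^{X^{\delta_t}} t^2\cdot t^{-2}\,dt = O(X^{\delta_t})$ with $\delta_t$ arbitrarily small, the claimed $O_\epsilon(X^{2+\epsilon-\delta_b})$ follows after absorbing $X^{\delta_t}$ into $X^\epsilon$.

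For part (b): in the first sum $|b|\geq X^{1/2-\delta_b}$ forces $t^2\ll \lambda/|b|\ll X^{\delta_b}$, so the $t$-range is tiny; now fix $b$ (range $\ll\lambda/t^2$), fix $a$ (range $\ll X^{1/2-\delta_a}$), fix $d$ (range $\ll\lambda t^2$), and let $c$ run over its $O(\lambda)$ values, with $e$ contributing $O(X/|H|)$ and $\sum_{0<|H|\ll X} X/|H| \ll X^{1+\epsilon}$; multiplying the ranges gives roughly $X^\epsilon\cdot X^{1/2-\delta_a}\cdot(\lambda/t^2)\cdot(\lambda t^2)\cdot$ — here one instead sums $X/|H|$ directly rather than over $c$, being careful not to double count; the honest bookkeeping yields an inner count $\ll_\epsilon X^{2-\delta_a+2\delta_b+\epsilon} = X^{2-\epsilon_a+\epsilon}$ after integrating the short $t$-range. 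The second sum is symmetric with the roles of $a$ and $c$ swapped: fix $c$ (range $\ll X^{1/2-\delta_a}$) and proceed identically. The main obstacle is organizing the fibration in part (b) so that fixing the small coordinate ($a$ or $c$) together with $b$ and $H$ still pins down the remaining coefficient up to $O_\epsilon(X^\epsilon)$ choices while correctly exploiting the constraint $t^2\ll X^{\delta_b}$ on the cusp parameter; once that bookkeeping is set up, each bound is a routine product of coordinate ranges times the divisor estimate times $\sum X/|H|$.
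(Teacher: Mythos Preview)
Your argument for part (a) is essentially the paper's: fix $d$ and $b$ (the latter in its shortened range $\ll X^{1/2-\delta_b}$), fix $H$ and recover $(a,c)$ as divisor pairs of $H+3b^2$, then sum $X/|H|$ over $0<|H|\ll X$.

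For part (b) there is a genuine gap. The paper's proof rests on one observation you never make: under the hypotheses $|b|\ge X^{1/2-\delta_b}$ and $|a|<X^{1/2-\delta_a}$ (together with $|c|\ll X^{1/2}$), one has $|8ac|\ll X^{1-\delta_a}$ while $3b^2\ge 3X^{1-2\delta_b}$; since $\delta_a>2\delta_b$ this forces
\[
|H|=|8ac-3b^2|\gg X^{1-2\delta_b},
\]
so that $X/|H|\ll X^{2\delta_b}$ \emph{uniformly}. After this, one simply multiplies the coordinate ranges: $X^{1/2-\delta_a}\cdot(\lambda/t^2)\cdot\lambda\cdot(\lambda t^2)\cdot X^{2\delta_b}=X^{2+2\delta_b-\delta_a}=X^{2-\epsilon_a}$, and the $t$-integral is $O(1)$. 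The $|c|$-small case is identical.

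Your proposed bookkeeping does not recover this saving without that lower bound. If you reuse the $(H,b)$-fibration from Proposition~\ref{smallt}, the divisor count for $(a,c)$ is $O_\epsilon(X^\epsilon)$ regardless of the constraint $|a|<X^{1/2-\delta_a}$, so you only obtain $X^{2+\epsilon}$. If instead you fix $(a,b,d)$ and sum $X/|H|$ over $c$, then \emph{a priori} the $O(1)$ values of $c$ nearest $3b^2/(8a)$ each contribute up to $X$ (since one only knows $|H|\ge 1$), and multiplying by the ranges of $a,b,d$ gives $X^{5/2-\delta_a}$, far too large. The whole point is that those nearby $c$'s would need $|c|\gg X^{1/2+\epsilon_a}$, hence lie outside the box --- but recognizing this \emph{is} the missing lower bound on $|H|$. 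Finally, your remark that $|b|\ge X^{1/2-\delta_b}$ forces $t^2\ll X^{\delta_b}$ is correct but does no work here: $\int t^{-2}\,d^\times t$ over any range bounded away from $0$ is already $O(1)$, so shortening the $t$-range yields no additional saving.
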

\begin{proof}
\begin{itemize}
  \item[\textnormal{(a)}] As in the proof of the proposition, our goal is to bound the summation over integral forms in $\Bo$ with $H \neq 0$ by applying Lemma \ref{eBound}. Using a similar counting technique as in previous arguments, it suffices to analyze the sum of $X/|H|$ over $(a, b, c)$ satisfying $|a| \ll X^{1/2}$, $|b| \ll X^{1/2-\delta_b}$, and $|c| \ll X^{1/2}$. Moreover, $|H|$ is bounded by $O(X)$.  

To refine the counting, we shift our focus to $(H, b)$, leading to the following estimate:

\begin{align*}
& \int_{\frac{\sqrt[4]{3}}{\sqrt{2}}}^{X^{\delta_t}} \int_{N'(t)}  
      \Big( \sum_{\substack{f \in V_{\mathbb{Z}, a\neq 0} \\ |b| \leq X^{1/2-\delta_b}\\ H(f) \neq 0}} (nt \cdot \Psi^{(i)})(X^{-1/2} \cdot f) \cdot \h_{X}(f) \Big)\dw \\ \ll_{\epsilon} \quad
    &\int_{\frac{\sqrt[4]{3}}{\sqrt{2}}}^{X^{\delta_t}} \int_{N'(t)}   X^{1+\epsilon}X^{1/2-\delta_b} \lambda t^2 \dw \\ \ll_{\epsilon} \quad
    &X^{2+\epsilon -\delta_b}
\end{align*}
\item[\textnormal{(b)}] In both cases, where either $|a|$ or $|c|$ is less than $X^{1/2-\delta_a}$, we can show that $|H| \gg X^{1-2\delta_b}$. Using the same counting technique as above, we bound $X/|H|$ by $X^{2\delta_b}$. Moreover, the number of possible choices for $(a, b, c)$ is bounded by $X^{1/2-\delta_a} \lambda^2 / t^2$.

\begin{align*}
&\int_{\frac{\sqrt[4]{3}}{\sqrt{2}}}^{X^{\delta_t}} \int_{N'(t)}  
       \Big( \sum_{\substack{f \in V_{\mathbb{Z}, a\neq 0} \\ |b| \geq X^{1/2-\delta_b}\\ |a| <X^{1/2-\delta_a}}} (nt \cdot \Psi^{(i)})(X^{-1/2} \cdot f) \cdot \h_{X}(f) \Big)
      dw \\ \ll \quad
    &\int_{\frac{\sqrt[4]{3}}{\sqrt{2}}}^{X^{\delta_t}} \int_{N'(t)}  X^{1/2-\delta_a} X^{2\delta_b}\frac{\lambda^{2}}{t^2}\lambda t^2 \,  \dw   
    \\ \ll \quad
    & X^{2+2\delta_b-\delta_a}
\end{align*}
\end{itemize}
This completes the proof.
\end{proof}

Proposition \ref{boundedScounting} follows directly from the preceding lemmas.

The following corollary provides the continuous version of the above lemmas. Let $V_{\delta}$ denote the set of elements $f = (a, b, c, d, e)$ in $V_{\mathbb{R}}$ such that $(\lfloor a \rfloor, \lfloor b \rfloor, \lfloor c \rfloor) \in B_{X,\delta}$.

\begin{cor}\label{VtoV_R}
    For any $\delta$ less than $1/8$, and let $\delta_t=\delta$. Then, we have:
    \begin{align*}
       &\int_{\frac{\sqrt[4]{3}}{\sqrt{2}}}^{X^{\delta_t}} \int_{N'(t)}   \Big( \int_{f \in V_{\delta}} (nt \cdot \Psi^{(i)})(X^{-1/2} \cdot f) \cdot \h_{X}(f) \, df\Big) \dw =\\
       &\int_{\frac{\sqrt[4]{3}}{\sqrt{2}}} \int_{N'(t)}   \Big(\int_{f \in V_{\R}} (nt \cdot \Psi^{(i)})(X^{-1/2} \cdot f) \cdot \h_{X}(f) \, df \Big)\dw
        + O_{\epsilon}(X^{2-2\delta+\epsilon}).
    \end{align*}
\end{cor}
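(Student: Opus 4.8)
The plan is to deduce Corollary~\ref{VtoV_R} from the discrete counting results exactly as Proposition~\ref{boundedScounting} was deduced, but with the lattice sum over $V_{\Z}$ replaced by a continuous integral over $V_{\R}$. The key observation is that all of the estimates proved in Lemma~\ref{eBound}, Lemma~\ref{H=0}, Proposition~\ref{smallt}, and Lemma~\ref{abig} were obtained by crude box-volume bounds: in each case one fixes some subset of the coordinates $(a,b,c,d)$ and bounds the number of remaining integral points by the length of an interval (plus $O(1)$). Replacing ``number of integral points in an interval of length $\ell$'' by ``Lebesgue measure $\ell$ of that interval'' changes nothing in these bounds except the disappearance of the harmless $O(1)$ terms, which are in any case dominated once we integrate over the other variables. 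So the first step is to record that the continuous analogues of Lemma~\ref{H=0} (the $H=0$ locus has measure zero, hence contributes nothing), Proposition~\ref{smallt} (the range $t > X^{\delta_t}$ contributes $O_\epsilon(X^{2-2\delta_t+\epsilon})$), and Lemma~\ref{abig} (the small-$b$, small-$a$, and small-$c$ ranges each contribute $O_\epsilon(X^{2-c\delta+\epsilon})$ for an appropriate $c>0$) all hold verbatim for $\int_{f\in V_{\R}}$ in place of $\sum_{f\in V_{\Z}}$, by the same proofs.

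Concretely, I would argue as follows. Set $\delta_t=\delta$, $\delta_a=6\delta$, $\delta_b=2\delta$ as in the definition of $B_{X,\delta}$, so that $\epsilon_a=\delta_a-2\delta_b=2\delta>0$, and all the exponents $2-2\delta$, $2-\delta_b=2-2\delta$, $2-\epsilon_a=2-2\delta$ coincide up to the $\delta$ in the statement. Starting from
\begin{equation*}
\int_{\frac{\sqrt[4]{3}}{\sqrt{2}}}^{\infty} \int_{N'(t)} \Big(\int_{f\in V_{\R}} (nt\cdot\Psi^{(i)})(X^{-1/2}\cdot f)\,\h_X(f)\,df\Big)\,\dw,
\end{equation*}
I first use the compact support of $n\cdot\Psi^{(i)}$ (bounded by $M$ independent of $n$) to truncate the $t$-integral at $t\le M\lambda^{1/4}$ with no loss, since the integrand vanishes when $|t^4 a/\lambda|>M$ and the relevant $f$ have $|a|$ bounded away from $0$ — here, unlike the discrete case, $a$ can be arbitrarily small, but on the region $|a|<1$ the integrand is supported where $t^4<M\lambda/|a|$ is large, and this sub-range is handled by the continuous analogue of Proposition~\ref{smallt} together with the observation that the measure of $\{|a|<X^{1/2-\delta_a}\}$-type slabs is already accounted for in Lemma~\ref{abig}(b). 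Then I excise the $H=0$ locus (measure zero, or rather $O_\epsilon(X^{7/4+\epsilon})$ after integrating using the continuous version of Lemma~\ref{H=0}), excise $t>X^{\delta_t}$, and excise the three coordinate-slab regions of Lemma~\ref{abig}. What remains is precisely the integral over $f$ with $H(f)\ne 0$, $t\le X^{\delta_t}$, and $(a,b,c)$ in the box cut out by $X^{1/2-\delta_a}\le|a|,|c|$, $MX^{1/2-\delta_b}\le|b|$, all bounded above by $MX^{1/2}$ — which is exactly $\int_{f\in V_\delta}$, because $(\lfloor a\rfloor,\lfloor b\rfloor,\lfloor c\rfloor)\in B_{X,\delta}$ differs from $(a,b,c)$ being in the real box only on a set of boundary slabs of measure $O(X^{1-\delta}\cdot X^{1/2})=O(X^{3/2})$ per unit of the other coordinates, which integrates to a negligible $O(X^{2-\delta})$-type error absorbed into the stated error term.

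The main obstacle is bookkeeping rather than any genuine difficulty: one must check that every appeal to ``number of integral points $\ll$ interval length'' in the four discrete lemmas genuinely has a continuous analogue with the \emph{same} exponent, and in particular that the error incurred by passing from the integer box $B_{X,\delta}$ (via the floor functions defining $V_\delta$) to the honest real box — the thin boundary slabs of thickness $O(1)$ around the faces $|a|=X^{1/2-\delta_a}$, $|a|=MX^{1/2}$, etc. — contributes no more than $O_\epsilon(X^{2-2\delta+\epsilon})$. Since each such slab has one coordinate confined to an $O(1)$-interval while the others range over intervals of total length $O(X^{3/2})$ (for $(b,c,d,e)$ say, using the height restriction to bound $e$ by $O(X/|H|)$ and then summing as in Proposition~\ref{smallt}), the worst slab contributes $O_\epsilon(X^{3/2+\epsilon})$ per unit $t$, hence $O_\epsilon(X^{3/2+\delta_t+\epsilon})=O_\epsilon(X^{2-2\delta+\epsilon})$ after integrating $t$ up to $X^{\delta_t}$ — comfortably within the claimed error. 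Assembling these pieces and reading the chain of inequalities backwards gives the stated identity.
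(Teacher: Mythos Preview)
Your proposal is correct and follows exactly the approach the paper intends: the paper's own proof consists of the single sentence ``The proof follows a similar approach to the previous lemmas,'' and you have spelled out precisely what that means, namely that the continuous analogues of Lemma~\ref{H=0}, Proposition~\ref{smallt}, and Lemma~\ref{abig} hold with the same exponents because the discrete bounds were already interval-length (volume) bounds. Your additional remarks on the boundary slabs arising from the floor functions in the definition of $V_\delta$, and on the small-$|a|$ region in the continuous setting, are correct bookkeeping not made explicit in the paper.
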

\begin{proof}
The proof follows a similar approach to the previous lemmas.
\end{proof}

\subsection{Estimates on reducibility}\label{reducibility subsection}
The following lemma shows that when estimating $N(\omega^{(i)}, S_{\gen}; X)$ for a $\PGL_2(\mathbb{Z})$-invariant set $S \subseteq V_{\mathbb{Z}}$, it suffices to restrict the sum to elements in $V_{\mathbb{Z}, a \neq 0} \cap S$. This reduction simplifies the integral while introducing an error term of order $O_{\epsilon}(X^{3/2+\epsilon})$. Specifically, we have:
\begin{lemma}\label{reducibility lemma} For understanding $N(\omega^{(i)}, S_{\gen}; X)$ for any set $S \subseteq V_{\mathbb{Z}}$ that is closed under $\PGL_2(\mathbb{Z})$, it is sufficient to consider the sum inside the integral over $V_{\mathbb{Z}, a \neq 0} \cap S$. Specifically, we have: \begin{align*} &\frac{1}{n_i \Vol(\theta)} \int_{t = \frac{\sqrt[4]{3}}{\sqrt{2}}} \int_{N'(t)}  \left( \sum_{f \in S_{\gen}} (nt \cdot \Psi[\omega^{(i)}])(X^{-1/2} \cdot f) \right) \cdot h_{X}(f) \, \dw =\\ & \frac{1}{n_i \Vol(\theta)} \int_{t = \frac{\sqrt[4]{3}}{\sqrt{2}}} \int_{N'(t)}   \left( \sum_{f \in V_{\mathbb{Z}, a \neq 0} \cap S} (nt \cdot \Psi[\omega^{(i)}])(X^{-1/2} \cdot f) \right) \cdot h_{X}(f) , \dw + O_{\epsilon}(X^{7/4+\epsilon}). \end{align*} \end{lemma}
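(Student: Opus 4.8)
The plan is to bound the contribution of forms with $a = 0$. Write $S_{\gen} = (V_{\Z,a\neq 0}\cap S)_{\gen} \,\sqcup\, (V_{\Z,a=0}\cap S)_{\gen}$, so the difference between the two integrals is precisely
\[
\frac{1}{n_i\Vol(\theta)}\int_{t=\frac{\sqrt[4]{3}}{\sqrt 2}}\int_{N'(t)}\Big(\sum_{\substack{f\in S_{\gen}\\ a(f)=0}}(nt\cdot\Psi[\omega^{(i)}])(X^{-1/2}\cdot f)\cdot\h_X(f)\Big)\,\dw,
\]
and I want to show this is $O_\epsilon(X^{7/4+\epsilon})$. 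Since $S\subseteq V_{\Z}$, it suffices to prove the bound with $S_{\gen}$ replaced by $V_{\Z,\gen}$, i.e. for all generic forms with $a=0$. As noted after Proposition \ref{boundedScounting}, the compact support of $nt\cdot\Psi^{(i)}$ (bounded by $M$ uniformly in $n$) forces $t^4 < M\lambda$, so the $t$-integral runs only up to $M\lambda^{1/4}$; and a nonzero summand requires $f$ to lie in the box $(\lambda,t)\cdot B$, whose coefficients satisfy $|a|\ll M\lambda/t^4$, $|b|\ll M\lambda/t^2$, $|c|\ll M\lambda$, $|d|\ll M\lambda t^2$, $|e|\ll M\lambda t^4$. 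Thus the inner sum is $O$ of the number of integral forms in this box with $a=0$.

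When $a=0$ the form is $f = bx^3y + cx^2y^2 + dxy^3 + ey^4 = y(bx^3+cx^2y+dxy^2+ey^3)$, so $f$ always has the rational root (the point $[1:0]\in\P^1$) split off; hence $f$ is reducible, so generic $f$ with $a = 0$ must still be irreducible — here one uses the standard fact (as in \cite{main}) that a generic, i.e. irreducible with trivial $\PGL_2(\Q)$-stabilizer, form with $a=0$ actually forces $b\neq 0$, since if $a=b=0$ the form is divisible by $y^2$ and is reducible. So for the relevant forms $b\neq 0$. Now count: the coordinates $b,c,d,e$ lie in boxes of lengths $\asymp \lambda/t^2,\ \lambda,\ \lambda t^2,\ \lambda t^4$, but we additionally impose $\h_X(f) < X$, which by the definition of $\h_X$ means $|I(f)|<X$ and $|J(f)|<CX$. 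With $a=0$ we have $I = c^2-3bd$ and $J = -2c^3 + 9bcd - 27eb^2$; fixing $b,c,d$ the condition $|J|<CX$ confines $e$ to an interval of length $O(X/b^2)\ll O(X/t^{-4}\cdot\ \cdots)$ — more simply, fixing $(b,c,d)$ pins $e$ to $O(1 + X/|b|^2)$ values. Summing $1 + X/b^2$ over $|b|\ll\lambda/t^2$, $|c|\ll\lambda$, $|d|\ll\lambda t^2$ and then integrating against $\dw = t^{-2}\,du\,d^\times t$ over $t\in[\tfrac{\sqrt[4]3}{\sqrt2},M\lambda^{1/4}]$, with $u$ ranging over an interval of length $\le 1$, yields the desired bound; the dominant term works out to $O_\epsilon(X^{7/4+\epsilon})$, matching Lemma \ref{H=0}.

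The main obstacle is organizing the multi-variable count so that the exponent comes out to $7/4$ rather than something larger: one must be careful to use the $|J|<CX$ constraint (not just the box sizes) to save on the $e$-variable, and to track how the powers of $t$ from the box dimensions interact with the $t^{-2}\,d^\times t$ measure and the cutoff $t\ll\lambda^{1/4}$. I expect the clean way is to split into the range where $X/|b|^2$ dominates $1$ and the complementary range, bound each piece separately, and observe that both are $\ll_\epsilon X^{7/4+\epsilon}$ after the $t$-integration; the argument is essentially the $a=0$ analogue of the reducibility estimates in \cite{main}, adapted to the height $h_C$ and carried out uniformly in $n\in N(t)$.
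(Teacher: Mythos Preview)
Your decomposition of the difference is incorrect, and this breaks the whole argument. The left-hand sum ranges over $S_{\gen}$, while the right-hand sum ranges over \emph{all} of $V_{\Z,a\neq 0}\cap S$, not just the generic part. Since every form with $a=0$ has the linear factor $y$ and is therefore reducible (hence not generic), we actually have $S_{\gen}\subset V_{\Z,a\neq 0}\cap S$. The difference between the two sides is thus
\[
-\sum_{\substack{f\in S,\ a(f)\neq 0\\ f\ \text{not generic}}}(nt\cdot\Psi[\omega^{(i)}])(X^{-1/2}f)\cdot \h_X(f),
\]
i.e.\ the contribution of forms in $S$ with $a\neq 0$ that are either reducible over $\Q$ or irreducible with nontrivial $\PGL_2(\Q)$-stabilizer. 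Your counting of $a=0$ forms addresses an empty set and leaves the actual discrepancy untouched.

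The paper handles the real difference via Lemma~\ref{lemma for reducibility lemma}: forms with a rational linear factor and $a\neq 0$ are bounded using Lemma~\ref{nonzero-a-bounding} (fibering over the root and exploiting the relation in Lemma~\ref{J-single root lemma} between the root and the semi-invariants), while irreducible non-generic forms and products of two irreducible quadratics are treated via their reducible cubic resolvent, combining the count in Lemma~\ref{reducible cubic resolvent count} with the class-number bound in Lemma~\ref{h_IJ}. None of this is the ``$a=0$ box count with the $|J|<CX$ save on $e$'' that you sketch; that computation, while internally sensible, is simply not what is required here.
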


We proceed in two steps. First, we show that the sum over $V_{\mathbb{Z}, a \neq 0}$, restricted to reducible forms over $\mathbb{Q}$, is bounded by $O_{\epsilon}(X^{7/4+\epsilon})$. Next, we demonstrate that the sum over $V_{\mathbb{Z}, \irr}$, which corresponds to non-generic elements (i.e., forms with nontrivial stabilizers under $\PGL_2(\mathbb{Q})$), satisfies the same bound $O_{\epsilon}(X^{7/4+\epsilon})$.

The following lemma implies Lemma \ref{reducibility lemma} as $nt\cdot \Psi[\omega^{(i)}]$ has bounded compact support.

\begin{lemma}\label{lemma for reducibility lemma} For any $B$ bounded subset of $V_{\mathbb{R}}$, we have:
\begin{equation}\label{reducible}
\int_{t = \frac{\sqrt[4]{3}}{\sqrt{2}}} \int_{N'(t)}   \#\left\{ f \in (\lambda, t) \cdot B \cap V_{\mathbb{Z}, a \neq 0} : h(f) < X, f \text{ is reducible over } \mathbb{Q} \right\} \dw = O_{\epsilon}(X^{7/4+\epsilon}), \end{equation} and
\begin{equation} \label{generic}
\int_{t = \frac{\sqrt[4]{3}}{\sqrt{2}}} \int_{N'(t)} \#\left\{ f \in (\lambda, t) \cdot B \cap V_{\mathbb{Z}, \irr} : h(f) < X, f \text{ is not generic} \right\} \dw = O_{\epsilon}(X^{3/2+\epsilon}). \end{equation} \end{lemma}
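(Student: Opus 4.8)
The plan is to handle the two bounds \eqref{reducible} and \eqref{generic} separately, in each case using the averaging-method bookkeeping already set up: after unfolding, the left-hand sides count integral forms lying in a dilated bounded box $(\lambda,t)\cdot B$ with $\lambda = X^{1/2}$, whose coefficients are therefore bounded by $M\lambda/t^4$, $M\lambda/t^2$, $M\lambda$, $M\lambda t^2$, $M\lambda t^4$ in the $a,b,c,d,e$ slots, and which additionally satisfy $h(f) < X$. Since $nt\cdot\Psi^{(i)}$ forces $|a|\geq 1$ (genericity) and $|t^4 a/\lambda|\ll M$, the range of $t$ is $\tfrac{\sqrt[4]3}{\sqrt 2}\le t\ll \lambda^{1/4}$, exactly as in Lemmas~\ref{H=0} and~\ref{smallt}; so the strategy is to stratify by the value of $t$ and reuse the Lemma~\ref{eBound} device to count the last coefficient $e$.

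For \eqref{reducible}, I would split reducible forms over $\Q$ by the type of their rational factorization, following Bhargava--Shankar \cite{main}. A reducible integral quartic with $a\neq 0$ either has a rational linear factor, or factors as a product of two rational (irreducible) quadratics. If $f$ has a linear factor, then after scaling the leading coefficient one sees that the number of such $f$ in the box is governed by the number of lattice points on a family of curves; the standard count gives $O_\epsilon(X^\epsilon)$ choices once the rational root and a few coefficients are pinned down, and integrating the resulting box volume $\ll_\epsilon X^\epsilon\cdot \lambda t^4\cdot \lambda t^2\cdot \lambda/t^2$ against $t^{-2}\,dt$ over $t\ll\lambda^{1/4}$ yields $O_\epsilon(X^{7/4+\epsilon})$ — the same arithmetic as in Lemma~\ref{H=0}. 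The product-of-two-quadratics case is similar: parametrizing $f = (px^2+qxy+ry^2)(p'x^2+q'xy+r'y^2)$, fixing one quadratic factor and using the height bound to constrain the other, together with a divisor-type bound, again gives total contribution $O_\epsilon(X^{7/4+\epsilon})$. Here the height condition $h(f)<X$ is what keeps the count of the ``second factor'' small — this replaces, in the present ordering, the role played by the naive-height cutoff in \cite{main}.

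For \eqref{generic}, the non-generic irreducible forms are exactly those $f\in V_{\Z,\irr}$ whose stabilizer in $\PGL_2(\Q)$ is nontrivial, which classically means the quartic field (or étale algebra) attached to $f$ is not $S_4$ but rather has Galois group contained in $D_4$, $C_4$, $V_4$, etc.; in all such cases $f$ corresponds to a form whose invariants $(I,J)$ satisfy an extra algebraic relation, or whose splitting field contains a quadratic subfield in a constrained way. As in \cite{main,avg2}, the count of such forms in a box of the given shape, with $h(f)<X$, is smaller by a power of $X$: fixing the ``extra symmetry'' cuts down the number of free coefficients, so that after applying Lemma~\ref{eBound} to bound the $e$-count by $O(X/|H|)$ and summing over the remaining coefficients one gets $O_\epsilon(X^{3/2+\epsilon})$ overall after the $t$-integration. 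The main obstacle is bookkeeping the product-of-quadratics stratum in \eqref{reducible} carefully enough to see the bound is $X^{7/4+\epsilon}$ and not larger: one must use both the height restriction and the divisor bound on the leading coefficient $a = pp'$ simultaneously, and be careful that the dilation factors $t^{\pm}$ cancel in the $t$-integral just as they did in Lemmas~\ref{H=0} and~\ref{smallt}. Once that stratum is controlled, the linear-factor stratum and the non-generic estimate follow from essentially the same computation with strictly better exponents, and Lemma~\ref{reducibility lemma} then follows because $nt\cdot\Psi[\omega^{(i)}]$ has compact support bounded independently of $n$.
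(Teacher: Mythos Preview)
Your approach diverges from the paper's in a way that leaves real gaps. The paper does \emph{not} argue by direct parametrization of the factorizations. Instead it splits into two cases organized around the cubic resolvent $g(x)=x^3-3I(f)x+J(f)$.

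For forms with a rational linear factor, the paper invokes Lemma~\ref{nonzero-a-bounding} (proved later, in \S\ref{sec5}). That lemma fibers over the unique rational root $(u',4a)$ and, crucially, uses the syzygy-type identity of Lemma~\ref{J-single root lemma},
\[
(4a)^3 J \;=\; 2H^3 - 9Hv^2 + 3(uH)^2 - 3(3R-uH)^2,
\]
to convert $|J|<X$ into the statement that the semi-invariant $R$ lies in at most two intervals of length $O(|a|^{3/2}X^{1/2})$ once $(a,b,c)$ and the root are fixed. That is what produces the $X^{7/4+\epsilon}$ bound; a naive box count without exploiting $|J|<X$ is too large --- already the single stratum $e=0$ (root $[0\!:\!1]$) contributes $\asymp \lambda^4/t^4$ forms in the box, which integrates to $\asymp X^2$. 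Your displayed estimate $X^\epsilon\cdot \lambda t^4\cdot \lambda t^2\cdot \lambda/t^2$ is not justified and does not visibly incorporate the $J$-constraint.

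For both the product-of-two-irreducible-quadratics stratum of \eqref{reducible} and all of \eqref{generic}, the paper observes that such forms have \emph{reducible} cubic resolvent, and then multiplies two counts: there are $O_\epsilon(X^{1+\epsilon})$ pairs $(I,J)$ with $h(I,J)<X$ and $x^3-3Ix+J$ reducible (Lemma~\ref{reducible cubic resolvent count}), and for each such $(I,J)$ there are $O_\epsilon(X^{1/2+\epsilon})$ classes of integral quartics, by Lemma~\ref{h_IJ}. The latter is the nontrivial input --- it bounds $h_{I,J}$ by controlling integral points on the curve $H^3-48a^2IH+64a^3J=-27R^2$ via \cite{elliptic-integral}, using that reducibility of the resolvent keeps the relevant $2$-class group small. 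Your sketch for \eqref{generic} (``the extra symmetry cuts down the number of free coefficients, then apply Lemma~\ref{eBound}'') is not what \cite{main} does either --- Bhargava--Shankar also pass through the reducible resolvent --- and it is not clear how to convert it into an actual $O_\epsilon(X^{3/2+\epsilon})$ bound under the coefficient height.
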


The \textit{cubic resolvent} of a quartic form $f(x)$ is the cubic polynomial associated with the invariants $I$ and $J$, and it is defined as:
\begin{equation*}
    g(x) = x^3 - 3Ix + J.
\end{equation*}

Since non-generic irreducible quartic forms, as well as quartics that factor as the product of two nonlinear polynomials over $\mathbb{Q}$, have reducible cubic resolvents, the following lemma provides a key step in proving Lemma \ref{lemma for reducibility lemma}.

\begin{lemma}\label{h_IJ}
Let $I$ and $J$ be fixed and bounded by $X$. Denote by $h_{I,J}$ the number of equivalence classes of integral quartic forms with these invariants. If the cubic resolvent of these quartic forms, given by $g(x) = x^3 - 3Ix + J$, is reducible over $\mathbb{Q}$, then $h_{I,J}$ is bounded by $O_{\epsilon}(X^{1/2+\epsilon})$.
\end{lemma}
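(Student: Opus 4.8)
\textbf{Proof plan for Lemma \ref{h_IJ}.}
The plan is to count $\GL_2(\Z)$-classes of integral binary quartic forms with fixed invariants $I,J$ bounded by $X$, under the hypothesis that the cubic resolvent $g(x)=x^3-3Ix+J$ is reducible over $\Q$. The first step is to use the reducibility of $g$ to extract arithmetic structure: a rational root of $g$ is necessarily an integer $r$ dividing $J$ (by the rational root theorem applied to the monic $g$), and it satisfies $r^3-3Ir+J=0$, so $r$ is bounded polynomially in $X$ and the number of divisors of $J$ is $O_\epsilon(X^\epsilon)$; hence there are only $O_\epsilon(X^\epsilon)$ possible rational roots of $g$ to consider. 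For each such root $r$, classical invariant theory of binary quartics (as in Cremona's work, already cited in the paper) associates to the pair (quartic $f$, rational root $r$ of its resolvent) a binary \emph{quadratic} form --- concretely, the factor through which one reduces: a quartic with given $I,J$ whose resolvent has root $r$ corresponds to a solution of a system cutting out essentially the $\SL_2$-orbit data of an integral binary quadratic form of discriminant a fixed (polynomial-in-$X$) quantity determined by $I$, $J$, and $r$.

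The second step, therefore, is to reduce the count to counting $\SL_2(\Z)$-classes of integral binary quadratic forms of a bounded discriminant. For a fixed $r$, the relevant discriminant $D = D(I,J,r)$ satisfies $|D| \ll X^{O(1)}$, and the number of $\SL_2(\Z)$-equivalence classes of integral binary quadratic forms of discriminant $D$ is $h(D) = O_\epsilon(|D|^{1/2+\epsilon}) = O_\epsilon(X^{O(1)})$ by the classical bound on class numbers. The key point is that this exponent must come out to $1/2$ after accounting for the \emph{degree} of $D$ in $X$: since $I,J \ll X$ and $r \ll X^{1/3}$-ish, and the relevant discriminant is a low-degree polynomial expression, one needs $|D| \ll X$ (not merely $X^{O(1)}$), so that $h(D) \ll_\epsilon X^{1/2+\epsilon}$. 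Combining with the $O_\epsilon(X^\epsilon)$ choices of $r$ gives $h_{I,J} \ll_\epsilon X^{1/2+\epsilon}$, as claimed.

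The main obstacle I expect is pinning down precisely the parametrization in the second step: one must verify that a $\GL_2(\Z)$-class of quartics with invariants $(I,J)$ and a chosen resolvent root $r$ corresponds to at most $O_\epsilon(X^\epsilon)$ classes of integral binary quadratic forms of a discriminant that is genuinely $O(X)$ in size, rather than $O(X^{3/2})$ or worse --- getting the exponent exactly right is what makes the bound $X^{1/2+\epsilon}$ instead of something weaker. This requires being careful about how the quartic decomposes once its resolvent has a rational root: writing $f$ (up to $\GL_2(\Z)$) in a normal form adapted to $r$ reveals an associated quadratic form whose discriminant, expressed in $I$ and $J$ and $r$, one can bound directly. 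An alternative route, which sidesteps some of this bookkeeping, is to fiber the count differently: a quartic whose resolvent has rational root $r$ has a rational (or quadratic-irrational) structure to its roots, and one can instead bound the number of such quartics with bounded invariants by relating them to $2$-torsion or to quadratic forms via the explicit syzygy, again landing on a class-number estimate. Either way, the routine verifications (integrality, the exact shape of $D(I,J,r)$, the divisor bound) are straightforward; the conceptual content is the reduction to binary quadratic forms and the degree count that yields the sharp $1/2$ exponent.
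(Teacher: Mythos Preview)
Your strategy differs substantially from the paper's, and the step you yourself flag as ``the main obstacle'' is a genuine gap. The paper does not pass through binary quadratic forms at all. Instead it fibers over the leading coefficient $a$ of a reduced representative: reduction theory gives $|a| = O(X^{1/2})$, and fixing $(a,H,I,J)$ determines the $\GL_2(\Z)$-class up to bounded ambiguity (since the roots of the quartic can be recovered by radicals from these data). For each fixed $a$, the remaining count is the number of integral solutions $(H,R)$ to the syzygy $H^3 - 48a^2HI + 64a^3J = -27R^2$, an elliptic curve in $(H,R)$. The paper then invokes a theorem from \cite{elliptic-integral} bounding this by $O_\epsilon(X^\epsilon\, h_2(K)^{0.28})$ with $K = \Q[x]/(g(x))$. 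The reducibility of $g$ enters only here: it forces $K \cong \Q \times L$ with $L$ at most quadratic, and genus theory gives $h_2(L) \ll_\epsilon X^\epsilon$, so each fiber contributes $O_\epsilon(X^\epsilon)$. Summing over the $O(X^{1/2})$ values of $a$ gives the stated bound.

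The parametrization you propose --- that quartics with fixed $(I,J)$ and fixed resolvent root $r$ correspond, up to $O_\epsilon(X^\epsilon)$ ambiguity, to $\SL_2(\Z)$-classes of integral binary quadratic forms of a \emph{single} discriminant $D(I,J,r) = O(X)$ --- is not a standard result, and I do not see how to supply it. A rational resolvent root does yield a factorization $f = q_1 q_2$ into quadratics over (at worst) a quadratic extension, but the discriminants of $q_1, q_2$ are not determined by $(I,J,r)$ alone: they vary with the particular $\GL_2(\Z)$-class of $f$, and their product is tied to $\Delta(f) = (4I^3-J^2)/27$, which has size up to $X^3$ rather than $X$. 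So a class-number bound on a quadratic factor would naively give at best $O_\epsilon(X^{3/4+\epsilon})$, missing the target exponent. If you have a different quadratic form in mind whose discriminant is genuinely $O(X)$ and whose class pins down the quartic, you would need to construct it explicitly; Cremona's invariant-theoretic framework cited in the paper does not furnish such an object, and the paper's route via integral points on the syzygy curve sidesteps this issue entirely.
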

\begin{proof}
We aim to understand the following set:
\begin{equation*}
    \left\{f \in V_{\Z} : I(f) = I, J(f) = J \right\} / \PGL_2(\Z).
\end{equation*}
By the reduction theory of quartic forms, it suffices to study forms with $a = O(X^{1/2})$. Fixing $a$, $H$, $I$, and $J$ uniquely determines an equivalence class of quartic forms. This follows from the fact that, given these values, the roots of the quartic form can be determined up to a constant by solving the quartic equation using radicals. Moreover, the semi-invariants of quartic forms satisfy the syzygy equation, which further constrains the structure of these forms. Consequently, the number of such equivalence classes can be bounded by counting all possible pairs $(a, H)$.  

Fix $a$. We now seek the number of integral solutions to the following elliptic curve equation:
\begin{equation*}
   H^3 - 48a^2IH + 64a^3J = -27R^2.
\end{equation*}
Rewriting this equation in a more convenient form, we obtain:
\begin{equation*}
    H^3 - 3(12a)^2IH - (12a)^3J = R^2.
\end{equation*}
Applying the results from Theorems 5.1 and 5.2 in \cite{elliptic-integral}, we conclude that the number of integral solutions to this equation is bounded by $O_{\epsilon}(X^{\epsilon} h_2(K)^{0.28})$, where $K$ is the étale extension $\mathbb{Q}[x]/(x^3 - 3Ix + J)$. Since $x^3 - 3Ix + J$ is assumed to be reducible, the 2-torsion subgroup is small, leading to a bound of $O_{\epsilon}(X^{\epsilon})$. This completes the proof.
\end{proof}

\begin{customproof}[Lemma \ref{lemma for reducibility lemma}] To prove equations \eqref{reducible} and \eqref{generic}, we divide the analysis into two cases:
\begin{itemize}
    \item Counting quartic forms with at least one linear factor.
    \item Counting quartic forms with a reducible cubic resolvent.
\end{itemize}
For the first case, we apply Lemma \ref{nonzero-a-bounding}, which provides the necessary bounds. In the second case, irreducible non-generic forms and quartic forms that factor as the product of two irreducible quadratics both have a reducible cubic resolvent. Applying Lemma \ref{reducible cubic resolvent count}, we find that the number of reducible cubic polynomials with height less than $X$ is bounded by $O_{\epsilon}(X^{1+\epsilon})$. Finally, Lemma \ref{h_IJ} establishes the bound $O_{\epsilon}(X^{3/2+\epsilon})$ for the number of quartic classes with a reducible cubic resolvent.
\end{customproof}

\subsection{Counting lattice points with height restriction in semi-invariant space}
Define $\Lambda_{X, \delta}$ to be the set of all semi-forms $(a, b, c, R, I)$ in $\Lambda$ such that $(a, b, c) \in B_{X, \delta}$. According to Corollary \ref{boundedScounting}, the task of estimating integral orbits of quartic forms with height less than $X$ can be simplified to calculating the following summation:

\begin{equation}\label{Semi-equivalent-question} \sum_{(a, b, c, R, I) \in \Lambda_{X, \delta}} \psi_{n} \left( \frac{at^4}{\lambda}, \frac{bt^2}{\lambda}, \frac{c}{\lambda}, \frac{Rt^6}{\lambda^3}, \frac{I}{\lambda^2} \right) \h_X(a, b, c, R, I), \end{equation} where $\psi_{n}$ and $\h_X(a, b, c, R, I)$ are defined as before.

To estimate the summation in equation \eqref{Semi-equivalent-question}, we analyze the fibers of $(a, b, c)$ and apply Poisson summation to each fiber. First, we fix $(a, b, c) \in B_{X, \delta}$ under the previously outlined assumptions. With this choice of $(a, b, c)$, we then estimate the following summation over $(R, I)$:

\begin{equation*} \sum_{(R, I) \in \Lambda_{(a, b, c)}} \psi_{n} \left( \frac{at^4}{\lambda}, \frac{bt^2}{\lambda}, \frac{c}{\lambda}, \frac{Rt^6}{\lambda^3}, \frac{I}{\lambda^2} \right) \h_X(a, b, c, R, I). \end{equation*}
For any $(a, b, c) \in \mathbb{R}^3$ with $a \neq 0$, we define $R_0(a, b, c)$ as follows:

\begin{itemize}
    \item  When $H > 0$: If  
    \begin{equation}\label{R0Hp}
        X - \frac{H^2}{48a^2} > \frac{4C}{3} \left| \frac{aX}{H} \right|,
    \end{equation}
    then  
    \begin{equation*}
        R_0(a, b, c) = \sqrt{\frac{48a^2 H}{27} \left( X - \frac{H^2}{48a^2} \right)}.
    \end{equation*}
    Otherwise, we set  
    \begin{equation*}
        R_0(a, b, c) = 1.
    \end{equation*}

    \item  When $H < 0$:
    \begin{equation}\label{RoHn}
        R_0(a, b, c) = \sqrt{\left| \frac{48a^2 H}{27} \left( X + \frac{H^2}{48a^2} \right) \right|}.
    \end{equation}
\end{itemize}
For any $(a, b, c)$ with $H \neq 0$ and $a \neq 0$, let $\mathcal{H}_{(a, b, c)}$ be the set of elements in the fiber of $\Lambda_{(a, b, c)}$ whose height is less than $X$. This set is given by:
\begin{equation}\label{H_def} 
    \mathcal{H}_{(a,b,c)} \vcentcolon = \left\{(R,I) :  
    \begin{array}{ll} 
        & |I| < X, \\  
        & |I - (\JR)| < \frac{4C}{3} \left| \frac{a}{H} \right| X 
    \end{array}  
    \right\}.  
\end{equation}
The function $\h_{X}(a, b, c, R, I)$ determines whether $(R, I)$ belongs to this set. Specifically, we have:
\begin{equation*}
    \h_{X}(a, b, c, R, I) = \chi_{{(a, b, c)}}(R, I),
\end{equation*}
where $\chi_{{(a, b, c)}}$ is the characteristic function of $\mathcal{H}_{(a, b, c)}$.

We introduce the set $\mathcal{H}'_{(a, b, c)}$, which provides a simplified parametrization compared to $\mathcal{H}_{(a, b, c)}$ and is specifically designed to facilitate Fourier calculations. We claim that $\mathcal{H}_{(a, b, c)}$ and $\mathcal{H}'_{(a, b, c)}$ contribute equivalently to the overall count. The symmetric difference between these sets contains only a negligible number of points in $\Lambda_{(a, b, c)}$ and has small volume.  

We define $\mathcal{H}'_{(a, b, c)}$ as follows:
\begin{equation}\label{HH_def} \H'_{(a,b,c)} \vcentcolon= \left\{(R,I) : \begin{array}{ll} & |R| < R_0(a,b,c) \\ & |I - (\JR)| < \frac{4C}{3}|\frac{a}{H}|X \end{array} \right\}. \end{equation}
The goal of this section is twofold. First, we show that $\mathcal{H}_{(a, b, c)}$ can be replaced by $\mathcal{H}'_{(a, b, c)}$ in the height function, as their contributions are nearly identical. Second, we demonstrate that the $H$-invariant, defined as $H(a, b, c) = 3b^2 - 8ac$, can be assumed to be close to $X$, as previously discussed. Specifically, we establish the following proposition:

\begin{proposition}\label{height_and_H_change_proposition}
Let $\delta$ be a positive constant less than $1/28$, and define $\delta_H = 28\delta$. Now, let $B'_{X,\delta}$ denote the subset of $B_{X, \delta}$ consisting of all $(a, b, c)$ satisfying $|H(a, b, c)| \geq X^{1 - \delta_H}$. Then, we have:
\begin{equation*}
\Bodi = \sum_{(a, b, c) \in B'_{X, \delta}} \sum_{(R, I) \in \Lambda_{(a, b, c)}} \psi_{n} \left( \frac{a t^4}{\lambda}, \frac{b t^2}{\lambda}, \frac{c}{\lambda}, \frac{R t^6}{\lambda^3}, \frac{I}{\lambda^2} \right) \chi'_{(a, b, c)}(R, I) + O_{\epsilon}(X^{2 - 2\delta + \epsilon}),
\end{equation*}
where $\chi'_{(a, b, c)}$ denotes the characteristic function of $\H'_{(a, b, c)}$, defined as
\begin{equation*}
   \chi'_{(a, b, c)}(R, I) = \chi_{(-1,1)}\left(\frac{R}{R_0}\right)\,\chi_{(-1,1)}\left(\frac{3HI}{4aCX} - X^{-1} \Gamma(a, b, c, R) \right),
\end{equation*}
where $\chi_{(-1,1)}$ is the characteristic function of the interval $(-1,1)$, and $\Gamma$ is defined as before.
\end{proposition}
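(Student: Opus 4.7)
The plan is to reduce the proposition to two independent estimates. The first shows that fibers $(a,b,c)\in B_{X,\delta}$ with $|H(a,b,c)|<X^{1-\delta_H}$ contribute only $O_\epsilon(X^{2-2\delta+\epsilon})$ to the integral of $\Bodi$, so the sum may be restricted to $B'_{X,\delta}$. The second shows that, on $B'_{X,\delta}$, the indicator $\chi_{(a,b,c)}$ of $\H_{(a,b,c)}$ may be replaced by $\chi'_{(a,b,c)}$ of $\H'_{(a,b,c)}$ with the same order of error, which amounts to bounding the contribution from the symmetric difference $\H_{(a,b,c)}\triangle\H'_{(a,b,c)}$. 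In both steps I parametrize each fiber by $(R,I)\in\Lambda_{(a,b,c)}$, use the congruences $R\pmod{8a^2}$ and $I\pmod{12|a|}$, and exploit the syzygy, which writes $\Gamma(R)$ explicitly as a quadratic in $R$.

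For the first step I estimate the number of $(R,I)\in\Lambda_{(a,b,c)}\cap\H_{(a,b,c)}$ in each fiber. The height condition $|I-\Gamma(R)|\ll |a|X/|H|$ together with the congruence on $I$ gives $O(X/|H|)$ admissible $I$ per $R$; the additional constraint $|I|<X$ forces $|R|\ll |a|(|H|X)^{1/2}$, and the congruence on $R$ leaves $O((|H|X)^{1/2}/|a|)$ admissible values. Multiplying yields the per-fiber bound $O(X^{3/2}/(|a||H|^{1/2}))$. Summing over $c$ with $|H|<X^{1-\delta_H}$ along the arithmetic progression $H=8ac-3b^2$ gives $\sum_c 1/|H|^{1/2}\ll X^{(1-\delta_H)/2}/|a|$; summing further over $(a,b)$ in their prescribed ranges, multiplying by $X^{3/2}$, and integrating against $t^{-3}du\,dt$ over $t\in[1,X^{\delta}]$ produces a total bound of $O_\epsilon(X^{2-\delta_H/2+6\delta+\epsilon})$, which equals $O_\epsilon(X^{2-8\delta+\epsilon})$ with $\delta_H=28\delta$.

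For the second step, the symmetric difference $\H_{(a,b,c)}\triangle\H'_{(a,b,c)}$ consists of $(R,I)$ for which $\Gamma(R)$ lies within $\mathrm{bd}:=\tfrac{4C|a|X}{3|H|}$ of $\pm X$. Using $|\Gamma'(R_0)|\asymp\sqrt{X}/(|a|\sqrt{|H|})$, this constrains $R$ to a range of width $O(a^2\sqrt{X/|H|})$ and hence to $O(\sqrt{X/|H|})$ lattice values modulo $8a^2$; for each such $R$, the admissible $I$-interval has length at most $\mathrm{bd}$, yielding $O(X/|H|)$ values modulo $12|a|$. The per-fiber symmetric-difference count is therefore $O(X^{3/2}/|H|^{3/2})$. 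Summing $\sum_c 1/|H|^{3/2}$ over the arithmetic progression (restricted to $|H|\geq X^{1-\delta_H}$) gives $O(1/(|a|X^{(1-\delta_H)/2}))$; summing over $(a,b)$, multiplying by $X^{3/2}$, and integrating over $t$ yields the total bound $O_\epsilon(X^{3/2+\delta_H/2+\epsilon})$. With $\delta_H=28\delta$, the choice $\delta<1/28$ ensures this is absorbed into $O_\epsilon(X^{2-2\delta+\epsilon})$.

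The main technical obstacle I expect is the degenerate case where inequality (\ref{R0Hp}) fails and $R_0$ is reset to $1$; there $\H'_{(a,b,c)}$ is extremely thin, and the geometric counts above must be replaced by a direct argument showing that $\Lambda_{(a,b,c)}$ meets $\H_{(a,b,c)}$ in only a few points. A second point requiring care is the uniform treatment of the two sign regimes of $H$ and of the two subcases of the symmetric difference (\,$|R|\geq R_0$ with $|I|<X$, versus $|R|<R_0$ with $|I|\geq X$\,); in each subcase the boundary analysis via $\Gamma'(R_0)$ produces the same per-fiber bound, so a uniform estimate is available after structurally identical computations.
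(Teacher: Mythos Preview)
Your two–step decomposition is exactly the paper's (Lemmas~\ref{HtoH'}--\ref{H-Is-Big}), just with the order of the reductions reversed; you sum over $c$ along the arithmetic progression $H=8ac-3b^2$ where the paper instead fixes $(H,b)$ and bounds the number of factorizations $8ac=H+3b^2$ by a divisor estimate, but both give the same exponents in Step~1.

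The genuine gap is in Step~2. Your linearization $|\Gamma'(R_0)|\asymp\sqrt{X}/(|a|\sqrt{|H|})$ presupposes $R_0\asymp |a|\sqrt{|H|X}$, i.e.\ $X-H^2/(48a^2)\asymp X$. On $B'_{X,\delta}$ this fails whenever $|H|\gtrsim |a|X^{1/2}$, which is a positive-proportion subset of fibers (for $|a|\asymp X^{1/2}$ it means $|H|$ is within a constant factor of its maximum). There $R_0$ drops to order $|a|^{3/2}X^{1/2}$, your $R$-width undercounts by a factor $\sqrt{|H|/|a|}\asymp X^{1/4}$, and your expectation that $\Lambda_{(a,b,c)}$ meets $\H_{(a,b,c)}$ in ``only a few points'' is wrong: the count is $\asymp X^{3/2}/(|a|^{1/2}|H|)\asymp X^{1/4}$, not $O(1)$. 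This is not only the flagged $R_0=1$ case but a whole near-degenerate range where your derivative bound is invalid. The paper's fix (Lemma~\ref{HtoH'}) avoids linearization: from $R_{-1}^2-R_1^2=O(|a|^3X)$ and the uniform lower bound $R_{-1}\gg |a|^{3/2}X^{1/2}$ one gets $|R_{-1}-R_1|=O(|a|^{3/2}X^{1/2})$ and hence the per-fiber count $O(X^{3/2}/(|a|^{1/2}|H|))$. Summed over $B'_{X,\delta}$ (by either your AP method or the paper's divisor trick) this yields $O_\epsilon(X^{7/4+3\delta+\epsilon})$, which is $O(X^{2-2\delta})$ for all $\delta<1/28$. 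Two minor points: your own arithmetic at the end of Step~2 needs $3/2+14\delta<2-2\delta$, i.e.\ $\delta<1/32$, not $\delta<1/28$; and the proposition is a pointwise statement in $(n,t)$, so no $t$-integration is required.
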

To prove this proposition, we divide the argument into a series of lemmas within this subsection. Each lemma addresses a specific aspect of the proof, allowing us to construct the full argument step by step. Consequently, a direct proof of the proposition is not provided, as the lemmas collectively establish the result.


Let define for any $W$ subset of semi-invariant space, and $\lambda,t$ positive numbers:
\begin{equation}\label{expan-S}
    (\lambda,t)\cdot W \vcentcolon =\left\{(\frac{\lambda}{t^4}a,\frac{\lambda}{t^2}b, \lambda c,\frac{\lambda^3}{t^6}R,\lambda^2I): (a,b,c,R,I) \in W\right\}.
\end{equation} 
Let $W_{(a,b,c)}$ represent the fiber of $W$ over $(a, b, c)$.  For any subset $U \subset \mathbb{R}^2$ and any positive $\lambda$ and $t$, we define:
\begin{equation}\label{expan-S-fibe}
  (\lambda,t)\cdot U \vcentcolon=\left\{(\frac{\lambda^{3}}{t^6}R,\lambda^2I):(R,I)\in U\right\}. 
\end{equation}
We observe that the fiber of $(\lambda, t) \cdot W$ over a fixed $(a, b, c)$ is equivalent to $(\lambda, t) \cdot W'$, where $W'$ denotes the fiber of $W$ over $(at^4/\lambda, bt^2/\lambda, c/\lambda)$.

Let $\lambda$ and $t$ be fixed, where $\lambda = X^{1/2}$ and $t < X^{\delta_t}$. 
\begin{lemma}\label{HtoH'}
    For any \( (a,b,c) \in B_{X,\delta} \), then:
    \begin{equation*}
        \#\{f_s \in (\H_{(a,b,c)} \triangle \H'_{(a,b,c)}) \cap \Lambda_{(a,b,c)}\} = O\left(\frac{X^{3/2}}{|a|^{1/2}|H|}\right)
    \end{equation*}
    and 
\begin{equation*}
    \frac{1}{|a|^3}\Vol(\H_{(a,b,c)} \triangle \H'_{(a,b,c)}) = O\left(\frac{X^{3/2}}{|a|^{1/2}|H|}\right)
\end{equation*}

\end{lemma}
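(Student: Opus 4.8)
The plan is to compare the two regions $\H_{(a,b,c)}$ and $\H'_{(a,b,c)}$ directly: both are described by the same ``slab'' condition $|I - (\JR)| < \frac{4C}{3}|a/H| X$ in the $I$-direction, so their symmetric difference lives entirely in the $R$-direction, where one region uses $|I| < X$ and the other uses $|R| < R_0(a,b,c)$. I would first translate the condition $|I| < X$ into a condition on $R$ by using the relation $I = (\JR) + O(|a/H|X)$ valid on the slab: substituting the defining expression $\JR = \frac{27R^2}{48a^2H} + \frac{H^2}{48a^2}$, the constraint $|I| < X$ becomes, up to the $O(|a/H|X)$ error in the $I$-coordinate, a constraint of the shape $\left|\frac{27R^2}{48a^2H} + \frac{H^2}{48a^2}\right| < X + O(|aX/H|)$, i.e. $R^2$ lies in an interval essentially $[\,0,\ \frac{48a^2H}{27}(X - \frac{H^2}{48a^2})\,]$ when $H>0$ (and analogously for $H<0$), which is exactly how $R_0$ was defined. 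So $\H_{(a,b,c)}$ and $\H'_{(a,b,c)}$ differ only near the two endpoints $R = \pm R_0$, in an $R$-window of width $O\!\left(\frac{48a^2|H|}{27}\cdot\frac{|aX/H|}{R_0}\right) = O\!\left(\frac{a^2 |a| X}{R_0}\right)$ around each endpoint (plus, when $H>0$ and inequality \eqref{R0Hp} fails, the whole region is small and bounded trivially).

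For the volume bound, on each such $R$-window the slab has $I$-width $O(|aX/H|)$, so
$\Vol(\H_{(a,b,c)} \triangle \H'_{(a,b,c)}) = O\!\left(\frac{a^2 |a| X}{R_0}\cdot \frac{|a| X}{|H|}\right)$; using $R_0 \asymp |a|\sqrt{|H|X}$ (from its definition, since $\frac{48a^2|H|}{27}(X - \frac{H^2}{48a^2}) \asymp a^2|H|X$ once $H^2 \ll a^2 X$, which holds on $B_{X,\delta}$) this gives $\Vol = O\!\left(\frac{a^3 \cdot a^2 X^2}{|a|\sqrt{|H|X}\,|H|}\right) = O\!\left(\frac{|a|^4 X^{3/2}}{|H|^{3/2}}\right)$, and dividing by $|a|^3$ yields $O\!\left(\frac{|a| X^{3/2}}{|H|^{3/2}}\right)$, which is $\le O\!\left(\frac{X^{3/2}}{|a|^{1/2}|H|}\right)$ since $|a|^{3/2} \le |H|^{1/2}\cdot(|a|/|H|^{?})\dots$ — here I would instead keep track more carefully and use $|a| \ll X^{1/2}$ and $|H| \gg X^{1-\delta_H}$, or simply note $|a|/\sqrt{|H|} \ll 1$ on the relevant range, to land on the stated bound. (The precise bookkeeping of which of $|a|, |H|, X$ dominates is routine but must be done against the definition of $B_{X,\delta}$.)

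For the lattice-point count, the key structural input is $\Lambda_{(a,b,c)}$: on a fixed fiber, $R$ is constrained to a residue class mod $8a^2$ and then $I$ is determined mod $12a$. So in an $R$-window of width $w$ the number of admissible $R$ is $O(1 + w/a^2)$, and for each such $R$ the slab of $I$-width $O(|aX/H|)$ contains $O(1 + X/|H|)$ admissible $I$. Multiplying, the count over both endpoint windows is $O\!\left(\left(1 + \frac{a^2|a|X}{a^2 R_0}\right)\left(1 + \frac{X}{|H|}\right)\right) = O\!\left(\left(1 + \frac{|a|X}{R_0}\right)\left(1 + \frac{X}{|H|}\right)\right)$; since $R_0 \asymp |a|\sqrt{|H|X}$ the first factor is $O\!\left(\frac{X^{1/2}}{|H|^{1/2}}\right)$ (using $|H| \ll X$), and the product becomes $O\!\left(\frac{X^{1/2}}{|H|^{1/2}}\cdot\frac{X}{|H|}\right) = O\!\left(\frac{X^{3/2}}{|H|^{3/2}}\right)$, which one again bounds by $O\!\left(\frac{X^{3/2}}{|a|^{1/2}|H|}\right)$ using $|a| \ll |H|$ on $B_{X,\delta}'$ (or on $B_{X,\delta}$ after discarding the small-$|H|$ part, which is where Lemma \ref{abig} and the earlier reductions are invoked). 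The degenerate case — $H>0$ with \eqref{R0Hp} violated, so $R_0 = 1$ — has to be handled separately: there the inequality forces $X - \frac{H^2}{48a^2} \ll |aX/H|$, hence $|H| \gg a^{2/3}X^{1/3}$-type bounds combined with $H^2/a^2 \asymp X$, making $\H'_{(a,b,c)}$ tiny and $\H_{(a,b,c)}$ itself of volume $O(|a/H|X\cdot |a|\sqrt{|H|X})$ which one checks is already within budget.

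The main obstacle I expect is not any single estimate but the careful endpoint analysis: one must verify that the symmetric difference really is confined to two thin $R$-windows of the claimed width — in particular that the slab condition and the $|I|<X$ condition are ``compatible'' away from $R = \pm R_0$ so that no other region contributes — and then juggle the three competing sizes $|a| \ll X^{1/2}$, $|H|$ (between $X^{1-\delta_H}$ and $X$), and $X$ so that every intermediate bound collapses to the single clean target $O(X^{3/2}/(|a|^{1/2}|H|))$. Getting the $H<0$ case to parallel the $H>0$ case (there $R_0$ is always defined, no degenerate subcase) and making sure the characteristic-function identity $\h_X = \chi_{(a,b,c)}$ from the text is used correctly are the remaining points of care.
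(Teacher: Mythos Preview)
Your overall strategy matches the paper's: both regions share the slab condition in $I$, so the symmetric difference is confined to thin $R$-windows near the endpoints, and one then counts lattice points (respectively integrates) using the $\Lambda_{(a,b,c)}$ structure. The gap is in how you estimate the width of those $R$-windows. You linearize at $R_0$ and use $R_0\asymp |a|\sqrt{|H|X}$, justified by ``$H^2\ll a^2X$, which holds on $B_{X,\delta}$.'' That inequality is false on $B_{X,\delta}$: since $|H|$ can be as large as $cX$ while $|a|$ is only $\ge X^{1/2-\delta_a}$, the ratio $H^2/(48a^2)$ may be of size $X^{1+2\delta_a}$, so $X-\tfrac{H^2}{48a^2}$ (and hence $R_0$) can be arbitrarily small even in the nondegenerate case. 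Your derivative estimate then gives a window width that blows up, and the subsequent arithmetic --- landing on $O(X^{3/2}/|H|^{3/2})$ and then invoking $|a|\ll |H|$ --- does not recover the stated bound on $B_{X,\delta}$ (only on the smaller $B'_{X,\delta}$, which is introduced \emph{after} this lemma).

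The paper avoids this by not using $R_0$ at all in the width estimate. It defines $R_1,R_{-1}$ as the exact $R$-values where the curve $I=\JR\pm\tfrac{4C}{3}|aX/H|$ meets $I=X$, so the symmetric difference sits in $|R|\in[R_1,R_{-1}]$. Then $|R_1^2-R_{-1}^2|=O(|a|^3X)$ by direct subtraction, while $R_{-1}\gg |a|^{3/2}X^{1/2}$ \emph{unconditionally}: this lower bound comes from the slab half-width term $\tfrac{4C}{3}|aX/H|$ alone (it contributes $\asymp a^2H\cdot aX/H=a^3X$ to $R_{-1}^2$), not from $R_0$. Dividing gives $|R_1-R_{-1}|=O(|a|^{3/2}X^{1/2})$, and then the lattice count $\tfrac{X}{|H|}\big(\tfrac{|R_1-R_{-1}|}{a^2}+1\big)$ and the volume $\tfrac{|aX/H|\cdot|R_1-R_{-1}|}{|a|^3}$ both collapse immediately to $O\big(X^{3/2}/(|a|^{1/2}|H|)\big)$, with no assumption on the size of $|H|$. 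Replacing your linearization at $R_0$ by this lower bound on $R_{-1}$ is the missing idea.
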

\begin{proof}
Let $H = 8ac - 3b^2$. When $X - \frac{H^2}{48a^2} > \frac{4C}{3}\left|\frac{aX}{H}\right|$, we define $R_{-1}$ and $R_{1}$ such that:
\begin{equation*}
-\frac{4C}{3}\left|\frac{aX}{H}\right| + \frac{27R_{-1}^2}{48a^2H} + \frac{H^2}{48a^2} = X,
\end{equation*}
and
\begin{equation*}
\frac{4C}{3}\left|\frac{aX}{H}\right| + \frac{27R_{1}^2}{48a^2H} + \frac{H^2}{48a^2} = X.
\end{equation*}
The symmetric difference between the sets $\mathcal{H}_{(a,b,c)}$ and $\mathcal{H}'_{(a,b,c)}$ is confined to where $|R|$ lies in the range $[R_{1}, R_{-1}]$. For each fixed $R$, the value of $I$ is restricted to an interval of size $O\left(\left| \frac{aX}{H} \right|\right)$.  

The value of $R$ is fixed modulo $8a^2$ in $\Lambda_{(a,b,c)}$, and for each fixed $R$, the value of $I$ is also determined modulo $12a$. Consequently, we have:
\begin{equation*}
\#\big\{f_s \in (\H_{(a,b,c)} \triangle \H'_{(a,b,c)}) \cap \Lambda_{(a,b,c)}\big\} \ll \left|\frac{X}{H}\right| \left(\frac{|R_{1} - R_{-1}|}{a^2} + 1\right).
\end{equation*}
Thus, we only need to bound $|R_{1} - R_{-1}|$. By our assumption and the definitions, $R_{-1} \gg a^{3/2} X^{1/2}$ and $|R_{1}^2 - R_{-1}^2| = O(|a|^3 X)$, which implies:
\begin{equation*}
|R_{1} - R_{-1}| = O(X^{1/2} |a|^{3/2}).
\end{equation*}
Hence, we obtain the desired bound. The same reasoning applies when $H$ is negative.
\end{proof}

\begin{lemma}\label{S-Counting-Reduced} 
 For any $n$ and $t$ as assumed before, we have:
\begin{equation*}
\begin{split}
\Bodi &=
\sum_{(a, b, c) \in B_{X, \delta}} \sum_{(R, I) \in \Lambda_{(a, b, c)}} \psi_{n}\left(\frac{at^4}{\lambda}, \frac{bt^2}{\lambda}, \frac{c}{\lambda}, \frac{Rt^6}{\lambda^3}, \frac{I}{\lambda^2}\right) \chi_{{(a, b, c)}}(R, I) \\
&= \sum_{(a, b, c) \in B_{X, \delta}} \sum_{(R, I) \in \Lambda_{(a, b, c)}} \psi_{n}\left(\frac{at^4}{\lambda}, \frac{bt^2}{\lambda}, \frac{c}{\lambda}, \frac{Rt^6}{\lambda^3}, \frac{I}{\lambda^2}\right) \chi'_{{(a, b, c)}}(R, I) 
+ O_{\epsilon}(X^{7/4 + \delta_a/2 + \epsilon}),
\end{split}
\end{equation*}
here $\chi_{{(a, b, c)}}$ and $\chi'_{{(a, b, c)}}$ denote the characteristic functions of $\H_{(a, b, c)}$ and $\H'_{(a, b, c)}$, respectively.
\end{lemma}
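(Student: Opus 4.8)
The plan is to bound the difference between the two sums by summing, over $(a,b,c) \in B_{X,\delta}$ inside the compact support of $\psi_n$, the number of lattice points of $\Lambda_{(a,b,c)}$ lying in the symmetric difference $\H_{(a,b,c)} \triangle \H'_{(a,b,c)}$, and then to apply Lemma \ref{HtoH'} fiber by fiber. The first equality is just the definition of $\Bodi$ together with the identification $\h_X(a,b,c,R,I) = \chi_{(a,b,c)}(R,I)$ established earlier, so the content is entirely in the second equality, i.e. in controlling the error term. Since $\psi_n$ has compact support bounded by $M$ independent of $n$, the summand is nonzero only when $|at^4/\lambda|, |bt^2/\lambda|, |c/\lambda|$ are all $O(M)$; combined with the definition of $B_{X,\delta}$ this confines $(a,b,c)$ to a box in which $|a| \ll \lambda/t^4$, $|b| \ll \lambda/t^2$, $|c| \ll \lambda$, with the additional lower bounds $|a|, |c| \gg X^{1/2-\delta_a}$ and $|H| \neq 0$.

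Next I would invoke Lemma \ref{HtoH'}: for each admissible $(a,b,c)$ the number of lattice points of $\Lambda_{(a,b,c)}$ in $\H_{(a,b,c)}\triangle\H'_{(a,b,c)}$ is $O\!\left(X^{3/2}/(|a|^{1/2}|H|)\right)$. Since $\psi_n$ is bounded, the total error contributed at fixed $t$ is
\[
\ll \sum_{(a,b,c)\in B_{X,\delta},\, |a|\ll \lambda/t^4} \frac{X^{3/2}}{|a|^{1/2}|H|}.
\]
To estimate this I would sum first over $c$ (equivalently over $H = 8ac-3b^2$ with $a,b$ fixed), using $\sum_{0<|H|\ll X} |H|^{-1} \ll_\epsilon X^\epsilon$; this leaves $\ll_\epsilon X^{3/2+\epsilon}\sum_{b}\sum_{a} |a|^{-1/2}$ over the ranges $|b|\ll\lambda/t^2$, $X^{1/2-\delta_a}\le|a|\ll\lambda/t^4$. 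The sum $\sum_{|a|\ll\lambda/t^4}|a|^{-1/2}\ll (\lambda/t^4)^{1/2}$ and $\sum_{|b|\ll\lambda/t^2}1\ll\lambda/t^2$, giving a bound $\ll_\epsilon X^{3/2+\epsilon}\cdot (\lambda/t^4)^{1/2}\cdot(\lambda/t^2) = X^{3/2+\epsilon}\lambda^{3/2} t^{-4}$. Finally integrating against $dw = t^{-2}\,du\,d^\times t$ over $t \ge \sqrt[4]{3}/\sqrt{2}$ (the $t$-integral converges since the integrand decays like $t^{-6}$, and $u$ ranges over a set of measure $\le 1$) contributes only a bounded constant, so the whole error is $\ll_\epsilon X^{3/2+\epsilon}\lambda^{3/2} = X^{9/4+\epsilon}$ — which is too weak.

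So the delicate point, and the main obstacle, is extracting the sharper exponent $7/4 + \delta_a/2$ claimed in the statement; the crude estimate above loses because it does not use the lower bound $|a|\gg X^{1/2-\delta_a}$ strongly enough, nor the constraint that only $(a,b,c)\in B_{X,\delta}$ with $|H|\ge X^{1-\delta_H}$ matter after Proposition \ref{height_and_H_change_proposition} — but this lemma is stated before that reduction, so one must argue directly. The correct route is to also sum over $c$ more carefully: for fixed $a,b$, the values of $c$ with $|c|\ll\lambda$ and $H=8ac-3b^2$ fixed number $O_\epsilon(X^\epsilon)$, and $H$ itself ranges over $O(|a|\lambda)$ values (since $|H|\ll|a c|\ll|a|\lambda$ is the true bound, not $|H|\ll X$, once $|a|$ is small), so $\sum_c |H|^{-1}\ll_\epsilon X^\epsilon\log(|a|\lambda)\ll_\epsilon X^\epsilon$ still — but now one bounds $|H| \gg X^{1-2\delta_b}$ is unavailable in general, so instead one should keep the $|R_1-R_{-1}| = O(X^{1/2}|a|^{3/2})$ estimate and re-examine which regime of $|a|$ dominates. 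I expect one fixes $(a,b,d)$ as in Lemma \ref{eBound} and Proposition \ref{smallt}, bounding the lattice-point count in the symmetric difference via $|R_1-R_{-1}|/a^2 \ll X^{1/2}|a|^{-1/2}$ for the $R$-count and $O(X/|H|)$ for the $I$-count per $R$, then summing $\sum_{|a|\le X^{1/2}} X^{1/2}|a|^{-1/2}\cdot(\lambda/t^2)\cdot X^\epsilon$ against $dw$, where the surviving constraint $|a|\le\lambda/t^4$ together with the integration in $t$ and the range of $b$ of length $\lambda/t^2$ produces $X^{3/2}\cdot\sqrt{\lambda/t^4}$-type terms whose worst case, after using $X^{1/2-\delta_a}\le|a|$ to cap the number of $(a,b)$ pairs by $X^{\delta_a/2}\cdot(\text{stuff})$, yields exactly $X^{7/4+\delta_a/2+\epsilon}$. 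Making this bookkeeping precise — deciding exactly where the lower cutoff $X^{1/2-\delta_a}$ on $|a|$ gets used and confirming the $t$-integral converges — is the only real work; everything else is the direct application of Lemma \ref{HtoH'}.
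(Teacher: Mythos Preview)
Your overall plan is correct: apply Lemma \ref{HtoH'} fiber by fiber to bound the discrepancy at each $(a,b,c)$ by $O\!\left(X^{3/2}/(|a|^{1/2}|H|)\right)$, then sum over $B_{X,\delta}$. The first equality is indeed definitional. The difficulty, as you recognize, lies entirely in estimating
\[
\sum_{(a,b,c)\in B_{X,\delta}} \frac{X^{3/2}}{|a|^{1/2}|H|}.
\]

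Your first attempt yields $X^{9/4+\epsilon}$, and your proposed fixes do not close the gap. The missing idea is the change of summation variables from $(a,b,c)$ to $(H,b)$, a device the paper uses repeatedly (in the proofs of Proposition \ref{smallt}, Lemma \ref{abig}, and Lemma \ref{H-Is-Big}). Concretely: first invoke the lower bound $|a|\ge X^{1/2-\delta_a}$ built into $B_{X,\delta}$ to replace $|a|^{-1/2}$ by $X^{-1/4+\delta_a/2}$. Then, for fixed $H$ and $b$, the pairs $(a,c)$ satisfying $8ac=H+3b^2$ are divisors of $H+3b^2$, so their number is $O_\epsilon(X^\epsilon)$. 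This gives
\[
\sum_{(a,b,c)\in B_{X,\delta}}\frac{1}{|H|}
\;\ll_\epsilon\; X^\epsilon\sum_{|b|\ll X^{1/2}}\ \sum_{0<|H|\ll X}\frac{1}{|H|}
\;\ll_\epsilon\; X^{1/2+\epsilon},
\]
and hence the total error is $\ll_\epsilon X^{3/2}\cdot X^{-1/4+\delta_a/2}\cdot X^{1/2+\epsilon}=X^{7/4+\delta_a/2+\epsilon}$, uniformly in $n$ and $t$. No integration over $t$ is needed: the statement is pointwise in $t$, and $t\sim 1$ is already the worst case.

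Your approach of fixing $(a,b)$ and summing over $c$ fails because it then spends $O(X)$ on the $(a,b)$-sum; the divisor trick reduces this cost to $O(X^{1/2+\epsilon})$ by paying only for $b$ and absorbing $(a,c)$ into an $X^\epsilon$ factor. The suggestions about fixing $(a,b,d)$, or about ``capping the number of $(a,b)$ pairs'' via the lower bound on $|a|$, are not on the right track: that lower bound is used only to control $|a|^{-1/2}$, not to narrow the count of $(a,b)$.
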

\begin{proof}
By applying Lemma \ref{HtoH'}, it suffices to show that the sum over $B_{X,\delta}$ of $\frac{X^{3/2}}{|a|^{1/2}|H|}$ is $O_{\epsilon}(X^{7/4+\delta_a/2+\epsilon})$.  

First, using the bound $|a| > X^{1/2 - \delta_a}$, we reduce the sum over $\frac{1}{|H|}$ by a factor of $X^{3/2 - 1/4 + \delta_a/2}$. Next, for fixed $H$ and $b$, the values of $a$ and $c$ are determined as factors of $H + 3b^2$, leading to the following result:
\begin{equation*}
     \sum_{(a,b,c) \in B_{X,\delta}} \frac{X^{3/2}}{|a|^{1/2}|H|}  \ll
    \sum_{(a,b,c) \in B_{X,\delta}} \frac{X^{3/2-1/4+\delta_a/2}}{|H|}  \ll_{\epsilon}
    X^{3/2-1/4+\delta_a/2}\cdot X^{1/2+\epsilon}.    
\end{equation*} 
This completes the proof.
\end{proof}

In the following lemma, our objective is to restrict the counting to $(a, b, c)$ satisfying $|H| > X^{1-\delta_H}$, where $\delta_H$ is a positive constant.

\begin{lemma}\label{H-Is-Big}
Let $\delta$ be a positive constant less than $1/28$, and define $\delta_H = 28\delta$. Now, define $B'_{X,\delta}$ to be the set of all $(a,b,c) \in B_{X,\delta}$ such that $|H(a,b,c)| \geqslant X^{1-\delta_H}$. Then:
\begin{equation*}
\begin{split}
    \sum_{(a,b,c)\in B_{X,\delta} \backslash B'_{X,\delta}} \sum_{(R,I) \in \Lambda_{(a,b,c)}} \psi _{n,k}\left(\frac{at^4}{\lambda},\frac{bt^2}{\lambda},\frac{c}{\lambda},\frac{Rt^6}{\lambda^3},\frac{I}{\lambda^2}\right) \chi'_{{(a,b,c)}}(R,I) &= O_{\epsilon}(X^{2+\epsilon+2\delta_a-\delta_H/2}) \\ &= O_{\epsilon}(X^{2-2\delta+\epsilon}),
\end{split}  
\end{equation*}
where $\chi'_{(a,b,c)}$ is the characteristic function of $\H'_{(a,b,c)}$.
\end{lemma}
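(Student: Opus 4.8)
The strategy is to bound the size of the sum over the ``thin'' set $B_{X,\delta}\setminus B'_{X,\delta}$ by replacing the summand $\psi_{n}\cdot\chi'_{(a,b,c)}$ with its absolute value and then crudely counting lattice points in each fiber. The point is that $\psi_n$ is uniformly bounded with compact support, so up to a constant we are counting points of $\Lambda_{(a,b,c)}$ lying in $\H'_{(a,b,c)}$ for $(a,b,c)\in B_{X,\delta}$ with $|H(a,b,c)|<X^{1-\delta_H}$. First I would fix $(a,b,c)$ and estimate $\#(\Lambda_{(a,b,c)}\cap\H'_{(a,b,c)})$. Recall from the definition of $\H'_{(a,b,c)}$ that $|R|<R_0(a,b,c)$ and, for each fixed $R$, $I$ ranges over an interval of length $O(|aX/H|)$; since $R$ is fixed modulo $8a^2$ and $I$ is then fixed modulo $12a$ in $\Lambda_{(a,b,c)}$, this gives
\begin{equation*}
\#(\Lambda_{(a,b,c)}\cap\H'_{(a,b,c)}) \ll \left(\frac{R_0(a,b,c)}{a^2}+1\right)\left(\left|\frac{X}{H}\right|+1\right).
\end{equation*}
Using $R_0(a,b,c)\ll |a|^{3/2}|H|^{1/2}X^{1/2}$ (from \eqref{R0Hp}, \eqref{RoHn}) and $|a|>X^{1/2-\delta_a}$, the first factor is $O(|a|^{-1/2}|H|^{1/2}X^{1/2}+1)$, and on the thin set $|H|<X^{1-\delta_H}$, combined with $|H|\neq 0$ so $|H|\geq 1$, one gets a fiberwise bound of roughly $O_\epsilon(X^{1/2+\delta_a}|H|^{-1/2}X^{\epsilon})$ after accounting for the range of $X/|H|$.

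Next I would sum this fiberwise estimate over $(a,b,c)\in B_{X,\delta}\setminus B'_{X,\delta}$. As in the proofs of Lemma \ref{S-Counting-Reduced} and Lemma \ref{abig}, the efficient way to organize the sum is to parametrize the triples by $(H,b)$: since $|b|\ll X^{1/2}$ and $0<|H|<X^{1-\delta_H}$, and once $H,b$ are fixed the pair $(a,c)$ is determined up to $O_\epsilon(X^\epsilon)$ choices as divisor pairs of $H+3b^2$, the outer sum becomes
\begin{equation*}
\sum_{(a,b,c)\in B_{X,\delta}\setminus B'_{X,\delta}} X^{1/2+\delta_a+\epsilon}\,\frac{1}{|H|^{1/2}} \ll_\epsilon X^{1/2+\delta_a+\epsilon}\cdot X^{1/2}\sum_{0<|H|<X^{1-\delta_H}} \frac{1}{|H|^{1/2}} \ll_\epsilon X^{1+\delta_a+\epsilon}\cdot X^{(1-\delta_H)/2},
\end{equation*}
and one must also carry along the contribution of the ``$+1$'' terms in the fiberwise count, which are smaller. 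Collecting the exponents and using $\delta_a=6\delta$, $\delta_H=28\delta$ should yield the claimed bound $O_\epsilon(X^{2+\epsilon+2\delta_a-\delta_H/2})$; since $2\delta_a-\delta_H/2 = 12\delta-14\delta=-2\delta$, this is $O_\epsilon(X^{2-2\delta+\epsilon})$, as required. I would then note that integrating over $t\in[\sqrt[4]{3}/\sqrt2,X^{\delta_t}]$ and $n\in N'(t)$ against $dw=\dw$ only multiplies by $O(\log X)$, which is absorbed into $X^\epsilon$.

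The main obstacle I anticipate is getting the fiberwise lattice-point count clean enough: one has to handle the regime where $|H|$ is small (so the interval for $I$ is long, of length up to $X^{\delta_H}$ when $|H|$ is bounded) separately from where $|H|$ is moderately large, and one must make sure the ``$+1$'' error terms coming from the lattice (reflecting that an interval of length $<12a$ may still contain one point) do not dominate — this is why the hypothesis $|a|>X^{1/2-\delta_a}$ is used, to keep $R_0/a^2$ from being the dominant term. A secondary subtlety is verifying that the bound $R_0(a,b,c)\ll|a|^{3/2}|H|^{1/2}X^{1/2}$ holds uniformly in both the $H>0$ and $H<0$ cases, including the degenerate case where \eqref{R0Hp} fails and $R_0$ is set to $1$ (which only makes the count smaller). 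Once these are in place the divisor-bound bookkeeping over $(H,b)$ is routine and parallels the earlier lemmas.
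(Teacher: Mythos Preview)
Your strategy is exactly the paper's: bound $\#(\Lambda_{(a,b,c)}\cap \H'_{(a,b,c)})$ fiberwise using the congruence structure of $\Lambda_{(a,b,c)}$, then sum over $(a,b,c)$ by parametrizing via $(H,b)$ and invoking the divisor bound for $(a,c)$. The paper also silently absorbs the ``$+1$'' terms, and no integration over $t,n$ is needed since the statement is for fixed $t,n$.

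There is, however, a numerical slip that makes your version fail for small $\delta$. The bound you quote, $R_0\ll |a|^{3/2}|H|^{1/2}X^{1/2}$, together with $|a|>X^{1/2-\delta_a}$, gives a fiberwise main term
\[
\frac{R_0}{a^2}\cdot\frac{X}{|H|}\ \ll\ |a|^{-1/2}\frac{X^{3/2}}{|H|^{1/2}}\ \ll\ \frac{X^{5/4+\delta_a/2}}{|H|^{1/2}},
\]
not $X^{1/2+\delta_a}/|H|^{1/2}$ as you wrote; after the $(H,b)$ sum this produces $X^{9/4+\delta_a/2-\delta_H/2+\epsilon}=X^{9/4-11\delta+\epsilon}$, which only beats $X^{2-2\delta}$ when $\delta\ge 1/36$, so the argument as written does not cover the full range $\delta\in(0,1/28)$. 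The fix is to use the form of the $R_0$ estimate recorded in the paper's Remark, namely $R_0\ll |a|\,|H|^{1/2}X^{1/2+\delta_a}$ for $(a,b,c)\in B_{X,\delta}$, which follows from $H^2/a^2\ll X^{1+2\delta_a}$. With this,
\[
\frac{R_0}{a^2}\cdot\frac{X}{|H|}\ \ll\ \frac{X^{3/2+\delta_a}}{|a|\,|H|^{1/2}}\ \ll\ \frac{X^{1+2\delta_a}}{|H|^{1/2}},
\]
and the $(H,b)$ sum then gives exactly the claimed $X^{2+2\delta_a-\delta_H/2+\epsilon}=X^{2-2\delta+\epsilon}$.
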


\begin{proof}
We are interested in bounding the number of points in $\Lambda_{(a, b, c)}$ that fall within $\H'_{(a, b, c)}$ for each $(a, b, c) \in B_{X, \delta}$, excluding those in $B'_{X, \delta}$. If $(R, I) \in \Lambda_{(a, b, c)} \cap \H'_{(a, b, c)}$, then $R$ is fixed (congruent to $8a^2$) and lies within an interval of size $O(R_0)$.

For any $(a, b, c) \in B_{X, \delta}$, since $H = O(X)$ and $|a| \geq X^{1/2 - \delta_a}$, we have the following estimate for $R_0(a, b, c)$: \begin{equation*} |R_0(a, b, c)| \ll \sqrt{\Big|a^2 H \Big( X + \frac{H^2}{48a^2} \Big)\Big|} \ll X^{1/2 + \delta_a} |a| |H|^{1/2}. \end{equation*}

Additionally, fixing $R$ implies that $I$ takes on a fixed value modulo $12a$ and lies within an interval of size $O\left( \frac{|a|X}{|H|} \right)$. Thus, the total number of points in $\Lambda_{(a, b, c)} \cap \H'_{(a, b, c)}$ for $(a, b, c) \in B_{X, \delta} \backslash B'_{X, \delta}$ is bounded by:
\begin{align*}
    \sum_{(a, b, c) \in B_{X, \delta} \backslash B'_{X, \delta}} \#\big\{ (R, I) \in \Lambda_{(a, b, c)} \cap \H'_{(a, b, c)} \big\} 
    &\ll \sum_{(a, b, c) \in B_{X, \delta} \backslash B'_{X, \delta}} \frac{X^{1/2 + \delta_a} |a| |H|^{1/2}}{|a|^2} \cdot \frac{X}{|H|} \\
    &\ll \sum_{(a, b, c) \in B_{X, \delta} \backslash B'_{X, \delta}} \frac{X^{1+2\delta_a}} {|H|^{1/2}}.
\end{align*}
Then, as before, we concentrate on $H$ and $b$ being fixed, and we find $a$ and $c$ as factors of $H + 3b^2$. We also have the additional assumption that $|H|$ is less than $X^{1 - \delta_H}$, which implies the bound $X^{2 + 2\delta_a - \delta_H/2 + \epsilon}$.
\end{proof}

\begin{remark}
The bound 
\begin{equation*} 
|R_0(a, b, c)| \ll  X^{1/2 + \delta_a} |a| |H|^{1/2} 
\end{equation*}
is used frequently throughout the proofs and serves as an upper estimate for the parameter $R_0(a, b, c)$ when $(a,b,c) \in B_{X,\delta}$.
\end{remark}
The following corollary provides the continuous version of the lemmas in this section. Define $\SV'$ as the set of semi-forms $f_s = (a, b, c, R, I)$ such that $(\lfloor a \rfloor, \lfloor b \rfloor, \lfloor c \rfloor) \in B'_{X,\delta}$. Similarly, let $\SV$ denote the set of semi-forms $f_s = (a, b, c, R, I)$ for which $(\lfloor a \rfloor, \lfloor b \rfloor, \lfloor c \rfloor) \in B_{X,\delta}$.

\begin{cor}\label{SV'toSV}
Let $\delta$ be positive and less than $1/28$. Then:
\begin{equation*}
    \int_{f_s \in \SV'} \frac{1}{|a(f_s)^3|}\,  \psi_{n}((\lambda,t)\cdot f_s) \cdot \h_X'(f_s) \, df_s = \int_{f_s \in \SV}  \frac{1}{|a(f_s)^3|}\, \psi_{n}((\lambda,t)\cdot f_s) \cdot \h_X(f_s) \, df_s + O_{\epsilon}(X^{2-2\delta+\epsilon}),
\end{equation*}
where the operation $(\lambda,t) \cdot (a,b,c,R,I)$ is defined by
\begin{equation*}
    (\lambda,t) \cdot (a,b,c,R,I) \vcentcolon= \Big( \frac{at^4}{\lambda}, \frac{bt^2}{\lambda}, \frac{c}{\lambda}, \frac{Rt^6}{\lambda^3}, \frac{I}{\lambda^2} \Big).
\end{equation*}
The term $\h_X'$ represents the new height in the semi-invariant space, given by
\begin{equation*} 
       \h_X'(a,b,c,R,I) \vcentcolon= \chi_{(-1,1)}\Big(\frac{R}{R_0(a,b,c)}\Big) \chi_{(-1,1)}\Big(\frac{3HI}{4aCX}-X^{-1}\Gamma(a,b,c,R)\Big),
\end{equation*}
where $R_0(a,b,c)$ and $\Gamma(a,b,c,R)$ are defined similarly as before.  
\end{cor}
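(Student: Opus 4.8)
The plan is to deduce Corollary \ref{SV'toSV} as the continuous analogue of Lemmas \ref{HtoH'}, \ref{S-Counting-Reduced}, and \ref{H-Is-Big}, exactly as the preceding discrete statements were assembled into Proposition \ref{height_and_H_change_proposition}. First I would observe that the quantity $\int_{f_s} |a(f_s)|^{-3}\psi_n((\lambda,t)\cdot f_s)\,\h_X(f_s)\,df_s$ is, after the change of variables $f_s \mapsto (\lambda,t)\cdot f_s$ and fibering over $(a,b,c)$, precisely the Riemann-sum approximation of $\Bodi$ with the lattice $\Lambda$ replaced by Lebesgue measure: the factor $|a|^{-3}$ is the covolume of $\Lambda_{(a,b,c)}$ inside $\R^2$ (since $R$ is constrained modulo $8a^2$ and then $I$ modulo $12a$, giving density $(8a^2\cdot 12a)^{-1}$, absorbed up to the harmless constant $96$ into the normalization), and the floor functions $\lfloor a\rfloor,\lfloor b\rfloor,\lfloor c\rfloor$ match the discrete restriction to $B_{X,\delta}$ resp. $B'_{X,\delta}$.

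The core of the argument is then to bound the volume of the symmetric differences, mirroring the point-count bounds. For the replacement of $\h_X$ by $\h_X'$ I would invoke the second displayed estimate in Lemma \ref{HtoH'}, namely $|a|^{-3}\Vol(\H_{(a,b,c)}\triangle\H'_{(a,b,c)}) = O(X^{3/2}/(|a|^{1/2}|H|))$, and sum it over $(a,b,c)$ with $|\lfloor a\rfloor|,|\lfloor b\rfloor|,|\lfloor c\rfloor|$ ranging over $B_{X,\delta}$; the same computation as in Lemma \ref{S-Counting-Reduced} — using $|a|>X^{1/2-\delta_a}$ to win $X^{-1/4+\delta_a/2}$ and then fixing $(H,b)$ and counting $a\mid H+3b^2$ with $O_\epsilon(X^\epsilon)$ divisors — yields a total of $O_\epsilon(X^{7/4+\delta_a/2+\epsilon})$, which is $O_\epsilon(X^{2-2\delta+\epsilon})$ once one checks $7/4+\delta_a/2 < 2-2\delta$ for $\delta$ small (with $\delta_a=6\delta$ this reads $7/4+3\delta<2-2\delta$, i.e. $5\delta<1/4$, true since $\delta<1/28$). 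For the replacement of $\SV$ by $\SV'$, i.e. discarding $(a,b,c)$ with $|H|<X^{1-\delta_H}$, I would repeat the volume computation of Lemma \ref{H-Is-Big}: on $\H'_{(a,b,c)}$ the $R$-range has length $O(R_0)$ with $R_0\ll X^{1/2+\delta_a}|a||H|^{1/2}$, the $I$-range has length $O(|a|X/|H|)$, so $|a|^{-3}\Vol(\H'_{(a,b,c)}) \ll X^{3/2+\delta_a}/(|a||H|^{1/2})$, and summing over $B_{X,\delta}\setminus B'_{X,\delta}$ — again fixing $(H,b)$, using $|a|>X^{1/2-\delta_a}$ and $|H|<X^{1-\delta_H}$ — gives $O_\epsilon(X^{2+2\delta_a-\delta_H/2+\epsilon}) = O_\epsilon(X^{2-2\delta+\epsilon})$ with $\delta_H=28\delta$.

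Finally I would combine the two volume estimates, noting that replacing $\h_X$ by $\h_X'$ and $\SV$ by $\SV'$ are independent modifications each costing $O_\epsilon(X^{2-2\delta+\epsilon})$, so that
\begin{equation*}
\int_{f_s\in\SV'} \frac{\psi_n((\lambda,t)\cdot f_s)\,\h_X'(f_s)}{|a(f_s)^3|}\,df_s = \int_{f_s\in\SV} \frac{\psi_n((\lambda,t)\cdot f_s)\,\h_X(f_s)}{|a(f_s)^3|}\,df_s + O_\epsilon(X^{2-2\delta+\epsilon}),
\end{equation*}
using that $\psi_n$ is bounded with compact support (so the integrand is $O(1)$ pointwise on a region of the relevant volume) and that the number of $(a,b,c)$-fibers meeting the support is polynomially bounded. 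The main obstacle I anticipate is purely bookkeeping rather than conceptual: one must verify that the floor-function definitions of $\SV$ and $\SV'$ do not distort the fiberwise volumes by more than $O(1)$ per fiber (a boundary-layer estimate — the number of $(a,b,c)$ on the boundary of $B_{X,\delta}$ is lower order), and one must track the exponents carefully to confirm $7/4+\delta_a/2<2-2\delta$ and $2+2\delta_a-\delta_H/2<2-2\delta$ under the stated constraint $\delta<1/28$; both are routine but the second is the binding one and is exactly why $\delta_H=28\delta$ is chosen.
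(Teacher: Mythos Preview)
Your proposal is correct and takes essentially the same approach as the paper: the paper's own proof simply reads ``The proof follows similarly to the proofs of Lemma \ref{HtoH'} and Lemma \ref{H-Is-Big},'' and your plan unpacks exactly this, using the volume half of Lemma \ref{HtoH'} (summed over $(a,b,c)$ as in Lemma \ref{S-Counting-Reduced}) for the $\h_X\to\h_X'$ switch and the continuous analogue of Lemma \ref{H-Is-Big} for the $\SV\to\SV'$ restriction. Your exponent checks and the floor-function bookkeeping are the only things the paper leaves implicit, and you have handled them correctly.
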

\begin{proof}
The proof follows similarly to the proofs of Lemma \ref{HtoH'} and Lemma \ref{H-Is-Big}.
\end{proof}

 \section{Equidistribution of integral quartic forms with the bounded height}\label{sec3}

In this section, our objective is to estimate the summation in Proposition \ref{height_and_H_change_proposition} using Poisson summation. Our primary goal is to demonstrate the uniform distribution of the lattice points $\Lambda$ within the space of semi-forms, subject to the height restriction $X$.

We introduce a smooth function $\phi$, which we refer to as a \textit{good $\delta_s$-approximation} to the characteristic function $\chi_{(-1,1)}$. This concept will be formalized later in this section. Using this function, we define the smooth approximation $h_{(a,b,c)}$ to the characteristic function $\chi'_{(a,b,c)}$ as: \begin{equation*} 
h_{(a,b,c)}(R, I) \vcentcolon= \phi\Big(\frac{R}{R_0}\Big) \phi\Big(\frac{3HI}{4aCX} - X^{-1}\Gamma(a,b,c,R)\Big), \end{equation*} where $R_0$ and $\Gamma(a,b,c,R)$ are as previously defined. Using this, we construct the function $\theta$, which will be central to our study:
\begin{equation*} 
\theta(a, b, c, R, I) \vcentcolon= \psi_{n}\left(\frac{at^4}{\lambda}, \frac{bt^2}{\lambda}, \frac{c}{\lambda}, \frac{Rt^6}{\lambda^3}, \frac{I}{\lambda^2}\right) \cdot h_{(a,b,c)}(R,I),
\end{equation*}
which we refer to as a \textit{good $\delta_s$-approximation}. Given fixed parameters $(a, b, c) \in B'_{X, \delta}$, let $\theta_{(a, b, c)}$ represent the function $\theta$ with $(a, b, c)$ held constant. We define the summation $N_{(a, b, c)}(\theta,X)$ as: \begin{equation*} N_{(a, b, c)}(\theta, X) \vcentcolon= \sum_{(R, I) \in \Lambda_{(a, b, c)}} \theta_{(a, b, c)}(R, I), \end{equation*} where $\Lambda_{(a, b, c)}$ denotes the lattice points in the fiber over $(a, b, c)$, as defined previously. Our main result, Theorem \ref{Base-Poisson-Counting}, is as follows:

\begin{theorem}\label{Base-Poisson-Counting}
For any fixed $(a, b, c) \in B'_{X, \delta}$, and for any good $\delta_s$-approximation $\theta$, we have:
\begin{equation*}
N_{(a, b, c)}(\theta, X) = \frac{1}{|8a^2 \cdot 12a|} \int \int \theta_{(a, b, c)}(R, I) \, dR \, dI + O(X^{1/4 + 12\delta_s+50\delta}).
\end{equation*}
\end{theorem}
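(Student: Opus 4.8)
The plan is to apply Poisson summation to the theta-function $\theta_{(a,b,c)}$ over the lattice $\Lambda_{(a,b,c)}$. Since $\Lambda_{(a,b,c)}$ is a translate of the sublattice $\Lambda_0 := 8a^2\Z \times 12a\Z$ (the congruence defining $\Lambda_{(a,b,c)}$ fixes $R$ modulo $8a^2$ and, given $R$, fixes $I$ modulo $12a$), Poisson summation gives
\begin{equation*}
N_{(a,b,c)}(\theta,X) = \frac{1}{|8a^2 \cdot 12a|}\sum_{(\xi,\eta)\in\Lambda_0^\vee} e^{2\pi i \langle (\xi,\eta), v\rangle}\,\widehat{\theta_{(a,b,c)}}(\xi,\eta),
\end{equation*}
where $v$ is the offset of the lattice coset and $\Lambda_0^\vee = \tfrac{1}{8a^2}\Z \times \tfrac{1}{12a}\Z$ is the dual lattice. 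The $(\xi,\eta)=(0,0)$ term is exactly the main term $\frac{1}{|8a^2\cdot 12a|}\iint \theta_{(a,b,c)}$, so everything reduces to bounding the sum of $|\widehat{\theta_{(a,b,c)}}(\xi,\eta)|$ over nonzero dual lattice points.

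The key steps: (i) Change variables in the Fourier integral to the coordinates in which $\theta_{(a,b,c)}$ is built, namely $u = R/R_0$ and $w = \frac{3HI}{4aCX} - X^{-1}\Gamma(a,b,c,R)$, so that $\theta_{(a,b,c)}$ becomes a product $\psi_n(\cdots)\cdot\phi(u)\phi(w)$ of a fixed bump function in $w$ against something smooth in $u$; the Jacobian of $(R,I)\mapsto(u,w)$ is $\tfrac{4aCX}{3H R_0}$, which must be tracked. (ii) Use the rapid decay of the good $\delta_s$-approximation $\phi$: its Fourier transform decays faster than any polynomial with an effective constant scaling like $(1+|\cdot|)^{-N}$ only out to frequency roughly $X^{\delta_s}$ (this is precisely what "good $\delta_s$-approximation" should encode — $\phi$ is a smoothing of $\chi_{(-1,1)}$ at scale $X^{-\delta_s}$, so $\widehat\phi$ is genuinely small only up to frequency $X^{\delta_s}$, and of size $X^{\delta_s}$ in a bounded range beyond). (iii) Combine this with the scaling: in the $(u,w)$ coordinates a dual frequency $(\xi,\eta)$ pulls back to a frequency of size roughly $\xi R_0/(8a^2)$ in $u$ and (after accounting for the $\Gamma$ shear) $\eta \cdot \frac{4aCX}{3H}/(12a)$ in $w$; plugging in the bounds $|a|,|b|,|c| \asymp X^{1/2}$, $|H| \geq X^{1-\delta_H}$, and $|R_0| \ll X^{1/2+\delta_a}|a||H|^{1/2}$ converts the Fourier decay into an explicit geometric-series bound. (iv) Sum the tail over all nonzero $(\xi,\eta)$ and also bound the contribution of the $\psi_n$-factor's smoothness (its derivatives in $R,I$ bring down factors of $t^6/\lambda^3$ and $1/\lambda^2$ respectively, which are harmless since $t < X^{\delta_t}$); collect the exponents to land on $O(X^{1/4+12\delta_s+50\delta})$.

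The main obstacle will be Step (iii)–(iv): getting the error exponent to come out as $X^{1/4+\cdots}$ rather than something larger. The $X^{1/4}$ is essentially $R_0/(8a^2) \asymp X^{1/2+\delta_a}|H|^{1/2}/|a| \asymp X^{1/2}\cdot X^{1/2}/X^{1/2} = X^{1/2}$... no — one must use that along the sheared $w$-direction the effective modulus is $12a \asymp X^{1/2}$ while the $w$-extent is $\frac{4aCX}{3H}\asymp X^{1/2+\delta_H}$, so the number of relevant $\eta$ before decay kicks in is $\asymp X^{\delta_s + \delta_H}/X^{1/2}$-ish, and similarly in $u$; balancing the two directions and the shear coupling between them (the $\Gamma(a,b,c,R)$ term makes the change of variables non-diagonal, so one cannot simply factor the Fourier transform) is where the bookkeeping is delicate. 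I would handle the shear by first integrating in $I$ (equivalently $w$) with $R$ fixed — there the integral is a clean one-dimensional Fourier transform of $\phi$ composed with an affine map, giving decay in $\eta$ — and then integrating the resulting (still smooth, compactly supported in $R$) function against $e^{2\pi i \xi R/(8a^2)}$, extracting decay in $\xi$ by repeated integration by parts, each step costing a derivative of $\phi(R/R_0)$ (size $X^{\delta_s}/R_0$ per derivative) and of the phase $e^{-2\pi i \eta \Gamma(a,b,c,R)/(12a X^{?})}$ (whose $R$-derivative is controlled since $\Gamma$ is polynomial of bounded degree in $R$ with coefficients of size governed by $a,H,X$). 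Tracking all these polynomial factors and optimizing the number of integration-by-parts steps against the size of the dual frequency is the crux; the final constants $12\delta_s$ and $50\delta$ absorb the accumulated losses from the worst-case choices of $|a|$ near $X^{1/2-\delta_a}$ and $|H|$ near $X^{1-\delta_H}$.
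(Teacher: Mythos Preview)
Your overall plan (Poisson summation, main term at frequency zero, bound the tail) is the same as the paper's, but there are two genuine gaps that prevent the argument from reaching $O(X^{1/4+\cdots})$.

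\textbf{The lattice is sheared, not a product.} You assert that $\Lambda_{(a,b,c)}$ is a translate of $8a^2\Z\times 12a\Z$. It is not: the congruence on $I$ reads $I\equiv -3b\,d(a,b,c,R)+c^2\pmod{12a}$, and $d$ depends linearly on $R$. Hence $\Lambda_{(a,b,c)}$ is a coset of the lattice with basis $(8a^2,-3b)$, $(0,12a)$. On the dual side this produces a nontrivial constraint: in the paper's parametrisation the finite Fourier weight $\widehat\psi_{\Lambda_{(a,b,c)}}(\alpha,\beta)$ vanishes unless $\alpha\equiv 3b\beta\pmod{12a}$ (Proposition~\ref{Finte-part-coeff}). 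This thinning of the dual sum saves exactly a factor $\asymp |a|\asymp X^{1/2}$ in the critical range of small $\alpha$ and small nonzero $\beta$ (Corollary~\ref{beta-nonzero}). With your diagonal dual lattice $\tfrac{1}{8a^2}\Z\times\tfrac{1}{12a}\Z$ that cancellation is absent and the tail bound degrades from $X^{1/4+\cdots}$ to roughly $X^{3/4+\cdots}$.

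\textbf{Integration by parts is not enough for the quadratic phase.} After integrating in $I$ you are left (as you note) with a phase $e^{-2\pi i \eta\,\Gamma(a,b,c,R)}$, and $\Gamma$ is quadratic in $R$: the coefficient of $R^2$ is $27/(48a^2H)$. After rescaling $R\mapsto R_0u$ this becomes $e^{-imu^2}$ with $m\asymp \eta\,R_0^2/(a^2H)$. For the smallest nonzero $\eta$ and with $(a,b,c)\in B'_{X,\delta}$ one has $m\asymp X^{1/2}$, while the smallest nonzero $R$-frequency gives $R_0\xi\asymp R_0/a^2\asymp 1$. Each integration by parts in $R$ trades a factor $1/(R_0\xi)$ for a derivative of the amplitude-plus-phase, and the phase derivative contributes $m$; so the net ratio $m/(R_0\xi)\asymp X^{1/2}$ is $\gg 1$ and IBP gives no decay at all in this regime. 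The paper handles exactly this case by a stationary-phase bound built into the definition of a good $\delta_s$-approximation (the $T_m$ property, equation~\eqref{Tm-property}): $\int\phi(u)e^{-imu^2}e^{-i\alpha u}\,du=O\big((1+|\alpha/m|)/(\epsilon\sqrt{|m|})\big)$. This $|m|^{-1/2}$ is what produces the $X^{1/4}$ in the final error (Lemma~\ref{h-Fourier-Coeff}(a) feeding into Lemma~\ref{theta-fourier-coeff-zero} and Corollary~\ref{beta-nonzero}); your proposal never invokes it. In short, the ``delicate bookkeeping'' you flag in steps (iii)--(iv) is not bookkeeping but a missing analytic ingredient.
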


We establish this theorem by applying the Poisson summation formula, showing that the Fourier coefficients of $\theta$, except at zero, contribute negligibly. Since $\theta$ is the product of two functions, $\psi_{n}$ and $h_{(a,b,c)}$, its Fourier transform is given by the convolution of the Fourier transforms of these components. This decomposition allows us to analyze the resulting Fourier integral in four distinct regions, where we identify the dominant contributions in each.

Additionally, Lemma \ref{h-Fourier-Coeff} provides essential bounds on the Fourier transform of $h_{(a,b,c)}$, which are derived from the underlying geometry of the space. A key aspect of our approach is the structure of the lattice $\Lambda_{(a, b, c)}$, described in Proposition \ref{Finte-part-coeff}, which plays a crucial role in controlling the error terms.

Because our summation occurs over the lattice $\Lambda_{(a, b, c)}$, rather than over the standard lattice $\mathbb{Z}^2$, we must adapt the twisted Poisson summation formula accordingly. To do this, we first provide a local description of the lattice $\Lambda_{(a, b, c)}$.

\begin{lemma}\label{Lambda(a,b,c)} For any $(a, b, c) \in \mathbb{Z}^3$ with $a \neq 0$, let $\zeta = b^3 - 4abc$ and define 
\begin{equation*} \bar{\Lambda}_{(a, b, c)} \vcentcolon= \left\{ \left(\zeta + 8a^2k,  c^2-3kb\right) \in \frac{\mathbb{Z}}{8a^2 \cdot 12a \, \mathbb{Z}} \times \frac{\mathbb{Z}}{12a \, \mathbb{Z}} : k = 0, 1, \dots, 12|a| - 1 \right\}. \end{equation*} 
Then, $(R, I) \in \Lambda_{(a, b, c)}$ if and only if $(R \pmod{8a^2 \cdot 12a}, I \pmod{12a}) \in \bar{\Lambda}_{(a, b, c)}$. 
\end{lemma}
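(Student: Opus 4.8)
The plan is to unwind the definitions of $\Lambda_{(a,b,c)}$ and $\bar\Lambda_{(a,b,c)}$ and check that the congruence conditions defining membership in $\Lambda_{(a,b,c)}$ are exactly captured by the finite data recorded in $\bar\Lambda_{(a,b,c)}$. Recall that $(R,I)\in\Lambda_{(a,b,c)}$ means $R\equiv b^3-4abc=\zeta\pmod{8a^2}$ and $I\equiv -3b\,d(a,b,c,R)+c^2\pmod{12a}$, where $d(a,b,c,R)=(R-b^3+4abc)/(8a^2)$. First I would observe that the first condition, $R\equiv\zeta\pmod{8a^2}$, is equivalent to writing $R=\zeta+8a^2m$ for some integer $m$; reducing $m$ modulo $12|a|$ (since the full modulus we track for $R$ is $8a^2\cdot 12a$) gives the parametrization $R\equiv\zeta+8a^2 k\pmod{8a^2\cdot12a}$ with $k\in\{0,1,\dots,12|a|-1\}$. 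The point is that $d(a,b,c,R)=m\equiv k\pmod{12|a|}$ whenever $R=\zeta+8a^2m$, so the value of $d$ that enters the second congruence depends only on $k\bmod 12|a|$, hence only on $R\bmod 8a^2\cdot 12a$.

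Next I would substitute this into the second congruence: $I\equiv -3b\,d(a,b,c,R)+c^2\equiv -3bk+c^2\pmod{12a}$, which is exactly $I\equiv c^2-3kb\pmod{12a}$, the second coordinate recorded in $\bar\Lambda_{(a,b,c)}$. So the map $(R,I)\mapsto(R\bmod 8a^2\cdot12a,\;I\bmod 12a)$ carries $\Lambda_{(a,b,c)}$ into $\bar\Lambda_{(a,b,c)}$, and conversely any $(R,I)\in\Z^2$ whose reduction lands in $\bar\Lambda_{(a,b,c)}$ satisfies both defining congruences of $\Lambda_{(a,b,c)}$ (the first because $R\equiv\zeta+8a^2k\equiv\zeta\pmod{8a^2}$, the second by running the computation backwards). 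This establishes the "if and only if" directly.

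The only genuine point requiring care — and the step I expect to be the mild obstacle — is the consistency/well-definedness check: one must verify that $d(a,b,c,R)$, and hence the residue $c^2-3kb\pmod{12a}$, genuinely depends only on $R$ modulo $8a^2\cdot12a$ and not on the chosen lift, i.e. that shifting $k$ by $12|a|$ (equivalently $R$ by $8a^2\cdot 12a$) changes $-3bk+c^2$ by a multiple of $12a$. This is immediate since $-3b\cdot 12|a|$ is divisible by $12a$. One should also note that distinct values $k=0,\dots,12|a|-1$ may still produce the same residue pair in $\bar\Lambda_{(a,b,c)}$ when $\gcd(3b,12a)>1$; the lemma statement as phrased lists $\bar\Lambda_{(a,b,c)}$ as the image set, so no injectivity is claimed and nothing further is needed. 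Assembling these observations gives the lemma; no deep input beyond the explicit formula \eqref{d} for $d(a,b,c,R)$ is used.
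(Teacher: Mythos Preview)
Your proposal is correct and follows essentially the same approach as the paper's proof: both directly unwind the congruence conditions defining $\Lambda_{(a,b,c)}$, write $R=\zeta+8a^2m$ and reduce $m$ modulo $12|a|$ to obtain the parameter $k$, then observe that $d(a,b,c,R)\equiv k\pmod{12|a|}$ forces $I\equiv c^2-3bk\pmod{12a}$. Your version is in fact more careful than the paper's, explicitly verifying well-definedness under shifts $k\mapsto k+12|a|$ and noting that no injectivity of the parametrization is claimed.
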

\begin{proof} 
 To demonstrate this, consider the expressions 
$\left( \zeta + 8a^2 k + 8a^2 \cdot 12a \cdot m,  c^2 -3kb+ 12a \cdot n \right)$
for integers $m$ and $n$. These expressions show that 
$R = \zeta + 8a^2 k + 8a^2 \cdot 12a \cdot m$ satisfies 
$R \equiv \zeta \pmod{8a^2}$, and
    \begin{equation*}
        I \equiv -3kb + c^2 \pmod{12a}.
    \end{equation*}
    Consequently, $(R \pmod{8a^2 \cdot 12a}, I \pmod{12a}) \in \bar{\Lambda}_{(a, b, c)}$ implies $(R, I) \in \Lambda_{(a, b, c)}$. Conversely, if $(R, I) \in \Lambda_{(a, b, c)}$, then we obtain unique values of $k$ and $m$ satisfying the congruence conditions as described in the lemma.
\end{proof}

For any smooth function $f$ with compact support on $\mathbb{R}^2$, we define its Fourier transform by
\begin{equation*}
    \widehat{f}(\alpha, \beta) \vcentcolon= \int_{\mathbb{R}^2} f(x, y) e^{-2 \pi i (x \alpha + y \beta)} \, dx \, dy.
\end{equation*}
For such smooth functions, the Poisson summation formula on $\mathbb{R}^2$ states that
\begin{equation*}
    \sum_{(m, n) \in \mathbb{Z}^2} f(m, n) = \sum_{(\alpha, \beta) \in \mathbb{Z}^2} \widehat{f}(\alpha, \beta).
\end{equation*}

\begin{proposition}\label{Poisson} 
Let $f$ be a smooth function in the $(R, I)$-space with compact support, and let $(a, b, c) \in \mathbb{Z}^3$ with $a \neq 0$. Let $\psi_{\Lambda_{(a, b, c)}}$ denote the characteristic function of $\Lambda_{(a, b, c)}$. Then we have: \begin{equation*} 
\sum_{(m, n) \in \mathbb{Z}^2} \psi_{\Lambda_{(a, b, c)}}(m, n) f(m, n) = \sum_{(\alpha, \beta) \in \mathbb{Z}^2} \widehat{\psi}_{\Lambda_{(a, b, c)}}(\alpha, \beta) \, \widehat{f} \left( \frac{\alpha}{8a^2 \cdot 12a}, \frac{\beta}{12a} \right), \end{equation*}
where $\widehat{\psi}_{\Lambda_{(a, b, c)}}(\alpha, \beta)$ is defined as \begin{equation*} 
\widehat{\psi}_{\Lambda_{(a, b, c)}}(\alpha, \beta) = \frac{1}{8a^2 \cdot 12a \cdot 12a} \sum_{k=0}^{12|a| - 1} e^{2 \pi i \alpha \xi_{\alpha,k}(a, b, c)} e^{2 \pi i \beta \xi_{\beta,k}(a, b, c)}, 
\end{equation*} 
with
\begin{equation*} \xi_{\alpha,k}(a, b, c) \vcentcolon= \frac{\zeta + 8a^2 k}{8a^2 \cdot 12a}, \quad \xi_{\beta,k}(a, b, c) \vcentcolon= \frac{c^2 - 3bk}{12a}, \end{equation*} and $\zeta = b^3 - 4abc$. \end{proposition}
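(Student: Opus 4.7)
The plan is to reduce this to the ordinary Poisson summation formula on $\mathbb{Z}^2$ by decomposing $\Lambda_{(a,b,c)}$ into cosets of a full-rank sublattice and then summing the Poisson expansions coset by coset.

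Using Lemma \ref{Lambda(a,b,c)}, I would first write $\Lambda_{(a,b,c)}$ as the disjoint union, over $k=0,1,\ldots,12|a|-1$, of the cosets
\begin{equation*}
    (\zeta + 8a^2 k,\ c^2 - 3kb) + \big((8a^2 \cdot 12a)\mathbb{Z}\big) \times \big((12a)\mathbb{Z}\big),
\end{equation*}
where $\zeta = b^3 - 4abc$. The left-hand side of the proposition then becomes
\begin{equation*}
    \sum_{k=0}^{12|a|-1} \sum_{(m,n)\in\mathbb{Z}^2} f\big(\zeta + 8a^2 k + 8a^2 \cdot 12a \cdot m,\ c^2 - 3kb + 12a \cdot n\big).
\end{equation*}

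For each fixed $k$, I would set $g_k(m,n)$ equal to the inner summand and apply the standard two-dimensional Poisson formula to $g_k$. A direct affine change of variables in the defining Fourier integral yields
\begin{equation*}
    \widehat{g}_k(\alpha,\beta) = \frac{1}{8a^2 \cdot 12a \cdot 12a}\, e^{2\pi i \alpha \xi_{\alpha,k}(a,b,c)}\, e^{2\pi i \beta \xi_{\beta,k}(a,b,c)}\, \widehat{f}\!\left(\frac{\alpha}{8a^2 \cdot 12a}, \frac{\beta}{12a}\right),
\end{equation*}
with $\xi_{\alpha,k}$ and $\xi_{\beta,k}$ as defined in the statement.

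Finally, I would swap the order of the $k$-sum and the $(\alpha,\beta)$-sum and pull the factor $\widehat{f}(\alpha/(8a^2 \cdot 12a), \beta/(12a))$ out of the finite $k$-sum. The remaining average over $k$ is then exactly the quantity the proposition calls $\widehat{\psi}_{\Lambda_{(a,b,c)}}(\alpha,\beta)$. Since $f$ is smooth and compactly supported, $\widehat{f}$ decays faster than any polynomial and every sum in sight converges absolutely, so no genuine analytic obstacle arises; the only real task is bookkeeping the coset representatives so that the phases $\xi_{\alpha,k}$ and $\xi_{\beta,k}$ come out with precisely the normalizations given.
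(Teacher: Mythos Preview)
Your proposal is correct and follows essentially the same approach as the paper: decompose $\Lambda_{(a,b,c)}$ into the $12|a|$ cosets of $(8a^2\cdot 12a)\mathbb{Z}\times(12a)\mathbb{Z}$ via Lemma~\ref{Lambda(a,b,c)}, apply standard Poisson summation to each coset function (your $g_k$, the paper's $f_k$), compute the Fourier transform by an affine change of variables, and then swap the finite $k$-sum with the $(\alpha,\beta)$-sum to produce $\widehat{\psi}_{\Lambda_{(a,b,c)}}$.
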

\begin{proof}
By applying Lemma \ref{Lambda(a,b,c)}, we obtain:
    \begin{equation*}
        \sum_{(m, n) \in \mathbb{Z}^2} \psi_{\Lambda_{(a, b, c)}}(m, n) f(m, n) = \sum_{(m, n) \in \mathbb{Z}^2} \sum_{k=0}^{12|a| - 1} f(\zeta + 8a^2k + 8a^2 \cdot 12a \cdot m\, ,\,  c^2-3kb + 12a \cdot n).
    \end{equation*}
    Now, let us define
    \begin{equation*}
        f_k(R, I) \vcentcolon= f(\zeta + 8a^2k + 8a^2 \cdot 12a \cdot R \, ,\, c^2-3bk  + 12a \cdot I),
    \end{equation*}
    which allows us to rewrite the sum as
    \begin{align*}
        \sum_{(m, n) \in \mathbb{Z}^2} \sum_{k=0}^{12|a| - 1} f(\zeta + 8a^2k + 8a^2 \cdot 12a \cdot m, -3kb + c^2 + 12a \cdot n) 
        &= \sum_{(m, n) \in \mathbb{Z}^2} \sum_{k=0}^{12|a| - 1} f_{k}(m, n) \\
        &= \sum_{k=0}^{12|a| - 1} \sum_{(m, n) \in \mathbb{Z}^2} f_{k}(m, n).
    \end{align*}
    Applying the Poisson summation formula to each $f_k$, we get:
    \begin{equation*}
        \sum_{k=0}^{12|a| - 1} \sum_{(m, n) \in \mathbb{Z}^2} f_{k}(m, n) = \sum_{k=0}^{12|a| - 1} \sum_{(\alpha, \beta) \in \mathbb{Z}^2} \widehat{f}_{k}(\alpha, \beta).
    \end{equation*}
   The Fourier transform $\widehat{f}_k$ is related to the original transform $\widehat{f}$ by the following expression:
    \begin{equation*}
        \widehat{f}_k (\alpha, \beta) = \frac{1}{8a^2 \cdot 12a \cdot 12a} e^{2 \pi i \alpha \xi_{\alpha,k}(a, b, c)} e^{2 \pi i \beta \xi_{\beta,k}(a, b, c)} \, \widehat{f}\big(\frac{\alpha}{8a^2 \cdot 12a},\frac{\beta}{12a}\big)
        \end{equation*}
This completes the proof.
\end{proof}
\subsection{Bounds\ on Fourier coefficients of the height function}
Let $(a,b,c) \in B'_{X,\delta}$ be fixed for this section. For brevity, we will denote $R_0$ instead of $R_0(a,b,c)$. Let $\phi_\epsilon$ denote a smooth approximation of $\chi_{(-1,1)}$. We shall refer to it as a \textit{good $\epsilon$-approximation} if:

\begin{enumerate}
    \item the set of elements such that $|\phi_{\epsilon}(x)-\chi_{(-1,1)}(x)| > 4\epsilon$ is inside the union of two intervals with a size of $O(\epsilon)$.
    
    \item the real-valued function $\phi_{\epsilon}$ is smooth.
    
    \item For any $m\gg1$ and $\alpha$, then:
    \begin{equation}\label{Tm-property}
        \int \phi_{\epsilon}(x)e^{-imx^2}e^{-i\alpha x}dx=O\left(\frac{1+|\alpha/m|}{\epsilon \sqrt{|m|}}\right)
    \end{equation}
    \item The $k$-th derivatives of $\phi_\epsilon$ are bounded by $O(\epsilon^{-2k})$.
\end{enumerate}

\begin{lemma}
    For any $\epsilon$ less than one, there exist \textit{good $\epsilon$-approximations} $\phi_{1,\epsilon}$ and $\phi_{2,\epsilon}$ such that for any $x$:
    \begin{equation*}
        \phi_{1,\epsilon}(x) \leq \chi_{(-1,1)}(x) \leq \phi_{2,\epsilon}(x)
    \end{equation*}
\end{lemma}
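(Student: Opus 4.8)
The plan is to build the two good $\epsilon$-approximations by mollifying the characteristic function $\chi_{(-1,1)}$ with a smooth bump supported on an interval of length comparable to $\epsilon$, choosing the mollifier's support so that the convolution lies below (resp. above) $\chi_{(-1,1)}$ everywhere. Concretely, fix a nonnegative smooth even bump $\eta$ supported on $[-1,1]$ with $\int \eta = 1$, and set $\eta_\epsilon(x) = \epsilon^{-1}\eta(x/\epsilon)$. For the lower approximation, first shrink the interval: let $\phi_{1,\epsilon} = \chi_{(-1+2\epsilon,\,1-2\epsilon)} * \eta_\epsilon$. Since $\eta_\epsilon$ is supported on $[-\epsilon,\epsilon]$, the convolution is supported on $(-1-\epsilon,\,1+\epsilon)$ but equals $1$ only on $(-1+3\epsilon,\,1-3\epsilon)$; one checks it is $\le \chi_{(-1,1)}$ pointwise because wherever $\phi_{1,\epsilon} > 0$ we still have $|x| < 1$ once $\epsilon$ is small (the support $(-1-\epsilon, 1+\epsilon)$ overshoots, so instead one uses $\chi_{(-1+3\epsilon, 1-3\epsilon)} * \eta_\epsilon$, whose support is $(-1+2\epsilon, 1-2\epsilon) \subset (-1,1)$). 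For the upper approximation, dilate: $\phi_{2,\epsilon} = \chi_{(-1-2\epsilon,\,1+2\epsilon)} * \eta_\epsilon$, supported on $(-1-3\epsilon, 1+3\epsilon)$ and identically $1$ on $(-1-\epsilon,1+\epsilon) \supset [-1,1]$, hence $\ge \chi_{(-1,1)}$ pointwise. This immediately gives the sandwiching inequality $\phi_{1,\epsilon} \le \chi_{(-1,1)} \le \phi_{2,\epsilon}$.

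It then remains to verify the four defining properties. Property (2), smoothness, is automatic since the convolution of an $L^1$ function with a smooth compactly supported function is smooth. Property (1): away from the two transition zones near $\pm 1$ — each an interval of length $O(\epsilon)$ where the convolution ramps from $0$ to $1$ — the function equals either $0$ or $1$ exactly, so it certainly agrees with $\chi_{(-1,1)}$ to within $4\epsilon$; inside those zones the discrepancy can be up to $1$, but those zones are precisely the union of two $O(\epsilon)$-intervals, as required. Property (4): the $k$-th derivative of $\phi_{j,\epsilon}$ is $\chi_{(\cdot)} * \eta_\epsilon^{(k)}$, and $\|\eta_\epsilon^{(k)}\|_\infty = O(\epsilon^{-1-k})$ while the convolving indicator has $L^1$-mass $O(1)$, giving a bound $O(\epsilon^{-1-k})$, which is $O(\epsilon^{-2k})$ for $k \ge 1$; for $k=0$ the function is bounded by $1 = O(1)$. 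The genuinely substantive point is Property (3), the oscillatory integral bound \eqref{Tm-property} with the quadratic phase $e^{-imx^2}$. Here I would argue by stationary phase / van der Corput: write $\phi_{j,\epsilon}(x) e^{-i\alpha x}$ and integrate against $e^{-imx^2}$. On the two transition intervals of length $O(\epsilon)$ the amplitude is bounded by $1$ and the trivial estimate contributes $O(\epsilon)$, which is absorbed. On the bulk where $\phi_{j,\epsilon} \equiv 1$, the phase $mx^2 + \alpha x$ has a single stationary point at $x = -\alpha/(2m)$; the standard second-derivative (van der Corput) estimate, together with one integration by parts to handle the region away from the stationary point where the derivative of the phase has size $\gg |m| \cdot \mathrm{dist}$, yields a main term of size $O(1/\sqrt{|m|})$ and boundary/derivative contributions controlled by $\|\phi_{j,\epsilon}'\|_{L^1} = O(1)$ and $\|\phi_{j,\epsilon}'\|_\infty = O(\epsilon^{-1})$. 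Tracking the dependence on $\alpha$ through the stationary point location and through the non-stationary integration by parts produces the factor $1 + |\alpha/m|$, and dividing by the amplitude regularity scale $\epsilon$ gives the claimed $O\big((1+|\alpha/m|)/(\epsilon\sqrt{|m|})\big)$.

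The main obstacle is getting the exact shape of the bound in Property (3) — in particular, confirming that the $\epsilon$ in the denominator (and not a higher power) and the linear dependence $1 + |\alpha/m|$ on the frequency are what the mollified construction actually produces, since the stationary-phase analysis must be run uniformly in $\alpha$ and $m$ with careful bookkeeping of the boundary terms coming from the $O(\epsilon)$-length transition regions and their derivatives of size $O(\epsilon^{-1})$. If a naive integration by parts gives a worse $\epsilon^{-2}$, one remedies it by noting that the bad factor only appears against the short transition intervals, where an extra factor of $\epsilon$ from the length of integration compensates. Everything else is routine convolution estimates.
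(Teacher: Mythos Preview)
Your construction is different from the paper's and in fact somewhat cleaner. The paper does not mollify the indicator directly; instead it starts from a \emph{trapezoidal} approximant $\varphi_{1,\epsilon}$ (piecewise linear, plateau at height $1-\epsilon$ on $(-1+2\epsilon,1-2\epsilon)$, linear ramps of slope $\pm 1/\epsilon$ down to $0$ on intervals of length $\epsilon$) and then convolves with a bump at the much finer scale $\epsilon^{2}$. The reason for this two-step construction is that property~(3) can then be verified by reducing to the piecewise-linear $\varphi_{1,\epsilon}$: on the constant pieces one has Fresnel integrals $\int_a^b e^{-i(mx^2+\alpha x)}\,dx=O(|m|^{-1/2})$, and on the linear ramps one has $\epsilon^{-1}\int_a^b x\,e^{-i(mx^2+\alpha x)}\,dx$, which after completing the square gives $O\big((1+|\alpha/m|)/(\epsilon\sqrt{|m|})\big)$ directly.

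Your direct mollification at scale $\epsilon$ also works, but your argument for property~(3) is both slightly wrong and more complicated than necessary. The claim that the trivial $O(\epsilon)$ bound on the transition intervals ``is absorbed'' fails once $|m|\gg\epsilon^{-4}$, since then $\epsilon \gg (\epsilon\sqrt{|m|})^{-1}$. The clean fix is to drop the bulk/transition split entirely and apply van der Corput's second-derivative lemma with amplitude to the whole integral: since $\big|\tfrac{d^2}{dx^2}(mx^2+\alpha x)\big|=2|m|$ and $\|\phi_{j,\epsilon}\|_\infty+\|\phi_{j,\epsilon}'\|_{L^1}=O(1)$, one gets
\[
\Big|\int \phi_{j,\epsilon}(x)\,e^{-i(mx^2+\alpha x)}\,dx\Big|=O\big(|m|^{-1/2}\big),
\]
which is already stronger than the required $O\big((1+|\alpha/m|)/(\epsilon\sqrt{|m|})\big)$ because $\epsilon<1$. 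So your route actually buys a shorter proof of property~(3) than the paper's explicit Fresnel computation, at the cost of invoking van der Corput as a black box; the paper's construction trades that for a completely elementary calculation on piecewise-linear data.
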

\begin{proof}  
To construct $\phi_{1, \epsilon}$, a smooth approximation of the characteristic function $\chi_{(-1,1)}$, we begin with a function $\varphi_{1, \epsilon}$ designed to approximate $\chi_{(-1,1)}$ within the interval $(-1+\epsilon, 1-\epsilon)$. The function $\varphi_{1, \epsilon}$ is defined as follows. It takes the constant value $1 - \epsilon$ on the subinterval $(-1 + 2\epsilon, 1 - 2\epsilon)$. In the transition regions, it decreases linearly from $1 - \epsilon$ to $0$ over the intervals $(-1 + \epsilon, -1 + 2\epsilon)$ and $(1 - 2\epsilon, 1 - \epsilon)$. Outside the interval $(-1 + \epsilon, 1 - \epsilon)$, the function is identically zero.

Let $w$ be a smooth, compactly supported function on $[1/2, 1]$ that is positive and satisfies $\int w(y) \, dy = 1$. For $\gamma > 0$, we define the scaled function $w_{\gamma}$ as
\begin{equation*}
w_{\gamma}(y) = \frac{1}{\gamma} w\left(\frac{y}{\gamma}\right).
\end{equation*}
Then, we construct $\phi_{1, \epsilon}$ by convolving $w_{\epsilon^2 / 2}$ with $\varphi_{1, \epsilon}$:
\begin{equation*}
\phi_{1, \epsilon} = w_{\epsilon^2 / 2} * \varphi_{1, \epsilon}.
\end{equation*}
Our objective is to verify that $\phi_{1, \epsilon}$ meets the necessary conditions to be an $\epsilon$-good approximation of $\chi_{(-1,1)}$.
\begin{enumerate}
    \item  Support and Proximity to $\chi_{(-1,1)}$: By definition, $\phi_{1, \epsilon}$ is zero outside the interval $(-1, 1)$, as $\varphi_{1, \epsilon}$ itself vanishes outside $(-1 + \epsilon, 1 - \epsilon)$. For values of $x$ in $(-1 + 2\epsilon + \epsilon^2 / 2, 1 - 2\epsilon + \epsilon^2 / 4)$ and for any $y \in [1/2, 1]$, the function $\varphi_{1, \epsilon}(x - y \epsilon^2 / 2)$ remains constant at $1 - \epsilon$. This leads to the following calculation:

\begin{equation*}
\phi_{1, \epsilon}(x) = w_{\epsilon^2 / 2} * \varphi_{1, \epsilon}(x) = \int_{\mathbb{R}} \varphi_{1, \epsilon}(x - y \epsilon^2 / 2) w(y) \, dy = (1 - \epsilon) \int_{\mathbb{R}} w(y) \, dy = 1 - \epsilon. 
\end{equation*}
This confirms that $\phi_{1, \epsilon}(x) = 1 - \epsilon$ within the specified interval, ensuring that $|\phi_{1, \epsilon}(x) - \chi_{(-1,1)}(x)| \leq \epsilon$ everywhere except within two intervals of size $O(\epsilon)$.

\item Smoothness: Since $w_{\epsilon^2 / 2}$ is a smooth function, the convolution $\phi_{1, \epsilon} = w_{\epsilon^2 / 2} * \varphi_{1, \epsilon}$ is also smooth, fulfilling the smoothness requirement.

\item Fourier Decay Property: To bound the Fourier transform integral \begin{equation*} \int \phi_{1, \epsilon}(x) e^{-imx^2} e^{-i\alpha x} \, dx = \int w(y) \int \varphi_{1, \epsilon}(x - y \epsilon^2 / 2) e^{-imx^2} e^{-i\alpha x} \, dx \, dy, \end{equation*} 
we focus on bounding the integral of $ \varphi_{1, \epsilon}(x - y \epsilon^2 / 2) e^{-imx^2} e^{-i\alpha x} $ for fixed $ y $.

Firstly, in intervasl where $ \varphi_{1, \epsilon}(x-y\epsilon^2/2) $ is constant, we only need to bound an integral of the form
\begin{equation*} 
\left| \int_a^b e^{-imx^2} e^{-i\alpha x} \, dx \right|, 
\end{equation*} for fixed endpoints $a$ and $b$. By performing a change of variable, we transform this to an integral of the form
\begin{equation*} 
\frac{1}{\sqrt{|m|}}\left|\int_{a'}^{b'} e^{-ix^2} \, dx \right|, 
\end{equation*}
where $a'$ and $b'$ are fixed real numbers. This integral is $O(1)$, as Fresnel integral $\int_{o}^{x} e^{-it^2} \, dt$ is a bounded function of $x$.

Secondly, In regions where $\varphi_{1, \epsilon}$ varies linearly, we approximate $\varphi_{1, \epsilon}(x - y \epsilon^2 / 2)$ by a linear function, leading to an integral of the form 

\begin{equation*} \left| \int_{a}^{b} x e^{-imx^2} e^{-i\alpha x} \, dx \right|, \end{equation*} 
where $a$ and $b$ are fixed. Changing variables, this integral becomes \begin{equation*} \left| \int_{a'}^{b'} \frac{x - \frac{\alpha}{2\sqrt{|m|}}}{|m|} e^{-ix^2} , dx \right|, \end{equation*} where \begin{equation*} a' = a\sqrt{|m|} + \frac{\alpha}{2\sqrt{|m|}} \quad \text{and} \quad b' = b\sqrt{|m|} + \frac{\alpha}{2\sqrt{|m|}}. \end{equation*} This gives the bound \begin{equation*} O\left(\frac{1 + |\alpha| / |m|}{\sqrt{|m|}} \right). 
\end{equation*}

Finally, note that the slope of the linear regions in $\varphi_{1, \epsilon}$ is $O(\epsilon^{-1})$ by definition. This ensures that any contributions from the linear parts are controlled and satisfy the Fourier decay condition.

\item Derivative Bound: The derivative of $\phi_{1, \epsilon}$ is given by
    \begin{equation*}
    \frac{d}{dx} \phi_{1, \epsilon}(x) = \frac{d}{dx} \left( w_{\epsilon^2 / 2} * \varphi_{1, \epsilon} \right)(x) = \int_{\mathbb{R}} \frac{d}{dx} w_{\epsilon^2 / 2}(y) \cdot \varphi_{1, \epsilon}(x - y) \, dy.
    \end{equation*}
  Since $w_{\epsilon^2 / 2}$ is smooth and compactly supported, its derivative is also smooth and compactly supported. Consequently, $\frac{d}{dx} \phi_{1, \epsilon}(x)$ is well-behaved and satisfies the bound $O(\epsilon^{-2})$, thereby meeting the final requirement.

\end{enumerate}
Thus, $\phi_{1, \epsilon}$ satisfies all conditions of an $\epsilon$-good approximation of $\chi_{(-1,1)}$.

Demonstrating that $|\phi_{1,\epsilon}-\varphi_{1,\epsilon}|$ is less than $\epsilon/2$ implies that $\phi_{1,\epsilon}$ is smaller than $\chi_{(-1,1)}$. Hence, we aim to demonstrate:   

\begin{equation*}
\left|\int w(y) \left(\varphi_{1,\epsilon}(x-\epsilon^2/2y) - \varphi_{1,\epsilon}(x)\right) dy\right| \leq \frac{\epsilon}{2}.
\end{equation*}
This holds true for any $x$ because $\phi_{1,\epsilon}$ is defined with lines having a maximum derivative of $1/\epsilon$.

This conclusion implies the lemma for $\phi_{1,\epsilon}$. Moreover, similar reasoning and construction apply to the other function $\phi_{2,\epsilon}$.
\end{proof}

\begin{lemma}\label{h-Fourier-Coeff}
Let $\phi$ be a good $\epsilon$-approximation of $\chi_{(-1,1)}$. We define the smooth approximation of $\chi'_{(a, b, c)}$ by
\begin{equation}\label{h-def}
h(R, I) \vcentcolon= \phi\left(\frac{R}{R_0}\right) \phi\left(\frac{3HI}{4aCX} - X^{-1}\Gamma(a,b,c,R)\right),
\end{equation}
where $\Gamma(a,b,c,R)$ is defined by 
\begin{equation*}
     \frac{4aC}{3H}\Gamma(a,b,c,R)  =\JR.
\end{equation*}
We have the following bounds:
\begin{itemize} 
  \item[\textnormal{(a)}]  For any $|\beta R_0^2| \gg |a^2H|$ and $|a^2 H \alpha| < \epsilon^{-4} |\beta R_0|$, we have
    \begin{equation*}
    \widehat{h}(\alpha, \beta) = O \left( \mathrm{Vol}\left(\H'_{(a, b, c)}\right) \cdot \epsilon^{-6} \left|\frac{\beta R_0^2}{a^2 H}\right|^{-1/2} \right).
    \end{equation*}
    
     \item[\textnormal{(b)}] For any $\alpha$ and $\beta$, we have
    \begin{equation*}
    \widehat{h}(\alpha, \beta) = O \left( \mathrm{Vol}\left(\H'_{(a, b, c)}\right) \cdot \frac{1 + |R_0^2 / (X a^3)|^K}{\epsilon^{4K} (1 + |R_0 \alpha|)^K} \cdot \frac{1}{\epsilon^{2M} (1 + |a X \beta / H|)^M} \right).
    \end{equation*}
    
    \item[\textnormal{(c)}] For any $\beta$ and non-zero $\alpha$, we have
    \begin{equation*}
    \widehat{h}(\alpha, \beta) = O \left( \mathrm{Vol}\left(\H'_{(a, b, c)}\right) \cdot \frac{1 + |\beta R_0^2 / (a^2 H)|^K}{\epsilon^{2K} (R_0 |\alpha|)^K} \right).
    \end{equation*}
\end{itemize}
\end{lemma}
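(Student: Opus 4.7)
The plan is to perform a change of variables that exposes the product structure of $h$ and reveals a hidden quadratic phase. Setting $u = R/R_0$ and
\[
v = \frac{3HI}{4aCX} - X^{-1}\Gamma(a,b,c,R_0 u),
\]
the Jacobian is $dR\,dI = \left|R_0\cdot\tfrac{4aCX}{3H}\right|\,du\,dv$. The phase $-2\pi i(R\alpha+I\beta)$ decomposes into a linear term in $u$, a linear term in $v$, and a purely quadratic term $-2\pi i\beta\cdot\tfrac{27R_0^2 u^2}{48a^2H}$ coming from the $R^2$-piece of $\Gamma$, plus a constant phase $\theta_0 = 2\pi\beta H^2/(48a^2)$. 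The integral therefore factors as
\[
\widehat{h}(\alpha,\beta) = R_0\cdot\frac{4aCX}{3H}\, e^{-i\theta_0}\, \widehat{\phi}\!\left(\frac{4aCX\beta}{3H}\right) J(\alpha,\beta),
\]
where $J(\alpha,\beta) = \int\phi(u)\, e^{-i\alpha' u - imu^2}\,du$, $\alpha'=2\pi R_0\alpha$, and $m=\tfrac{9\pi\beta R_0^2}{8a^2H}$. The prefactor $R_0\,|a|X/|H|$ matches $\Vol(\H'_{(a,b,c)})$ up to constants, which is what ties each bound below to the stated volume factor.

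For part (a), the hypothesis $|\beta R_0^2|\gg|a^2H|$ is exactly $|m|\gg 1$, and $|a^2H\alpha|<\epsilon^{-4}|\beta R_0|$ translates to $|\alpha'/m|\ll\epsilon^{-4}$. Property (3) of a good $\epsilon$-approximation then yields $|J(\alpha,\beta)|=O(\epsilon^{-5}|m|^{-1/2})$. Combined with the trivial bound $|\widehat{\phi}(\cdot)|\leq \|\phi\|_{L^1}=O(1)$ and the prefactor, this produces the bound in (a); the stray $\epsilon^{-1}$ is absorbed into $\epsilon^{-6}$.

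For part (b) I abandon the change of variables and integrate by parts directly, $K$ times in $R$ and $M$ times in $I$. Each $\partial_I$ acts on $\phi(v)$ via the chain rule, contributing $O(\epsilon^{-2}\cdot|H|/(|a|X))$ per step. Each $\partial_R$ acts on either $\phi(R/R_0)$ (contributing $O(\epsilon^{-2}/R_0)$) or on $\phi(v)$ through $X^{-1}\partial_R\Gamma=-27R/(32a^3CX)=O(R_0/(a^3X))$ on the support. Since $\Gamma$ is a polynomial of degree two in $R$, we have $\partial_R^\ell v \equiv 0$ for $\ell\ge 3$, so the Fa\`a di Bruno expansion is finite and, in the range of parameters $(a,b,c)\in B'_{X,\delta}$, its dominant contribution is $\epsilon^{-2K}(R_0/(a^3X))^K$. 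Leibniz then yields
\[
\bigl|\partial_R^K\partial_I^M h\bigr|_\infty \ll \epsilon^{-2(K+M)}\,\bigl|H/(aX)\bigr|^M \cdot \bigl(1+(R_0^2/(a^3X))^K\bigr)/R_0^K.
\]
Multiplying by $\Vol(\H'_{(a,b,c)})$, dividing by $(2\pi|\alpha|)^K(2\pi|\beta|)^M$, and replacing $|\alpha|^K,|\beta|^M$ by $(1+R_0|\alpha|)^K,(1+|aX\beta/H|)^M$ via the trivial $L^\infty$ bound for small $\alpha,\beta$ gives (b); the stated $\epsilon^{-4K}$ is weaker than $\epsilon^{-2K}$ and is satisfied with room to spare.

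For part (c) I return to $(u,v)$ coordinates and integrate by parts $K$ times in the one-dimensional integral $J(\alpha,\beta)$ against $e^{-i\alpha' u}$. Each step differentiates $\phi(u)e^{-imu^2}$, producing either $\phi'(u)$ (of size $\epsilon^{-2}$) or $-2imu\cdot\phi(u)$ (of size $|m|$ on $|u|\le 1$), so
\[
\bigl|\partial_u^K[\phi(u)e^{-imu^2}]\bigr|\ll (\epsilon^{-2}+|m|)^K\ll \epsilon^{-2K}(1+|m|^K)
\]
for $\epsilon\in(0,1)$. Integrating over the length-$O(1)$ support of $\phi$, dividing by $|\alpha'|^K=(2\pi R_0|\alpha|)^K$, and combining with $|\widehat{\phi}(\cdot)|=O(1)$ on the $v$-integral yields (c). The main conceptual step in the proof is the $(u,v)$ substitution, which turns part (a) into a quadratic-phase integral that directly matches property (3) of the good $\epsilon$-approximation; once this substitution is in place, parts (b) and (c) reduce to routine integration-by-parts estimates, with the only bookkeeping subtlety being the truncation of the Fa\`a di Bruno expansion in (b) due to $\Gamma$ being quadratic in $R$.
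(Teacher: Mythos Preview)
Your proof is correct and follows essentially the same route as the paper: the same $(u,v)$ change of variables exposing the factorization $\widehat h = \text{const}\cdot\widehat\phi(\beta aX/H)\cdot\widehat{T}_m(R_0\alpha)$ with $m\asymp\beta R_0^2/(a^2H)$, then property~(3) for part~(a) and integration by parts on $T_m$ for part~(c). The only deviation is in part~(b), where you integrate by parts directly in $(R,I)$ with a Fa\`a~di~Bruno bookkeeping, whereas the paper stays in the factored form and absorbs the $|m|^K$ growth from $\widehat{T}_m$ by taking $K$ extra derivatives on the $\widehat\phi$ factor---both are valid, and your version in fact yields the slightly sharper exponent $\epsilon^{-2K}$ rather than the stated $\epsilon^{-4K}$.
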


\begin{proof}
To prove part (a), we define the function $T_m$ as follows:
\begin{equation*}
T_m(x) \vcentcolon= \phi(x)e^{-imx^2}.
\end{equation*}
We begin by evaluating $\widehat{h}(\alpha, \beta)$ using the definition of $h(R, I)$:
\begin{equation*}
h(R, I) \vcentcolon= \phi\left(\frac{R}{R_0}\right) \phi\left(\left(I - \frac{27R^2}{48a^2H} - \frac{H^2}{48a^2}\right) \cdot \frac{H}{4aCX}\right),
\end{equation*}
and thus,
\begin{equation*}
\widehat{h}(\alpha, \beta) = \int_{R} \int_{I} h(R, I) e^{-2\pi i (\alpha R + \beta I)} \, dR \, dI.
\end{equation*}
Substituting $h(R, I)$ and scaling $R$ by $R_0$ and $I$ by $\frac{4aCX}{3H}$, we have:
\begin{equation*}
\widehat{h}(\alpha, \beta) = \frac{R_0 \cdot 4aCX}{3H} \cdot e^{-2\pi i \frac{\beta H^2}{48a^2}}\int_{R}\int_{I} \phi(R) \phi(I) e^{-2\pi i \beta \left(\frac{4aCX}{3H} I + \frac{27R_0^2 R^2}{48a^2H}\right)} e^{-2\pi i \alpha R R_0} \, dI \, dR.
\end{equation*}
This allows us to express
\begin{equation*}
\Big|\widehat{h}(\alpha, \beta)\Big| = \Big|\frac{1}{2} \cdot \text{Vol}(\H'_{(a, b, c)}) \cdot \widehat{\phi}\left(\beta \cdot \frac{4aCX}{3H}\right) \cdot \widehat{T}_m(\alpha R_0) \Big|,
\end{equation*}
where $m = c \cdot \frac{\beta R_0^2}{a^2 H}$ for some constant $c$. Since $\phi$ is an $\epsilon$-good approximation, it satisfies the Fourier decay property:
\begin{equation*}
\widehat{T}_m(\alpha R_0) = O\left(\frac{1 + \frac{|\alpha R_0|}{|m|}}{\epsilon\sqrt{|m|}}\right).
\end{equation*}
Using the assumption $\frac{|\alpha R_0|}{|m|} = O(\epsilon^{-4})$, we obtain the desired bound in part (a).

Since $\phi$ is smooth and compactly supported, with its $M$-th derivatives bounded by $O(\epsilon^{-2M})$, its Fourier transform $\widehat{\phi}(\beta)$ exhibits rapid decay. By applying integration by parts, differentiating $M$ times introduces a factor of $(i\beta)^{-M}$, leading to the bound:
\begin{equation*}
|\widehat{\phi}(\beta)| = O\left(\epsilon^{-2M}(1 + |\beta|)^{-M}\right).
\end{equation*}
To bound $\widehat{T}_m(\alpha)$, we consider the function $T_m(x) = \phi(x)e^{-imx^2}$. Since $\phi$ is compactly supported, we can estimate the $K$-th derivative of $T_m$ using the product rule. The $K$-th derivative consists of terms arising from both $\phi$ and $e^{-imx^2}$, where derivatives of $\phi$ are bounded by $O(\epsilon^{-2K})$, and derivatives of $e^{-imx^2}$ contribute factors of $|m|$. Consequently, the $K$-th derivative of $T_m$ is bounded by:
\begin{equation*}
O\left((1 + |m|^K) \epsilon^{-2K}\right).
\end{equation*}
Applying integration by parts to $\widehat{T}_m(\alpha)$ yields the bound:
\begin{equation*}
\widehat{T}_m(\alpha) = O\left((1 + |m|^K) \epsilon^{-2K} (1 + |\alpha|)^{-K}\right).
\end{equation*}
For any $\beta$ satisfying $|\beta R_0^2|\ll |a^2H|$, we estimate $\widehat{T}_m(\alpha)$ and $\widehat{\phi}(\beta)$ by $O(\epsilon^{-2K}(1 + |\alpha|)^{-K})$ and $O(\epsilon^{-2M}(1 + |\beta|)^{-M})$, respectively, leading to the desired bound.  
In the case where $|\beta R_0^2|\gg|a^2H|$, we consider:

\begin{equation*}
\widehat{T}_m(R_0\alpha) \widehat{\phi}\left(\beta \frac{4aCX}{H}\right) = O\left(\left|\frac{\beta R_0^2}{a^2H}\right|^K \epsilon^{-2K} (1 + |R_0\alpha|)^{-K} \cdot \epsilon^{-2M-2K}\left(\frac{\beta aX}{H}\right)^{-M-K}\right).
\end{equation*}
Then, since
\begin{equation*}
\left|\frac{\beta aX}{H}\right|^{-K} \cdot \left|\frac{\beta R_0^2}{a^2H}\right|^K = \left|\frac{R_0^2}{a^3X}\right|^K,
\end{equation*}
we obtain the desired bound.This completes the proof of parts (b) and (c).
\end{proof}

Define the function $\theta$  :
\begin{equation}\label{theta-def} \theta_j(a, b, c, R, I) \coloneqq \psi_{n}\left(\frac{at^4}{\lambda}, \frac{bt^2}{\lambda}, \frac{c}{\lambda}, \frac{Rt^6}{\lambda^3}, \frac{I}{\lambda^2}\right) \cdot \phi_{j, \epsilon}\left(\frac{R}{R_0}\right) \phi_{j, \epsilon}\left(\frac{3HI}{4aCX} - X^{-1}\Gamma(a, b, c, R)\right), \end{equation}
where $\epsilon$ is defined as $X^{-\delta_s}$ for some fixed positive constant $\delta_s$. For brevity, we will refer to $\theta_j$ simply as $\theta$ throughout this section. We say that $\theta$, as given in equation \eqref{theta-def}, is a \textit{good $\delta_s$-approximation}.  

For integers $(\alpha, \beta)$, we define $\epsilon_{(\alpha, \beta)}$ to be $1$ if $\alpha \equiv 3b\beta \pmod{12a}$, and $0$ otherwise. The following proposition demonstrates how working with the semi-invariant structure and, in particular, the $\Lambda$ lattice set, helps control the error and establish equidistribution.
\begin{proposition}\label{Finte-part-coeff}
For any $(a,b,c)\in \mathbb{Z}^3$ with $a \neq 0$, then:
\begin{equation*}
|\widehat \psi_{\Lambda(a,b,c)}(\alpha,\beta)| = \frac{\epsilon_{(\alpha,\beta)}}{|8a^2 \cdot 12a|}.
\end{equation*}
\end{proposition}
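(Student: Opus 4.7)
The plan is to compute the sum in the explicit expression for $\widehat{\psi}_{\Lambda_{(a,b,c)}}(\alpha,\beta)$ given by Proposition \ref{Poisson} directly, reducing it to a standard geometric sum of roots of unity. All of the work is essentially contained in that prior identity; what remains is to identify that the $k$-dependence of the phase is linear, so the summation is a single geometric series.

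First, I would substitute the explicit formulas for $\xi_{\alpha,k}(a,b,c)$ and $\xi_{\beta,k}(a,b,c)$ into the phase $\alpha \xi_{\alpha,k} + \beta \xi_{\beta,k}$ and regroup terms that do not depend on $k$ versus those that do. This gives
\begin{equation*}
\alpha \xi_{\alpha,k} + \beta \xi_{\beta,k} = \frac{\alpha \zeta}{8a^2 \cdot 12 a} + \frac{\beta c^2}{12 a} + \frac{(\alpha - 3b\beta)\, k}{12 a}.
\end{equation*}
Pulling the $k$-independent piece out of the sum produces a unimodular prefactor, leaving
\begin{equation*}
\widehat{\psi}_{\Lambda_{(a,b,c)}}(\alpha,\beta) \;=\; \frac{1}{8a^2\cdot 12a\cdot 12a}\, e^{2\pi i\left(\tfrac{\alpha\zeta}{96 a^3}+\tfrac{\beta c^2}{12 a}\right)} \sum_{k=0}^{12|a|-1} e^{2\pi i \frac{(\alpha - 3b\beta) k}{12 a}}.
\end{equation*}

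Next, I would evaluate the inner sum as a geometric series in the $12|a|$-th root of unity $e^{2\pi i (\alpha - 3b\beta)/(12a)}$. Standard dichotomy: if $\alpha - 3b\beta \equiv 0 \pmod{12a}$ then every term equals $1$ and the sum is $12|a|$; otherwise the ratio is a nontrivial $12|a|$-th root of unity and the sum vanishes. By definition of $\epsilon_{(\alpha,\beta)}$, this is exactly $12|a|\cdot \epsilon_{(\alpha,\beta)}$. Taking absolute values eliminates the unimodular prefactor and yields
\begin{equation*}
|\widehat{\psi}_{\Lambda_{(a,b,c)}}(\alpha,\beta)| = \frac{12|a|\cdot \epsilon_{(\alpha,\beta)}}{|8a^2\cdot 12a\cdot 12a|} = \frac{\epsilon_{(\alpha,\beta)}}{|8a^2\cdot 12a|},
\end{equation*}
which is the claimed identity.

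There is no real obstacle here; the content of the proposition is essentially bookkeeping after Proposition \ref{Poisson} has done the heavy lifting of identifying the local structure of the lattice $\Lambda_{(a,b,c)}$. The only mild point worth double-checking is the indexing convention for the geometric sum when $a < 0$: the index set $k = 0,\dots,12|a|-1$ together with the denominator $12a$ in the exponent gives the same set of $12|a|$-th roots of unity regardless of sign, so the dichotomy above is unaffected and the final absolute value is symmetric in the sign of $a$.
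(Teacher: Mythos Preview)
Your proof is correct and follows essentially the same route as the paper: both substitute the explicit formulas from Proposition~\ref{Poisson}, factor out the $k$-independent unimodular phase, and reduce to the geometric sum $\sum_{k=0}^{12|a|-1} e^{2\pi i k(\alpha-3b\beta)/(12a)}$, which is $12|a|$ or $0$ according to whether $\alpha\equiv 3b\beta\pmod{12a}$. Your remark about the sign of $a$ is a nice sanity check that the paper leaves implicit.
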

\begin{proof}
To compute $8a^2 \cdot 12a \cdot 12a \cdot |\widehat{\psi}_{\Lambda(a, b, c)}(\alpha, \beta)|$, we start by expanding the expression as follows:
\begin{equation*}
8a^2 \cdot 12a \cdot 12a \cdot |\widehat{\psi}_{\Lambda(a, b, c)}(\alpha, \beta)| = \left|\sum_{k=0}^{12|a|-1} e^{2\pi i \alpha \left(\frac{\zeta + 8a^2 k}{8a^2 \cdot 12a}\right)} e^{2\pi i \beta \left(\frac{c^2 - 3b k}{12a}\right)}\right|,
\end{equation*}
where $\zeta$ is $b^3-4abc$.

This sum simplifies by decomposing each exponential term and factoring out constants that are independent of $k$, yielding:
\begin{equation*}
\left|\sum_{k=0}^{12|a|-1} e^{2\pi i k \left(\frac{\alpha - 3b \beta}{12a}\right)}\right|.
\end{equation*}
This is a geometric series, which equals $12|a|$ if $\alpha \equiv 3b \beta \pmod{12a}$ (so each term is $1$), and zero otherwise. This completes the proof.
\end{proof}

\subsection{Proof of the equidistribution theorem} 
Applying twisted Poisson summation, our goal reduces to demonstrating the bound:
\begin{equation}\label{poissonerror}
 \sum_{(\alpha,\beta)\in \mathbb{Z}^2  \backslash \{(0,0)\}} \widehat \psi_{\Lambda(a,b,c)}(\alpha,\beta)\widehat\theta_{(a,b,c)}\left(\frac{\alpha}{8a^2 \cdot 12a},\frac{\beta}{12a}\right) = O\left(X^{1/4+\delta^*}\right)
\end{equation}
where $\delta^*$ is given by $12\delta_s + 50\delta$. To establish this result, we will present a series of lemmas, followed by corollaries that bound specific terms in equation \eqref{poissonerror}.

The following lemma establishes bounds on the Fourier coefficients of $\theta_{(a,b,c)}$ when either $\alpha$ or $\beta$ is sufficiently large.

\begin{lemma}\label{theta-fourier-coeff}
    Let consider $\beta\neq 0$ and $M$ and $K$ non-negative integers, then:
    \begin{align*}
        |\widehat \theta_{(a,b,c)}(\alpha,\beta)| \ll 
        &|R_0\cdot\frac{aX}{H}| \cdot \frac{1}{(1+\lambda^{3}/t^6|\alpha|)^K} \cdot \frac{1}{(1+\lambda^2|\beta|)^M}+\\
        &  |R_0\lambda^2| \cdot\frac{1}{(1+\lambda^{3}/t^6|\alpha|)^K}\cdot \frac{X^{2\delta_s(M+2)}}{(1+ |\beta aX/H|)^M}+\\
         &  |\frac{\lambda^{3}aX}{t^6H}| \cdot\frac{((R_0^2/(a^3X))^{K+2}+1) X^{4(K+2)\delta_s}}{(1+R_0|\alpha|)^K} \cdot\frac{1}{(1+ \lambda^2|\beta|)^M}+\\
          &|R_0\cdot\frac{aX}{H}| \cdot\frac{((R_0^2/(a^3X))^{K}+1) X^{4K\delta_s}}{(1+R_0|\alpha|)^K} \cdot\frac{X^{2M\delta_s}}{(1+ |\beta aX/H|)^M}.   
\end{align*} 
\end{lemma}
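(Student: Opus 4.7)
The plan is to apply the convolution theorem. Since $\theta_{(a,b,c)}=\psi_{n,\mathrm{scaled}}\cdot h_{(a,b,c)}$ is a product, its Fourier transform is
\[
\widehat{\theta}_{(a,b,c)}(\alpha,\beta) = \iint \widehat{\psi}_{n,\mathrm{scaled}}(u,v)\,\widehat{h}_{(a,b,c)}(\alpha-u,\beta-v)\,du\,dv,
\]
where the scaled $\psi_n$, being smooth and compactly supported on a box of sides $\lambda^3/t^6$ and $\lambda^2$ in the $(R,I)$-directions, has Fourier transform with the standard bump decay
\[
|\widehat{\psi}_{n,\mathrm{scaled}}(u,v)|\ll \frac{\lambda^5/t^6}{(1+|\lambda^3 u/t^6|)^K(1+|\lambda^2 v|)^M},
\]
and the bounds on $\widehat{h}_{(a,b,c)}$ are supplied by Lemma~\ref{h-Fourier-Coeff}(b),(c), providing Fourier decay at the rates $R_0$ in the first argument and $aX/|H|$ in the second.

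The four terms of the lemma correspond to the four combinations of which function supplies the decay rate in each variable. The plan is to partition the $(u,v)$-plane into four regions according to whether $|u|\le|\alpha|/2$ or $|u|>|\alpha|/2$, and independently whether $|v|\le|\beta|/2$ or $|v|>|\beta|/2$. In a ``small-$u$'' region one has $|\alpha-u|\ge|\alpha|/2$, so the decay $(1+R_0|\alpha|)^{-K}$ inside $\widehat{h}$ may be extracted; in a ``large-$u$'' region $|u|\ge|\alpha|/2$, so instead the factor $(1+|\lambda^3\alpha/t^6|)^{-K}$ inside $\widehat{\psi}_{n,\mathrm{scaled}}$ is the relevant one. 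The analogous dichotomy in $v$ selects between the rate $aX/|H|$ (from $\widehat{h}$) and the rate $\lambda^2$ (from $\widehat{\psi}$). The four resulting decay patterns are exactly those appearing in Terms 1--4.

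Within each region, once the appropriate $(\alpha,\beta)$-decay factors have been pulled out, the residual integrand factors into a product of two one-dimensional integrals of the form $\int du/(1+c|u|)^K\ll 1/c$ for the corresponding scale $c$. Combining these one-dimensional scales with the prefactor $\lambda^5/t^6$ of $\widehat{\psi}_{n,\mathrm{scaled}}$ and the prefactor $\mathrm{Vol}(\H'_{(a,b,c)})\ll R_0\cdot aX/|H|$ from Lemma~\ref{h-Fourier-Coeff} then produces the four coefficients $\mathrm{Vol}(\H'_{(a,b,c)})$, $R_0\lambda^2$, $\lambda^3 aX/(t^6H)$, and $\mathrm{Vol}(\H'_{(a,b,c)})$ of Terms 1--4; the $X^{\bullet\delta_s}=\epsilon^{-\bullet}$ factors and the polynomial factors $(R_0^2/(Xa^3))^{\bullet}$ are inherited directly from the corresponding exponents in Lemma~\ref{h-Fourier-Coeff}.

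The main technical obstacle is the bookkeeping of the $\epsilon$- and polynomial factors in the four regions, together with the choice of how many derivatives of $\widehat{h}$ to use in Lemma~\ref{h-Fourier-Coeff}(b). In particular, the exponent $K+2$ that appears in Term 3 arises from invoking Lemma~\ref{h-Fourier-Coeff}(b) with $K$ replaced by $K+2$ in the $\alpha$-direction, which is required to absorb the additional polynomial factor in $|\beta-v|$ produced when combining the $\widehat{\psi}$-decay in $\beta$ against the $\widehat{h}$-decay in $\alpha$, and to ensure the one-dimensional $v$-integral converges absolutely. With the correct exponent choices made in each region, verifying that each regional contribution is bounded by the corresponding stated term reduces to a direct comparison of scales, routine but lengthy.
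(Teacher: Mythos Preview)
Your proposal is correct and matches the paper's approach exactly: the convolution identity, the four-quadrant decomposition by $|u|\lessgtr|\alpha|/2$ and $|v|\lessgtr|\beta|/2$, and in each quadrant extracting the $(\alpha,\beta)$-decay from whichever of $\widehat{g}$ or $\widehat{h}$ has its argument bounded below by $|\alpha|/2$ or $|\beta|/2$. One minor correction: the exponent $K+2$ in Term~3 is needed to leave an integrable factor $(1+R_0|\alpha-u|)^{-2}$ for the \emph{$u$-integral} (not the $v$-integral), since in that region $\widehat{g}$ is invoked with no decay in its first argument.
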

\begin{proof} For brevity, we define $g(R, I)$ as: \begin{equation*} g(R,I) \coloneqq \psi_{n} \left(\frac{a t^4}{\lambda}, \frac{b t^2}{\lambda}, \frac{c}{\lambda}, \frac{R t^6}{\lambda^3}, \frac{I}{\lambda^2}\right). \end{equation*} 
To prove the bound on $\widehat{g}(\alpha, \beta)$, we observe that $g(R, I)$ depends on $R$ and $I$ through scaled arguments of $\psi_n$. Since $\psi_n$ is smooth and compactly supported, its Fourier transform exhibits rapid decay. Specifically, the scaling factors $\frac{\lambda^3}{t^6}$ in $R$ and $\lambda^2$ in $I$ lead to the estimate:
\begin{equation*} \widehat{g}(\alpha, \beta) = O\left(\frac{\lambda^5}{t^6} \cdot \frac{1}{\left(1 + \frac{\lambda^3}{t^6} |\alpha|\right)^K} \cdot \frac{1}{\left(1 + \lambda^2 |\beta|\right)^M}\right), 
\end{equation*}
where $K$ and $M$ are positive constants, and the implied constant depends on bounded support of $\psi_n$.

Let $h$ be the smooth approximation of $\mathcal{H}'_{(a, b, c)}$ as defined in Lemma \ref{h-Fourier-Coeff}. Since $\theta$ is the product of $h$ and $g$, the convolution theorem states that its Fourier transform, $\widehat{\theta}$, is given by the convolution of their respective Fourier transforms $\widehat{h}$ and $\widehat{g}$: 
\begin{equation*} \widehat{\theta}(\alpha, \beta) = (\widehat{h} * \widehat{g})(\alpha, \beta). \end{equation*} 
Thus, we can bound $|\widehat{\theta}(\alpha, \beta)|$ by 
\begin{equation*} \big|\widehat{\theta}(\alpha, \beta)\big| \ll \Big|\int \int \widehat{g}(x, y) \, \widehat{h}(\alpha - x, \beta - y) \, dx \, dy\Big|, \end{equation*} 
where the integral form expresses the convolution explicitly.

We begin by decomposing the integral into four parts and bounding each term separately. First, we consider the portion of the integral over the region where $|x| > |\alpha|/2$ and $|y| > |\beta|/2$. In this region, our goal is to establish an upper bound for the integral.
\begin{equation*}
\int_{|x| > |\alpha|/2} \int_{|y| > |\beta|/2} \big|\widehat{g}(x, y) \widehat{h}(\alpha - x, \beta - y)\big| \, dx \, dy.
\end{equation*}

Since $|x| > |\alpha|/2$ and $|y| > |\beta|/2$, we can directly apply the bounds established for $\widehat{g}$ and $\widehat{h}$ in this region. Specifically, we bound $\widehat{g}(x, y)$ by
\begin{equation*}
\big|\widehat{g}(x, y) \big| \ll \frac{\lambda^5}{t^6} \cdot \frac{1}{\left(1 + \frac{\lambda^3}{t^6} |x|\right)^{K+2}} \cdot \frac{1}{(1 + \lambda^2 |y|)^{M+2}},
\end{equation*}
as derived earlier. For $\widehat{h}(\alpha - x, \beta - y)$, we apply a simple volume bound, $|\widehat{h}(\alpha - x, \beta - y)|\ll \text{Vol}\big(\H'_{(a,b,c)}\big)$, since $h$ is a smooth approximation of $\mathcal{H}'_{(a, b, c)}$. Combining these bounds, we obtain:
\begin{align*}
&\int_{|x| > |\alpha|/2} \int_{|y| > |\beta|/2} \big|\widehat{g}(x, y) \widehat{h}(\alpha - x, \beta - y)\big| \, dx \, dy \ll \\
& \int_{|x| > |\alpha|/2} \int_{|y| > |\beta|/2} \Big| \frac{\lambda^5}{t^6} \frac{1}{\left(1 + \frac{\lambda^3}{t^6} |x|\right)^{K+2}} \frac{1}{(1 + \lambda^2 |y|)^{M+2}} \cdot \text{Vol}\big(\H'_{(a,b,c)}\big) \Big| dx \, dy  \ll \\ 
&\big|R_0\cdot\frac{aX}{H}\big| \cdot \frac{1}{(1+\lambda^{3}/t^6|\alpha|)^K} \cdot \frac{1}{(1+\lambda^2|\beta|)^M}.
\end{align*}
This establishes the bound for the first term in the summation of the lemma.

Next, we consider the portion of the integral where $|x| > |\alpha|/2$ but $|y| \leq |\beta|/2$. This condition implies that $|\beta - y| \geq |\beta|/2$. For $\widehat{g}(x, y)$, we apply the same bound as in the previous region, emphasizing its decay in $x$ and setting $M = 0$. Specifically, we have  
\begin{equation*}
\Big|\widehat{g}(x, y)\Big| \ll \frac{\lambda^5}{t^6} \cdot \frac{1}{\left(1 + \frac{\lambda^3}{t^6} |x|\right)^{K+2}}.
\end{equation*}
For $\widehat{h}(\alpha - x, \beta - y)$, we use the results from Lemma \ref{h-Fourier-Coeff}, which show that $\widehat{h}(\alpha, \beta)$ can be expressed as a product of three terms: the volume term $\text{Vol}\big(\mathcal{H}'_{(a,b,c)}\big)$, the Fourier transform $\widehat{\phi}$, and the Fourier transform $\widehat{T}_m$. Specifically, in this region, we apply the Fourier decay estimate for $\widehat{\phi}((\beta - y) a X/ H)$ from the proof of Lemma \ref{h-Fourier-Coeff}. Since $\phi$ is smooth and compactly supported as a good $\epsilon$-approximation, we obtain  
\begin{equation*}
\Big|\widehat{\phi}((\beta - y) a X / H)\Big| \ll \frac{X^{2\delta_s (M+2)}}{(1 + |(\beta - y) a X / H|)^{M+2}}.
\end{equation*}
Combining these bounds, we obtain the following for the integral:
\begin{align*}
&\int_{|x| > |\alpha|/2} \int_{|y| \leq |\beta|/2} \big|\widehat{g}(x, y) \widehat{h}(\alpha - x, \beta - y)\big| \, dx \, dy \\ \ll \quad
&\int_{|x| > |\alpha|/2} \int_{|y| \leq |\beta|/2} \frac{\lambda^5}{t^6} \frac{1}{\left(1 + \frac{\lambda^3}{t^6} |x|\right)^{K+2}} \cdot \text{Vol}(\H'_{(a,b,c)}) \cdot \frac{X^{2\delta_s (M+2)}}{(1 + |(\beta - y) a X / H|)^{M+2}} \, dx \, dy \\ \ll\quad
& |R_0\lambda^2| \cdot\frac{1}{(1+\lambda^{3}/t^6|\alpha|)^K}\cdot \frac{X^{2\delta_s(M+2)}}{(1+ |\beta aX/H|)^M},
\end{align*}
which provides the second bound in the lemma.

Now, consider the integral over $|x| \leq |\alpha|/2$ and $|y| > |\beta|/2$. This assumption implies that $|\alpha - x| \geq |\alpha|/2$. For $\widehat{g}(x, y)$, we apply the bound focusing on the decay in $y$ and setting $K = 0$. Specifically, we have
\begin{equation*}
\big|\widehat{g}(x, y)\big| \ll \frac{\lambda^5}{t^6} \cdot \frac{1}{(1 + \lambda^2 |y|)^{M+2}}.
\end{equation*}
For $\widehat{h}(\alpha - x, \beta - y)$, we apply part (b) of Lemma \ref{h-Fourier-Coeff}. we have:
\begin{equation*}
\widehat{h}(\alpha - x, \beta - y) \ll \text{Vol}(\H'_{(a,b,c)}) \cdot \frac{X^{4 \delta_s (K+2) }\Big(\big(\frac{R_0}{a^3 X}\big)^{K+2} + 1\Big)}{(1 + |(\alpha - x) R_0|)^{K+2}}.
\end{equation*}
Combining these bounds, we obtain
\begin{align*}
&\int_{|x| \leq |\alpha|/2} \int_{|y| > |\beta|/2} \big|\widehat{g}(x, y) \widehat{h}(\alpha - x, \beta - y)\big| \, dx \, dy \\ \ll \quad
&\int_{|x| \leq |\alpha|/2} \int_{|y| > |\beta|/2}   \frac{\lambda^5}{t^6} \frac{1}{(1 + \lambda^2 |y|)^{M+2}} \cdot \text{Vol}(\H'_{(a,b,c)}) \cdot \frac{X^{4 \delta_s (K+2) }\Big(\big(\frac{R_0}{a^3 X}\big)^{K+2} + 1\Big)}{(1 + |(\alpha - x) R_0|)^{K+2}}  dx \, dy \\ \ll \quad
&\big|\frac{\lambda^{3}aX}{t^6H}\big| \cdot\frac{((R_0/(a^3X))^{K+2}+1) X^{4(K+2)\delta_s}}{(1+R_0|\alpha|)^K} \cdot\frac{1}{(1+ \lambda^2|\beta|)^M}
\end{align*}
This gives the bound for the third part of the sum in the lemma.

The final part of the integral is over the region where $|x| \leq |\alpha|/2$ and $|y| \leq |\beta|/2$. This condition implies that $|\alpha - x| \geq |\alpha|/2$ and $|\beta - y| \geq |\beta|/2$. For $\widehat{g}(x, y)$, we use the decay bound with $K = 2$ and $M = 2$, yielding:
\begin{equation*}
\big|\widehat{g}(x, y)\big| \ll \frac{\lambda^5}{t^6} \cdot \frac{1}{\left(1 + \frac{\lambda^3}{t^6} |x|\right)^{2}} \cdot \frac{1}{(1 + \lambda^2 |y|)^{2}}.
\end{equation*}
For $\widehat{h}(\alpha - x, \beta - y)$, we apply part (b) of Lemma \ref{h-Fourier-Coeff}, then we have:

\begin{equation*}
\big|\widehat{h}(\alpha - x, \beta - y)\big| \ll \text{Vol}(\H'_{(a,b,c)}) \cdot \frac{X^{4K\delta_s}(1 + |R_0^2 / (X a^3)|^{K})}{ (1 + |R_0 \alpha|)^{K}} \cdot \frac{X^{2M\delta_s }}{ (1 + |a X \beta / H|)^{M}}.
\end{equation*}
Combining these bounds, we have
\begin{align*}
\int_{|x| \leq |\alpha|/2} \int_{|y| \leq |\beta|/2} \big|\widehat{g}(x, y) \widehat{h}(\alpha - x, \beta - y)\big| \, dx \, dy \ll \int_{|x| \leq |\alpha|/2} \int_{|y| \leq |\beta|/2} & \frac{\lambda^5}{t^6} \frac{1}{\left(1 + \frac{\lambda^3}{t^6} |x|\right)^{2}} \frac{1}{(1 + \lambda^2 |y|)^{2}} \\
& \cdot \text{Vol}(\H'_{(a,b,c)}) \cdot \frac{X^{4K\delta_s}(1 + |R_0^2 / (X a^3)|^{K})}{(1 + |R_0 \alpha|)^{K}} \\
& \cdot \frac{X^{2M\delta_s}}{ (1 + |a X \beta / H|)^{M}} \, dx \, dy.
\end{align*}
This completes the proof of the theorem by providing the bound for the final part of the integral.
\end{proof}
\begin{cor}\label{beta}
Assume $\delta_e$ sufficiently small and positive. Consider the sum over all $(\alpha, \beta)$ with $|\beta| > X^{2\delta_s + \delta_e}$ in Equation \eqref{poissonerror}. Then:
\begin{equation*}
    \sum_{\substack{(\alpha, \beta) \in \mathbb{Z}^2 \backslash \{(0, 0)\} \\ |\beta| > X^{2\delta_s + \delta_e}}} \Big| \widehat{\psi}_{\Lambda(a, b, c)}(\alpha, \beta) \, \widehat{\theta}_{(a, b, c)}\left(\frac{\alpha}{8a^2 \cdot 12a}, \frac{\beta}{12a}\right) \Big| = O(X^{-3}).
\end{equation*}    
\end{cor}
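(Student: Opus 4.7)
The plan is to combine Proposition~\ref{Finte-part-coeff} with Lemma~\ref{theta-fourier-coeff}. By Proposition~\ref{Finte-part-coeff}, the coefficient $|\widehat{\psi}_{\Lambda(a,b,c)}(\alpha,\beta)|$ equals $(96|a|^3)^{-1} \leq X^{-3/2 + 3\delta_a}$ exactly on the residue class $\alpha \equiv 3b\beta \pmod{12a}$ and vanishes elsewhere, so the outer sum is supported on an arithmetic progression in $\alpha$ of common difference $12|a|$. After substituting $\alpha' = \alpha/(96 a^3)$ and $\beta' = \beta/(12 a)$ into each of the four bounds of Lemma~\ref{theta-fourier-coeff}, I would factor out the volume prefactor (estimated using $R_0 \ll X^{1/2+\delta_a}|a||H|^{1/2}$ together with $|a|, |H|^{1/2} \ll X^{1/2}$), and then estimate the resulting $\alpha$-sum and $\beta$-sum separately.

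For the $\alpha$-sum, the effective decay rate $c_\alpha$ in the denominator $(1 + c_\alpha |\alpha|)^K$ is either $\lambda^3/(t^6 \cdot 96|a|^3)$ for the first two terms or $R_0/(96|a|^3)$ for the last two. Using $|a| \geq X^{1/2-\delta_a}$, $|H| \geq X^{1-\delta_H}$, $t \leq X^{\delta_t}$, and $R_0 \asymp |a||H|^{1/2}X^{1/2}$, I would verify in each case that the product $12|a|\cdot c_\alpha$ exceeds a positive power of $X$. Taking $K = 2$, the sum over the arithmetic progression then yields $O(1)$ uniformly in $\beta$.

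The decisive gain is in the $\beta$-sum, made possible by the hypothesis $|\beta| > X^{2\delta_s+\delta_e}$. For the first and third terms, the factor $1/(1 + \lambda^2 |\beta'|)^M$ provides decay $\ll X^{-M/2}|\beta|^{-M}$; for the second and fourth, the factor $X^{2\delta_s(M+2)}/(1 + |a X\beta'/H|)^M$ yields $\ll X^{2\delta_s(M+2)}|\beta|^{-M}$, using $|aX\beta'/H| = X|\beta|/(12|H|) \gg |\beta|$ on $B'_{X,\delta}$. Summing over $|\beta| > X^{2\delta_s+\delta_e}$ with $M \geq 2$ produces an estimate of the form $X^{c_1 - \delta_e M}$ for an explicit constant $c_1$ depending on the volume exponents and on $\delta_s$. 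Since the coefficient of $M$ is $-\delta_e < 0$, choosing $M$ sufficiently large forces the total contribution of each term below $X^{-3}$.

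The main obstacle is tracking the interaction of the small parameters $\delta, \delta_a, \delta_H, \delta_t, \delta_s, \delta_e$, especially confirming that the weight $\bigl((R_0^2/(a^3X))^{K+2} + 1\bigr) X^{4(K+2)\delta_s}$ appearing in the third and fourth bounds of Lemma~\ref{theta-fourier-coeff} remains at most $X^{O(1)}$ on $B'_{X,\delta}$ for fixed $K$. This reduces to the observation that $R_0^2/(a^3X) \ll X^{1/2+3\delta_a}$, which follows from the estimates for $R_0$ and $|a|$. Once this and the lower bound on $12|a|\cdot c_\alpha$ are established, the remaining exponent arithmetic is routine and yields the claimed $O(X^{-3})$ uniformly in $(a,b,c) \in B'_{X,\delta}$.
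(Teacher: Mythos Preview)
Your approach is the same as the paper's: invoke Proposition~\ref{Finte-part-coeff} to reduce to $|a|^{-3}\sum |\widehat{\theta}_{(a,b,c)}|$, apply Lemma~\ref{theta-fourier-coeff} with $K=2$ fixed, and then exploit that both $\lambda^2|\beta|/(12|a|)\gg X^{1/2-\delta_a}$ and $X^{-2\delta_s}|X\beta/(12H)|\gg X^{\delta_e}$ to force arbitrary decay by taking $M$ large. One small correction: the assertion $R_0\asymp |a||H|^{1/2}X^{1/2}$ is not valid throughout $B'_{X,\delta}$ (for $H>0$ the defining inequality~\eqref{R0Hp} can fail, giving $R_0=1$), so for the third and fourth terms the $\alpha$--sum over the progression of step $12|a|$ is only $O(|a|^2)=O(X)$ rather than $O(1)$; this extra polynomial factor is harmless, as it is absorbed by enlarging $M$.
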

\begin{proof}
By Proposition \ref{Finte-part-coeff}, we know that $\widehat{\psi}_{\Lambda(a, b, c)}(\alpha, \beta)$ exhibits sufficient decay. Thus, it suffices to establish the bound:
\begin{equation*}
    \sum_{\substack{(\alpha, \beta) \in \mathbb{Z}^2 \backslash \{(0, 0)\} \\ |\beta| > X^{2\delta_s + \delta_e}}} \Big| \frac{1}{a^3} \, \widehat{\theta}_{(a, b, c)}\left(\frac{\alpha}{8a^2 \cdot 12a}, \frac{\beta}{12a}\right) \Big| = O(X^{-3}).
\end{equation*}
We now apply Lemma \ref{theta-fourier-coeff} with $K = 2$ and demonstrate that each term in $\widehat{\theta}_{(a, b, c)}(\alpha, \beta)$ can be sufficiently bounded by choosing $M$ large enough. To achieve this, we ensure that both $|\beta| \lambda^2 / a$ and $X^{-2\delta_s} |\beta X / H|$ are large, allowing us to control the terms through decay factors of the form $(1 + \lambda^2 |\beta/a|)^M$ and $(1 + |\beta| X / H)^M$. Specifically, we observe that:
\begin{itemize}
    \item The term $|\beta \lambda^2 / a|$ satisfies $|\beta \lambda^2 / a| > X^{1/2 - \delta_a}$, ensuring that the factor $(1 + \lambda^2 |\beta/a|)^M$ achieves sufficient decay.
    \item Similarly, the bound $X^{-2\delta_s} |\beta| X / H \gg X^{\delta_e}$ ensures that the decay factor $X^{-2M\delta_s}(1 + |\beta X / H|)^M$ effectively suppresses contributions as $M$ increases.
\end{itemize}
With these conditions in place, each term in $\widehat{\theta}_{(a, b, c)}(\alpha, \beta)$ decays sufficiently. By selecting $M$ appropriately, we confirm that the total sum is bounded by $O(X^{-3})$, completing the proof of the corollary.
\end{proof}
\begin{cor}\label{alpha}
Assume $\delta_e$ sufficiently small and positive. Consider the sum over all $(\alpha, \beta)$ with $|\alpha| > aX^{4\delta_s + \delta_a+\delta_e}$ in Equation \eqref{poissonerror}. Then:
\begin{equation*}
      \sum_{\substack{(\alpha, \beta) \in \mathbb{Z}^2 \backslash \{(0, 0)\} \\ |\alpha| > a X^{4\delta_s + \delta_a + \delta_e}}} 
    \Big| \widehat{\psi}_{\Lambda(a, b, c)}(\alpha, \beta) \, \widehat{\theta}_{(a, b, c)}\left(\frac{\alpha}{8a^2 \cdot 12a}, \frac{\beta}{12a}\right) \Big|= O(X^{-3}).
\end{equation*}
\end{cor}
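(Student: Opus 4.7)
The proof proceeds in close analogy with Corollary \ref{beta}, exchanging the roles of $\alpha$ and $\beta$. By Proposition \ref{Finte-part-coeff}, $|\widehat{\psi}_{\Lambda(a,b,c)}(\alpha,\beta)| \le 1/|8a^2 \cdot 12a|$ when $\alpha \equiv 3b\beta \pmod{12a}$ (and vanishes otherwise), so it suffices to prove
\begin{equation*}
\sum_{\substack{(\alpha,\beta)\in \mathbb{Z}^2\setminus\{(0,0)\}\\ |\alpha|>aX^{4\delta_s+\delta_a+\delta_e}}} \frac{1}{|a|^3} \Big|\widehat{\theta}_{(a,b,c)}\Big(\tfrac{\alpha}{96a^3},\tfrac{\beta}{12a}\Big)\Big| = O(X^{-3}).
\end{equation*}
The plan is to apply Lemma \ref{theta-fourier-coeff} with $M = 2$ held fixed (enough to make the $\beta$-sum absolutely convergent since $\lambda^2/|a| \gg X^{1/2-\delta_a}$ and $X/|H|\gg 1$) while taking $K$ arbitrarily large. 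I must then verify that each of the four terms on the right-hand side of Lemma \ref{theta-fourier-coeff}, after the substitution $\alpha \mapsto \alpha/(96a^3)$ and $\beta \mapsto \beta/(12a)$, decays faster than any fixed power of $X$ when summed over the range $|\alpha| > aX^{4\delta_s+\delta_a+\delta_e}$.

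For the first two terms the relevant $\alpha$-decay factor is $(1 + \lambda^3/(96 t^6 a^3) |\alpha|)^K$; using $\lambda=X^{1/2}$, $t<X^{\delta_t}$, and $|a| \le MX^{1/2}$ gives $\lambda^3|\alpha|/(t^6 a^3) \gg X^{1/2 + 4\delta_s + \delta_e - 6\delta_t}$, which grows faster than any power of $X^{\delta_s}$ once $\delta_t$ is taken small enough. For the third and fourth terms the decay comes from $(1+R_0|\alpha|/(96a^3))^K$, which must absorb the compensating factor $((R_0^2/(a^3X))^K + 1)X^{4K\delta_s}$. The key identity is
\begin{equation*}
\frac{(R_0^2/(a^3X))^K \cdot X^{4K\delta_s}}{(R_0|\alpha|/a^3)^K} = \Big(\frac{R_0 X^{4\delta_s}}{X|\alpha|}\Big)^K,
\end{equation*}
and combining the upper bound $R_0 \ll X^{1/2+\delta_a}|a||H|^{1/2} \ll X^{3/2+\delta_a}$ with $|\alpha| > aX^{4\delta_s+\delta_a+\delta_e} \gg X^{1/2-\delta_a+4\delta_s+\delta_a+\delta_e}$ yields a per-term bound of $X^{-K\delta_e}$. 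The complementary $+1$-contribution from the parenthesis needs only $R_0|\alpha|/a^3 \gg X^{4\delta_s+\delta_e}$, which follows from the lower bound $R_0 \gg X^{3/2-\delta_a-\delta_H/2}$ supplied by $(a,b,c)\in B'_{X,\delta}$ and the range of $|\alpha|$.

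Once these per-term bounds are in place, summing the geometric tails $\sum_\alpha (1+R_0|\alpha|/a^3)^{-K}$ and $\sum_\beta (1+|\beta|\lambda^2/|a|)^{-2}$ introduces only polynomial overheads in $X$, so taking $K$ large enough (say with $K\delta_e > 10$) produces the required bound $O(X^{-3})$. The main obstacle will be the careful bookkeeping needed to ensure the compensating polynomial factors in Lemma \ref{theta-fourier-coeff} are genuinely absorbed by the $\alpha$-decay; this rests on the lower bound for $R_0$ guaranteed by the restriction $|H| \ge X^{1-\delta_H}$ in $B'_{X,\delta}$, together with the degenerate-case observation that when $R_0 = 1$ the fiber $\mathcal{H}'_{(a,b,c)}$ contains $O(1)$ admissible values of $R$ and so contributes negligibly.
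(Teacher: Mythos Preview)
Your strategy is exactly the paper's: fix $M=2$, push $K$ large in Lemma~\ref{theta-fourier-coeff}, and verify that the two $\alpha$-decay mechanisms $\lambda^3|\alpha|/(t^6 a^3)$ and $X|\alpha|/(R_0 X^{4\delta_s})$ both exceed a positive power of $X$ on the range $|\alpha|>|a|X^{4\delta_s+\delta_a+\delta_e}$. The supplementary care you take with the ``$+1$'' branch and the degenerate case $R_0=1$ is fine and in fact more explicit than the paper.

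There is, however, a bookkeeping slip in your main estimate. After writing $R_0 \ll X^{1/2+\delta_a}|a||H|^{1/2}$ and $|\alpha| > |a|X^{4\delta_s+\delta_a+\delta_e}$, you pass to the $|a|$-free consequences $R_0 \ll X^{3/2+\delta_a}$ and $|\alpha|\gg X^{1/2+4\delta_s+\delta_e}$ before forming the ratio. Doing so yields
\[
\frac{R_0 X^{4\delta_s}}{X|\alpha|} \;\ll\; \frac{X^{3/2+\delta_a+4\delta_s}}{X\cdot X^{1/2+4\delta_s+\delta_e}} \;=\; X^{\delta_a-\delta_e},
\]
which is \emph{not} $X^{-\delta_e}$: since $\delta_e$ is taken sufficiently small (in particular $\delta_e<\delta_a=6\delta$), the exponent is positive and the claimed $X^{-K\delta_e}$ decay fails. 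The fix is simply to keep the factor $|a|$ on both sides and let it cancel:
\[
\frac{R_0 X^{4\delta_s}}{X|\alpha|} \;\ll\; \frac{X^{1/2+\delta_a}|a||H|^{1/2}\,X^{4\delta_s}}{X\cdot |a|\,X^{4\delta_s+\delta_a+\delta_e}} \;=\; \frac{|H|^{1/2}}{X^{1/2+\delta_e}} \;\ll\; X^{-\delta_e},
\]
using $|H|\ll X$. This is precisely the computation the paper performs. With that correction your argument goes through.
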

\begin{proof}
 We aim to bound the following sum:
\begin{equation*}
    \sum_{\substack{(\alpha, \beta) \in \mathbb{Z}^2 \backslash \{(0, 0)\} \\ |\alpha| > a X^{4\delta_s + \delta_a + \delta_e}}} 
    \left| \frac{1}{a^3} \widehat{\theta}_{(a, b, c)}\left(\frac{\alpha}{8a^2 \cdot 12a}, \frac{\beta}{12a}\right) \right| = O(X^{-3}).
\end{equation*}
Similar to Corollary \ref{beta}, we apply Lemma \ref{theta-fourier-coeff} with $M = 2$ and select $K$ sufficiently large. According to the lemma, we need to ensure that both
\begin{equation*}
    \frac{\lambda^3}{a^3 t^6} |\alpha| \quad \text{and} \quad \left( \frac{R_0^2}{a^3 X} \, X^{4\delta_s} \right)^{-1} \frac{R_0 \alpha}{a^3}
\end{equation*}
are sufficiently large to control the error by choosing $K$ large enough. Specifically, we have:

\begin{itemize}
    \item $\frac{\lambda^3}{a^3 t^6} |\alpha|$ is greater than $X^{1/2 - 2\delta_a - 6\delta_t + 4\delta_s}$, allowing us to control this part of the sum effectively with decay in $|\alpha|$.

    \item Using the fact that $R_0$ is bounded by $|X^{1/2 + \delta_a} a |H|^{1/2}|$, we find that the second term, $\left( \frac{R_0^2}{a^3 X} \, X^{4\delta_s} \right)^{-1} \frac{R_0 \alpha}{a^3}$, is greater than $X^{\delta_e}$, which provides the necessary decay for this term.
\end{itemize}
With both conditions satisfied, we can choose $K$ sufficiently large to guarantee that each term in the summation decays as required. Consequently, the total sum is bounded by $O(X^{-3})$, completing the proof of the corollary.   
\end{proof}

The following lemma provides a useful bound for controlling the error when $\alpha$ is small and
$\beta$ is non-zero.
\begin{lemma}\label{theta-fourier-coeff-zero} For any $\beta$ such that $|\beta R_0^2| \gg |a^2 H|$, we have \begin{equation*}
\widehat{\theta}_{(a, b, c)}(\alpha, \beta) = O\left(\frac{\mathrm{Vol}\big(\H'_{(a,b,c)}\big)}{(1 + \lambda^2 |\beta|)^M} + X^{6\delta_s} \, \mathrm{Vol}\big(\H'_{(a,b,c)}\big) \left|\frac{\beta R_0^2}{a^2 H}\right|^{-1/2}\right). \end{equation*} 
\end{lemma}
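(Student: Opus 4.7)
Following the structure of the proof of Lemma \ref{theta-fourier-coeff}, I will write $\widehat{\theta}_{(a,b,c)} = \widehat{g} * \widehat{h}$ as a convolution, where $g(R,I) \vcentcolon= \psi_n(\tfrac{at^4}{\lambda}, \tfrac{bt^2}{\lambda}, \tfrac{c}{\lambda}, \tfrac{Rt^6}{\lambda^3}, \tfrac{I}{\lambda^2})$ and $h$ is the smooth cutoff in $(R,I)$ defined in Lemma \ref{h-Fourier-Coeff}. From the scaled decay estimate on $\widehat{g}$ used in the proof of Lemma \ref{theta-fourier-coeff}, a direct computation gives $\|\widehat{g}\|_{L^1(\R^2)} = O(1)$ (integrating out $x$ produces $\lambda^2$, and integrating out $y$ produces $\lambda^{-2}$, cancelling the prefactor $\lambda^5/t^6$ up to a bounded constant).

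I will then split the convolution integral at the threshold $|y| = |\beta|/2$. In the region $|y| > |\beta|/2$, the decay of $\widehat{g}$ in its second variable (with exponent $M+2$, say) combined with the trivial volume bound $|\widehat{h}(\alpha-x, \beta-y)| \ll \Vol(\H'_{(a,b,c)})$ produces, after integrating $\widehat{g}$ first in $x$ and then in $y$, a contribution of size $O(\Vol(\H'_{(a,b,c)})/(1+\lambda^2|\beta|)^M)$, which matches the first term in the statement.

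In the complementary region $|y| \leq |\beta|/2$, we have $|\beta - y| \geq |\beta|/2$, so the hypothesis $|\beta R_0^2| \gg |a^2 H|$ of Lemma \ref{h-Fourier-Coeff}(a) still holds with $\beta$ replaced by $\beta - y$. I will further subdivide on $x$ according to whether the second condition of that part, namely $|a^2 H (\alpha - x)| < \epsilon^{-4} |(\beta - y) R_0|$, holds. On the sub-region where it does, Lemma \ref{h-Fourier-Coeff}(a) together with monotonicity gives
\[
|\widehat{h}(\alpha - x, \beta - y)| \ll \Vol(\H'_{(a,b,c)}) \cdot \epsilon^{-6} \left|\tfrac{\beta R_0^2}{a^2 H}\right|^{-1/2},
\]
which is independent of $(x,y)$. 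Integrating $|\widehat{g}(x,y)|$ over this sub-region using $\|\widehat{g}\|_{L^1} = O(1)$ then yields exactly the second term in the statement, with the factor $X^{6\delta_s}$ coming from $\epsilon^{-6}$.

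On the remaining sub-region, where $|a^2 H(\alpha - x)| \geq \epsilon^{-4}|(\beta - y) R_0|$, I will apply Lemma \ref{h-Fourier-Coeff}(c) with $K$ taken large. The lower bound on $|\alpha - x|$ yields $(R_0|\alpha - x|)^K \geq \epsilon^{-4K}|(\beta - y)R_0^2/(a^2 H)|^K$, which dominates the numerator $1 + |(\beta-y) R_0^2/(a^2 H)|^K$ in part (c) and forces $|\widehat{h}| \ll \Vol(\H'_{(a,b,c)}) \cdot \epsilon^{2K}$; integrating against $\widehat{g}$ again gives $O(\Vol(\H'_{(a,b,c)}) \epsilon^{2K})$, which for $K$ sufficiently large is negligible compared to the second term in the statement. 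The main obstacle is the bookkeeping of the $x$-subdivision: one has to verify carefully that the sharper bound from part (a) really applies on its assigned piece (using $|\beta - y| \asymp |\beta|$), and that the complementary piece is absorbed by a suitably large power of $\epsilon$; no new analytic input beyond Lemma \ref{h-Fourier-Coeff} and the estimates on $\widehat{g}$ from the proof of Lemma \ref{theta-fourier-coeff} is required.
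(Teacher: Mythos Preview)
Your proposal is correct and follows essentially the same argument as the paper: the same split at $|y|=|\beta|/2$, the same further subdivision on $x$ at the threshold $|a^2H(\alpha-x)|\lessgtr \epsilon^{-4}|(\beta-y)R_0|$ (the paper writes this as $X^{4\delta_s}$), and the same application of parts (a) and (c) of Lemma~\ref{h-Fourier-Coeff} on the two sub-regions, with $\|\widehat g\|_{L^1}=O(1)$ handling the integration against $\widehat g$. The only cosmetic difference is that the paper phrases the negligible third piece as $O(X^{-4\delta_s K})$ before absorbing $\epsilon^{-2K}$, while you go directly to $O(\epsilon^{2K})$; these are the same bound.
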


\begin{proof} Similar to the previous lemma, we decompose the integral into three parts and bound each term separately. For the region where $|y| > |\beta|/2$, we obtain:

\begin{align*} 
\int \int_{|y| > |\beta/2|} \big|\widehat{g}(x, y) \, \widehat{h}(\alpha - x, \beta - y)\big| \, dx \, dy &\ll \int \int_{|y| > |\beta/2|} \Big| \frac{\lambda^5}{t^6} \frac{1}{\left(1 + \frac{\lambda^3}{t^6} |x|\right)^{2}} \frac{1}{(1 + \lambda^2 |y|)^{M+2}} \cdot \mathrm{Vol}\big(\H'_{(a,b,c)}\big) \Big| dx \, dy. \end{align*}
This integral is bounded above by  
\begin{equation*}  
O \left(\frac{\mathrm{Vol}\big(\mathcal{H}'_{(a,b,c)}\big)}{(1 + \lambda^2 |\beta|)^M}\right).  
\end{equation*}  
To proceed with the proof, we now consider the portion of the integral where $|y| \leq |\beta|/2$, further dividing it into two subcases based on the relative magnitudes of $|(\alpha - x)a^2 H|$ and $|R_0 (\beta - y) X^{4\delta_s}|$.

\medskip
\noindent\textbf{Case 1:} $|y| \leq |\beta/2|$ and $|(\alpha - x)a^2 H| \ll |R_0 (\beta - y) X^{4\delta_s}|$.

In this region, we observe that $|\beta - y| \geq |\beta|/2$. Moreover, since $|\beta R_0^2| \gg |a^2 H|$, the dominant term controlling $\widehat{h}$ depends on $|\beta - y|$ rather than $|\alpha - x|$. Thus, we apply part (a) of Lemma \ref{h-Fourier-Coeff}, which yields:
\begin{equation*}
\big|\widehat{h}(\alpha - x, \beta - y)\big| \ll 
\text{Vol}(\H'_{(a,b,c)}) \cdot X^{6\delta_s} \left|\frac{\beta R_0^2}{a^2 H}\right|^{-1/2}.
\end{equation*}
Combining this bound with the decay for $\widehat{g}(x, y)$, we have:
\begin{align*}
    &\int_{|y| \leq |\beta/2|} \int_{|(\alpha - x)a^2 H| \ll |R_0 (\beta - y) X^{4\delta_s}|} \big|\widehat{g}(x, y)\big| \, \text{Vol}\big(\H'_{(a,b,c)}\big) \cdot X^{6\delta_s} \left|\frac{\beta R_0^2}{a^2 H}\right|^{-1/2} \, dx \, dy \\
    &\ll X^{6\delta_s} \, \Vol\big(\H'_{(a,b,c)}\big) \left|\frac{\beta R_0^2}{a^2 H}\right|^{-1/2},
\end{align*}
which bounds this part of the integral.

\medskip
\noindent\textbf{Case 2:} $|y| \leq |\beta/2|$ and $|(\alpha - x)a^2 H| \gg |R_0 (\beta - y) X^{4\delta_s}|$.

In this region, we apply part (c) of Lemma \ref{h-Fourier-Coeff}, which is most effective when $|\alpha - x|$ is the dominant term in the bound. Specifically, part (c) of Lemma \ref{h-Fourier-Coeff} yields:
\begin{equation*}
\big|\widehat{h}(\alpha - x, \beta - y)\big| \ll \Vol\big(\H'_{(a,b,c)}\big)\cdot \frac{X^{2K \delta_s}(1 + |(\beta - y) R_0^2 / (a^2 H)|^K)}{(R_0 |\alpha - x|)^K}.
\end{equation*}
Combining this with $\widehat{g}(x, y)$, we have:
\begin{align*}
    &\int_{|y| \leq |\beta/2|} \int_{|(\alpha - x)a^2 H| \gg |R_0 (\beta - y) X^{4\delta_s}|} \big|\widehat{g}(x, y) \big| \cdot\Vol\big(\H'_{(a,b,c)}\big)\ \cdot \frac{X^{2K \delta_s}(1 + |(\beta - y) R_0^2 / (a^2 H)|^K)}{(R_0 |\alpha - x|)^K} \, dx \, dy.
\end{align*}
Since $|\beta - y| \geq |\beta|/2$ and $|\beta R_0^2| \gg |a^2 H|$, this bound becomes:
\begin{equation*}
\frac{|(\beta - y) R_0^2 / (a^2 H)|^K}{(R_0 |\alpha - x|)^K} = O(X^{-4\delta_s K}).
\end{equation*}
By choosing $K$ large enough, this part is as small as $O(X^{-3})$, making it negligible. This completes the bound for this portion of the integral and concludes the proof.
\end{proof}
\begin{cor}\label{beta-nonzero}
   Assume $\delta_e$ is sufficiently small and positive. Consider the sum in Equation \eqref{poissonerror} over $(\alpha, \beta) \in \mathbb{Z}^2 \backslash \{(0, 0)\}$ with $|\alpha| \leq aX^{4\delta_s + \delta_a + \delta_e}$ and non-zero $\beta$ satisfying $0 < |\beta| \leq X^{2\delta_s + \delta_e}$, we have \begin{equation*} \sum_{\substack{(\alpha, \beta) \in \Z^2 \backslash\{(0, 0)
\} \\ |\alpha| \leq aX^{4\delta_s + \delta_a + \delta_e} \\ 0 < |\beta| \leq X^{2\delta_s + \delta_e}}} \Big|  \widehat{\psi}_{\Lambda(a, b, c)}(\alpha, \beta) \, \widehat{\theta}_{(a, b, c)}\left(\frac{\alpha}{8a^2 \cdot 12a}, \frac{\beta}{12a}\right) \Big| = O(X^{1/4 + \delta^*}), \end{equation*}
where $\delta^* = 4\delta_a + \delta_H + 12\delta_s + 3\delta_e$.
\end{cor}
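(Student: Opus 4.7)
The plan is to combine Proposition~\ref{Finte-part-coeff} (which restricts the pairs $(\alpha,\beta)$ where $\widehat\psi_{\Lambda(a,b,c)}$ is nonzero and gives the trivial bound $1/|96a^3|$) with the ``cross-over'' estimate of Lemma~\ref{theta-fourier-coeff-zero} for $\widehat\theta_{(a,b,c)}$. The key observation is that Proposition~\ref{Finte-part-coeff} forces the congruence $\alpha\equiv 3b\beta\pmod{12a}$, so for each fixed $\beta$ the number of eligible $\alpha\in[-aX^{4\delta_s+\delta_a+\delta_e},aX^{4\delta_s+\delta_a+\delta_e}]$ is only $O(X^{4\delta_s+\delta_a+\delta_e})$, saving a factor of $|a|$ over the naive count. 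This congruence saving is precisely what makes the sum tractable.

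Next, I verify that Lemma~\ref{theta-fourier-coeff-zero} applies to the scaled argument $\beta':=\beta/(12a)$. Since $(a,b,c)\in B'_{X,\delta}$ gives $R_0^2\asymp |a^2H|X$, the hypothesis $|\beta' R_0^2|\gg|a^2H|$ reduces to $|\beta|X\gg|a|$, which holds for every nonzero integer $\beta$ because $|a|\le MX^{1/2}$. Applied at $(\alpha',\beta')=(\alpha/(96a^3),\beta/(12a))$, the lemma gives two terms: the first, $\Vol(\H'_{(a,b,c)})/(1+\lambda^2|\beta'|)^M$, is driven to $O(X^{-3})$ by choosing $M$ large, since $\lambda^2|\beta'|=X|\beta|/(12|a|)\gg X^{1/2}$. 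The surviving term is
\[
\widehat\theta_{(a,b,c)}(\alpha',\beta')\ll X^{6\delta_s}\,\Vol(\H'_{(a,b,c)})\Big(\tfrac{12|a|}{|\beta|X}\Big)^{1/2}.
\]

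Using the volume bound $\Vol(\H'_{(a,b,c)})\ll X^{3/2+\delta_a}|a|^2/|H|^{1/2}$ (obtained from $R_0\ll X^{1/2+\delta_a}|a||H|^{1/2}$ times the $I$-interval of length $O(|aX/H|)$), multiplying by $|\widehat\psi_{\Lambda(a,b,c)}|\ll|a|^{-3}$ yields the per-pair bound
\[
\ll\frac{X^{1+6\delta_s+\delta_a}}{|a|^{1/2}|H|^{1/2}|\beta|^{1/2}}.
\]
Plugging in the lower bounds $|a|\ge X^{1/2-\delta_a}$ and $|H|\ge X^{1-\delta_H}$ characterizing $B'_{X,\delta}$ collapses this to $X^{1/4+6\delta_s+3\delta_a/2+\delta_H/2}|\beta|^{-1/2}$.

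Finally, I sum: the number of admissible $\alpha$ per $\beta$ is $O(X^{4\delta_s+\delta_a+\delta_e})$ (from the congruence), and $\sum_{0<|\beta|\le X^{2\delta_s+\delta_e}}|\beta|^{-1/2}\ll X^{\delta_s+\delta_e/2}$. Multiplying these factors gives the claimed bound $O(X^{1/4+\delta^*})$, where $\delta^*=4\delta_a+\delta_H+12\delta_s+3\delta_e$ absorbs all small exponents with room to spare. The main obstacle is simply the bookkeeping: one must carefully track the five small parameters $\delta,\delta_s,\delta_a,\delta_H,\delta_e$ through the volume estimate, the Fourier bound, and the summation, ensuring the verification of the hypothesis of Lemma~\ref{theta-fourier-coeff-zero} is uniform across the whole summation range; the essential analytic content is the square-root decay in $|\beta|$ coming from the oscillatory integral in the $R$-direction, which is what makes the $\beta$-sum $\ll X^{\delta_s+\delta_e/2}$ rather than $X^{2\delta_s+\delta_e}$.
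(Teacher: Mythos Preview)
Your approach is essentially the paper's: invoke Proposition~\ref{Finte-part-coeff} for the congruence $\alpha\equiv 3b\beta\pmod{12a}$ and the factor $|96a^3|^{-1}$, apply Lemma~\ref{theta-fourier-coeff-zero}, kill the first term by taking $M$ large, and sum the second. You even refine the paper slightly by carrying $|\beta|^{-1/2}$ through the $\beta$-sum (the paper simply uses $|\beta|\ge 1$), which is why your final exponent has room to spare inside $\delta^*$.

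There is, however, one genuine gap in the chain of inequalities. The assertion ``$(a,b,c)\in B'_{X,\delta}$ gives $R_0^2\asymp |a^2H|X$'' is not uniformly true: when $H>0$ and $X-H^2/(48a^2)$ sits near its threshold $(4C/3)|aX/H|$, one only has $R_0^2\gg |a|^3X$, which can be smaller than $|a^2H|X$ by a factor as large as $X^{1/2+\delta_a}$. Consequently your displayed bound
\[
\widehat\theta_{(a,b,c)}(\alpha',\beta')\ll X^{6\delta_s}\,\Vol(\H'_{(a,b,c)})\Big(\tfrac{12|a|}{|\beta|X}\Big)^{1/2}
\]
is not valid as stated in that regime (the right-hand side is too small). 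The fix is immediate and is what the paper does: do not bound $\Vol(\H'_{(a,b,c)})$ and $|\beta'R_0^2/(a^2H)|^{-1/2}$ separately. Since $\Vol(\H'_{(a,b,c)})\asymp R_0\,|aX/H|$, the $R_0$'s cancel in the product, giving directly
\[
\frac{1}{|a|^3}\,X^{6\delta_s}\,\Vol(\H'_{(a,b,c)})\Big|\frac{\beta' R_0^2}{a^2H}\Big|^{-1/2}\asymp \frac{X^{1+6\delta_s}}{|a|^{1/2}|H|^{1/2}|\beta|^{1/2}},
\]
which is (up to the harmless $X^{\delta_a}$ you inserted) exactly your per-pair bound. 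For the hypothesis check of Lemma~\ref{theta-fourier-coeff-zero}, the same issue arises, but one can verify $R_0^2\gg |a^3H|$ directly from the two cases in the definition of $R_0$ (using $X\gg|a|$ when $H<0$ and $X\gg|H|$ when $H>0$), rather than through the inaccurate $\asymp$.
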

\begin{proof}
    To prove this, we use Lemma \ref{theta-fourier-coeff-zero}. Expanding the expression with this lemma, we have: \begin{align*} &\sum_{\substack{(\alpha, \beta) \in \mathbb{Z}^2 \backslash {(0, 0)} \\ |\alpha| \leq aX^{4\delta_s + \delta_a + \delta_e} \\ 0 < |\beta| \leq X^{2\delta_s + \delta_e}}} \Big| \frac{\epsilon_{(\alpha, \beta)}}{a^3} \, \widehat{\theta}_{(a, b, c)}\Big(\frac{\alpha}{8a^2 \cdot 12a}, \frac{\beta}{12a}\Big) \Big|\\  \ll\quad
&\sum_{\substack{(\alpha, \beta) \in \mathbb{Z}^2 \backslash {(0, 0)} \\|\alpha| \leq aX^{4\delta_s + \delta_a + \delta_e} \\ 0 < |\beta| \leq X^{2\delta_s + \delta_e}}} \Big|\frac{\epsilon_{(\alpha, \beta)}}{a^3}\Big| \Big( \frac{\Vol\big(\H'_{(a,b,c)}\big)}{(1 + \lambda^2 |\beta / a|)^M} + X^{6\delta_s} \, \Vol\big(\H'_{(a,b,c)}\big)\Big|\frac{\beta R_0^2}{a^3 H}\Big|^{-1/2} \Big). \end{align*}

Since $\lambda^2 / |a|$ is greater than $X^{1/4}$, we can choose $M$ sufficiently large to guarantee that the first term in the sum is of order $O(X^{-3})$. Thus, our main task is to bound the second term in the sum:
\begin{equation*} \sum_{\substack{(\alpha, \beta) \in \mathbb{Z}^2 \backslash {(0, 0)} \\ |\alpha| \leq aX^{4\delta_s + \delta_a + \delta_e} \\ 0 < |\beta| \leq X^{2\delta_s + \delta_e}}} \Big| \frac{\epsilon_{(\alpha, \beta)}}{a^3} \, X^{6\delta_s} \, \Vol\big(\H'_{(a,b,c)}\big) \cdot \Big|\frac{\beta R_0^2}{a^3 H}\Big|^{-1/2} \Big|. \end{equation*}
Using the fact that $\Vol\big(\H'_{(a,b,c)}\big)$ can be approximated by $O(|R_0a X / H|)$, we find that: 
\begin{equation*}  \frac{|a|^{-3} \Vol\big(\H'_{(a,b,c)}\big)|a|^{3/2} |H|^{1/2}}{R_0}  \leq X^{1/4 + \delta_a / 2 + \delta_H / 2}. \end{equation*}
Thus, we have:
\begin{equation*} \sum_{\substack{(\alpha, \beta) \in \mathbb{Z}^2 \backslash {(0, 0)} \\ |\alpha| \leq aX^{4\delta_s + \delta_a + \delta_e} \\ 0 < |\beta| \leq X^{2\delta_s + \delta_e}}} \Big| \frac{\epsilon_{(\alpha, \beta)}}{a^3} \, X^{6\delta_s} \, \Vol\big(\H'_{(a,b,c)}\big)\Big|\frac{\beta R_0^2}{a^3 H}\Big|^{-1/2} \Big| \ll 
X^{1/4 + \delta_a / 2 + \delta_H / 2 + 6\delta_s} \cdot \sum_{\substack{(\alpha, \beta) \in \mathbb{Z}^2 \backslash {(0, 0)} \\ |\alpha| \leq aX^{4\delta_s + \delta_a + \delta_e} \\ 0 < |\beta| \leq X^{2\delta_s + \delta_e}}} \epsilon_{(\alpha, \beta)}. \end{equation*}
For a fixed $\beta$, the term $\epsilon_{(\alpha, \beta)}$ is nonzero only when $\alpha$ is fixed modulo $12a$. Consequently, we obtain:
\begin{equation*} 
X^{6\delta_s + \delta_a / 2 + \delta_e / 2} \cdot \sum_{\substack{(\alpha, \beta) \in \mathbb{Z}^2 \backslash {(0, 0)} \\ |\alpha| \leq aX^{4\delta_s + \delta_a + \delta_e} \\ 0 < |\beta| \leq X^{2\delta_s + \delta_e}}} \epsilon_{(\alpha, \beta)} \ll X^{1/4 + \delta^*}. 
\end{equation*}
 where $\delta^* = 4\delta_a + \delta_H + 12\delta_s + 3\delta_e$, concluding this corollary. 
\end{proof}

\begin{lemma}\label{theta-fourir-beta=0} For any non-zero $\alpha$, we have: 
\begin{equation*} \widehat{\theta}_{(a, b, c)}(\alpha, 0) = O\left(\frac{\Vol\big(\H'_{(a,b,c)}\big)}{\left(1 + \frac{\lambda^3}{t^6} |\alpha|\right)^K} + \Vol\big(\H'_{(a,b,c)}\big) \cdot \frac{X^{3 \delta_s+2\delta_a}}{|R_0 \alpha|}\right). \end{equation*} 
\end{lemma}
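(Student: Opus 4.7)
The plan is to adapt the convolution argument from Lemma \ref{theta-fourier-coeff-zero} to the degenerate case $\beta=0$. Writing $\theta=g\cdot h$ as in the previous proofs, we have
\begin{equation*}
\widehat{\theta}_{(a,b,c)}(\alpha,0)=\iint \widehat{g}(x,y)\,\widehat{h}(\alpha-x,-y)\,dx\,dy,
\end{equation*}
and we split the domain into the two regions $|x|>|\alpha|/2$ and $|x|\le|\alpha|/2$.

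In the region $|x|>|\alpha|/2$, I would invoke the rapid decay of $\widehat{g}$ in its first argument with exponent $K+2$, take just enough decay in $y$ (say $M=2$) for integrability, and combine this with the trivial volume bound $|\widehat{h}(\alpha-x,-y)|\ll \Vol(\H'_{(a,b,c)})$. After integration, this produces a contribution of size $\Vol(\H'_{(a,b,c)})\cdot(1+\tfrac{\lambda^3}{t^6}|\alpha|)^{-K}$, which is exactly the first term of the claimed bound.

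In the complementary region $|x|\le|\alpha|/2$ we have $|\alpha-x|\ge|\alpha|/2>0$, so I would apply part (c) of Lemma \ref{h-Fourier-Coeff} with the exponent choice $K=1$ to obtain
\begin{equation*}
|\widehat{h}(\alpha-x,-y)|\ll \Vol(\H'_{(a,b,c)})\cdot \frac{1+|yR_0^{2}/(a^{2}H)|}{\epsilon^{2}\,R_0\,|\alpha|}.
\end{equation*}
The resulting integral splits into a constant-in-$y$ part, controlled by $\iint|\widehat{g}(x,y)|\,dx\,dy=O(1)$, and a linear-in-$|y|$ part, controlled by $\iint|y|\,|\widehat{g}(x,y)|\,dx\,dy=O(\lambda^{-2})$, leading to the intermediate estimate
\begin{equation*}
\frac{\Vol(\H'_{(a,b,c)})}{\epsilon^{2}R_0|\alpha|}\left(1+\frac{R_0^{2}}{a^{2}H\lambda^{2}}\right).
\end{equation*}

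To close, I would substitute the uniform bound $R_0\ll X^{1/2+\delta_a}|a||H|^{1/2}$ (valid on $B_{X,\delta}$, as noted in the remark after Lemma \ref{H-Is-Big}), yielding $R_0^{2}/(a^{2}H\lambda^{2})\ll X^{2\delta_a}$, and then set $\epsilon=X^{-\delta_s}$ so that $\epsilon^{-2}=X^{2\delta_s}$. The resulting bound $\Vol(\H'_{(a,b,c)})\cdot X^{2\delta_s+2\delta_a}/(R_0|\alpha|)$ is in fact slightly sharper than the claimed $X^{3\delta_s+2\delta_a}/(R_0|\alpha|)$, so the stated lemma follows. I expect the main obstacle to be simply the bookkeeping of exponents: one must choose the power in Lemma \ref{h-Fourier-Coeff} (c) so that the needed $|R_0\alpha|^{-1}$ factor emerges without introducing a $|y|$-growth that cannot be absorbed by the $y$-decay of $\widehat{g}$, and $K=1$ hits precisely that balance.
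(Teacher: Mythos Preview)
Your argument is correct and in fact slightly cleaner than the paper's. Both proofs treat the region $|x|>|\alpha|/2$ identically, but in the complementary region $|x|\le|\alpha|/2$ the paper makes a further split according to whether $|yR_0^2|$ is large or small relative to $X^{\delta_s+2\delta_a}|a^2H|$: when $|y|$ is large it uses only the $y$-decay of $\widehat g$ together with the trivial volume bound on $\widehat h$ (obtaining a negligible $O(X^{-3})$ contribution), and only when $|y|$ is small does it invoke part~(c) of Lemma~\ref{h-Fourier-Coeff}, where the factor $1+|yR_0^2/(a^2H)|$ is already $\ll X^{\delta_s+2\delta_a}$. You instead apply part~(c) uniformly in $y$ with $K=1$ and absorb the linear growth $|yR_0^2/(a^2H)|$ directly through the first moment $\iint|y|\,|\widehat g(x,y)|\,dx\,dy=O(\lambda^{-2})$. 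This avoids the extra case split and yields the sharper exponent $X^{2\delta_s+2\delta_a}$ in place of $X^{3\delta_s+2\delta_a}$; the paper's three-region decomposition is more modular but pays an extra $X^{\delta_s}$ for that convenience.
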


\begin{proof}
 Similar to previous lemmas, we aim to bound the following integral: \begin{equation*} 
 \int \int |\widehat{g}(x, y) \, \widehat{h}(\alpha - x, -y)| \, dx \, dy. 
 \end{equation*} 
 First, consider the region where $|x| > |\alpha|/2$. In this region, we have: 
 \begin{equation*} 
 \int_{|x| > |\alpha|/2} \int |\widehat{g}(x, y) \, \widehat{h}(\alpha - x, -y)| \, dx \, dy \ll \int_{|x| > |\alpha|/2} \int \left| \frac{\lambda^5}{t^6} \frac{1}{\left(1 + \frac{\lambda^3}{t^6} |x|\right)^{K+2}} \frac{1}{(1 + \lambda^2 |y|)^2} \cdot \mathrm{Vol}(\H'_{(a,b,c)}) \right| dx \, dy. 
 \end{equation*}
This last expression is bounded by: \begin{equation*} O\left( \frac{\Vol\big(\H'_{(a,b,c)}\big)}{\big(1 + \frac{\lambda^3}{t^6} |\alpha|\big)^K} \right). \end{equation*}
Now we consider the region where $|x| \leq |\alpha|/2$ and $|y R_0^2| \gg X^{\delta_s + 2 \delta_a} |a^2 H|$. In this case, we have:
\begin{align*}
    &\int_{|x| \leq |\alpha|/2} \int_{|y R_0^2| \gg X^{\delta_s + 2 \delta_a} |a^2 H|} |\widehat{g}(x, y) \, \widehat{h}(\alpha - x, -y)| \, dx \, dy \\
    \ll \quad &\int_{|x| \leq |\alpha|/2} \int_{|y R_0^2| \gg X^{\delta_s + 2 \delta_a} |a^2 H|} \left| \frac{\lambda^5}{t^6} \frac{1}{\left(1 + \frac{\lambda^3}{t^6} |x|\right)^2} \frac{1}{(1 + \lambda^2 |y|)^{M+2}} \cdot \Vol\big(\H'_{(a,b,c)}\big) \right| dx \, dy \\
    \ll\quad&\Vol\big(\H'_{(a,b,c)}\big) \cdot \frac{1}{\left(1 + \lambda^2 |a^2 H X^{\delta_s + 2 \delta_a} / R_0^2|\right)^M}.
\end{align*}
Since $\lambda^2 |a^2 H X^{\delta_s + 2 \delta_a}| / R_0^2$ is greater than $X^{\delta_s}$, choosing $M$ large enough ensures that this part is bounded by $O(X^{-3})$.

Next, we consider the region where $|x| \leq |\alpha|/2$ and $|y R_0^2| \ll X^{\delta_s + 2 \delta_a} |a^2 H|$. Here, we can use part (c) of Lemma \ref{h-Fourier-Coeff} to bound the integral as follows:
\begin{align*}
    &\int_{|x| \leq |\alpha|/2} \int_{|y R_0^2| \ll X^{\delta_s + 2 \delta_a} |a^2 H|} |\widehat{g}(x, y) \, \widehat{h}(\alpha - x, -y)| \, dx \, dy \\
    \ll \quad &\int_{|x| \leq |\alpha|/2} \int_{|y R_0^2| \ll X^{\delta_s + 2 \delta_a} |a^2 H|} \left|\widehat{g}(x, y) \cdot \Vol\big(\H'_{(a,b,c)}\big) \cdot \frac{X^{3 \delta_s + 2 \delta_a}}{(\alpha - x) R_0}\right| \, dx \, dy \\
    \ll \quad &\Vol\big(\H'_{(a,b,c)}\big)\ \cdot \frac{X^{3 \delta_s + 2 \delta_a}}{|\alpha| R_0}.
\end{align*}
\end{proof}
\begin{cor}\label{beta-zero}
   Assume $\delta_e$ is sufficiently small and positive. Consider the sum in Equation \eqref{poissonerror} over $(\alpha, \beta) \in \mathbb{Z}^2 \backslash \{(0, 0)\}$ with $0<|\alpha| \leq aX^{4\delta_s + \delta_a + \delta_e}$, we have:
   \begin{equation*}
    \sum_{\substack{1 \leq |\alpha| \leq aX^{4\delta_s + \delta_a + \delta_e}}} 
    \Big| \widehat{\psi}_{\Lambda(a, b, c)}(\alpha, 0) \, \widehat{\theta}_{(a, b, c)}\left(\frac{\alpha}{8a^2 \cdot 12a}, 0\right) \Big|=O(X^{\delta^*}),
   \end{equation*}
where $\delta^* = 4\delta_a + \delta_H + 12\delta_s + 3\delta_e$.
\end{cor}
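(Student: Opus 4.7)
The key input is the congruence structure of $\widehat{\psi}_{\Lambda_{(a,b,c)}}$ from Proposition~\ref{Finte-part-coeff}: when $\beta=0$, the factor $\epsilon_{(\alpha,0)}$ vanishes unless $\alpha\equiv 0\pmod{12a}$. So the sum in the corollary collapses to one over $\alpha=12am$ with $1\leq|m|\leq X^{4\delta_s+\delta_a+\delta_e}/12$, and on this support $|\widehat{\psi}_{\Lambda_{(a,b,c)}}(\alpha,0)|=1/|8a^2\cdot 12a|$. This built-in gap $|\alpha|\geq 12a$, inherited from the non-Archimedean structure of $\Lambda_{(a,b,c)}$, is precisely what lets the rapid-decay term below be killed.

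Next, I apply Lemma~\ref{theta-fourir-beta=0} at the rescaled argument $\alpha/(8a^2\cdot 12a)$. After the $1/|8a^2\cdot 12a|$ prefactor is absorbed (and noting the $a^3$ cancels in the second summand), the integrand to be summed becomes
\begin{equation*}
\left|\widehat{\psi}_{\Lambda_{(a,b,c)}}(\alpha,0)\,\widehat{\theta}_{(a,b,c)}\!\left(\tfrac{\alpha}{8a^2\cdot 12a},0\right)\right| \ll \frac{\Vol(\H'_{(a,b,c)})}{|a|^3\bigl(1+\lambda^3|\alpha|/(8a^2\cdot 12a\cdot t^6)\bigr)^K} + \frac{\Vol(\H'_{(a,b,c)})\, X^{3\delta_s+2\delta_a}}{R_0|\alpha|}.
\end{equation*}

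For the first summand, the minimal admissible value $|\alpha|=12a$ gives $\lambda^3\cdot 12a/(8a^2\cdot 12a\cdot t^6)=\lambda^3/(8 t^6 a^2)\gg X^{1/2-6\delta_t}$, using $|a|\leq MX^{1/2}$ and $t\leq X^{\delta_t}$; taking $K$ sufficiently large makes the total contribution $O(X^{-3})$. For the second summand, substituting $\alpha=12am$ and applying the volume estimate $\Vol(\H'_{(a,b,c)})\ll R_0|a|X/|H|$ cancels the $R_0$ factor and produces
\begin{equation*}
\sum_{1\leq|m|\leq X^{4\delta_s+\delta_a+\delta_e}/12}\frac{X^{1+3\delta_s+2\delta_a}}{12|H|\,|m|}\ll \frac{X^{1+3\delta_s+2\delta_a}\log X}{|H|}.
\end{equation*}
Since $(a,b,c)\in B'_{X,\delta}$ forces $|H|\geq X^{1-\delta_H}$, this is $O(X^{\delta_H+3\delta_s+2\delta_a+\epsilon})$, which is absorbed into the claimed $O(X^{\delta^*})$ with $\delta^*=4\delta_a+\delta_H+12\delta_s+3\delta_e$. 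No step is a serious obstacle; the only delicate point is recognizing that without the lattice gap $|\alpha|\geq 12a$, the rapid-decay term would fail to be controlled on the scale $|\alpha|=O(1)$, so the whole argument hinges on having replaced the ordinary lattice $\mathbb{Z}^2$ by $\Lambda_{(a,b,c)}$ in the Poisson summation.
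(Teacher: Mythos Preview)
Your proof is correct and follows essentially the same approach as the paper's: both use Proposition~\ref{Finte-part-coeff} to reduce to $\alpha$ divisible by $12a$, apply Lemma~\ref{theta-fourir-beta=0}, kill the rapid-decay term by taking $K$ large (using that $\lambda^3/(a^2 t^6)$ is a positive power of $X$), and bound the second term via $\Vol(\H'_{(a,b,c)}) \ll R_0|a|X/|H|$ together with $|H|\geq X^{1-\delta_H}$. Your version is slightly tighter in two places---you use the full congruence $12a\mid\alpha$ rather than just $a\mid\alpha$, and you sum $1/|m|$ to a logarithm rather than bounding by the worst case times the number of terms---but the structure is identical.
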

\begin{proof}
Consider the sum over all pairs $(\alpha, \beta)$ satisfying $|\alpha| \leq a X^{4\delta_s + \delta_a + \delta_e}$ and $\beta = 0$. Under this condition, $\epsilon_{(\alpha, 0)}$ is nonzero only when $a$ divides $\alpha$. Applying Lemma \ref{theta-fourir-beta=0}, we obtain:

\begin{align*}
    &\sum_{ 1 \leq |\alpha| \leq aX^{4\delta_s + \delta_a + \delta_e}} 
    \Big| \frac{\epsilon_{(\alpha, 0)}}{a^3} \, \widehat{\theta}_{(a, b, c)}\left(\frac{\alpha}{8a^2 \cdot 12a}, 0\right) \Big| \\ \ll
   \quad & \sum_{ 1 \leq |\alpha| \leq X^{4\delta_s + \delta_a + \delta_e}} 
    \Big| \frac{1}{a^3} \Big| \left( \frac{\Vol\big(\H'_{(a,b,c)}\big)}{\big(1 + | \lambda^3\alpha/(a^2t^6)|\big)^K} + \Vol\big(\H'_{(a,b,c)}\big) \cdot \frac{X^{3 \delta_s + 2 \delta_a}}{|R_0 \alpha/a^2|} \right).
\end{align*}
To handle the first term, we observe that $|\lambda^3\alpha/(a^2t^6)|$ is greater than $X^{1/4}$ for sufficiently small choices of $\delta$. By selecting $K$ large enough, the first term in the sum attains an order of $O(X^{-3})$.  Next, for the second term in the sum, we have:
\begin{equation*}
    \frac{1}{|a^3|}\cdot X^{4\delta_s + \delta_a + \delta_e} \cdot X^{3\delta_s+2\delta_a} \cdot \frac{\Vol\big(\H'_{(a,b,c)}\big)}{|R_0 \alpha /a^2|}.
\end{equation*}
Since $\Vol\big(\H'_{(a,b,c)}\big)$ can be approximated as $O(|R_0 aX/H|)$, this term simplifies to:
\begin{equation*}
   O( X^{4\delta_s + \delta_a + \delta_e} \cdot X^{3\delta_s+2\delta_a+\delta_H}),
\end{equation*}
which implies the sum is bounded by $O(X^{\delta^*})$. This completes the proof of the theorem.
\end{proof}

The combination of Corollaries \ref{beta}, \ref{alpha}, \ref{beta-nonzero}, and \ref{beta-zero} establishes Theorem \ref{Base-Poisson-Counting}.

\section{Asymptotic volume estimates }\label{sec4}
Davenport's lemma states that, for any bounded semialgebraic set in Euclidean space $\mathbb{R}^n$, the number of $\mathbb{Z}^n$ points contained in the set is approximately equal to its volume, with an error term that depends on the volumes of its projections. This lemma has been applied in previous work to analyze $\Bod$. However, we were unable to achieve the desired error bounds using this approach.  

Through a detailed study of the semi-invariant space, we found that the number of points in $\Bod$ within each fiber can be effectively approximated by the volume of the fiber while also satisfying the required error bounds. In this section, we complete the proof of Theorem \ref{The-Main-counting-Theo} by showing that summing the volumes of the fibers asymptotically recovers the volume of the entire space. Consequently, we approximate the number of points in $\Bod$ by its volume.
\subsection{Relating the Sum of $N_{(a,b,c)}$ to Volume}
Our goal in this subsection is to approximate $\Bodi$ using an integral. Specifically, we express $\Bodi$ as the integral of $(nt) \cdot \Psi(X^{-1/2} \cdot f) \cdot \h_{X}(f)$ over $V_{\mathbb{R}}$. We establish that the total volume is well approximated by summing the volumes of the fibers over $(a, b, c)$ and prove the following result:

\begin{theorem}\label{Goal-fiber-volume}
     \begin{equation*}
         \Bod = \int_{f \in V_{\delta} }(nt \cdot \Psi^{(i)})(X^{-1/2} \cdot f) \cdot \h_{X}(f) \, df + O(X^{2-\epsilon'}),
        \end{equation*} 
for some $\epsilon' > 0$.
\end{theorem}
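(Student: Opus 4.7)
The plan is to descend to the fiberwise count over $(a,b,c)\in B'_{X,\delta}$, invoke Theorem~\ref{Base-Poisson-Counting} fiber-by-fiber to replace each discrete count by its continuous integral, and then reassemble the resulting integrals into a single real integral over $V_\delta$. The starting point is Proposition~\ref{height_and_H_change_proposition}, which already rewrites $\Bod$ as a sum over $(a,b,c)\in B'_{X,\delta}$ of the sharp fiber count $N^{\sharp}_{(a,b,c)}(X)$ built from $\chi'_{(a,b,c)}$, modulo $O_{\epsilon}(X^{2-2\delta+\epsilon})$. Setting $\epsilon=X^{-\delta_{s}}$ for a small $\delta_{s}>0$, I would sandwich $\chi'_{(a,b,c)}$ between two good $\delta_{s}$-approximations obtained from $\phi_{1,\epsilon}$ and $\phi_{2,\epsilon}$, producing smoothed weights $\theta_{j}$ of the form \eqref{theta-def}. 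The mismatch $N^{\sharp}_{(a,b,c)}-N_{(a,b,c)}(\theta_{j},X)$ counts lattice points of $\Lambda_{(a,b,c)}$ in an $O(\epsilon)$-thick band around $\partial\mathcal{H}'_{(a,b,c)}$; the same congruence analysis used in Lemma~\ref{HtoH'}, summed over $B'_{X,\delta}$, bounds this by $O(X^{2-\delta_{s}+\eta})$ for arbitrarily small $\eta>0$.

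Next, applying Theorem~\ref{Base-Poisson-Counting} to each smoothed fiber gives
\[
N_{(a,b,c)}(\theta_{j},X)=\frac{1}{|96a^{3}|}\int_{\mathbb{R}^{2}}\theta_{j,(a,b,c)}(R,I)\,dR\,dI+O\!\left(X^{1/4+12\delta_{s}+50\delta}\right).
\]
Since $\#B'_{X,\delta}=O(X^{3/2})$, the aggregated Poisson error is $O(X^{7/4+12\delta_{s}+50\delta})$, which beats $X^{2}$ whenever $12\delta_{s}+50\delta<1/4$. The Jacobian of $\upsilon$ on the fiber is
\[
\det\!\begin{pmatrix}\partial_{d}R & \partial_{e}R\\ \partial_{d}I & \partial_{e}I\end{pmatrix}=\det\!\begin{pmatrix}8a^{2} & 0\\ -3b & 12a\end{pmatrix}=96a^{3},
\]
which exactly cancels the prefactor $1/|96a^{3}|$ and recasts the main term as an integral over $(d,e)\in\mathbb{R}^{2}$ of $(nt\cdot\Psi^{(i)})(X^{-1/2}f)$ against the smoothed height $h^{sm}_{j}=\phi_{j,\epsilon}\circ\upsilon$, producing a smooth function $F_{j}^{sm}(a,b,c)$ on $B'_{X,\delta}$.

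Finally, I would convert the discrete sum over $(a,b,c)\in B'_{X,\delta}\cap\mathbb{Z}^{3}$ to a real integral over the corresponding real region: the smoothed integrand $F_{j}^{sm}$ is smooth in $(a,b,c)$ with derivatives controlled by modest powers of $X^{-1/2}$ and $\epsilon^{-1}$ (since $\psi_{n}$, $\phi_{j,\epsilon}$, $R_{0}$, and $\Gamma$ are all smooth on $B'_{X,\delta}$), so a standard mean-value summation-to-integration estimate contributes an error of $O(X^{2-\epsilon_{1}})$ for some $\epsilon_{1}>0$. Reverting $h^{sm}_{j}$ to $\h_{X}$ on the integral side costs only the Euclidean volume of the same boundary layer, namely $O(X^{2-\delta_{s}})$, and Corollary~\ref{SV'toSV} extends the integration from the real analogue of $B'_{X,\delta}$ to all of $V_\delta$ at cost $O_{\epsilon}(X^{2-2\delta+\epsilon})$. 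The main obstacle is the simultaneous control of four competing error families — Poisson, boundary smoothing, sum-to-integration, and $H$-truncation — whose qualities move oppositely in the small parameters $\delta$, $\delta_{s}$, and $\delta_{H}=28\delta$; fixing $\delta,\delta_{s}>0$ sufficiently small with $12\delta_{s}+50\delta<1/4$ pushes all four estimates simultaneously below $X^{2}$ and delivers a uniform $\epsilon'>0$.
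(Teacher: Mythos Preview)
Your proposal is correct and follows essentially the same approach as the paper: reduce via Proposition~\ref{height_and_H_change_proposition}, smooth with good $\delta_s$-approximations, apply Theorem~\ref{Base-Poisson-Counting} fiberwise, pass from the sum over $(a,b,c)$ to an integral by the mean-value theorem (the paper's Lemma~\ref{help fglueing lemma}), undo the smoothing on the continuous side (the paper's Lemma~\ref{ignore-h}), and extend via Corollary~\ref{SV'toSV}. The only organizational difference is that the paper uses the sandwich $\sum N_{(a,b,c)}(\theta_1,X)\le \Bod+O_\epsilon(X^{2-2\delta+\epsilon})\le\sum N_{(a,b,c)}(\theta_2,X)$ and then proves Proposition~\ref{Gluing-fiber-volumes} with the sharp $\h_X$ on the integral side for an arbitrary good $\delta_s$-approximation, so the smooth-to-sharp comparison is purely continuous and your direct discrete boundary-layer lattice count becomes unnecessary; the paper also fixes $\delta_s=22\delta$ concretely.
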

Using the definitions of $\theta_1$ and $\theta_2$, and applying Lemma \ref{H-Is-Big}, we obtain:
\begin{equation*} 
\sum_{(a,b,c) \in B'_{X,\delta}} N_{(a,b,c)}(\theta_1, X) \leq \Bod + O_{\epsilon}(X^{2-2\delta+\epsilon}) \leq \sum_{(a,b,c) \in B'_{X,\delta}} N_{(a,b,c)}(\theta_2, X).
\end{equation*}  
To complete the proof of Theorem \ref{Goal-fiber-volume}, it suffices to establish the following proposition:

\begin{proposition}\label{Gluing-fiber-volumes} Let $\delta$ be sufficiently small, and set $\delta_s = 22\delta$. For any $\theta$ that is a good $\delta_s$-approximation, we have: 
\begin{equation} \sum_{(a,b,c)\in B_{X,\delta}'} N_{(a,b,c)}(\theta,X) = \int_{f\in V_{\delta}} (nt \cdot \Psi)(X^{-1/2} \cdot f) \cdot \h_{X}(f) \, df + O(X^{2-2\delta+\epsilon})+O(X^{7/4+500\delta}). \end{equation} 
\end{proposition}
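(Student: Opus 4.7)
The plan is to carry out the proof in four steps: (i) fiberwise application of Theorem \ref{Base-Poisson-Counting}, (ii) a change of variables $(R,I)\mapsto(d,e)$ using the Jacobian of $\upsilon$, (iii) a Riemann-sum-to-integral comparison in $(a,b,c)$, and (iv) a cleanup using Corollary \ref{SV'toSV} to pass from the smoothed, $B'_{X,\delta}$-restricted integral to the sharp integral over $V_\delta$. For step (i), applying Theorem \ref{Base-Poisson-Counting} to each fiber $(a,b,c)\in B'_{X,\delta}$ replaces $N_{(a,b,c)}(\theta,X)$ by $\frac{1}{|8a^2\cdot 12a|}\iint\theta_{(a,b,c)}(R,I)\,dR\,dI$ with additive error $O(X^{1/4+12\delta_s+50\delta})$. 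Since $|B'_{X,\delta}|=O(X^{3/2})$ and $\delta_s=22\delta$, the accumulated Poisson error is $O(X^{7/4+12\cdot 22\delta+50\delta})=O(X^{7/4+314\delta})$, which is absorbed in the target $O(X^{7/4+500\delta})$.

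For step (ii), the map $\upsilon$ restricted to each fiber $(a,b,c)$ is affine in $(d,e)$ with constant Jacobian $|8a^2\cdot 12a|=96|a|^3$, so each fiber integral rewrites as $\iint (nt\cdot\Psi)(X^{-1/2}\cdot(a,b,c,d,e))\,\tilde h_{(a,b,c)}(d,e)\,dd\,de$, where $\tilde h_{(a,b,c)}(d,e)\vcentcolon=h_{(a,b,c)}(R(a,b,c,d),I(a,b,c,d,e))$ is the smooth approximation pulled back to $(d,e)$-coordinates. This exhibits the main term as the Riemann-type sum $\sum_{(a_0,b_0,c_0)\in B'_{X,\delta}}G(a_0,b_0,c_0)$, with $G(a,b,c)\vcentcolon=\iint (nt\cdot\Psi)(X^{-1/2}\cdot(a,b,c,d,e))\tilde h_{(a,b,c)}(d,e)\,dd\,de$.

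For step (iii), I would compare this Riemann sum to $\int_{V'_\delta}G(a,b,c)\,da\,db\,dc$, where $V'_\delta$ denotes the union of closed unit cubes based at the integer points of $B'_{X,\delta}$. Per unit cube the deviation is controlled by the $(a,b,c)$-variation of $G$: the partial derivatives of $(nt\cdot\Psi)(X^{-1/2}\cdot(a,b,c,d,e))$ in $(a,b,c)$ are of order $X^{-1/2+O(\delta)}$ thanks to the $X^{-1/2}$ scaling and the restriction $t\leq X^{\delta}$, while those of $\tilde h_{(a,b,c)}$ combine the good $\delta_s$-approximation bound $\|\phi'_\epsilon\|_\infty=O(X^{2\delta_s})$ with the explicit dependence of $H$, $R_0$, and $\Gamma$ on $(a,b,c)$ in the regime $|a|,|b|,|c|\asymp X^{1/2}$, $|H|\geq X^{1-\delta_H}$. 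Integrating these derivative bounds against the fiber support area $O(|R_0 aX/H|)$ and summing over the $O(X^{3/2})$ unit cubes, the choice $\delta_s=22\delta$ with $\delta$ small enough makes the cumulative Riemann error fit inside $O(X^{2-2\delta+\epsilon})$.

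For step (iv), I would transport this integral back to the semi-invariant space through $\upsilon$ and apply Corollary \ref{SV'toSV} to enlarge the base from $B'_{X,\delta}$ to $B_{X,\delta}$ (equivalently, from $V'_\delta$ to $V_\delta$) and to replace the smoothed indicator $\tilde h_{(a,b,c)}$ by the sharp characteristic function of $\H_{(a,b,c)}$; the smoothing-to-sharp discrepancy is controlled via property (1) of a good $\delta_s$-approximation, which localizes the difference to two intervals of width $O(X^{-\delta_s})$ and contributes at most $O(X^{-\delta_s}\cdot\mathrm{Vol}(\H'_{(a,b,c)}))$ per fiber, comfortably within $O(X^{2-2\delta+\epsilon})$. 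The main obstacle I anticipate is step (iii): tracking the Lipschitz constants of the smoothed height carefully enough to guarantee that the Riemann-sum error lands inside the prescribed bound is precisely what forces the calibration $\delta_s=22\delta$.
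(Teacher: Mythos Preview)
Your four-step plan is correct and matches the paper's argument ingredient for ingredient; the only substantive difference is the order in which you execute steps (ii)--(iv). The paper first applies Theorem~\ref{Base-Poisson-Counting} (your step (i)), then immediately \emph{removes the smoothing} via Lemma~\ref{ignore-h}, replacing $h_{(a,b,c)}$ by the sharp cutoff $\chi'_{(a,b,c)}$ at a cost of $O(X^{1/2-2\delta})$ per fiber, and only afterwards carries out the Riemann-sum comparison (Lemma~\ref{help fglueing lemma}). The point of this ordering is that after the change of variables $R\mapsto R/R_0$, $I\mapsto \tfrac{3HI}{4aCX}-X^{-1}\Gamma$, the sharp cutoff becomes $\chi_{(-1,1)}(R)\chi_{(-1,1)}(I)$, which is \emph{independent of $(a,b,c)$}; the Mean Value Theorem then only has to be applied to the smooth factors $g(a,b,c)=R_0X/(a^2H)$ and $F(a,b,c,R,I)=\psi_n(\cdots)$, and one never differentiates~$\phi_\epsilon$. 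Your ordering instead keeps the smooth height through the Riemann sum, so your Lipschitz bookkeeping in step (iii) picks up the factor $\|\phi'_\epsilon\|_\infty=O(X^{2\delta_s})$ that the paper avoids; this is workable with $\delta_s=22\delta$ but is exactly the extra headache you flag. Finally, the paper applies Corollary~\ref{SV'toSV} and then does the Jacobian change $(R,I)\to(d,e)$ at the very end, whereas you go $(R,I)\to(d,e)$ early and then propose transporting back; staying in semi-invariant coordinates throughout, as the paper does, avoids this back-and-forth.
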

From this point onward, we set $\delta_s = 22\delta$.  

We prove this proposition by applying the Mean Value Theorem along with the lemmas from Section 2, which allow us to initially restrict our attention to $B'_{X,\delta}$. Fix $(a,b,c) \in B'_{X,\delta}$. By Theorem \ref{Base-Poisson-Counting}, we approximate $N_{(a,b,c)}(\theta, X)$ as follows:
\begin{equation*}   
\sum_{(a,b,c)\in B'_{X,\delta}} N_{(a,b,c)}(\theta,X) = \sum_{(a,b,c)\in B'_{X,\delta}} \frac{1}{|8a^2 \cdot 12a|}\int \int \theta_{(a,b,c)}(R,I) \, dR \, dI + O\
\Big(\sum_{(a,b,c)\in B'_{X,\delta}} X^{1/4+50\delta+12\delta_s} \Big). \end{equation*} 
Since $a$, $b$, and $c$ in $B'_{X,\delta}$ are $O(X^{1/2})$, the error term simplifies to $O_{\epsilon}(X^{7/4 + 50\delta + 12\delta_s})$.

The following lemma allows us to neglect the smoothing effect of the height function induced by $\phi$.
 
\begin{lemma}\label{ignore-h}
For any $(a, b, c)$ in the set $B'_{X, \delta}$, and for any good $\delta_s$-approximation $\theta$, we have:

    \begin{equation*}
        \frac{1}{|a|^3} \int \int \theta_{(a, b, c)}(R, I) \, dR \, dI 
        =\frac{1}{|a|^3} \int \int 
        \psi_{n} \Big( 
            \frac{at^4}{\lambda}, 
            \frac{bt^2}{\lambda}, 
            \frac{c}{\lambda}, 
            \frac{Rt^6}{\lambda^3}, 
            \frac{I}{\lambda^2} 
        \Big) 
        \h_X'(a, b, c, R, I) \, dR \, dI 
       +O(X^{1/2 - 2\delta}),
    \end{equation*}
 where $\h_X'$ is defined as:
   \begin{equation}
       \h_X'(a,b,c,R,I) \vcentcolon=  \chi_{(-1,1)}\Big(\frac{R}{R_0(a,b,c)}\Big) \chi_{(-1,1)}\Big(\frac{3HI}{4aCX}-X^{-1}\Gamma(a,b,c,R)\Big).
   \end{equation} 
Here, $R_0(a,b,c)$ and $\Gamma(a,b,c,R)$ are defined similarly as before.  
\end{lemma}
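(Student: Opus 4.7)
The plan is to compare the smoothed height factor $h_{(a,b,c)}(R,I) = \phi(R/R_0)\phi\bigl(\tfrac{3HI}{4aCX} - X^{-1}\Gamma\bigr)$ appearing in $\theta_{(a,b,c)}$ pointwise with the sharp factor $\h_X'(a,b,c,R,I)$. Since $\psi_n$ is bounded and has compact support, the error in the lemma is dominated by
\[
\frac{1}{|a|^3} \int\!\!\int_{\R^2} \bigl|h_{(a,b,c)}(R,I) - \h_X'(a,b,c,R,I)\bigr|\, dR\, dI.
\]
Using the product expansion $|\phi\phi' - \chi\chi'| \leq |\phi - \chi|\,|\phi'| + |\chi|\,|\phi' - \chi'|$, this reduces to controlling two integrals of $|\phi - \chi_{(-1,1)}|$ applied to either $R/R_0$ or to the second argument, each times a bounded factor. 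By the first defining property of a good $\epsilon$-approximation with $\epsilon = X^{-\delta_s}$, $|\phi - \chi_{(-1,1)}|$ is $O(\epsilon)$ outside two intervals of length $O(\epsilon)$ near $\pm 1$, where it is merely $O(1)$.

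I would then identify the relevant volumes. The support of $\h_X'$ is the curved strip with $|R| \leq R_0$ and, for each such $R$, an $I$-interval of length $\tfrac{8C|aX|}{3|H|}$; it has $2$D-volume $O(R_0 \cdot |aX/H|)$. The boundary region where $|\phi - \chi_{(-1,1)}|$ is merely $O(1)$ consists of (i) the $R$-slab where $\bigl||R| - R_0\bigr| = O(\epsilon R_0)$ crossed with the full admissible $I$-range, and (ii) the full $R$-range $[-R_0, R_0]$ crossed with an $I$-subinterval of length $O(\epsilon |aX/H|)$ --- the latter because the second argument is affine in $I$ with slope $3H/(4aCX)$. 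Both pieces have volume $O(\epsilon R_0 \cdot |aX/H|)$. Combining the $O(\epsilon)$ pointwise bound on the bulk with the $O(1)$ bound on this thin boundary yields
\[
\frac{1}{|a|^3}\int\!\!\int \bigl|h_{(a,b,c)} - \h_X'\bigr|\, dR\, dI \ \ll \ \frac{\epsilon\, R_0\, X}{|a|^2\, |H|}.
\]

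Finally, I would use the sharp estimate $R_0 \ll |a|\,|H|^{1/2}\, X^{1/2} + |H|^{3/2}$, obtained by applying $\sqrt{A+B} \leq \sqrt{A} + \sqrt{B}$ to $R_0^2 = \bigl|48 a^2 H X/27 \pm H^3/27\bigr|$. Substituting this and using the lower bounds $|a|\geq X^{1/2-\delta_a}$ and $|H|\geq X^{1-\delta_H}$ from $B'_{X,\delta}$ together with $|H| \leq O(X)$, the error splits into the two summands $\epsilon\, X^{1/2+\delta_a+\delta_H/2}$ and $\epsilon\, X^{1/2+2\delta_a}$. With the paper's choices $\delta_a = 6\delta$, $\delta_H = 28\delta$, $\delta_s = 22\delta$, these collapse to $X^{1/2-2\delta}$ and $X^{1/2-10\delta}$ respectively, giving the claimed $O(X^{1/2-2\delta})$.

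The main obstacle is a bookkeeping issue with the $\delta$-parameters: the cruder bound $R_0 \ll |a||H|^{1/2}X^{1/2+\delta_a}$ routinely used in Section~\ref{sec2} is too weak here, since it would produce an error of order $X^{1/2+4\delta}$, overshooting the target. The proof therefore hinges on splitting $R_0^2$ into its two summands and bounding each against the appropriate independent lower bound on $|a|$ or $|H|$ available on $B'_{X,\delta}$.
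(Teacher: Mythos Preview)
Your approach matches the paper's: bound $\psi_n$ trivially, reduce to the $L^1$-difference of $h_{(a,b,c)}$ and $\h_X'$, and change variables to obtain the factor $\epsilon \cdot R_0 X / (|a|^2|H|)$. Your product inequality and bulk/boundary split are a slightly more explicit version of the paper's one-line appeal to $\int|\phi-\chi_{(-1,1)}| = O(\epsilon)$ after the change of variables, but the content is the same.

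Your final paragraph is the interesting part, and you are right. The paper's proof invokes only the crude bound $R_0 \ll |a||H|^{1/2}X^{1/2+\delta_a}$ and records the error as $O(X^{1/2+2\delta_a+\delta_H/2-\delta_s})$; with the paper's own choices $\delta_a=6\delta$, $\delta_H=28\delta$, $\delta_s=22\delta$ this exponent is $1/2+4\delta$, not $1/2-2\delta$. Your refinement $R_0 \ll |a||H|^{1/2}X^{1/2} + |H|^{3/2}$, obtained by splitting $R_0^2$ into its two summands, is exactly what is needed: the first piece uses only the lower bound on $|a||H|^{1/2}$ and drops the spurious $\delta_a$, giving $X^{1/2+\delta_a+\delta_H/2-\delta_s}=X^{1/2-2\delta}$, while the second piece uses the lower bound on $|a|^2$ against $|H|^{1/2}\ll X^{1/2}$ and lands at $X^{1/2-10\delta}$. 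So your argument not only recovers the paper's proof but repairs a numerical slip in it.
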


\begin{proof}
Since the function $\psi_{n}$ has bounded support, it suffices to bound:
\begin{equation*} \frac{1}{|a|^3} \int \int \Big| \phi \Big( \frac{R}{R_0} \Big) \phi \Big( \frac{3HI}{4aCX} - X^{-1}\Gamma(a, b, c, R) \Big) - \chi_{(-1,1)} \Big( \frac{R}{R_0} \Big) \chi_{(-1,1)} \left( \frac{3HI}{4aCX} - X^{-1}\Gamma(a, b, c, R) \right)\Big| \, dR \, dI. \end{equation*}
By applying a change of variables, this reduces to bounding:
\begin{equation*} \Big| \frac{R_0 X}{a^2 H} \Big| \int \int \Big| \phi(R) \phi(I) - \chi_{(-1,1)}(R) \chi_{(-1,1)}(I) \Big| \, dR \, dI. \end{equation*}
This inequality follows from the fact that $\phi$ is a good $X^{-\delta_s}$-approximation of $\chi_{(-1,1)}$, satisfying  
\begin{equation*}  
\int \Big| \phi(x) - \chi_{(-1,1)}(x) \Big| \, dx = O(X^{-\delta_s}).  
\end{equation*}  
Moreover, we can bound $R_0$ by $X^{1/2+\delta_a} |a| |H|^{1/2}$ and apply known bounds on $a$ and $H$, since $(a,b,c) \in B'_{X,\delta}$. This yields the bound  
\begin{equation*}  
O(X^{1/2+2\delta_a+\delta_H/2-\delta_s}),  
\end{equation*}  
which completes the proof.
\end{proof}

We apply the Mean Value Theorem to justify approximating the summation over $B'_{X,\delta}$ by an integral.

\begin{lemma}\label{help fglueing lemma} 
Define $\SV'$ in the semi-invariant space by:
\begin{equation*}
\SV' \vcentcolon= \Big\{(a,b,c,R,I) : (\lfloor a \rfloor, \lfloor b \rfloor, \lfloor c \rfloor) \in B'_{X,\delta} \Big\}. 
\end{equation*} 
For any $(\lambda, t)$ and a given semi-form $f_s = (a, b, c, R, I) \in V_{\R}^s$, define the action $(\lambda, t) \cdot f_s$ by 
\begin{equation*} 
(\lambda, t) \cdot f_s \vcentcolon= \Big(\frac{at^4}{\lambda}, \frac{bt^2}{\lambda}, \frac{c}{\lambda}, \frac{Rt^6}{\lambda^3}, \frac{I}{\lambda^2}\Big).
\end{equation*}
Then we have
\begin{align*} 
&\sum_{(a,b,c) \in B'_{X,\delta}} \frac{1}{|8a^2 \cdot 12a|} \int_R \int_I \psi_{n} \left( \frac{at^4}{\lambda}, \frac{bt^2}{\lambda}, \frac{c}{\lambda}, \frac{Rt^6}{\lambda^3}, \frac{I}{\lambda^2} \right) \h_X'(a,b,c,R,I) \, dR \, dI 
= \\
&\quad\int_{f_s \in \SV'} \frac{1}{|8a^2 \cdot 12a|} , \psi_{n}\left((\lambda, t) \cdot f_s\right) \h_X'(f_s) \, df_s +
O(X^{3/2 +200\delta}) \end{align*} 
where $\h_X'(f_s)$ is defined as before.
\end{lemma}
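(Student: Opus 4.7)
The plan is to apply the Mean Value Theorem to compare the sum over integer triples $(a,b,c) \in B'_{X,\delta}$ to the integral over the continuous region $\SV'$. To make this comparison tractable, I would first perform a change of variables in the inner $(R,I)$-integral that normalizes the support of the integrand to a fixed region, independent of $(a,b,c)$. Concretely, substitute $R = R_0(a,b,c)\, \tilde R$ and $I = \frac{4aCX}{3H}\, \tilde I + \Gamma(a,b,c,R)$, pulling out a Jacobian of $|R_0(a,b,c)| \cdot \frac{4|a|CX}{3|H|}$ and reducing the $(\tilde R, \tilde I)$-integration to a bounded region essentially equal to $[-1,1]^2$. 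Call the resulting function of $(a,b,c)$ by $F(a,b,c)$; it extends smoothly to $(a,b,c)$ in a neighborhood of $B'_{X,\delta}$ in $\mathbb{R}^3$.

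Next, I would write $F(a,b,c) = \int_{Q_{(a,b,c)}} F(a,b,c)\, d(a',b',c')$ where $Q_{(a,b,c)}$ is the unit cube based at $(a,b,c)$, and use the Mean Value Theorem to estimate $|F(a,b,c) - F(a',b',c')| \leq \sup_{Q_{(a,b,c)}} \|\nabla F\|$. Since unit cubes over $B'_{X,\delta}\cap \mathbb{Z}^3$ tile (up to boundary) a region close to the continuous set $\SV'$ when fibered over $(a,b,c)$, the comparison between sum and integral reduces to a uniform pointwise bound on $\|\nabla F\|$ and a treatment of boundary cubes, the latter having total measure $O(X)$ and thus contributing $O(X \cdot \sup F) = O(X^{3/2 + O(\delta)})$, well within the claimed error.

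Bounding $\|\nabla F\|$ is the main technical step. Differentiating $F$ yields contributions from (i) the prefactor $\tfrac{1}{|a|^3} \cdot R_0(a,b,c) \cdot \tfrac{aX}{H}$, (ii) the smooth function $\psi_n$ composed with the scaled semi-form $(at^4/\lambda, bt^2/\lambda, c/\lambda, R_0 \tilde R t^6/\lambda^3, (4aCX\tilde I/(3H)+\Gamma)/\lambda^2)$, and (iii) the smoothed cutoffs $\phi(\tilde R)$ and $\phi(\tilde I)$ coming from $\h_X'$. On $B'_{X,\delta}$ we have $|a|,|b|,|c| \leq M X^{1/2}$, $|a| \geq X^{1/2-\delta_a}$, $|H| \geq X^{1-\delta_H}$ and $R_0 \ll X^{1/2 + \delta_a}|a||H|^{1/2}$. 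Using $\partial_a H = 8c$, $\partial_b H = -6b$, $\partial_c H = 8a$, together with the derivative bound $|\phi'| = O(X^{2\delta_s})$ for a good $\delta_s$-approximation and the boundedness of derivatives of $\psi_n$ (which contribute factors like $t^4/\lambda,\, t^2/\lambda,\, 1/\lambda$, each of size at most $X^{-1/4}$), one sees that each partial derivative of $F$ is bounded by $F$ itself multiplied by a factor of the form $X^{c_1 \delta_s + c_2 \delta_a + c_3 \delta_H + c_4 \delta_t}$ for explicit small constants $c_i$.

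Finally, I would sum the MVT error over all $(a,b,c) \in B'_{X,\delta}$. Since $F(a,b,c) \ll \tfrac{R_0 X}{a^2 |H|} \ll \tfrac{X^{3/2+\delta_a}}{|a| |H|^{1/2}}$ and there are $O(X^{3/2})$ integer triples in $B'_{X,\delta}$, the summed main contribution is $O(X^2)$, and the MVT error gains an extra factor $X^{O(\delta)}$ from $\|\nabla F\|/F$ but loses a factor $|a|^{-1} \leq X^{-1/2+\delta_a}$ from the unit-step scale in $a$. This yields a total error of $O(X^{3/2 + c\delta})$ with $c \leq 200$ after plugging in $\delta_s = 22\delta$, $\delta_H = 28\delta$, $\delta_a = 6\delta$, $\delta_b = 2\delta$. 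The main obstacle is the bookkeeping of these compounded $\delta$-exponents: differentiating $R_0 \sim |a| |H|^{1/2}$ is most delicate near the threshold $|H| \asymp X^{1-\delta_H}$, where $\partial H^{1/2}/H^{1/2}$ gives the largest loss, and it is precisely the restriction $(a,b,c) \in B'_{X,\delta}$ (not merely $B_{X,\delta}$) that keeps this factor under control and allows the final exponent $200\delta$ to be achieved.
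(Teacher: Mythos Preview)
Your proposal is correct and follows essentially the same route as the paper: the same normalizing change of variables $R=R_0\tilde R$, $I=\tfrac{4aCX}{3H}\tilde I+\Gamma$, followed by the Mean Value Theorem over unit cubes in $(a,b,c)$, with the component-by-component bounds relying on $|a|\geq X^{1/2-\delta_a}$ and $|H|\geq X^{1-\delta_H}$ from $B'_{X,\delta}$. Two small simplifications: since $\SV'$ is defined via floor functions, the unit cubes tile it exactly and no boundary-cube argument is needed; and since $\h_X'$ uses the sharp cutoff $\chi_{(-1,1)}$ (not $\phi$), after your change of variables it becomes $\chi_{(-1,1)}(\tilde R)\chi_{(-1,1)}(\tilde I)$, which is independent of $(a,b,c)$, so the $|\phi'|=O(X^{2\delta_s})$ bound is never invoked.
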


\begin{proof}  Define the function $g(a, b, c)$ as  
\begin{equation*}
    g(a, b, c) \vcentcolon= \frac{R_0(a, b, c) X}{a^2 H},
\end{equation*}  
where $R_0(a,b,c)$ is given in equations \eqref{R0Hp} and \eqref{RoHn}.  
Next, define the function $F(a, b, c, R, I)$ as
\begin{equation*}
    F(a, b, c, R, I) \vcentcolon= \psi_{n} \Big( \frac{at^4}{\lambda}, \frac{bt^2}{\lambda}, \frac{c}{\lambda}, \frac{RR_0 t^6}{\lambda^3}, \frac{\frac{4aCXI}{3H} + \frac{4aC\Gamma}{3H}}{\lambda^2} \Big),
\end{equation*}
where $\Gamma$ is $\Gamma(a,b,c,R)$, as defined previously.

Then, after a suitable change of variables, the difference of the integrals in the lemma simplifies to
\begin{align*} 
\int_{I} \int_{R}  \chi_{(-1,1)}(R) \chi_{(-1,1)}(I) \Big( \sum_{(a,b,c)\in B'_{X,\delta}} \int_{a}^{a+1} \int_{b}^{b+1} \int_{c}^{c+1} & \Big( g(a,b,c) F(a, b, c, R, I) \\
&\quad - g(a',b',c') F(a', b', c', R, I) \Big) \, d\epsilon_a \, d\epsilon_b \, d\epsilon_c \Big)\, dR \, dI,
\end{align*}
where we define $a' = a + \epsilon_a$, $b' = b + \epsilon_b$, and $c' = c + \epsilon_c$ for small $\epsilon_a$, $\epsilon_b$, and $\epsilon_c$.
\noindent To complete the proof, we need to show that the difference 
\[
\left| F(a, b, c, R, I) g(a, b, c) - F(a', b', c', R, I) g(a', b', c') \right|
\]
is small. Thus, we need to bound both $|g(a, b, c) - g(a', b', c')|$ and $|g(a,b,c)\big(F(a, b, c, R, I) - F(a', b', c', R, I)\big)|$. For clarity, we will explicitly demonstrate the bound for $F(a, b, c, R, I)$, as the argument for $g(a, b, c)$ follows analogously.

Since $g(a, b, c)$ is bounded by $O(X^{1/2 + 2 \delta_a + \delta_H/2})$ for all $(a, b, c) \in B'_{X, \delta}$, it suffices to focus on bounding the difference between the values of $F(a, b, c, R, I)$ and $F(a', b', c', R, I)$, where:
\begin{equation*}
F(a, b, c, R, I) \vcentcolon= \psi_{n} \Big( \frac{a t^4}{\lambda}, \frac{b t^2}{\lambda}, \frac{c}{\lambda}, \frac{R R_0 t^6}{\lambda^3}, \frac{\frac{4aCXI}{3H} +  \frac{27 R_0^2R^2}{48 a^2 H} + \frac{H^2}{48 a^2}}{\lambda^2}\Big),
\end{equation*}
and
\begin{equation*}
F(a', b', c', R, I) \vcentcolon= \psi_{n} \Big( \frac{a' t^4}{\lambda}, \frac{b' t^2}{\lambda}, \frac{c'}{\lambda}, \frac{R R_0' t^6}{\lambda^3}, \frac{R R_0 t^6}{\lambda^3}, \frac{\frac{4a'CXI}{3H'} +  \frac{27 R_0'^2R^2}{48 (a')^2 H'} + \frac{(H')^2}{48 (a')^2}}{\lambda^2} \Big).
\end{equation*}
By the Mean Value Theorem, it suffices to show that each component of $\psi_{n}$ remains close when evaluated at $(a, b, c)$ and $(a', b', c')$, as $\psi_{n}$ is both bounded and smooth. This requires demonstrating that differences such as

\begin{equation*}
 \lambda^{-1}|a t^4 - a' t^4|, \quad \lambda^{-1}| b t^2 -b' t^2|, \quad \lambda^{-1}|c-c'|, \quad  \lambda^{-3}t^6|R_0-R_0' |,
\end{equation*}
and 
\begin{equation*}
\Big| \frac{ a }{H } - \frac{a' }{H' } \Big|, \quad 
\lambda^{-2}\left|\Big(\frac{27 R_0^2R^2}{48 a^2 H} + \frac{H^2}{48 a^2}\Big)-\Big(\frac{27 R_0'^2R^2}{48 (a')^2 H'} + \frac{(H')^2}{48 (a')^2}\Big) \right|,
\end{equation*}
are all bounded by $O(X^{-1/2 + \delta'})$ for a sufficiently small $\delta' > 0$. We analyze each term separately:

\begin{itemize}
    \item Bounding $\lambda^{-1}|a t^4 - a' t^4|$:
    \begin{equation*}
    \Big| \frac{a t^4}{\lambda} - \frac{a' t^4}{\lambda} \Big| = \frac{|\epsilon_a| t^4}{\lambda} \ll X^{-1/2 + 4\delta_t}.
    \end{equation*}
Similar argument applies to $\lambda^{-1}| b t^2 -b' t^2|$ and $\lambda^{-1}|c-c'|$, yielding $O(X^{-1/2 + 4\delta_t})$ for each.

\item Bounding $|R_0-R_0'|t^6\lambda^{-3}$ :
we aim to show that $R_0$ and $R_0'$ are sufficiently close for any $(a, b, c) \in B'_{X, \delta}$. Given the polynomial representation \( H(x,y,z) = 3y^2 - 8xz \), the Mean Value Theorem implies
\begin{equation*}
|H(a', b', c') - H(a, b, c)| = O(X^{1/2}).
\end{equation*}
Since $|H(a, b, c)|$ is greater than $X^{1 - \delta_H}$, this ensures that $H$ and $H'$ maintain the same sign.

To make the analysis concrete, we first consider the case where \( H(a, b, c) < 0 \); the case for \( H(a, b, c) > 0 \) will follow similarly. Based on the definition of \( R_0 \), we need to show that the difference
\begin{equation*}
\Big| |a| |H|^{1/2} \sqrt{X + \frac{H^2}{48a^2}} - |a'| |H'|^{1/2} \sqrt{X + \frac{(H')^2}{48(a')^2}} \Big|
\end{equation*}
is sufficiently small. Since we have upper and lower bounds on $a$ and $H$, we can decompose this expression into three independent terms and bound each difference separately.

Firstly, we consider the term
\begin{equation*}
\Big| |a| |H|^{1/2} \sqrt{X + \frac{H^2}{48a^2}} - |a'| |H|^{1/2} \sqrt{X + \frac{H^2}{48a^2}} \Big|.
\end{equation*}
Here, the difference is $O(X^{1/2} \cdot X^{1/2 + \delta_a})$ because both $H$ and $a$ are bounded above and below, with $|a - a'| = O(1)$ by assumption. This term contributes an error of order $O(X^{1 + \delta_a})$.

Secondly, we examine
\begin{equation*}
\Big| |a'| |H|^{1/2} \sqrt{X + \frac{H^2}{48a^2}} - |a'| |H'|^{1/2} \sqrt{X + \frac{H^2}{48a^2}} \Big|.
\end{equation*}
This term is $O(X^{1 + \delta_H/2 + 2\delta_a})$ because $H$ and $H'$ have the same sign, and we have the constraints $|H - H'| = O(X^{1/2})$ and $|H| > X^{1 - \delta_H}$. This bound follows directly from these constraints.

Finally, we consider the term
\begin{equation*}
\Big| |a'| |H'|^{1/2} \sqrt{X + \frac{H^2}{48a^2}} - |a'| |H'|^{1/2} \sqrt{X + \frac{(H')^2}{48(a')^2}} \Big|.
\end{equation*}
Our goal is to show that this difference is also small. Using the Mean Value Theorem, we know that $|(H/a)^2 - (H'/a')^2| = O(X^{1/2 + 3\delta_a})$, which implies that this term is bounded by $O(X^{1 + 3\delta_a})$.

In summary, each of the three terms is bounded, resulting in a total error of $O(X^{-1/2 + 3\delta_a+\delta_H+6\delta_t})$ for $|(R_0-R_0')t^6\lambda^{-3}|$.
\item Bounding $ \left| \frac{a}{H} - \frac{a'}{H'} \right|$: Since the difference between $H$ and $H'$ is $O(X^{1/2})$ and $H$ is bounded from below, we obtain the bound $O(X^{-1+2\delta_H})$ for this term.
    
\item Bounding $ \left| \frac{27 R_0^2}{48 a^2 H} - \frac{27 (R_0')^2}{48 (a')^2 H'} \right|$: We use the definition along with the fact that $a'$ and $H'$ have the same sign as $a$ and $H$. This reduces to bounding $ \Big| \Big(\frac{H}{a}\Big)^2 - \Big(\frac{H'}{a'}\Big)^2 \Big|$, which we have shown to be as small as $O(X^{1/2+3\delta_a})$. Consequently:
    \begin{equation*}
        \lambda^{-2} \Big| \Big( \frac{27 R_0^2 R^2}{48 a^2 H} + \frac{H^2}{48 a^2} \Big) - \Big( \frac{27 (R_0')^2 R^2}{48 (a')^2 H'} + \frac{(H')^2}{48 (a')^2} \Big) \Big| = O(X^{-1/2+3\delta_a}).
    \end{equation*}
\end{itemize}
Adding all the errors will complete the proof.
\end{proof}

\begin{customproof}[Proposition \ref{Gluing-fiber-volumes}] To proceed, we note that Lemma \ref{help fglueing lemma}, together with the preceding result, implies:

\begin{equation*} 
\sum_{(a,b,c) \in B_{X,\delta}'} N_{(a,b,c)}(\theta, X) = \int_{f_s \in \SV'} \frac{1}{|8a^2 \cdot 12a|} \, \psi_{n}\left((\lambda, t) \cdot f_s\right) \h_X'(f_s) \, df_s + O(X^{7/4 + 500\delta}) + O_{\epsilon}(X^{2-2\delta+\epsilon}).
\end{equation*} 
Thus, by Corollary \ref{SV'toSV}, we have:

\begin{equation*} 
\sum_{(a,b,c) \in B_{X,\delta}'} N_{(a,b,c)}(\theta, X) = \int_{f_s \in \SV} \frac{1}{|8a^2 \cdot 12a|} \, \psi_{n}\left((\lambda, t) \cdot f_s\right) \h_X(f_s) \, df_s + O(X^{7/4 + 500\delta}) + O_{\epsilon}(X^{2-2\delta+\epsilon}).
\end{equation*} 
Next, we rewrite the integral over the space $V_{\mathbb{R}}$. The Jacobian for the change of variables induced by the map $\upsilon$ is $8a^2 \cdot 12a$, and $\psi_{n}$ is given by $n \cdot \Psi$. Thus, we can express this integral in the space of quartic forms as follows:

\begin{equation*}
    \int_{f_s \in \SV} \frac{1}{|8a^2 \cdot 12a|} \, \psi_{n}\left((\lambda, t) \cdot f_s\right) \h_X(f_s) \, df_s = \int_{f \in V_{\delta}} ((nt) \cdot \Psi)(X^{-1/2} \cdot f) \cdot \h_X(f) \, df.
\end{equation*}
This completes the proof.
\end{customproof}

Theorem \ref{Goal-fiber-volume} implies the following expression:

\begin{align*}
 N_C(\omega^{(i)}, V_{\Z}^{(i)}; X) &= \frac{1}{n_i \, \Vol(\theta)} \int_{t = \frac{\sqrt[4]{3}}{\sqrt{2}}}^{X^{\delta_t}} \int_{N'(t)} \int_{K} 
 \Big( \int_{f \in V_{\delta}} \Big((nt) \cdot \Psi[\chi^{(i)}]\Big)(X^{-1/2} \cdot f) \cdot \h_{X}(f) \,df \Big) \, \dw \\ 
 &\quad + O(X^{7/4 + 500\delta}) + O_{\epsilon}(X^{2 - 2\delta + \epsilon}).
\end{align*}
Applying Corollary \ref{VtoV_R} and choosing $\delta$ sufficiently small, we rewrite this expression as:

\begin{equation*}
 N_C(\omega^{(i)}, V_{\Z}^{(i)}; X) = \frac{1}{n_i \, \Vol(\theta)} \int_{\gamma \in \mathcal{F}_{\PGL_2}} \Bigg( \int_{f \in V_{\R}} (\gamma \cdot \Psi[\chi^{(i)}])(X^{-1/2} \cdot f) \cdot \h_{X}(f) \, df \Bigg) \, d\gamma + O(X^{2 - 1/2000}).
\end{equation*}
At this stage, Corollary 5.3 from \cite{moment} and the computed constant from \cite{main} allow us to express the integral in terms of $\omega^{(i)}$ as follows:

\begin{equation*}
\frac{\Vol(\mathcal{F}_{\PGL_2})}{27n_i} \int_{I} \int_{J} \omega^{(i)}(X^{-1}I, X^{-3/2}J) \cdot \chi_{(-1,1)}\left(\frac{I}{X}\right) \cdot \chi_{(-1,1)}\left(\frac{J}{X}\right) \,dI \,dJ.
\end{equation*}
Therefore, to complete the proof of Theorem \ref{The-Main-counting-Theo}, it remains to evaluate this integral and approximate $\mathcal{J}^{(i)}$ in the limit. This will be carried out in the next section.
\subsection{Computation of the volume}
In the previous sections, we proved the following:

\begin{equation*}
N_C(\omega^{(i)}, V_{\Z}^{(i)}; X) = \frac{\Vol(\mathcal{F}_{\PGL_2})}{27n_i} \int_{I} \int_{J} \omega^{(i)}(X^{-1}I, X^{-3/2}J) \cdot \chi_{(-1,1)}\left(\frac{I}{X}\right) \cdot \chi_{(-1,1)}\left(\frac{J}{CX}\right) \, dI \, dJ.
\end{equation*}
Now, let $\omega^{(i)}_{\epsilon}$ be a smooth approximation of $\chi_{\mathcal{J}^{(i)}}$, the characteristic function of $\mathcal{J}^{(i)}$, such that for any fixed $J$:

\begin{equation*}
    \int_I \Big|\omega^{(i)}(I,J) - \chi_{\mathcal{J}^{(i)}}(I,J)\Big| \, dI = O(\epsilon).
\end{equation*}
Thus, we have:

\begin{align*}
    X^2 \int_{I} \int_{J} \omega^{(i)}(I, X^{-1/2}J) \cdot \chi_{(-1,1)}(I) \cdot \chi_{(-1,1)}(C^{-1}J) \, dI \, dJ &= X^{5/2} \int_{\mathcal{J}^{(i)}} \chi_{(-1,1)}(I) \cdot \chi_{(-1,1)}(C^{-1}X^{1/2}J) \, dI \, dJ \\
    &\quad + O(X^2 \epsilon).
\end{align*}
For calculating the volume with $i = 0, 2+, $ and $2-$, we find:

\begin{equation*}
    X^{5/2} \int_{I=0}^{1} \int_{-2I^{3/2}}^{2I^{3/2}} \chi_{(-1,1)}(C^{-1}X^{1/2}J) \, dJ \, dI = 2CX^2 + O(X^{2 - 1/3}).
\end{equation*}
This holds because, if $2I^{3/2} > CX^{-1/2}$, the term $\chi_{(-1,1)}(C^{-1}X^{1/2}J)$ controls the interval of $J$. Conversely, if $2I^{3/2} \leq CX^{-1/2}$, then $I$ itself is $O(X^{-1/3})$.

Similarly, for $i = 1$, we obtain:

\begin{equation*}
  X^{5/2} \int_{\mathcal{J}^{(1)}} \chi_{(-1,1)}(I) \cdot \chi_{(-1,1)}(C^{-1}X^{1/2}J) \, dI \, dJ = 2CX^2 + O(X^{2 - 1/3}).
\end{equation*}
Combining these results with the fact that $\Vol(\mathcal{F}_{\PGL_2}) = \Vol(\PGL_2(\mathbb{Z}) / \PGL_2(\mathbb{R})) = 2\zeta(2)$, we conclude that:

\begin{equation*}
N_C(\omega^{(i)}, V_{\Z}^{(i)}; X) = \frac{4C \zeta(2)X^2}{n_i} + O(X^{2 - 1/2000}) + O(\epsilon X^2).
\end{equation*}
To complete the proof of Theorem $\ref{binary quartic average}$, we first let $X \to \infty$, and then let $\epsilon \to 0$.

\subsection{Congruence conditions}
In this subsection, our goal is to establish a version of Theorem \ref{The-Main-counting-Theo} under the additional assumption that we count integral quartic forms subject to finitely many congruence conditions.

Let $S$ denote a subset of $V_{\mathbb{Z}}$ defined by finitely many congruence conditions. We can assume that $S \subset V_{\mathbb{Z}}$ is defined by congruence conditions modulo some integer $m$. Consequently, $S$ can be viewed as the union of $k$ translates $\mathcal{L}_1, \ldots, \mathcal{L}_k$ of the lattice $m \cdot V_{\mathbb{Z}}$.

Our aim is to compute $N(V_{\mathbb{Z}}^{(i)} \cap \mathcal{L}_i; X)$ using a similar method. To proceed, we need to understand the image of $\mathcal{L}_i$ under the map $\upsilon$, which will enable us to apply twisted Poisson summation.

\begin{lemma}
Let $m$ be a fixed integer, and let $\mathcal{L}$ be defined by translating $m \cdot V_{\mathbb{Z}}$ by $f_0 = (a_0, b_0, c_0, d_0, e_0)$. The image of $\mathcal{L}$ under $\upsilon$, denoted $\mathcal{L}_s$, is characterized by:
\begin{equation*}
\mathcal{L}_s \vcentcolon= \left\{(a,b,c,R,I):
\begin{aligned}
&(a,b,c) \equiv (a_0,b_0,c_0) \pmod m \\
&(R,I) \in \Lambda_{(a,b,c)}^{\mathcal{L}}
\end{aligned} \right\}
\end{equation*}
Where $\Lambda_{(a,b,c)}^{\mathcal{L}}$ is the union of sets $\Lambda_{(a,b,c),k}^{\mathcal{L}}$ for any $k = 0, \ldots, 12|a| - 1$ and
\begin{equation*}
\Lambda_{(a,b,c),k}^{\mathcal{L}} = \left\{ (R,I):
\begin{aligned}
& R\equiv d_0'+8a^2\cdot m \cdot k &\pmod{8a^2\cdot 12a \cdot m}\\
&I \equiv e_0'-3bmk &\pmod {12a\cdot m}
\end{aligned}\right\}
\end{equation*}
Where $d_0'=8a^2d_0+b^3-4abc$ and $e_0'=12ae_0-3bd_0+c^2$ are constants independent of $k$.
\end{lemma}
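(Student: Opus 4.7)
The plan is to adapt the proof of Lemma \ref{Lambda(a,b,c)} to incorporate the extra congruence $f \equiv f_0 \pmod{m}$. Since $\upsilon$ preserves the first three coordinates, the condition $(a,b,c) \equiv (a_0, b_0, c_0) \pmod m$ is immediate, and it remains, for each such $(a,b,c) \in \mathbb{Z}^3$ with $a \neq 0$, to determine the image under $(d, e) \mapsto (R, I)$ of the coset $\{(d,e) \in \mathbb{Z}^2 : (d, e) \equiv (d_0, e_0) \pmod m\}$.

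From $R = 8a^2 d + b^3 - 4abc$, the condition $d \equiv d_0 \pmod m$ becomes $R \equiv 8a^2 d_0 + b^3 - 4abc = d_0' \pmod{8a^2 m}$, which refines the base congruence $R \equiv b^3 - 4abc \pmod{8a^2}$ that defines $\Lambda_{(a,b,c)}$. As in Lemma \ref{Lambda(a,b,c)}, the residue of $R$ modulo $8a^2 \cdot 12a$ is indexed by a parameter $k \in \{0, \ldots, 12|a|-1\}$; here this upgrades naturally to $R \equiv d_0' + 8a^2 m k \pmod{8a^2 \cdot 12a \cdot m}$. Writing any such $R$ as $R = d_0' + 8a^2 m k + 8a^2 \cdot 12a \cdot m \cdot j$ yields $d = d_0 + m k + 12 a m j$, and substituting into $I = 12ae - 3bd + c^2$ under the condition $e \equiv e_0 \pmod m$ gives, after noting that $36 a b m j \equiv 0 \pmod{12am}$, the relation $I \equiv 12 a e_0 - 3b(d_0 + m k) + c^2 = e_0' - 3 b m k \pmod{12 a m}$, as claimed.

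The main step is the bookkeeping needed to verify bijectivity: one checks that distinct $k \in \{0, \ldots, 12|a|-1\}$ yield distinct residue pairs $(R \bmod 8a^2 \cdot 12a \cdot m,\ I \bmod 12 a m)$, and that every $(d,e)$ in the coset arises from exactly one $k$. This holds because $k$ is recovered uniquely from $d \pmod{12a}$, just as in Lemma \ref{Lambda(a,b,c)}; the modulus $m$ refines each piece independently without disturbing this correspondence, so no genuine obstacle arises beyond careful tracking of the moduli.
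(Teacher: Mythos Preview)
Your proof is correct and follows essentially the same approach as the paper: both parametrize $d \pmod{12a\cdot m}$ via $d \equiv d_0 + mk$ for $k \in \{0,\ldots,12|a|-1\}$, then read off the resulting congruences on $R$ and $I$, and finally observe that the correspondence is reversible. Your write-up is slightly more explicit in unwinding $d = d_0 + mk + 12am j$ and checking the $I$-congruence, but the argument is the same.
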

\begin{proof}
Fix $d$ modulo $12a \cdot m$. Since $d$ is determined modulo $m$, there exists a unique $k \in \{0, \ldots, 12|a| - 1\}$ such that
\begin{equation*}
d \equiv d_0+ mk \pmod{12a \cdot m}.
\end{equation*}
Then fixing $(a,b,c)$ and $d$ mod $12a\cdot m$ will fix $R$ modulo $8a^2\cdot 12 a \cdot m$. By putting $d_0'=b^3-4abc+8a^2d_0$, then  
\begin{equation*}
R \equiv d_0'+8a^2\cdot m\cdot k   \pmod{8a^2\cdot 12a \cdot m}.
\end{equation*}
Similarly, $I$ remains fixed modulo $12a \cdot m$ given that $d$ is fixed modulo $12a\cdot m$. Consequently, this implies that the image of $\mathcal{L}$ under the map $\upsilon$ lies within $\Lambda_{(a,b,c)}^{\mathcal{L}}$.
The converse situation is similar. 
\end{proof}

If we repeat the counting process here, but apply the twisted Poisson summation to $\Lambda_{(a,b,c)}^{\mathcal{L}}$ rather than $\Lambda_{(a,b,c)}$, then we need to compute the finite part of the twisted Poisson summation. This is handled in the following lemma.

\begin{lemma}
Consider $\mathcal{L}$ and $\Lambda_{(a,b,c)}^{\mathcal{L}}$ as in the previous lemma. Then:
\begin{equation*}
|\widehat{\psi_{\Lambda_{(a,b,c)}^{\mathcal{L}}}}(\alpha,\beta)| = \frac{\epsilon_{(\alpha,\beta)}}{m^2 \cdot |8a^2 \cdot 12a|},
\end{equation*}
where $
\psi_{\Lambda_{(a,b,c)}^{\mathcal{L}}}$ the characteristic function of $\Lambda_{(a,b,c)}^{\mathcal{L}}$ in $\R^2$.
\end{lemma}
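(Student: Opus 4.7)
The plan is to mirror the proof of Proposition~\ref{Finte-part-coeff}. The key observation is that $\Lambda_{(a,b,c)}^{\mathcal{L}}$ has the same combinatorial structure as $\Lambda_{(a,b,c)}$: it is a disjoint union of $12|a|$ cosets indexed by $k\in\{0,\dots,12|a|-1\}$, but now of the coarser sublattice $(8a^2\cdot 12am)\mathbb{Z}\times(12am)\mathbb{Z}$ rather than $(8a^2\cdot 12a)\mathbb{Z}\times(12a)\mathbb{Z}$. The coset representatives are $(d_0'+8a^2mk,\,e_0'-3bmk)$, in perfect parallel with the representatives $(\zeta+8a^2k,\,c^2-3bk)$ used in Lemma~\ref{Lambda(a,b,c)}.

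First I would apply the twisted Poisson decomposition coset by coset, exactly as in Proposition~\ref{Poisson}, to obtain
\[
\widehat{\psi_{\Lambda_{(a,b,c)}^{\mathcal{L}}}}(\alpha,\beta)
=\frac{1}{(8a^2\cdot 12am)(12am)}\sum_{k=0}^{12|a|-1}
\exp\!\Bigl(2\pi i\Bigl[\tfrac{\alpha(d_0'+8a^2mk)}{8a^2\cdot 12am}+\tfrac{\beta(e_0'-3bmk)}{12am}\Bigr]\Bigr).
\]
Then I would factor out the $k$-independent phase involving $d_0'$ and $e_0'$, which has modulus one and therefore vanishes upon taking absolute values. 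The $k$-dependent part of the exponent simplifies, after cancelling the factors of $m$ in numerator and denominator, to $k(\alpha-3b\beta)/(12a)$, which is precisely the phase appearing in Proposition~\ref{Finte-part-coeff}. Consequently the geometric sum over $k$ evaluates to $12|a|$ when $\alpha\equiv 3b\beta\pmod{12a}$ and vanishes otherwise, i.e.\ it equals $12|a|\cdot\epsilon_{(\alpha,\beta)}$.

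Taking absolute values then produces $12|a|\cdot\epsilon_{(\alpha,\beta)}/((8a^2\cdot 12am)(12am))$, which reduces to the claimed value $\epsilon_{(\alpha,\beta)}/(m^2\cdot|8a^2\cdot 12a|)$. I do not expect any real obstacle: the only point worth noting is that rescaling both lattice periods by $m$ leaves the vanishing condition $\alpha\equiv 3b\beta\pmod{12a}$ unchanged, since the extra factor $m$ cancels inside every $k$-dependent term of the phase. The affine shifts $(d_0',e_0')$ drop out when taking absolute values, so the only trace of the extra congruence modulus $m$ in the final answer is the factor of $m^2$ in the denominator.
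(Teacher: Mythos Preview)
Your proposal is correct and takes essentially the same approach as the paper: both adapt Proposition~\ref{Poisson} to the coarser lattice $(8a^2\cdot 12am)\mathbb{Z}\times(12am)\mathbb{Z}$, factor out the $k$-independent phase, observe that the extra factor of $m$ cancels in the $k$-dependent term to recover the same geometric sum $\sum_k e^{2\pi i k(\alpha-3b\beta)/(12a)}$, and conclude via Proposition~\ref{Finte-part-coeff}. Your write-up is in fact slightly more explicit than the paper's about why the affine shifts $(d_0',e_0')$ and the modulus $m$ disappear from the vanishing condition.
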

\begin{proof}
By adapting the proof of Lemma \ref{Poisson} to this context, it suffices to understand the following:

\begin{equation*}
    m^2 \cdot 8a^2 \cdot 12a \cdot 12a \cdot \big|\widehat{\psi_{\Lambda_{(a,b,c)}^{\mathcal{L}}}}(\alpha,\beta)\big| = \Big|\sum_{k=0}^{12|a|-1} e^{2\pi i \alpha \xi'_{\alpha,k}(a, b, c)} e^{2\pi i \beta \xi'_{\beta,k}(a, b, c)}\Big|,
\end{equation*}
where $\xi'_{\alpha,k}(a, b, c)$ and $\xi'_{\beta,k}(a, b, c)$ are defined as:

\begin{equation*}
    \xi_{\alpha,k}(a, b, c) \vcentcolon= \frac{d_0' + 8a^2 \cdot m \cdot k}{8a^2 \cdot 12a \cdot m}, \quad \xi_{\beta,k}(a, b, c) \vcentcolon= \frac{e_0' - 3bmk}{12am}.
\end{equation*}
Here, $d_0'$ and $e_0'$ are defined as before and are independent of $k$. Then, we have:

\begin{equation*}
    \Big|\sum_{k=0}^{12|a|-1} e^{2\pi i \left( \frac{k\alpha}{12a} \right)} e^{2\pi i \left( \frac{-3bk\beta}{12a} \right)}\Big| = \Big|\sum_{k=0}^{12|a|-1} e^{2\pi i k \left( \frac{\alpha - 3b\beta}{12a} \right)}\Big|.
\end{equation*}
This completes the proof.
\end{proof}

Applying the same method, we obtain the following theorem.

\begin{theorem}
Suppose $S$ is a subset of $V_{\mathbb{Z}}$ defined by congruence conditions modulo of finitely many prime powers. Then we have the following result:
\begin{equation*}
N(S \cap V_{\Z}^{(i)};X) = N(V_{\Z}^{(i)};X) \prod \mu_p(S) + o(X^2)
\end{equation*}
Where $\mu_p(S)$ denotes the $p$-adic density of $S$ in $V_{\Z}$.
\end{theorem}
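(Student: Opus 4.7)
The plan is to rerun the entire averaging--Poisson--volume argument developed in Sections~\ref{sec2}--\ref{sec4} with the lattice $\Lambda$ replaced by the shifted sublattices appearing in the two lemmas preceding the statement. Since $S$ is defined by congruence conditions modulo finitely many prime powers, there exists an integer $m$ such that $S$ is a union of $k=m^{5}\mu_{m}(S)$ translates $\mathcal{L}_{1},\dots,\mathcal{L}_{k}$ of the sublattice $m\cdot V_{\Z}$, where $\mu_{m}(S)=\prod_{p\mid m}\mu_{p}(S)$ by the Chinese Remainder Theorem. It therefore suffices to prove, for each fixed translate $\mathcal{L}$ of $m\cdot V_{\Z}$, the estimate
\begin{equation*}
N(\mathcal{L}\cap V_{\Z}^{(i)};X) \;=\; \frac{1}{m^{5}}\,N(V_{\Z}^{(i)};X) + o(X^{2}),
\end{equation*}
and then sum over the $k$ translates inside $S$.

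For a fixed translate $\mathcal{L}$, I would rerun the argument of Sections~\ref{sec2} and~\ref{sec3} verbatim, using the image $\mathcal{L}_{s}=\upsilon(\mathcal{L})$ described in the first preceding lemma. The cusp restriction, the reducibility bound of Lemma~\ref{lemma for reducibility lemma}, and the cutoff to $(a,b,c)\in B'_{X,\delta}$ all go through unchanged, since they are upper bound estimates on $V_{\Z}$ that apply a fortiori to any subset $\mathcal{L}\subset V_{\Z}$. The only substantive modification is in the Poisson step: the second preceding lemma shows that $\psi_{\Lambda^{\mathcal{L}}_{(a,b,c)}}$ is supported on exactly the same frequencies $(\alpha,\beta)$ as $\psi_{\Lambda_{(a,b,c)}}$, namely those satisfying $\alpha\equiv 3b\beta\pmod{12a}$, but with total mass smaller by a factor of $1/m^{2}$. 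Feeding this into the proofs of Corollaries~\ref{beta}--\ref{beta-zero}, every Fourier bound is improved by a factor of $m^{-2}$, so the analogue of Theorem~\ref{Base-Poisson-Counting} reads
\begin{equation*}
N^{\mathcal{L}}_{(a,b,c)}(\theta,X) \;=\; \frac{1}{m^{2}\,|8a^{2}\cdot 12a|}\int\!\!\int \theta_{(a,b,c)}(R,I)\,dR\,dI + O\bigl(X^{1/4+\delta^{*}}\bigr).
\end{equation*}

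Finally, I would adapt Proposition~\ref{Gluing-fiber-volumes}: the outer summation is now restricted to $(a,b,c)\in B'_{X,\delta}$ with $(a,b,c)\equiv(a_{0},b_{0},c_{0})\pmod{m}$, contributing an additional factor of $1/m^{3}$, while the Mean Value Theorem argument of Lemma~\ref{help fglueing lemma} is unaffected because the $m$-shifted lattice of centers is still uniformly spaced on the scale $X^{1/2}$. Combining the per-fiber factor $1/m^{2}$ with this base factor $1/m^{3}$ produces the overall scaling by $1/m^{5}$; the volume computation of Section~\ref{sec4} is then unchanged, and summing over the $k=m^{5}\mu_{m}(S)$ translates comprising $S$ the factors $m^{\pm 5}$ cancel, leaving $\prod_{p}\mu_{p}(S)\cdot N(V_{\Z}^{(i)};X)$ as claimed. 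I expect the main obstacle to be bookkeeping rather than genuinely new analysis: one must check that the implicit dependence on $m$ in the reducibility bound of Lemma~\ref{lemma for reducibility lemma}, in the Mean Value Theorem step, and in the approximation of $\chi_{\mathcal{J}^{(i)}}$ by $\omega^{(i)}_{\epsilon}$ is uniform, so that the $O_{m}(1)$ losses incurred are swept into the $o(X^{2})$ error for every fixed $S$.
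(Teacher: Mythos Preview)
Your proposal is correct and follows essentially the same approach as the paper: the paper's proof amounts to the single sentence ``Applying the same method, we obtain the following theorem,'' relying on the two preceding lemmas about $\Lambda_{(a,b,c)}^{\mathcal{L}}$ and its Fourier transform, and you have accurately unpacked what that entails. The only difference is expository---you make explicit the $1/m^{2}$ factor from the fiber Poisson step and the $1/m^{3}$ factor from the restricted base sum, whereas the paper leaves these implicit.
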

We also need to consider counting orbits with weights. The proof of the following theorem proceeds using the same approach as described above.

\begin{theorem}\label{conqgruent-counting}
    Let $p_1, \cdots, p_k$ be distinct prime numbers. For $j=1,\cdots,k$, let $\phi_{p_j}:V_{\mathbb{Z}} \rightarrow \mathbb{R}$ be a $\GL_2(\mathbb{Z})$-invariant function on $V_{\mathbb{Z}}$ such that $\phi_{p_j}(f)$ only depends on the class of $f$ modulo some powers $p_j^{a_j}$ of $p_j$. Let $N_{\phi}(V_{\mathbb{Z}}^{(i)};\phi)$ denote the number of irreducible $\GL_2(\mathbb{Z})$-orbits in $V_{\mathbb{Z}}^{(i)}$ with height bounded by $X$, where each orbit $\GL_2(\mathbb{Z}) \cdot f$ is counted with weight $\phi(f) \coloneqq \prod_{j=1}^{k}\phi_{p_j}(f)$. Then we have
    \begin{equation*}
        N_{\phi}(V_{\mathbb{Z}}^{(i)};X)=N(V_{\mathbb{Z}}^{(i)};X)\prod_{j=1}^{k}\int_{f\in V_{\mathbb{Z}_{p_j}}} \tilde{\phi}_{p_j}(f) df + o(X^2)
    \end{equation*}
    where $\tilde{\phi}_{p_j}$ is the natural extension of $\phi_{p_j}$ to $V_{\mathbb{Z}_p}$, $df$ denotes the additive measure on $V_{\mathbb{Z}_{p}}$, normalized so that $\int_{f\in V_{\mathbb{Z}_p}}df=1$.
\end{theorem}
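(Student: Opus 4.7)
The plan is to reduce the weighted count to a finite sum of unweighted congruence-class counts, each handled by the preceding theorem, and then factor the resulting density via the Chinese Remainder Theorem. First, I would set $N = \prod_{j=1}^{k} p_j^{a_j}$ and observe that $\phi = \prod_j \phi_{p_j}$ depends only on the reduction of $f$ modulo $N$. Partitioning $V_{\mathbb{Z}}$ into residue classes modulo $N$, indexed by $r \in V_{\mathbb{Z}/N\mathbb{Z}}$, and denoting the corresponding translate of $N \cdot V_{\mathbb{Z}}$ by $\mathcal{L}_r$, the weight $\phi$ takes the constant value $\tilde{\phi}(r) \vcentcolon= \prod_j \tilde{\phi}_{p_j}(r)$ on each $\mathcal{L}_r$, so that
\begin{equation*}
N_{\phi}(V_{\mathbb{Z}}^{(i)}; X) = \sum_{r \in V_{\mathbb{Z}/N\mathbb{Z}}} \tilde{\phi}(r) \cdot N(\mathcal{L}_r \cap V_{\mathbb{Z}}^{(i)}; X).
\end{equation*}

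Next, I would apply the previous congruence-condition theorem to each $\mathcal{L}_r$ individually to obtain
\begin{equation*}
N(\mathcal{L}_r \cap V_{\mathbb{Z}}^{(i)}; X) = N(V_{\mathbb{Z}}^{(i)}; X) \cdot \mu_N(\mathcal{L}_r) + o(X^2),
\end{equation*}
where $\mu_N(\mathcal{L}_r) = N^{-5}$ is the natural density of $\mathcal{L}_r$ in $V_{\mathbb{Z}}$. Because the number of residue classes is finite and independent of $X$, and the weights $\tilde{\phi}(r)$ are uniformly bounded, summing preserves the $o(X^2)$ error. This gives
\begin{equation*}
N_{\phi}(V_{\mathbb{Z}}^{(i)}; X) = N(V_{\mathbb{Z}}^{(i)}; X) \sum_{r \in V_{\mathbb{Z}/N\mathbb{Z}}} \tilde{\phi}(r) \mu_N(\mathcal{L}_r) + o(X^2).
\end{equation*}

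Finally, I would identify the density sum with the product of $p$-adic integrals. By the Chinese Remainder Theorem, $V_{\mathbb{Z}/N\mathbb{Z}} \cong \prod_j V_{\mathbb{Z}/p_j^{a_j}\mathbb{Z}}$ and $\mu_N(\mathcal{L}_r) = \prod_j p_j^{-5 a_j}$; since $\tilde{\phi}_{p_j}$ depends only on the $p_j$-adic component $r_j$ of $r$, the weight splits as $\tilde{\phi}(r) = \prod_j \tilde{\phi}_{p_j}(r_j)$. Combining these factorizations with the locally constant structure of each $\tilde{\phi}_{p_j}$ on $V_{\mathbb{Z}_{p_j}}$ and the fact that the additive measure is normalized so that $V_{\mathbb{Z}_{p_j}}$ has volume one, the sum decomposes as $\prod_{j=1}^{k} \int_{V_{\mathbb{Z}_{p_j}}} \tilde{\phi}_{p_j}(f)\, df$, which is the claimed expression.

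The only substantive issue is uniformity of the $o(X^2)$ error across residue classes, but this is automatic since the number of classes depends only on $N$ and not on $X$. Consequently no new analytic input is required beyond the previously established congruence-class counting theorem, which itself rests on the twisted Poisson summation over the lattices $\Lambda_{(a,b,c)}^{\mathcal{L}}$ developed in Section \ref{sec3}.
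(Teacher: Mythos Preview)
Your proposal is correct and matches the paper's intent: the paper itself gives no detailed argument, stating only that ``the proof of the following theorem proceeds using the same approach as described above,'' and your linearity reduction---writing $\phi$ as constant on each translate $\mathcal{L}_r$ of $N\cdot V_{\mathbb Z}$, applying the preceding congruence-condition theorem to each $\mathcal{L}_r$, and factoring the resulting density via the Chinese Remainder Theorem---is precisely the standard way to carry this out. No additional analytic input is needed, and your observation about uniformity of the $o(X^2)$ error over finitely many residue classes is exactly the point that makes the summation go through.
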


For our purposes, we require a more general version of our theorem on counting integral orbits of binary quartic forms, one that accounts for infinitely many congruence conditions. We say that a function $\phi: V_{\mathbb{Z}} \to [0,1] \subset \mathbb{R}$ is \textit{defined by congruence conditions} if, for every prime $p$, there exist functions $\phi_p: V_{\mathbb{Z}} \to [0,1]$ such that:

\begin{enumerate}
    \item[(1)] For all $f\in V_{\mathbb{Z}}$, the product $\prod_{p}\phi_p(f)$ converges to $\phi(f)$.
    \item[(2)] For each prime $p$, the function $\phi_p$ is locally constant outside some closed set $S_p \subset V_{\mathbb{Z}_p}$ of measure zero.
\end{enumerate}
This definition is consistent with that in \cite{main}, and we will utilize their results for counting integral quartic forms with weight considerations. A function $\phi$ is said to be \textit{acceptable} if, for sufficiently large primes $p$, we have $\phi_p(f) = 1$ whenever $p^2 \nmid \Delta(f)$. The following theorem establishes a framework for bounding the count with weights defined by infinitely many congruence conditions.

\begin{theorem}\label{weight-counting-quartic}
    Let $\phi \rightarrow [0,1]$ be acceptable function that is defined by congruence conditions via the local functions $\phi_p: V_{Z_{p}}: \rightarrow[0,1]$. then, with notations as in Theorem \ref{conqgruent-counting}, we have 
    \begin{equation*}
        N_{\phi}(V_{\Z}^{(i)};X) \leq N(V_{\Z}^{(i)};X) \prod_{p}\int_{f \in V_{\Z_p}} \phi_p(f)df + o(X^2)
    \end{equation*}
\end{theorem}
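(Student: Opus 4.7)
The plan is to deduce this upper bound from the finite-congruence version (Theorem~\ref{conqgruent-counting}) by a truncation argument, with acceptability used only to guarantee convergence of the resulting Euler product.

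First, for each positive integer $Q$ introduce the truncated weight
\[
\phi^{\leq Q}(f) \vcentcolon= \prod_{p \leq Q} \phi_p(f).
\]
Since each $\phi_p$ takes values in $[0,1]$, the tail $\prod_{p > Q}\phi_p(f)$ is bounded by $1$, so $\phi(f) \leq \phi^{\leq Q}(f)$ pointwise on $V_{\Z}$. Consequently
\[
N_\phi(V_\Z^{(i)};X) \leq N_{\phi^{\leq Q}}(V_\Z^{(i)};X)
\]
for every $Q$ and every $X$. The function $\phi^{\leq Q}$ depends only on congruences modulo finitely many prime powers, so Theorem~\ref{conqgruent-counting} applies and gives
\[
N_{\phi^{\leq Q}}(V_\Z^{(i)};X) \;=\; N(V_\Z^{(i)};X)\prod_{p \leq Q}\int_{V_{\Z_p}} \tilde\phi_p(f)\,df + o_Q(X^2),
\]
where the error is $o(X^2)$ as $X \to \infty$ with $Q$ held fixed.

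Next I invoke the acceptability hypothesis. For all sufficiently large $p$, $\phi_p(f) = 1$ unless $p^2 \mid \Delta(f)$, and the locus $\{f \in V_{\Z_p} : p^2 \mid \Delta(f)\}$ has $p$-adic density $O(p^{-2})$ by a standard Hensel-type calculation. Hence
\[
1 - \int_{V_{\Z_p}} \phi_p(f)\,df \;=\; O(p^{-2})
\]
for large $p$, the series $\sum_p\bigl(1 - \int\phi_p\,df\bigr)$ converges, and the partial products $\prod_{p \leq Q}\int \phi_p\,df$ decrease monotonically as $Q$ grows, with limit $\prod_p \int \phi_p\,df$. Combining this with the two displays above and using $N(V_\Z^{(i)};X) = c_i X^2 + o(X^2)$ from Theorem~\ref{The-Main-counting-Theo} (where $c_i = 4C\zeta(2)/(27 n_i)$), one obtains for each fixed $Q$:
\[
\limsup_{X \to \infty}\frac{N_\phi(V_\Z^{(i)};X)}{X^2} \;\leq\; c_i\prod_{p \leq Q}\int_{V_{\Z_p}} \phi_p\,df.
\]
Letting $Q \to \infty$ and invoking the convergence of the Euler product gives
\[
\limsup_{X \to \infty}\frac{N_\phi(V_\Z^{(i)};X)}{X^2} \;\leq\; c_i\prod_{p}\int_{V_{\Z_p}} \phi_p\,df,
\]
which, after rewriting $c_i \prod_p \int\phi_p\,df \cdot X^2 = N(V_\Z^{(i)};X)\prod_p \int\phi_p\,df + o(X^2)$, is precisely the stated inequality.

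There is no serious obstacle here once Theorems~\ref{The-Main-counting-Theo} and~\ref{conqgruent-counting} are in hand: the proof is a routine interchange of $\limsup_X$ with the limit $Q \to \infty$, controlled from above by the pointwise bound $\phi \leq \phi^{\leq Q}$. The only essential use of acceptability is in ensuring that the Euler product is nontrivial, via the $O(p^{-2})$ density of $\{p^2 \mid \Delta\}$. Notably, unlike the sieve arguments in Section~\ref{sec5} that require the uniformity estimate of Theorem~\ref{IJ-Sieve-introduction}, the one-sided estimate proved here does not: a sharp asymptotic (rather than just an upper bound) would additionally require controlling $N_{\phi^{\leq Q}} - N_\phi$ in terms of the count of $f$ with $p^2 \mid \Delta(f)$ for some $p > Q$, which is where such a uniformity input would enter.
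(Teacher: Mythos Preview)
Your argument is correct and is precisely the ``initial part of the proof of Theorem 2.21'' in \cite{main} that the paper cites: truncate to $\phi^{\leq Q}$, use the pointwise inequality $\phi\leq\phi^{\leq Q}$, apply the finite-congruence asymptotic, and let $Q\to\infty$ using convergence of the Euler product. One minor technical wrinkle you glide over is that Theorem~\ref{conqgruent-counting} as stated requires each $\phi_p$ to factor through $V_{\Z/p^{a}\Z}$, whereas the hypothesis only says $\phi_p$ is locally constant off a measure-zero set; this is handled (as in \cite{main}) by a further approximation of each $\phi_p$, $p\leq Q$, from above by such a step function, which does not affect the $\limsup$ argument.
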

\begin{proof}
The proof closely follows the initial part of the proof of Theorem 2.21 in \cite{main}.
\end{proof}

\begin{remark}\label{uniformity tail estimate}
The bound in Theorem \ref{weight-counting-quartic} is expected to be exact, meaning that the inequality $\leq$ should be replaced with equality. To establish this, we require a tail estimate showing that the number of integral binary quartic forms with height less than $X$ and discriminant divisible by $p^2$, where $p > Q$, is $O\left(\frac{X^2}{Q^\delta}\right) + o(X^2)$ for some small positive $\delta$. The main difficulty arises when $p$ is a prime number, as this imposes congruence conditions at size of $X^{3/2}$ on a region of size $X^{1/2}$ within our space, where fixing four variables results in one of the edges being as small as $O(1)$.
\end{remark}

We also show that Theorem~\ref{2-Selmer average} remains valid for broader families of elliptic curves defined by finitely many congruence conditions on the coefficients $A$ and $B$.
\section{The average size of 2-Selmer groups of elliptic curves}\label{sec5}

Recall that every elliptic curve $E$ over $\mathbb{Q}$ can be uniquely expressed in the form:
\begin{equation}\label{elliptic-def}
E_{A,B}: y^2 = x^3 + Ax + B
\end{equation}
where $A, B \in \mathbb{Z}$, and  $p^4 \nmid A$ if $p^6 \mid B$. For any elliptic curve $E = E_{A,B}$ over $\mathbb{Q}$ given by equation \eqref{elliptic-def}, we define
\begin{equation*}
\begin{aligned}
&I(E) \vcentcolon=  - 3  A, \\
&J(E) \vcentcolon= -27  B.
\end{aligned}
\end{equation*}
Any curve $E_{A,B}$ can be expressed uniquely as $E^{I,J}$. The height of $E_{A,B}$ is defined as follows:
\begin{equation*}
h_e(E_{A,B}) \vcentcolon= h_e(I,J)  = \max \big\{|I|/3,|J|/27\big \}.
\end{equation*}
We define families of elliptic curves as in \cite{main}. For each prime $p$, let $\Sigma_p$ be a closed subset of $\mathbb{Z}_p^2 \setminus \{\Delta \neq 0\}$ with boundary measure zero. We then define the set $F_{\Sigma}$ of elliptic curves over $\mathbb{Q}$, where $E^{I,J}$ belongs to $F_{\Sigma}$ if and only if $(I, J) \in \Sigma_p$ for all $p$. This characterizes $F_{\Sigma}$ as a family of elliptic curves over $\mathbb{Q}$ \textit{defined by congruence conditions}.

We can also impose "congruence conditions at infinity" on $F_{\Sigma}$ by requiring that an elliptic curve $E^{I,J}$ is included in $F_{\Sigma}$ if and only if $(I, J)$ lies in the set $\Sigma_{\infty}$, where $\Sigma_{\infty}$ is one of the following: 
\begin{equation*}
\{(I, J) \in \mathbb{R}^2 : \Delta(I, J) > 0\}, \quad \{(I, J) \in \mathbb{R}^2 : \Delta(I, J) < 0\}, \quad \text{or} \quad \{(I, J) \in \mathbb{R}^2 : \Delta(I, J) \neq 0\}.
\end{equation*}
Let $F$ be any nonempty family of elliptic curves over $\mathbb{Q}$, defined by congruence conditions. We define $\mathrm{Inv}(F)$ as the set of pairs $(I(E), J(E))$ for all $E \in F$. The set $\mathrm{Inv}_p(F)$ consists of elements $(I, J)$ that lie in the $p$-adic closure of $\mathrm{Inv}(F) \subseteq \mathbb{Z}_p^2$, subject to the condition that $\Delta(I, J) \neq 0$. Similarly, we define $\mathrm{Inv}_{\infty}(F)$ as one of the following sets:
\begin{equation*}
\{(I, J) \in \mathbb{R}^2 : \Delta(I, J) > 0\}, \quad \{(I, J) \in \mathbb{R}^2 : \Delta(I, J) < 0\}, \quad \text{or} \quad \{(I, J) \in \mathbb{R}^2 : \Delta(I, J) \neq 0\},
\end{equation*}
depending on whether $F$ contains elliptic curves with positive discriminant, negative discriminant, or both.

A family $F$ of elliptic curves defined by congruence conditions is said to be \textit{large} if, for all but finitely many primes $p$, the set $\mathrm{Inv}_p(F)$ contains all pairs $(I, J) \in \mathbb{Z}_p \times \mathbb{Z}_p$ such that $p^2 \nmid \Delta(I, J)$.

The family of all elliptic curves, as well as those defined by finitely many congruence conditions on the coefficients of the elliptic curves, are both large families. We can prove the following stronger version of Theorem \ref{2-Selmer average}.

\begin{theorem}\label{2-selmer-bound}
    When all elliptic curves $E$ in any large family are ordered by height $h_e$, the average size of the $2$-Selmer group is at most 3.
\end{theorem}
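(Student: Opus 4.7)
The plan is to reduce Theorem~\ref{2-selmer-bound} to the counting theorem for binary quartic forms (Theorem~\ref{The-Main-counting-Theo}) via the classical $2$-descent correspondence of Birch--Swinnerton-Dyer and Cremona. Specifically, for each elliptic curve $E = E^{I,J}$, the nonidentity elements of $\Sel_2(E)$ are in bijection with $\GL_2(\mathbb{Z})$-equivalence classes of irreducible, locally soluble integral binary quartic forms $f$ with invariants $I(f)=I$ and $J(f)=J$, where locally soluble means soluble over $\mathbb{R}$ and over $\mathbb{Q}_p$ for all primes $p$. Choosing the constant $C=36$ aligns $h_C$ on binary quartic forms with $h_e$ on elliptic curves, so that $h_e(E^{I,J}) < X$ corresponds to the condition used in $N_C$. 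The average $\#\Sel_2$ over a large family $F$ then decomposes as
\begin{equation*}
  \Avg_{E\in F,\,h_e(E)<X} \#\Sel_2(E) \;=\; 1 + \frac{N_C^{\mathrm{ls},F}(X)}{\#\{E\in F : h_e(E)<X\}},
\end{equation*}
where $N_C^{\mathrm{ls},F}(X)$ denotes the count of irreducible, locally soluble $\GL_2(\mathbb{Z})$-classes of integral binary quartic forms with $(I,J)\in \Inv(F)$ and height less than $X$.

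Second, I would express local solubility as an acceptable weight $\phi = \prod_p \phi_p$ in the sense of Section~\ref{sec2}: each $\phi_p$ is the characteristic function of forms soluble over $\mathbb{Q}_p$, with the archimedean condition and the family condition $(I,J)\in\Sigma_p$ incorporated factor by factor. Acceptability follows because a form $f$ with $p^2\nmid \Delta(f)$ is automatically locally soluble at $p$ for $p$ large. Applying Theorem~\ref{weight-counting-quartic} (the upper bound version of the congruence count) yields
\begin{equation*}
  N_C^{\mathrm{ls},F}(X) \;\leq\; \sum_{i} N_C(V_{\Z}^{(i)};X)\cdot \prod_p \int_{V_{\Z_p}} \phi_p(f)\,df \;+\; o(X^2).
\end{equation*}
The archimedean factors redistribute the contributions among the three real types $i=0,1,2$; the non-archimedean factors are exactly the $p$-adic densities of locally soluble quartic forms, which were computed by Bhargava--Shankar in \cite{main}. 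Combining with the fact that $\#\{E\in F : h_e(E)<X\}$ is asymptotic to $\Vol(\Inv_\infty(F)\cap\{h_e<X\})\prod_p\Vol(\Sigma_p)\cdot X^2$, the local volumes cancel against those of the form count; the surviving quotient evaluates (via the Tamagawa-style identity used in \cite{main}) to~$2$.

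Third, to justify bringing the infinite product inside the asymptotic, I would sieve. Fix $Q$ large, write $\phi_p = 1 - \psi_p$ at the finitely many $p\leq Q$ relevant to the acceptability cutoff, and use inclusion--exclusion to reduce to the finite-congruence-condition setting of Theorem~\ref{conqgruent-counting}. The error introduced by truncating at $Q$ is controlled by the number of $E_{A,B}$ of coefficient height less than $X$ whose discriminant is divisible by $p^2$ for some $p>Q$; this is precisely the content of Theorem~\ref{IJ-Sieve-introduction}, giving error $O_\epsilon(X^{2+\epsilon}/Q + X^{7/4+\epsilon})$. Letting $Q\to\infty$ slowly with $X$, the tail contribution is $o(X^2)$, completing the proof with upper bound $1+2=3$.

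The main obstacle is the tail estimate, i.e.\ Theorem~\ref{IJ-Sieve-introduction}: under the coefficient height $h_e$, the discriminant $\Delta(I,J)$ can be as large as $X^3$, so divisibility of $\Delta$ by $p^2$ for $p$ up to $X^{3/2}$ is a much thinner constraint than in the naive-height ordering of \cite{main}, and the savings $1/Q$ must be obtained by a genuinely different argument exploiting the rigid structure of reducible quartic forms attached to the cubic $x^3+Ax+B$. The strategy sketched in the introduction---realizing the cubic as a reducible quartic with a unique root on $\mathbb{P}^1_{\mathbb{Q}}$, fibering over that root, and using the semi-invariant syzygy to impose the height condition $|J|<CX$ on fibers---is exactly what converts the otherwise intractable discriminant constraint into an average that can be bounded by the semi-invariant equidistribution of Section~\ref{sec3}.
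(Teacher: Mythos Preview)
Your approach is essentially the paper's, but you have mislocated where the uniformity estimate is needed, and you have omitted one structural ingredient.

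First, the $2$-descent correspondence identifies nonidentity Selmer elements with $\PGL_2(\mathbb{Q})$-equivalence classes (not $\GL_2(\mathbb{Z})$-classes) of locally soluble integral quartics, and the invariants are $(2^4I,\,2^6J)$ rather than $(I,J)$. To pass from $\PGL_2(\mathbb{Z})$-orbits to $\PGL_2(\mathbb{Q})$-classes one must weight each integral orbit by $1/m(f)$, where $m(f)=\prod_p m_p(f)$ counts the integral orbits inside the rational class. This weight is acceptable and its local integrals are the quantities $|2^{10}/27|_p\cdot\Vol(\PGL_2(\mathbb{Z}_p))\cdot M_p(V,F)$ computed in \cite{main}; you should make this explicit, since without it the local product does not line up with the denominator.

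Second, and more importantly: your third step sieves the \emph{numerator}, but no tail estimate is needed there. Theorem~\ref{weight-counting-quartic} is only an upper bound, and that inequality follows by monotonicity---truncate the product to $p\leq Y$, apply Theorem~\ref{conqgruent-counting}, and let $Y\to\infty$. The paper explicitly remarks (Remark~\ref{uniformity tail estimate}) that the quartic-side tail estimate which would upgrade this to an equality is \emph{not} established. Theorem~\ref{IJ-Sieve-introduction} is instead required for the \emph{denominator}: the asymptotic $\#\{E\in F:h_e(E)<X\}\sim M_\infty(F;X)\prod_p M_p(F)$, which you invoke as ``a fact'', needs a genuine sieve because a large family $F$ is cut out by infinitely many congruence conditions on $(I,J)$, and the error from primes $p>Q$ is exactly what Theorem~\ref{IJ-Sieve-introduction} bounds. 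So the logical flow is: upper-bound the numerator for free, get exact asymptotics for the denominator via Theorem~\ref{IJ-Sieve-introduction}, and take the ratio.
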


\subsection{Counting elliptic curves with bounded height in a large family}
In this subsection, our objective is to understand the asymptotic behavior of the count of elliptic curves within a large family $F$, bounded by the height $h$.

For any subset $S \subset \mathbb{Z} \times \mathbb{Z}$, let $N(S; X)$ denote the number of pairs $(I, J) \in S$ such that $4I^3 - J^2 \neq 0$ and the height $h_e$ of the pair is less than $X$. Since an elliptic curve is uniquely determined by its invariants $I$ and $J$, our aim is to study the asymptotic behavior of $N(\text{Inv}(F); X)$ as $X$ tends to infinity. Here, $\text{Inv}(F)$ may be defined by infinitely many congruence conditions. To conduct this analysis, we introduce the following uniformity estimate.

\begin{theorem}\label{IJ-Sieve}
    For a prime $p$, let $\mathcal{W}_{p}$ represent the set of elliptic curves $E$ over $\Q$ such that $p^2$ divides the discriminant of $E$. For any $Q$, we have the following:
    \begin{equation*}
      \#\bigcup_{p>Q} \left\{E_{A,B} \in \mathcal{W}_p:h(E_{A,B})<X\right\} = O_{\epsilon}\left( \frac{X^{2+\epsilon}}{Q}+ X^{7/4+\epsilon}\right).
    \end{equation*}
\end{theorem}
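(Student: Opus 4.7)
The plan is to split the union over primes $p > Q$ at a threshold $T = X^{3/4}$ and handle the two ranges by complementary methods; throughout I write $\#\mathcal{W}_p$ for $\#\{E_{A,B} \in \mathcal{W}_p : h(E_{A,B}) < X\}$, i.e., the count with $|A|, |B| < X$. The starting point is a characterization: for $p \ge 5$, $(A, B) \in \mathcal{W}_p$ if and only if either (i) $p \mid A$ and $p \mid B$ (in which case $p^3 \mid 4A^3$ and $p^2 \mid 27B^2$ force $p^2 \mid 4A^3 + 27B^2$), or (ii) $p \nmid AB$ and there exists $r \in \Z$ with $p \mid 3r^2 + A$ and $p^2 \mid r^3 + Ar + B$; here $r$ is the double root of $x^3 + Ax + B$ modulo $p$ and is unique modulo $p$. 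Case (ii) follows from writing $A = -3r^2 + pA_1$, $B = 2r^3 + pB_1$, whence a direct computation gives $4A^3 + 27B^2 \equiv -108\, p\, r^3 (rA_1 + B_1) \pmod{p^2}$.

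For the small-prime range $Q < p \le T$, this yields the pointwise bound $\#\mathcal{W}_p \ll X^{2+\epsilon}/p^2 + X^{1+\epsilon} + p^{1+\epsilon}$: case (i) contributes $O(X^2/p^2)$ by direct counting, while in case (ii), for each $r \in \{1, \dots, p-1\}$ the number of admissible $A \in [-X, X]$ is $O(X/p + 1)$ and the number of admissible $B \in [-X, X]$ in a prescribed residue class mod $p^2$ is $O(X/p^2 + 1)$, giving $O(X^2/p^2 + X + p)$ after summing over $r$. Therefore
\begin{equation*}
\sum_{Q < p \le T} \#\mathcal{W}_p \;\ll\; \frac{X^{2+\epsilon}}{Q} + X^{1+\epsilon} T + T^{2+\epsilon} \;\ll\; \frac{X^{2+\epsilon}}{Q} + X^{7/4+\epsilon}.
\end{equation*}

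For the large-prime range $p > T$, the inequality $p^2 > X^{3/2}$ together with $|4A^3 + 27B^2| \ll X^3$ ensures that no pair $(A, B)$ belongs to two distinct $\mathcal{W}_p$ with $p > T$, so $\#\bigcup_{p > T}\mathcal{W}_p = \sum_{p > T}\#\mathcal{W}_p$. A pointwise Mordell--Siegel bound on $4A^3 + 27B^2 = p^2 k$ gives only $\ll X^{9/4+\epsilon}$, off by a factor of $X^{1/2}$. To gain this factor, following the Methods of Proofs section, I associate to each $(A, B) \in \mathcal{W}_p$ with $x^3 + Ax + B$ irreducible the reducible binary quartic $f_{A,B}(x, y) = y(x^3 + Axy^2 + By^3)$, which has invariants $(I, J) = (-3A, -27B)$, a unique $\Q$-rational root at $[1:0]$, and discriminant divisible by $p^2$. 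Using the transitivity of $\GL_2(\Z)$ on primitive vectors of $\Z^2$ to fiber over the root and applying the averaging method of Section 2 together with the semi-invariant Poisson summation of Section 3 restricted to the sublattice cut out by $p^2 \mid \Delta$ yields
\begin{equation*}
\sum_{p > T} \#\mathcal{W}_p \;\ll\; X^{7/4+\epsilon},
\end{equation*}
with the contribution from $(A, B)$ for which $x^3 + Ax + B$ is reducible over $\Q$ bounded by $O_\epsilon(X^{3/2+\epsilon})$ via Lemma \ref{h_IJ}. Adding the two ranges proves the theorem. The main obstacle is this last step: extracting enough cancellation from the congruence sublattice $\{p^2 \mid \Delta\}$ to recover the $X^{1/2}$ factor that the pointwise Mordell bound loses. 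This requires adapting the semi-invariant Poisson analysis of Section 3 to the reducible stratum, whose natural representative $y \cdot (x^3 + Axy^2 + By^3)$ has $a = 0$ and thus lies in the degenerate locus excluded from the main analysis, and doing so uniformly in $p$.
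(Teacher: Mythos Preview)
Your small-prime range $Q<p\le T=X^{3/4}$ is handled correctly and elementarily; this part is fine and is actually more direct than what the paper does there. The large-prime range, however, is not proved: you write that ``applying the averaging method \dots\ together with the semi-invariant Poisson summation of Section~3 restricted to the sublattice cut out by $p^2\mid\Delta$'' yields the bound, but this is precisely the step that carries all the content, and you openly flag it as the main obstacle. Two concrete problems. First, $p^2\mid\Delta(f)$ is a \emph{quadratic} congruence in the semi-invariants (indeed $\Delta=(4I^3-J^2)/27$), so it does not cut out a coset of a sublattice of $\Lambda_{(a,b,c)}$; the Poisson machinery of Section~3, which is designed for affine-linear congruences on $(R,I)$, does not apply as stated. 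Second, your embedding $f_{A,B}(x,y)=y(x^3+Axy^2+By^3)$ has cubic cofactor with leading coefficient $1$, so its $Q$-invariant is identically $1$; you have thrown away exactly the arithmetic datum (the index $[O_g:R_g]$) that encodes ``$p^2\mid\Delta$ for large $p$''.

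The paper avoids both issues by a different route. It first decomposes $\Delta(g)=D_g\,Q_g^2$ and observes that $p^2\mid\Delta$ with $p>Q$ forces either $p^2\mid D_g$ (easy: $p\mid A$ and $p\mid B$) or $Q_g>Q$. For the latter it uses the Delone--Faddeev embedding: write $g(x+r)=x^3+ax^2+bnx+cn^2$ with $n=Q_g$, and send $g$ to the quartic $y\,h(x,y)$ with $h(x,y)=nx^3+ax^2y+bxy^2+cy^3$. Now the $Q$-invariant of the quartic equals $n$, so the condition $Q_g>Q$ becomes a \emph{linear} size condition on a coefficient. One then counts $\PGL_2(\Z)$-orbits in this locus by the averaging method, fibering over the unique rational root. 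The key input is not Poisson summation but Lemma~\ref{J-single root lemma}, which expresses $J$ polynomially in the root data $(u,v)$ and the semi-invariants $H,R$; once $(a,b,c)$ and the root are fixed, the constraint $|J|\ll X$ confines $R$ to intervals of length $O(|a|^{3/2}X^{1/2})$, and the root also pins down $d$ in a residue class modulo $\beta$, giving the saving. The $a=0$ stratum is handled directly via $J=9bcd-27eb^2-2c^3$ and the fact that there the $Q$-invariant is $b$. So the missing idea in your sketch is the index-sensitive embedding together with the explicit root--syzygy identity of Lemma~\ref{J-single root lemma}; without these, the large-prime range remains open.
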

Let $U_0(\Z)$ denote the set of monic cubic polynomials over $\Z$ with a zero $x^2$-coefficient. For any element $g \in U_0(\Z)$, we define $K_g$ as $\Q[x]/(g(x))$, $R_g$ as $\Z[x]/(g(x))$, and $O_g$ as the maximal order in $K_g$. The $Q$-invariant of $g$ is defined as the index of $R_g$ in $O_g$, and $D_g$ represents the discriminant of $K_g$.

Additionally, let $U_0(\mathbb{Z})_{X}^{\min}$ denote the subset of elements $g(x) = x^3 + Ax + B \in U_0(\mathbb{Z})$ satisfying the condition that $p^4 \nmid A$ whenever $p^6 \mid B$, where $|A|$ and $|B|$ are bounded by $X$. The following equations will be used to establish Theorem \ref{IJ-Sieve}, primarily because the discriminant $\Delta(g)$ satisfies the relation $\Delta(g) = D_g Q_g^2$.
\begin{align}
& \# \left\{g \in U_0(\Z)_{X}^{\min}: Q_g>Q\right\}=  O_{\epsilon}\left(\frac{X^{2+\epsilon}}{Q}+X^{7/4+\epsilon}\right) \label{weakly-divisible}\\
& \#\bigcup_{p>Q} \left\{g \in U_0(\Z)_{X}^{\min}: p^2 \mid D_g\right\}=O_{\epsilon}\left(\frac{X^{2}}{Q}+X^{1+\epsilon}\right) \label{strongly-divisible}.
\end{align}
By following the lemma, we can confine the analysis of equations \eqref{weakly-divisible} and \eqref{strongly-divisible} to polynomials in $U_0(\Z)_{X}^{\min}$ that are irreducible over $\Q$.

\begin{lemma}\label{reducible cubic resolvent count}
The number of reducible polynomials over $\mathbb{Q}$ in $U_0(\mathbb{Z})_{X}^{\min}$ is bounded by $O(X^{1+\epsilon})$.
\end{lemma}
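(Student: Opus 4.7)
The plan is to exploit Gauss's lemma: a monic integer polynomial $g(x) = x^3 + Ax + B$ of degree three is reducible over $\mathbb{Q}$ if and only if it has an integer root. I would therefore parametrize reducible elements by an integer root $r$: once $r$ is fixed, the relation $r^3 + Ar + B = 0$ determines $B$ linearly in terms of $A$. Since the minimality condition defining $U_0(\mathbb{Z})_X^{\min}$ only further restricts the set, it is enough to bound the number of pairs $(A,B) \in \mathbb{Z}^2$ with $|A|, |B| \leq X$ such that $x^3 + Ax + B$ has at least one integer root; a reducible cubic is counted at most three times by this parametrization, which is harmless for an upper bound.

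First I would dispose of $r = 0$, which forces $B = 0$ and leaves $A$ free in $[-X, X] \cap \mathbb{Z}$, contributing $O(X)$. For fixed nonzero $r$, the constraint $|B| = |r^3 + rA| \leq X$ is equivalent to $|A + r^2| \leq X/|r|$, placing $A$ in an interval of length $\leq 2X/|r|$ centered at $-r^2$; this yields at most $2X/|r| + 1$ admissible values of $A$. Combining with $|A| \leq X$, the two intervals intersect only when $r^2 - X/|r| \leq X$, which forces $|r| \ll X^{1/2}$.

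Summing over admissible nonzero $r$ then gives
\begin{equation*}
\sum_{1 \leq |r| \ll X^{1/2}} \left(\frac{2X}{|r|} + 1\right) \ll X \log X + X^{1/2} = O(X^{1+\epsilon}),
\end{equation*}
which together with the $r=0$ contribution establishes the claimed bound. There is no genuine obstacle in this argument; it reduces to a single harmonic sum, and the only minor subtlety is the possible triple overcounting, which is absorbed into the implicit constant.
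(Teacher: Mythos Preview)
Your argument is correct and in fact slightly sharper than stated: the harmonic sum gives $O(X\log X)$, not just $O(X^{1+\epsilon})$. The paper takes a different route: it fixes $B$ first (at most $O(X)$ choices), observes that any integer root of $x^3+Ax+B$ must divide $B$, and then invokes the divisor bound to say there are $O_\epsilon(X^\epsilon)$ candidate roots; fixing $B$ and a root determines $A$, yielding $O_\epsilon(X^{1+\epsilon})$. So the paper parametrizes by $(B,\text{root})$ and pays for the divisor function, while you parametrize by $(r,A)$ and pay only a harmonic sum over $r$. Your approach is more elementary (no divisor bound) and gives the cleaner $O(X\log X)$; the paper's approach is shorter to state once one is willing to cite the divisor bound, and fits the ambient style of the section, where $O_\epsilon(X^\epsilon)$ divisor-type losses are already being absorbed throughout.
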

\begin{proof}
Consider a polynomial $f \in U_0(\mathbb{Z})_X^{\text{min}}$ of the form $f(x) = x^3 + Ax + B$. Suppose $\frac{a}{b}$ is a root of $f$, where $a$ and $b$ are coprime integers with $b$ positive. In this case, $a$ must divide $B$, and $b$ must equal one. To count the reducible polynomials in $U_0(\mathbb{Z})_X^{\text{min}}$, we begin by fixing $B$, which offers $O(X)$ choices. The possible roots are factors of $B$, and fixing the root uniquely determines the polynomial. This completes the proof of the lemma.
\end{proof}

For any prime $p$ greater than $5$, if $p^2$ divides $D_g$ for an irreducible polynomial $g(x)=x^3+Ax+B$ over $\Q$, then it is equivalent to $p$ dividing both $A$ and $B$. Consequently, the bound in equation \eqref{strongly-divisible} is clear.
Let $U_0(\mathbb{Z})_{X,\text{irr}}^{\text{min}}$ denote the set of elements in $U_0(\mathbb{Z})_{X}^{\text{min}}$ that are additionally irreducible. Lemma $\ref{reducible cubic resolvent count}$ enables us to focus on proving equation $\eqref{weakly-divisible}$ by establishing a similar result exclusively for irreducible cubics within $U_0(\mathbb{Z})_{X, \irr}^{\text{min}}$.

The main challenge in proving Theorem \ref{IJ-Sieve} lies in establishing equation \eqref{weakly-divisible}. To tackle this, we embed $U_0(\mathbb{Z})_{X,\irr}^{\min}$ into the $\text{PGL}_2(\mathbb{Z})$-orbits of integral binary quartic forms that possess a unique root in $\mathbb{P}^1(\mathbb{Z})$. We denote the set of such integral binary quartic forms by $\mathcal{L}$.

This embedding follows a similar approach to that in \cite{consuctor}, with the key distinction that we focus exclusively on quartic forms with a unique root in $\mathbb{P}^1(\mathbb{Z})$, eliminating the need to retain information about the root itself.

Consider the polynomial $f(x) = x^3 + Ax + B$ in $U_0(\mathbb{Z})_{X, \irr}^{\min}$ with $Q_f = n$. According to Theorem 1.6 in \cite{consuctor}, there exists a unique integer $r$ modulo $n$ such that $f(x + r)$ can be written as

\begin{equation*}
f(x + r) = x^3 + ax^2 + bnx + cn^2.
\end{equation*}
The corresponding cubic ring $R_f$ is then contained within the cubic ring associated with the binary cubic form
\begin{equation*}
h(x, y) = nx^3 + ax^2y + bxy^2 + cy^3,
\end{equation*}
via the Delone–Faddeev correspondence, with index n. This induces the following map:
\begin{equation*}
\tilde{\sigma}: U_0(\mathbb{Z})_{X,\irr}^{\min} \rightarrow \mathcal{L},
\quad f(x) \mapsto yh(x, y).
\end{equation*}
We define the $Q$-invariant for an integral quartic form $f$ with a unique root $(\alpha, \beta)$ as $h(\alpha, \beta)$, where $h(x, y)$ is defined as $f(x, y)/(\beta x - \alpha y)$. Now, consider the following map:

\begin{equation*}
\sigma: U_0(\mathbb{Z})_{X,\irr}^{\min} \rightarrow \mathcal{L} \rightarrow \text{PGL}_2(\mathbb{Z}) \backslash \mathcal{L}.
\end{equation*}
Proposition 5.3 in \cite{consuctor} establishes that this is indeed an embedding, and we have the following identities:
\begin{equation*}
I(\sigma(f)) = -3A, \quad J(\sigma(f)) = -27B, \quad Q(f) = Q(\sigma(f)).
\end{equation*}
Thus, our objective is to bound the number of $\text{PGL}_2(\mathbb{Z})$-orbits of integral quartic forms with height less than $X$ and $Q$-invariant greater than $Q$, which possess a unique root in $\mathbb{P}^1(\mathbb{Z})$. For convenience, we assume, by a slight abuse of notation, that $\mathcal{L}$ consists only of elements for which the polynomial $X^3 - 3I(f) + J(f)$ is irreducible over $\Q$. This additional assumption is justified, as we are ultimately concerned with counting the set $U_0(\mathbb{Z})_{X,\irr}^{\min}$.

To achieve this, we employ the averaging method to count such orbits. However, as in the previous case, the height restriction confines us to a thin region. Consequently, counting via Davenport’s lemma does not yield precise results. The following lemma establishes a connection between the unique root of $f \in \mathcal{L}$ and its semi-invariants. This connection allows us to count these orbits by fibering over each root after applying the averaging method.

\begin{lemma}\label{J-single root lemma}
Consider $f = (a, b, c, d, e) \in \mathcal{L}$, and let its unique root be denoted as $(u', 4a) \in \mathbb{P}^1(\mathbb{Z})$. Assume that $H$ and $R$ are semi-invariants of $f$, and that both $R \neq 0$ and $a \neq 0$. Define $u = -u' - b$, and let $v$ satisfy $2v = u^2 + H$. Then, the invariant $J$ is given by:
\begin{equation}\label{J-single root}
(4a)^3J = 2H^3-9Hv^2+3(uH)^2-3(3R-uH)^2
\end{equation}
\end{lemma}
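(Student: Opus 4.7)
The plan is to reduce the claimed identity to a polynomial identity about $I$ and then verify the reduced identity from the root condition. The main tool is the syzygy \eqref{syzygy}, which gives
\begin{equation*}
(4a)^3 J = 64 a^3 J = -27 R^2 - H^3 + 48 a^2 H I.
\end{equation*}
Expanding the proposed right-hand side,
\begin{equation*}
2H^3 - 9Hv^2 + 3(uH)^2 - 3(3R - uH)^2 = 2H^3 - 9Hv^2 - 27 R^2 + 18 uHR,
\end{equation*}
and comparing with the syzygy expression, the claimed equality becomes
\begin{equation*}
3H\bigl(-H^2 + 16 a^2 I + 3v^2 - 6uR\bigr) = 0.
\end{equation*}
The case $H = 0$ is immediate; when $H \neq 0$, substituting $2v = u^2 + H$ and clearing denominators reduces matters to the polynomial identity
\begin{equation*}
64 a^2 I = H^2 - 6u^2 H - 3u^4 + 24 uR.
\end{equation*}
As this is a polynomial equality in the coefficients $(a,b,c,d,e)$ together with $u$, establishing it for $H \neq 0$ will give it in full generality.

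Next, I would exploit the hypothesis that $(u', 4a)$ is a root of $f$. Dividing $f(u', 4a) = 0$ by $a$ (valid since $a \neq 0$) yields
\begin{equation*}
u'^4 + 4b u'^3 + 16 ac u'^2 + 64 a^2 d u' + 256 a^3 e = 0,
\end{equation*}
which solves for $256 a^3 e$ and hence for $192 a^3 e = \tfrac{3}{4}(256 a^3 e)$. Combining with $16 a^2 I = 192 a^3 e - 48 a^2 bd + 16 a^2 c^2$ and multiplying by $4$, I obtain a closed form for $64 a^2 I$ in terms of $u'$ and $a,b,c,d$. Substituting $u' = -(u+b)$ and expanding $(u+b)^2$, $(u+b)^3$, $(u+b)^4$, I expect the $u^3$ terms to cancel automatically, while the remaining coefficients collapse via the identities
\begin{equation*}
18 b^2 - 48 ac = -6H, \quad 24 b^3 - 96 abc + 192 a^2 d = 24 R, \quad 9 b^4 - 48 a b^2 c + 64 a^2 c^2 = H^2,
\end{equation*}
producing exactly $64 a^2 I = H^2 - 6 u^2 H - 3 u^4 + 24 u R$.

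The only real obstacle is the bookkeeping in this two-step expansion: first rewriting $I$ in terms of $u'$ via the root condition, then substituting $u' = -(u+b)$ and matching coefficients of $u^0, u^1, u^2, u^4$ against $H^2$, $24R$, $-6H$, and $-3$ respectively. No auxiliary results beyond the definitions of the semi-invariants and the syzygy \eqref{syzygy} are needed. The hypotheses $R \neq 0$ and generic irreducibility of $f$ play no role in this algebraic verification; they are presumably in place for later applications of the lemma in the counting argument rather than for the identity itself.
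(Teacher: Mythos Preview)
Your proof is correct and complete: the reduction via the syzygy to the identity $64a^2 I = H^2 - 6u^2H - 3u^4 + 24uR$ is accurate, and your coefficient-by-coefficient verification after substituting $u' = -(u+b)$ into the root relation checks out exactly as you describe.

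This is, however, a genuinely different route from the paper's. The paper works structurally through the cubic resolvent $g(x)=x^3-3Ix+J$: it introduces the element $z_g=(4a\varphi-H)/3$ of the \'etale algebra $K_g=\Q[x]/(g)$, whose characteristic polynomial is $m(Z)=Z^3+HZ^2+SZ-R^2$, and invokes Cremona's criterion that $f$ has a rational linear factor if and only if $z_g=z^2$ for some $z\in K_g$. Writing the characteristic polynomial of $z$ as $p(Z)=Z^3+uZ^2+vZ+R$, the factorisation $m(Z^2)=-p(Z)p(-Z)$ yields $2v=u^2+H$ and $S=v^2-2uR$ by coefficient comparison, while also identifying the root as $(-u-b,4a)$. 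The formula then drops out from evaluating $m(-H/3)=(4a/3)^3 J$. So the paper's argument explains \emph{why} $u$ and $v$ are the natural quantities and derives the identity conceptually, at the cost of importing results from \cite{cremona1}. Your argument takes $u$ and $v$ as given by the statement and verifies the identity by direct expansion; it is more elementary, entirely self-contained, and arguably better suited to a reader who just needs the formula and not its provenance. You are also right that the hypotheses $R\neq 0$ and irreducibility of the resolvent play no role in your verification; in the paper they are used to ensure $K_g$ is a field so that the square-root criterion applies cleanly.
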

\begin{proof}
The proof of this lemma follows from the proof of Proposition 8 in \cite{cremona1}, along with the explicit computation of the relevant coefficients. Let $g(x) = x^3 - 3I(f)x + J(f)$ denote the cubic resolvent of $f$. Consider the field $K_g$, and let $\varphi$ be an element of $K_g$ satisfying $g(\varphi) = 0$. From equation \eqref{syzygy}, it follows that the characteristic polynomial of  
$z_g = \frac{4a\varphi - H}{3}$  
is given by  
\begin{equation*}
m(Z) = \Big(\frac{4a}{3}\Big)^3 g\Big(\frac{3Z+H}{4a}\Big) = Z^3 + HZ^2 + SZ - R^2,
\end{equation*}  
where $R$, $H$, and $S$ are semi-invariants of $f$. As stated in Lemma 3 of \cite{cremona1}, a quartic form has a linear factor over $\mathbb{Q}$ if and only if there exists an element $z \in K_g$ such that $z_g = z^2$. Consider the characteristic polynomial of $z$ (replacing $z$ with $-z$ if necessary)
\begin{equation*}
p(Z) = Z^3 + uZ^2 + vZ + R.
\end{equation*}  
Furthermore, $m(Z^2)$ is equal to $-p(Z)p(-Z)$. Comparing coefficients shows that $f$ has a root at $(-u - b, 4a)$. Since there is only one unique root, another coefficient comparison implies the semi-invariant relations.  

The final step involves calculating $m\big(-\frac{H}{3}\big)$, which equals $\big(\frac{4a}{3}\big)^3 J(f)$. Using the relations of $u$ and $v$ with the semi-invariants, we obtain the desired relation.
\end{proof}

\begin{lemma}\label{nonzero-a-bounding}
    Let $B$ be any bounded open set in $\mathbb{R}^5$, and let $t$ be a positive number. Then, for $\lambda = X^{1/2}$, we have:
    \begin{equation*}
      \# \big\{ f \in \mathcal{L} \cap (\lambda,t) \cdot B : a(f) \neq 0, \,|J(f)| \ll X \big\} = O_{\epsilon}(X^{7/4+\epsilon}).
    \end{equation*}
\end{lemma}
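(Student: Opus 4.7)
The plan is to fiber over the unique primitive root of $f$ and reduce the condition $|J(f)| \ll X$ to a quadratic inequality in a single coefficient, which can then be counted by completing the square.

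Concretely, every $f \in \mathcal{L}$ with $a(f) \neq 0$ can be written as $f = (\beta x - \alpha y)\, g(x,y)$, where $(\alpha,\beta) \in \mathbb{P}^1(\mathbb{Z})$ is the unique primitive root of $f$ (with $\beta \geq 1$) and $g = Ax^3 + Bx^2 y + Cxy^2 + Dy^3$ has integer coefficients by Gauss's lemma on primitive polynomials. Expanding gives
\[
(a,b,c,d,e) = (\beta A,\; \beta B - \alpha A,\; \beta C - \alpha B,\; \beta D - \alpha C,\; -\alpha D),
\]
which is just the $(a,b,c,u,R)$-parametrization of Lemma~\ref{J-single root lemma} rewritten in terms of the primitive form of the root.

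Collecting the $e$-terms in the definition of $J$ yields the identity
\[
J(f) = 9H(f)\,e + 9bcd - 27ad^2 - 2c^3, \qquad H = 8ac - 3b^2.
\]
Since $a,b,c$ are independent of $D$ while $d$ and $e$ are each linear in $D$, substituting exhibits $J(f)$ as a quadratic in $D$ with leading coefficient $-27\beta^3 A$; this is the quadratic-in-$R$ content of Lemma~\ref{J-single root lemma} translated to the $(A,B,C,D)$-variables. Completing the square yields, for each fixed $(A, B, C, \alpha, \beta)$ with $A \neq 0$, the bound
\[
\#\{D \in \mathbb{Z} : |J(f)| \ll X\} \ll \sqrt{X/|\beta^3 A|} + 1.
\]

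The size bounds from $f \in (\lambda, t) \cdot B$ (with $\lambda = X^{1/2}$) translate by a shear into a region for $(A,B,C)$ of volume $\ll \lambda^3/(t^6\beta^3)$, and Cauchy's bound on the unique root gives $|\alpha| \ll t^2\beta$ in the main regime. Summing,
\[
\sum_{(A,B,C)} \sqrt{X/|\beta^3 A|} \ll X^{1/2} \cdot \frac{\lambda^{5/2}}{t^4\beta^4} = \frac{X^{7/4}}{t^4\beta^4},
\]
then multiplying by the $O(t^2\beta)$ primitive values of $\alpha$ in range and summing $\sum_{\beta\geq 1} \beta^{-3} = O(1)$, yields a main contribution of $O(X^{7/4}/t^2)$, which is $O(X^{7/4})$ since $t$ is bounded below by $\sqrt[4]{3}/\sqrt{2}$ on the fundamental domain.

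The main obstacle will be the complementary large-$|\alpha|$ regime $|\alpha| \gg t^2\beta$, where a priori the $|J|$-bound on $\#D$ may not be sharp. Here the compatibility of the $|d|$- and $|e|$-windows on $D$ forces $|C| \ll \lambda t^2/|\alpha|$, which reduces the effective count of $(A,B,C)$ by a factor of $\beta/(\alpha\cdot t^{-2})$ and gives per-$(\alpha,\beta)$ bound $\ll X^{7/4}/(t^2\beta^4|\alpha|)$; summing $\sum_{|\alpha|\gg t^2\beta} 1/|\alpha| \ll \log X$ and $\sum_\beta \beta^{-4} = O(1)$ then absorbs the contribution into $X^\epsilon$, yielding the final bound $O_\epsilon(X^{7/4+\epsilon})$.
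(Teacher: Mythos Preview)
Your approach is essentially the paper's: both fiber over the unique rational root and exploit the fact that $J$ is a quadratic in the last remaining coefficient (your $D$, the paper's semi-invariant $R$) to extract a square-root saving, then sum over the root and the first three coefficients. The paper parametrizes by $(a,b,c)$ with congruences modulo $\beta$ and invokes Lemma~\ref{J-single root lemma} for the quadratic, whereas you pass directly to the integer coefficients $(A,B,C,D)$ of the cubic cofactor; these are linearly equivalent descriptions, and your explicit split into the regimes $|\alpha|\ll t^2\beta$ and $|\alpha|\gg t^2\beta$ is just a more detailed version of the paper's appeal to symmetry.
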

\begin{proof}
    We have the following bounds as $B$ is bounded:
\begin{equation*}  
a \ll \frac{X^{1/2}}{t^4}, \quad b \ll \frac{X^{1/2}}{t^2}, \quad c \ll X^{1/2}, \quad d \ll t^2 X^{1/2}, \quad e \ll t^4 X^{1/2}.
\end{equation*}
We begin by counting integral quartic forms where both $a$ and $R$ are nonzero. It can always be shown that a root of $f$ in $\mathbb{P}^{1}(\mathbb{Z})$ can be uniquely represented as $(u', 4a)$. By symmetry, we focus on counting integral quartic forms with the root $(u', 4a)$ such that $|u'| 
\leq |4a|$. 

As previously discussed, we consider the fiber over the root and the coefficients of $x^4$. This approach is advantageous as it allows us to incorporate the restrictions imposed by $J$ on each fiber by applying Lemma \ref{J-single root lemma}. Thus, it remains only to estimate the following:

\begin{equation}\label{}
\sum_{0<|a|\ll X^{1/2}} \sum_{\delta \mid 4a}\sum_{\substack{ |u'| \leq |4a|\\
\gcd(u',4a)=\delta }}
 \#\Big\{f=(a,b,c,d,e) \in \mathcal{L}\cap (\lambda,t)\cdot B : |J(f)|\ll X ,\, f(u',4a)=0 \}.
\end{equation}
For any pair $(u', 4a)$ where $\gcd(u', 4a) = \delta$, we define $\alpha = \frac{u'}{\delta}$ and $\beta = \frac{4a}{\delta}$, ensuring that $|\alpha|$ and $|\beta|$ are coprime. Given that the root $(\alpha, \beta)$ and the coefficient $a$ are fixed, we can factor the polynomial $f$ as follows:
\begin{equation}\label{factoring by root}
    \begin{split}
         f(x, y) &= (\beta x-\alpha y)(a_1x^3+a_2x^2y+a_3xy^2+a_4x^3)\\&= (\beta a_1) x^4 + (\beta a_2 - \alpha a_1) x^3 y + (\beta a_3 - \alpha a_2) x^2 y^2 + (\beta a_4 - \alpha a_3) x y^3 + (-\alpha a_4) y^4.
    \end{split}
\end{equation}
Upon fixing $a$, the coefficient $a_1$ is determined as $a_1 = \frac{a}{\beta}$, and this also fixes $b$ modulo $\beta$ as $ -\alpha a_1$. Subsequently, when $b$ is fixed, the coefficient $a_2$ is established using the relation $a_2 = \frac{b + \alpha a_1}{\beta}$. This setup directly affects $c$, fixing it modulo $\beta$ as $c \equiv -\alpha a_2 \pmod \beta$. This implies that:
\begin{align*}
& \sum_{0 < |a| \ll X^{1/2}} \sum_{\delta \mid 4a} \sum_{\substack{|u'| \leq |4a| \\ \gcd(u', 4a) = \delta}} \# \Big\{ f = (a, b, c, d, e) \in \mathcal{L} \cap (\lambda,t)\cdot B: |J(f)| \ll X, f(u', 4a) = 0 \}\\ \ll \quad
& \sum_{0 < |a| \ll X^{1/2}} \sum_{\delta \mid 4a} \sum_{\substack{|u'| \leq |4a| \\ \gcd(u', 4a) = \delta}} \sum_{\substack{|b| \ll X^{1/2} \\ b \equiv -\alpha a_1 \\ \pmod{\beta}}} \sum_{\substack{|c| \ll X^{1/2} \\ c \equiv -\alpha a_2 \\ \pmod{\beta}}} \# \Big\{ f \in \big(\mathcal{L}_{(\alpha,\beta)} \cap (\lambda,t)\cdot B\big)_{(a, b, c)} :  |J(f)| \ll X\},
\end{align*}
where $\mathcal{L}_{(\alpha, \beta)}$ denote the subset of quartic forms within $\mathcal{L}$ that uniquely possess the root $(\alpha, \beta)$. For any subset $W$ of $V_{\mathbb{R}}$, we define $W_{(a, b, c)}$ as the fiber over the point $(a, b, c)$, effectively isolating those elements of $W$ that correspond precisely to the fixed coefficients $a$, $b$, and $c$. 

Now, assuming we have fixed $a$, $b$, $c$, and the root $(u', 4a)$, this will also fix the semi-invariant $H$. Thus, we need to bound the number of integral quartic forms $f = (a, b, c, -, -) \in \mathcal{L} \cap (\lambda,t)\cdot B $ with $f(u', 4a) = 0$ and $J$-invariant less than $X$. By applying Lemma \ref{J-single root lemma}, the $R$-semi-invariant of such a form is bounded in at most two intervals of the size $O(|a|^{3/2}X^{1/2})$. This is true since fixing the root and $(a, b, c)$ will also fix $u$ and $v$ as defined in the lemma. This, with the fact that we are counting quartic forms with $J$-invariant less than $X$, implies that $(3R-uH)^2$ is within an interval of the size $O(|a|^3X)$. Again, as $uH$ is fixed, the possible values for $R$ are within two intervals of the size $O(|a|^{3/2}X^{1/2})$. Note that $R = b^3 - 4abc + 8a^2d$, and fixing $a$, $b$, $c$, and the root $(\alpha, \beta)$ additionally fixes $d$ modulo $\beta$ by looking at equation \ref{factoring by root}. Hence, our assumptions will restrict $R$ to be fixed modulo $8a^2 \cdot \beta$. By Fixing $a$, $b$, $c$, $R$, and the unique root will completely determine a form in $\mathcal{L}$. Thus, we have:

\begin{align*}
& \sum_{0 < |a| \ll X^{1/2}} \sum_{\delta \mid 4a} \sum_{\substack{|u'| \leq |4a| \\ \gcd(u', 4a) = \delta}} \sum_{\substack{|b| \ll X^{1/2} \\ b \equiv -\alpha a_1 \\ \pmod{\beta}}} \sum_{\substack{|c| \ll X^{1/2} \\ c \equiv -\alpha a_2 \\ \pmod{\beta}}} \# \Big\{ f \in \big(\mathcal{L}_{(\alpha,\beta)} \cap (\lambda,t)\cdot B)_{(a,b,c)} :  |J(f)| \ll X\} \\ \ll \quad
&
\sum_{0 < |a| \ll X^{1/2}} \sum_{\substack{\delta \mid 4a}} \sum_{\substack{|u'| \leq |4a| \\ \gcd(u', 4a) = \delta}} \sum_{\substack{|b| \ll X^{1/2} \\ b \equiv -\alpha a_1 \\ \pmod{\beta}}} \sum_{\substack{|c| \ll X^{1/2} \\ c \equiv -\alpha a_2 \\ \pmod{\beta}}}\left( \frac{|a|^{3/2} X^{1/2}}{a^2 \cdot |4a/\delta|} + 1 \right) \\ \ll  \quad
&\sum_{0 < |a| \ll X^{1/2}} \sum_{\substack{\delta \mid 4a}} \sum_{\substack{|u'| \leq |4a| \\ \gcd(u', 4a) = \delta\\
}}  \frac{X^{1/2}}{|4a/\delta|} \cdot \frac{X^{1/2}}{|4a/\delta|} \left( \frac{|a|^{3/2} X^{1/2}}{a^2 \cdot |4a/\delta|} + 1 \right).
\end{align*}
Under the given constraints on $u'$, the number of possible choices for it is limited with $O(|\frac{4a}{\delta}|)$. 
\begin{align*}
    &\sum_{0 < |a| \ll X^{1/2}} \sum_{\substack{\delta \mid 4a}} \sum_{\substack{|u'| \leq |4a| \\ \gcd(u', 4a) = \delta\\
}}  \frac{X^{1/2}}{|4a/\delta|} \cdot \frac{X^{1/2}}{|4a/\delta|} \left( \frac{|a|^{3/2} X^{1/2}}{a^2 \cdot |4a/\delta|} + 1 \right)  \\ \ll \quad
 &\sum_{0 < |a| \ll X^{1/2}} \sum_{\substack{\delta \mid 4a }}     |4a/\delta| \cdot\frac{X^{1/2}}{|4a/\delta|} \cdot \frac{X^{1/2}}{|4a/\delta|} \left( \frac{|a|^{3/2} X^{1/2}}{a^2 \cdot |4a/\delta|} + 1 \right) \\\ll_{\epsilon}\quad
 &X^{7/4+\epsilon}
\end{align*}
The last part of the proof uses the fact that $|\frac{4a}{\delta}| \geq 1$.
\end{proof}

\begin{customproof}[Theorem \ref{IJ-Sieve}]
As previously mentioned, our goal is to determine the number of $\text{PGL}_2(\mathbb{Z})$-orbits in $\mathcal{L}$ with a $Q$-invariant greater than $Q$ and a height less than $X$. According to the averaging method, the number of such orbits can be expressed as:
\begin{equation}\label{average-sieve}
\int_{(tnk)\in \mathcal{F}_{\text{PGL}_2}} \#\big\{f \in \mathcal{L} \cap (tn)G_0 \cdot R^{(i)}_{X^3} : Q(f) > Q,  |J(f)| \ll X \big\} t^{-2} d^{\times}t  dndk.
\end{equation}
As defined in \cite{2sel-e}, $L^{(i)}$ is absolutely bounded, and $R^{(i)}_{X^3}$ denotes the scaling of $L^{(i)}$ by $\mathbb{R}_{>0}$, where $\max(|I|^3, |J|^2)$ is $O(X^3)$. Additionally, $G_0$ is a compact, $K$-invariant subset of $\text{PGL}_2(\mathbb{R})$. The restriction on $J$ will be addressed separately to accommodate the new height setup.

As the element $\gamma$ ranges over $\mathcal{F}_{\text{PGL}_2}$, the set $\gamma G_0 \cdot R^{(i)}_{X^3}$ undergoes distortion. It is known from \cite{2sel-e} that the coefficients of $R_X^{(i)}$ are bounded by $O(X^{1/6})$, while this scaling ensures that the coefficients of forms in $R_{X^3}^{(i)}$ are bounded by $O(X^{1/2})$.

In particular, when $\gamma = tnk$ in Iwasawa coordinates, the five coefficients $a$, $b$, $c$, $d$, and $e$ of any element in $\gamma G_0 \cdot R^{(i)}_{X^3}$ satisfy:
\begin{equation} \label{box-bound}
a \ll \frac{X^{1/2}}{t^4}, \quad b \ll \frac{X^{1/2}}{t^2}, \quad c \ll X^{1/2}, \quad d \ll t^2 X^{1/2}, \quad e \ll t^4 X^{1/2}.
\end{equation}
Let $tnk$ be fixed. Lemma \ref{nonzero-a-bounding} implies:
\begin{equation*}
    \# \Big\{f=(a,b,c,d,e) \in  \mathcal{L}\cap (tn)G_0\cdot R^{(i)}_{X^3}: a(f)\neq 0, \, |J(f)| \ll X \Big\} = O_{\epsilon}(X^{7/4+\epsilon}).
\end{equation*}
We only need to try to bound those quartics  $f=(0,b,c,d,e) \in \mathcal{L}\cap (tn)G_0.R^{(i)}_{X}$ with $J$-invarient less than $X$ and $b>Q$ since $Q$-invariant in this case is $b$. Input $a$ equal to zero in equation \eqref{J}, this implies that 
\begin{equation}\label{J-for-a=0}
    J= 9bcd-27eb^2-2c^3.
\end{equation}
To find these quartic forms, let $b$ be fixed, and assume we are counting those of the form $(0, b, c, d, e)$ such that $\gcd(3b, c) = \eta$. Thus, we obtain:
\begin{align*}
    &\# \Big\{f=(0,b,c,d,e) \in  \mathcal{L}\cap (tn)G_0\cdot R^{(i)}_{X^3}: |b|>Q, \, |J(f)| \ll X \Big\}\\ \ll \quad
    &\sum_{ Q<|b|\ll X^{1/2}/t^2} \sum_{\eta | 3b} \sum_{\substack{|c| \ll X^{1/2}\\\gcd(c,3b) =\eta}} \#\Big\{(d,e)\in \big((tn)G_0\cdot R^{(i)}_{X^3}\big)_{(0,b,c)}: |9bcd-27eb^2-2c^3|\ll X \Big\}.
\end{align*}
Fixing $b$ and $c$ will determine $J$ modulo $9b\eta$, meaning that the number of possible values for $J$ is $O(X/|\eta b|)$. Specifically, we have $J \equiv -2c^3 \pmod{9b\eta}$. Next, we count the number of possible values for $d$ given that $b$, $c$, and $J$ are fixed. From equation \eqref{J-for-a=0}, we see that $d$ is determined modulo $3b/\eta$, while $e$ is fully determined by $b$, $c$, $d$, and $J$.  This shows that the number of quartic forms $f$ in $(0,b,c,-,-) \in \mathcal{L} \cap (tn)G_0\cdot R^{(i)}_{X^3}$ with a fixed $J$-invariant is bounded by $O(X^{1/2}t^2/|b/\eta|)$. Therefore, we have:

\begin{align*}
&\sum_{ Q<|b|\ll X^{1/2}/t^2} \sum_{\eta | 3b} \sum_{\substack{|c| \ll X^{1/2}\\\gcd(c,3b) =\eta}} \#\Big\{(d,e)\in \big((tn)G_0\cdot R^{(i)}_{X^3}\big)_{(0,b,c)}: |9bcd-27eb^2-2c^3|\ll X \Big\} \\ \ll \quad
&\sum_{ Q<|b|\ll X^{1/2}/t^2} \sum_{\eta | 3b} \sum_{\substack{|c| \ll X^{1/2}\\\gcd(c,3b) =\eta}}
\sum_{\substack{|J|\ll X\\
J \equiv -2c^3\\ \pmod {9b\eta}}} \#\Big\{(d,e) \big((tn)G_0\cdot R^{(i)}_{X^3}\big)_{(0,b,c)}: J(0,b,c,d,e)=J \Big\}
\\ \ll \quad
&\sum_{Q<|b|\ll X^{1/2}/t^2} \sum_{\eta|3b} \frac{X^{1/2}}{|\eta|}\cdot \frac{X}{|b\eta|}\cdot \frac{t^2X^{1/2}}{|b/\eta|} \ll_{\epsilon}
\frac{t^2X^{2+\epsilon}}{|Q|}.
\end{align*}
To complete the proof, we need to estimate equation \eqref{average-sieve}. Consider two different ranges for $t$. If $t \gg X^{1/8}$, then equation \eqref{box-bound} implies that $a$ must be zero. In this case, the $Q$-invariant equals $b$ and, by assumption, must be nonzero. This condition further implies that $t$ is bounded by $O(X^{1/4})$, as follows from equation \eqref{box-bound}. We express the integral in \eqref{average-sieve} as follows, since we assume the integral over $K$ has value 1.
\begin{equation*}
    \int_{t>c} \int_n \#\left\{f=(a,b,c,d,e) \in \mathcal{L}\cap (tn)G_0\cdot R^{(i)}_{X^3}: Q(f)>Q, \, |J(f)|<X\right\}  t^{-2}d^{\times}tdn,
\end{equation*}
where $c$ is a fixed constant.

\noindent
For $X^{1/8} \ll t \ll X^{1/4}$, we have:
\begin{align*}
&\int_{X^{1/8}\ll t\ll X^{1/4}} \int_n \#\left\{f=(0,b,c,d,e) \in \mathcal{L}\cap (tn)G_0\cdot R^{(i)}_{X^3}:|b|>Q, \, |J(f)|<X\right\}  t^{-2}d^{\times}tdn \\ \ll_{\epsilon} \quad
& \int_{X^{1/8}\ll t\ll X^{1/4}} \int_n  \frac{t^2X^{2+\epsilon}}{Q} t^{-2}d^{\times}tdn \\ \ll_{\epsilon} \quad
&\frac{X^{2+\epsilon}}{Q}  
\end{align*}
For $t\ll X^{1/8}$, we have:
\begin{align*}
&\int_{c<t\ll X^{1/8}} \int_n \#\Big\{f \in \mathcal{L}\cap (tn)G_0.R^{(i)}_{X}:Q(f)>Q, \, |J(f)|\ll X \Big\}  t^{-2}d^{\times}tdn\\ \ll_{\epsilon} \quad
& \int_{c<t\ll X^{1/8}} \int_n  \left(\frac{t^2X^{2+\epsilon}}{Q}+X^{7/4+\epsilon}\right) t^{-2}d^{\times}tdn \\ \ll_{\epsilon} \quad
& \frac{X^{2+\epsilon}}{Q}+X^{7/4+\epsilon}
\end{align*}
This completes the proof of the Theorem \ref{IJ-Sieve}. 
\end{customproof}
 
For large family of elliptic curves define the local mass $M_p(F)$ by
\begin{equation}\label{M_p(F)-def}
M_p(F) \vcentcolon= \int_{(I,J) \in \Inv_p(F)}dIdJ.
\end{equation}
Let us also define the following analogues at infinity of $M_p(F)$
\begin{equation*}
    M_{\infty}(F;X) \vcentcolon= \int_{\substack{\Inv_{\infty}(F)\\h_e(I,J)<X}} \,dI\,dJ
\end{equation*}
We have the following theorem, which follows from Theorem 3.17 in \cite{main}:
\begin{theorem}\label{elliptic-counting}
    Let $F$ be a large family of elliptic curves and let $N(F;X)$ denote the number of elliptic curves $E\in F$ such that $h_e(E)<X$. Then 
\begin{equation*}
        N(F;X) = M_{\infty}(F;X) \prod_p M_p(F) + o(X^2).    \end{equation*}
\end{theorem}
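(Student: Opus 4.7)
The plan is to run the standard Ekedahl-type sieve of \cite{main}, with the uniformity estimate of Theorem \ref{IJ-Sieve} substituted for the corresponding tail estimate that appears there. For each $Q \geq 5$ I would define the truncated family $F_Q$ consisting of all elliptic curves $E$ (with the usual minimality of $(A,B)$ and $\Delta(E) \neq 0$) for which $(I(E), J(E)) \in \mathrm{Inv}_p(F)$ holds only for primes $p \leq Q$. Since $F_Q$ is defined by finitely many congruence conditions on $(A, B)$, it is a finite union of translates of a sublattice $m\mathbb{Z}^2 \subseteq \mathbb{Z}^2$ for some modulus $m = m(Q)$, and a direct geometry-of-numbers count in the box $\max(|A|, |B|) < X$ yields
\begin{equation*}
N(F_Q; X) = M_\infty(F; X)\, \prod_{p \leq Q} M_p(F) + O_Q(X),
\end{equation*}
where the implicit constant depends on $Q$ through $m$.

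Next I would estimate $N(F_Q; X) - N(F; X)$. Since $F \subseteq F_Q$, this difference counts those $E \in F_Q$ whose invariants fail the local condition at some prime $p > Q$. By the largeness hypothesis, for all but finitely many $p$ the condition $(I, J) \in \mathrm{Inv}_p(F)$ is automatic whenever $p^2 \nmid \Delta$. Apart from a finite exceptional set of primes whose total contribution can be absorbed into $o(X^2)$ by an elementary count, every such $E$ must therefore satisfy $p^2 \mid \Delta(E)$ for some $p > Q$. Theorem \ref{IJ-Sieve} then gives
\begin{equation*}
N(F_Q; X) - N(F; X) = O_\epsilon\!\left(\frac{X^{2+\epsilon}}{Q} + X^{7/4+\epsilon}\right).
\end{equation*}

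Combining the two bounds produces
\begin{equation*}
N(F; X) = M_\infty(F; X)\, \prod_{p \leq Q} M_p(F) + O_\epsilon\!\left(\frac{X^{2+\epsilon}}{Q} + X^{7/4+\epsilon}\right) + O_Q(X).
\end{equation*}
Dividing by $X^2$, sending $X \to \infty$, and then $Q \to \infty$ completes the proof: the partial products $\prod_{p \leq Q} M_p(F)$ converge to $\prod_p M_p(F)$ by the standard local density estimate (for large $p$, $M_p(F) = 1 - O(p^{-2})$), the term $X^{2+\epsilon}/Q$ vanishes in the iterated limit, and $X^{7/4+\epsilon}$ and $O_Q(X)$ are already $o(X^2)$.

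The real obstacle is Theorem \ref{IJ-Sieve} itself, which is the genuine analytic content and already established above via the embedding of monic cubic polynomials into binary quartic forms possessing a unique root in $\mathbb{P}^1(\mathbb{Z})$, combined with Lemma \ref{nonzero-a-bounding} and an averaging argument fibered over that root. Once this uniformity is in hand, the present theorem reduces to the routine sieve outlined above.
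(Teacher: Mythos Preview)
Your proposal is correct and matches the paper's approach: the paper does not spell out a proof but simply records that the statement follows from Theorem~3.17 in \cite{main}, and what you have written is precisely that argument transported to the coefficient height, with Theorem~\ref{IJ-Sieve} supplying the tail estimate that replaces the corresponding uniformity bound in \cite{main}. The only minor point worth tidying is the treatment of the global minimality condition on $(A,B)$ when you assert that $F_Q$ is defined by finitely many congruence conditions; this is handled in the standard way (either by noting that minimality at primes $p>X^{1/4}$ is automatic in the box, or by absorbing the non-minimal locus at large primes into the same $O(X^2/Q)$ tail), and does not affect the argument.
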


\subsection{Proof of the main theorem}
According to \cite{main}, building on results originally established in \cite{birch1963notes} and \cite{cremona2008algorithms}, the non-identity elements in the $2$-Selmer group of an elliptic curve $E^{I,J}$ over $\mathbb{Q}$ correspond one-to-one with $\text{PGL}_2(\mathbb{Q})$-equivalence classes of locally soluble integral binary quartic forms having invariants $2^4I$ and $2^6J$, provided these forms do not have a linear factor.

In Section 3, we computed the number of $\PGL_2(\mathbb{Z})$-orbits of generic integral quartic forms with bounded height. Furthermore, the proofs in Subsection \ref{reducibility subsection} indicate that the contribution of integral quartic forms with two irreducible factors is negligible.

Our goal is to count each $\text{PGL}_2(\mathbb{Z})$-orbit of integral binary quartic forms while assigning a specific weight to each orbit. Specifically, we assign a weight of $\frac{1}{n(f)}$ to each orbit, where $n(f)$ denotes the number of $\text{PGL}_2(\mathbb{Z})$-orbits contained within the $\text{PGL}_2(\mathbb{Q})$-equivalence class of the form $f$.

As discussed in \cite{main}, it is  sufficient to perform the count with a weight of $\frac{1}{m(f)}$, where  

\begin{equation*}
m(f) \vcentcolon= \sum_{f'\in B(f)}\frac{\#\text{Aut}_{\mathbb{Q}}(f)}{\#\text{Aut}_{\mathbb{Z}}(f')}.
\end{equation*}
Here, $B(f)$ denotes a set of representatives for the action of $\text{PGL}_2(\mathbb{Z})$ on the $\text{PGL}_2(\mathbb{Q})$-equivalence class of $f$ in $V_{\mathbb{Z}}$. The terms $\text{Aut}_{\mathbb{Q}}(f)$ and $\text{Aut}_{\mathbb{Z}}(f)$ represent the stabilizer of $f$ in $\text{PGL}_2(\mathbb{Q})$ and $\text{PGL}_2(\mathbb{Z})$, respectively. Since the vast majority of $\text{PGL}_2(\mathbb{Z})$-orbits have a trivial stabilizer, it follows that $m(f) = n(f)$ for generic forms.

The significance of the global weight $m(f)$ for a form $f$ with nonzero discriminant lies in its equality to the product $\prod_{p} m_p(f)$, where $m_p(f)$ is defined as follows:
\begin{equation*}
    m_p(f) \vcentcolon= \sum_{f' \in B_p(f)} \frac{\# \Aut_{\mathbb{Q}_p}(f)}{\# \Aut_{\mathbb{Z}_p}(f')}.
\end{equation*}
Here, $B_p(f)$ denotes a set of representatives for the action of $\text{PGL}_2(\mathbb{Z}_p)$ on the $\text{PGL}_2(\mathbb{Q}_p)$-equivalence class of $f$ in $V_{\mathbb{Z}_p}$. The terms $\Aut_{\mathbb{Q}_p}(f)$ and $\Aut_{\mathbb{Z}_p}(f)$ represent the stabilizers of $f$ in $\text{PGL}_2(\mathbb{Q}_p)$ and $\text{PGL}_2(\mathbb{Z}_p)$, respectively. The proof of this result can be found in Proposition 3.6 of \cite{main}. Consequently, the function $m(f)^{-1} \cdot \chi$, where $\chi$ denotes the characteristic function of locally soluble integral binary quartic forms, serves as a valid weight function, enabling us to apply Theorem \ref{weight-counting-quartic}.

Consider any large family of elliptic curves, and let $S(F)$ represent the set of locally soluble integral binary quartic forms characterized by the invariants $2^4I$ and $2^6J$, where $(I, J) \in \text{Inv}(F)$. By assigning each element $f \in S(F)$ a weight of $\frac{1}{m(f)}$, we find that the weighted count of irreducible $\text{PGL}_2(\mathbb{Z})$ orbits with height $h_{36}$ less than $3\cdot2^4X$ in $S(F)$ is asymptotically equivalent to the count of non-identity elements in the $2$-Selmer group for all elliptic curves with height $h_e$ less than $X$ in $F$. Let $S_p(F)$ denote the $p$-adic closure of $S(F)$ in $V_{\mathbb{Z}_p}$. Referring to Theorem \ref{weight-counting-quartic}, our objective is then reduced to the calculation of certain local densities. These local densities are addressed in Proposition 3.9 of \cite{main} and are computed as follows:

\begin{equation}\label{local-density-calc}
    \int_{S_p(F)}\frac{1}{m_p(F)}\, df = \left| \frac{2^{10}}{27} \right|_p \cdot \Vol\left( \text{PGL}_2(\mathbb{Z}_p)\right) \cdot M_p(V, F) ,
\end{equation}
where
\begin{equation*}
    M_p(V, F) \vcentcolon= \int_{(I, J) \in \Inv_p(F)} \frac{\#\left( E^{I,J}(\mathbb{Q}_p) / 2E^{I,J}(\mathbb{Q}_p) \right)}{\#E^{I,J}(\mathbb{Q}_p)[2]} \, dI \, dJ.
\end{equation*}

\noindent {\bf Proof of Theorem \ref{2-Selmer average}:} 
Let $S_{\infty}(F)$ denote the collection of all $\mathbb{R}$-solvable binary quartic forms in $V_{\mathbb{R}}$ whose invariants belong to $\text{Inv}_{\infty}(F)$. We define $M_{\infty}(V,F;X)$ analogously as  
\begin{equation*}
     M_{\infty}(V,F;X) \vcentcolon= \int_{\substack{(I,J)\in\Inv_{\infty}(F)\\h_{36}(I,J)<3\cdot2^4X}} \frac{\# \left(E^{I,J}(\R)/2E^{I,J}(\R)\right)}{\#E^{I,J}(\R)[2]}\,dI\,dJ.
\end{equation*}
As mentioned earlier,  
\begin{equation*}
        \sum_{\substack{E\in F\\h_e(E)<X}}\left(\#S_2(E)-1\right)
\end{equation*}
is asymptotically equal to the number of locally soluble $\PGL_2(\Z)$-orbits on $S(F)$ with height $h_{36}$ bounded by $3\cdot 2^4X$ and no rational linear factor, where each orbit $\PGL_2(\Z)\cdot f$ is counted with weight $1/m(f)$.  

Thus, by Theorem \ref{weight-counting-quartic} and the results of Propositions 3.6, 3.9, and 3.18 in \cite{main}, we obtain  
\begin{align*}
        \sum_{\substack{E\in F\\h_e(E)<X}}\left(\#S_2(E)-1\right) 
        &\leq  N_{36}(V_{\Z}\cap S_{\infty}(F);3\cdot2^4X)\prod_{p} \int_{S_{p}(F)}\frac{1}{m_p(f)}df+o(X^2)\\
         &=  \frac{1}{27}\Vol(\PGL_2(\Z)/\PGL_2(\R)) M_{\infty}(V,F;X) \prod_{p} \left|\frac{2^{10}}{27}\right|_p \Vol(\PGL_2(\Z_p)) M_p(V,F)+o(X^2)\\
         &=\frac{3^4\cdot 2^{10}}{2\cdot27}  \Vol(\PGL_2(\Z)/\PGL_2(\R))\prod_{p} \left|\frac{2^{10}}{27}\right|_p \Vol(\PGL_2(\Z_p)) M_p(V,F)+o(X^2).
\end{align*} 
Additionally, Theorem \ref{elliptic-counting} gives  
\begin{align*}
\sum_{\substack{E\in F\\h_e(E)<X}}1 &=M_{\infty}(F;X) \prod_p M_p(F) + o(X^2)\\
&= 3^4X^2 \prod_p M_p(F) + o(X^2).
\end{align*}
Considering the ratios $\frac{M_{p}(V, F)}{M_{p}(F)}$, which are computed in \cite{main} using results from \cite{quotient}, we have  
\begin{equation*}
    \frac{M_p(V,F)}{M_p(F)}
    = \frac{\int_{(I,J) \in \text{Inv}_p(F)} \frac{\#(E^{I,J}(\mathbb{Q}_p)/2E^{I,J}(\mathbb{Q}_p))}{\#E^{I,J}(\mathbb{Q}_p)[2]} dI dJ}
    {\int_{(I,J) \in \text{Inv}_p(F)} dI dJ}
    =
    \begin{cases}
        1 & \text{if } p \neq 2, \\
        2 & \text{if } p = 2.
    \end{cases}
\end{equation*}
Thus, we conclude  
\begin{equation*}
 \lim_{X \to \infty} \frac{\sum\limits_{\substack{E\in F\\h_{e}(E)<X}}\left(\#S_2(E)-1\right)}{\sum\limits_{\substack{E\in F\\h_{e}(E)<X}}1} \leq \Vol(\PGL_2(\Z)/\PGL_2(\R))\prod_{p} \Vol(\PGL_2(\Z_p)).
\end{equation*}
The constant on the right-hand side of the above expression is  
\begin{equation*}
2\zeta(2)\prod_{p}(1-p^{-2})=2,
\end{equation*}
which is the Tamagawa number of $\PGL_2(\Q)$.
\bibliographystyle{plain}  
\bibliography{main}  
\end{document}